\documentclass[a4paper,11pt,reqno]{amsart}
\usepackage{amsmath,a4wide}
\usepackage{xfrac}
\usepackage{stmaryrd,mathrsfs,bm,amsthm,mathtools,yfonts,amssymb,color}
\usepackage{esint}
\usepackage{xcolor}
\usepackage[pagebackref,citecolor=black,linkcolor=blue,colorlinks=true]{hyperref}
\usepackage{tikz,tikz-3dplot}

 %notes

\begingroup
\newtheorem{theorem}{Theorem}[section]
\newtheorem{lemma}[theorem]{Lemma}
\newtheorem{proposition}[theorem]{Proposition}
\newtheorem{corollary}[theorem]{Corollary}
\endgroup

\theoremstyle{definition}
\newtheorem{definition}[theorem]{Definition}
\newtheorem{remark}[theorem]{Remark}
\newtheorem{ipotesi}[theorem]{Assumption}

\setcounter{tocdepth}{2}
\numberwithin{equation}{section}

%--------------MACRO PER CAMPI NUMERICI-----------------

\newcommand{\N}{\mathbb{N}} %naturali
 %interi
\newcommand{\R}{\mathbb{R}} %reali
\newcommand{\C}{\mathbb{C}} %complessi
 %disc 
 %sphere

%-------------------MACRO GEOMETRY AMBIENTE---------------

 %norma 2nd ff
\newcommand{\bC}{\mathbf{C}} %cilindri
\newcommand{\bB}{\mathbf{B}} %palle
 %libri

%--------------MACRO PER MANIFOLDS---------------------

%\newcommand{\M}{\mathcal{M}} %manifold
%\newcommand{\T}{\mathcal{T}} %tangent bundle
%\newcommand{\N}{\mathcal{N}} %conflict: manifold or normal bundle
%\newcommand{\bphi}{\boldsymbol{\varphi}} %parametrization manifold

%---------------MACRO PER GEOMETRIC OBJECTS-------------

%\newcommand{\K}{\mathcal{K}} %cubic complex
\newcommand{\p}{\mathbf{p}} %orthogonal projections
%\newcommand{\U}{\mathbf{U}} %tubular neighborhood
%\newcommand{\Rm}{\mathrm{Rm}} %Riemann tensor
%\newcommand{\Ric}{\mathrm{Ric}} %Ricci tensor
%\newcommand{\Ri}{\mathcal{R}} %partial Ricci operator

%----------------MACRO PER FUNZIONI MULTIVALORE-----------

\newcommand{\A}{\mathcal{A}} %Q-punti
\newcommand{\G}{\mathcal{G}} %distanza su A_Q
 %palla in A_Q
 %media
 %multivalued Dirichlet functional
\newcommand{\bG}{\mathbf{G}} %grafico

\newcommand{\sep}{{\rm sep}}

%\newcommand{\Jac}{\mathrm{Jac}} %multivalued Jacobi functional
%\newcommand{\E}{\mathscr{E}} %Dirichlet riscalato

%----------------MACRO PER CORRENTI-------------------------

 %general currents
\newcommand{\mass}{\mathbf{M}} %mass of a current
 %integer rectifiable currents
 %real rectifiable currents
 %integral currents
\newcommand\res{\mathop{\hbox{\vrule height 7pt width .3pt depth 0pt\vrule height .3pt width 5pt depth 0pt}}\nolimits}

 %cone

 %flat intere

 %massa
 %flat modificata
 %flat standard

\newcommand{\reg}{\mathrm{Reg}} %insieme regolare
\newcommand{\sing}{\mathrm{Sing}} %insieme singolare
\newcommand{\Sing}{\mathrm{Sing}} %insieme singolare

 %varifold associato

\newcommand{\bE}{\mathbf{E}} %eccesso riscalato
 %eccesso non riscalato	
 %densita` di eccesso
 %funzione massimale
\newcommand{\bh}{\mathbf{h}} %height

\newcommand{\bI}{\mathbf{I}}

 %stratificazione

%---------------MACRO PER p-CORRENTI------------------------

 %modulo p
 %modulo 2

%--------------MACRO PER MEASURE THEORETICAL OBJECTS------------

\newcommand{\Ha}{\mathcal{H}} %Hausdorff distance

%------------------MACRO VARIE------------------------
\newcommand{\eps}{\varepsilon} %epsilon
\newcommand{\spt}{\mathrm{spt}} %support (usually of a measure)
\newcommand{\supp}{\mathrm{supp}} %support (usually of a function)
\newcommand{\dist}{\mathrm{dist}} %distance
\newcommand{\tr}{\mathrm{tr}} %trace
\newcommand{\diam}{\mathrm{diam}} %diameter
 %geodesic ball
 %geodesic distance
\newcommand{\Lip}{\mathrm{Lip}} %Lipschitz
 %open nbd
\renewcommand{\epsilon}{\varepsilon}

%------------------COMANDI EXTRA---------------------------

\def\XXint#1#2#3{{\setbox0=\hbox{$#1{#2#3}{\int}$ }
		\vcenter{\hbox{$#2#3$ }}\kern-.6\wd0}}

\newcommand{\mint}{-\hskip-1.10em\int}

 %weak convergence
 %weak star convergence

%--------------MACRO PER OPERATORE ---------------------

 %gradient
\DeclareMathOperator{\Div}{div} %divergence (conflict?)
 %essential supremum
 %variazione essenziale

%--------------MACRO JONAS---------------------

\def\I#1{{\mathcal{A}}_{#1}}

\newcommand{\Iqs}{{\mathcal{A}}_Q(\R^{n})}
\newcommand{\Iq}{{\mathcal{A}}_Q}
\def\a#1{\left\llbracket{#1}\right\rrbracket}
 %absolute value
 %large absolute value
\newcommand{\norm}[1]{\left\lVert#1\right\rVert} %norm
\newcommand{\etab}{\boldsymbol{\eta}}

\newcommand{\Iqn}{{\mathcal{A}_Q}}

%--------------------NEW MACROS-------------------------

%--------------------------MACRO CENTER MANIFOLD----------------------------------------

\newcommand\cG{{\mathcal{G}}}

%%%%%%%%%%%%%%%%%%%%%%%%%%%%

\title[Stationary $2$-valued graphs]{Dimension of the singular set for  $2$-valued stationary Lipschitz graphs}

\author[J. Hirsch]{Jonas Hirsch}
\address{Mathematisches Institut, Universit\"at Leipzig, Augustusplatz 10, D-04109 Leipzig, Germany}
\email{hirsch@math.uni-leipzig.de}
\author[L. Spolaor]{Luca Spolaor}
\address{Department of Mathematics, UC San Diego, AP\&M, La Jolla, California, 92093, USA}
\email{lspolaor@ucsd.edu}

\begin{document}

\maketitle

\begin{abstract}
We prove that the singular set of a $2$-valued Lipschitz graph that is stationary for the area is of codimension $1$.
\end{abstract}

\tableofcontents

\section{Introduction}
In his groundbreaking work \cite{All}, Allard proved that the singular set of stationary integral varifolds is meager. Since then little to no progress has been made on the question of the optimal dimension of the singular set for integral stationary varifolds. In this note we answer this question under two assumptions: multiplicity $2$ and Lipschitz graphicality. Moreover we do this by applying Almgren's strategy for the first time to the stationary setting, that is without any minimizing (nor stability) assumption.

Given a domain $\Omega \subset \R^m$, we will consider Lipschitz multiple valued functions $f\colon \Omega \subset \R^m \to \Iq(\R^n)$, and denote with $\Lip(f)$ their Lipschitz constant and with $\bG_f$ the integral current naturally associated to the graph of $f$ (see \cite{DS, DLS_Currents} for the relevant definitions).

\begin{definition}
    Given a function $f\colon \Omega \to \Iq(\R^n)$, we say that a point \emph{$x\in \Omega$ is regular} if there exists a neighborhood $B\subset \Omega$ of $x$ and $Q$ analytic functions $f_i \colon B \to \R^n$ such that 
\[
f(y) = \sum_{i=1}^Q \a{f_i(y)}\qquad \text{ for
almost every }y \in B\,,
\]
and either $f_i(x)\neq f_j(x)$ for every $x\in B$ or $f_i \equiv f_j$. The \emph{singular set $\Sing(f)$ of $f$} is the complement in $\Omega$ of the set of regular points. 
\end{definition}

Our main result is the following optimal dimensional bound on the singular set of stationary $2$-valued Lipschitz maps.

\begin{theorem}[Dimension of the singular set]\label{thm:main}
    Let $f\colon \Omega \to \I2(\R^n)$, with $\Omega\subset \R^m$ open, be a Lipschitz map such that $\bG_f$ is a stationary varifold. Then $\dim (\Sing(f)\cap \Omega)\leq m-1$ and all the points in $\reg(f)$ have either multiplicity $1$ or $2$. Furthermore, in the second case $\dim(\sing(f)\cap \Omega)\leq m-4$.
\end{theorem}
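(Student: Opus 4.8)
The plan is to follow Almgren's scheme adapted to the stationary setting, whose success hinges on three ingredients: a monotonicity/frequency function, a good decomposition of the current into a "regular" average part and a "truly $2$-valued" part, and a dimension reduction argument à la Federer.

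\medskip

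\noindent\textbf{Step 1: Splitting off the average.} First I would write $f = \a{\eta\circ f + g_1} + \a{\eta\circ f + g_2}$ where $\eta\circ f = \tfrac12(f_1+f_2)$ is the (single-valued Lipschitz) average and $g = \a{g_1}+\a{g_2}$ is a $2$-valued map with $g_1 + g_2 \equiv 0$, i.e. $g$ takes values in the $1$-dimensional "separation" direction. Using that $\bG_f$ is stationary and the structure of the first variation for graphs, I would derive the PDE satisfied by $\eta\circ f$ and by $g$. The key point, exactly as in the minimizing theory of De Lellis–Spadaro, is that $\eta\circ f$ solves a (possibly nonlinear, but uniformly elliptic) system and is therefore analytic, so the singular set of $f$ coincides with the singular set of $g$; and where $g$ does not vanish identically, $g$ splits as $\a{g_1}+\a{-g_1}$ with $g_1$ solving a scalar elliptic equation, hence the two sheets are locally analytic graphs meeting only where $g_1 = 0$. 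This already gives $\Sing(f) \subset \{g_1 = 0\}$, so multiplicities are $1$ or $2$ and $\Sing(f)$ near a multiplicity-$1$ point is the nodal set of a solution of a second order elliptic equation, which has Hausdorff dimension $\le m-1$.

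\medskip

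\noindent\textbf{Step 2: Frequency function and blow-up at multiplicity-$2$ points.} At a genuinely singular point of multiplicity $2$ (where all sheets collapse), I would introduce Almgren's frequency function $I(r)$ for $g$ centered at such a point, prove its almost-monotonicity (this is where stationarity, rather than minimality, must be exploited — one needs inner/outer variation identities and a Rellich-type computation that survives without the energy comparison inequalities), and conclude that blow-ups exist and are homogeneous $2$-valued $\Dir$-minimizing (or at least homogeneous stationary) functions $\R^m \to \Iqn(\R^n)$ with $g_1+g_2\equiv 0$. For such homogeneous $2$-valued harmonic-type functions the singular set is contained in a subspace of dimension $\le m-2$, and a refinement using the top stratum of the homogeneity (the classical argument that the "invariant" directions form a subspace and that a $2$-valued homogeneous function invariant under an $(m-3)$-plane reduces to a genuinely $2$-valued harmonic function on $\R^3$ with isolated singularity at $0$) pushes this to $m-4$ — this is the analogue of the codimension-$4$ bound for $\Dir$-minimizers and for area-minimizing currents mod $2$, and it is sharp.

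\medskip

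\noindent\textbf{Step 3: Federer dimension reduction.} Finally I would run Federer's dimension reduction on the class of stationary $2$-valued Lipschitz graphs with vanishing average: the class is closed under the blow-up and translation operations (by Step 2), the frequency controls compactness, and any homogeneous element is either regular or has its singular set in a proper subspace; Federer's lemma then yields $\dim \Sing \le m-4$ on the multiplicity-$2$ part and, combined with Step 1's nodal-set bound, $\dim \Sing \le m-1$ overall. The main obstacle I expect is Step 2: establishing the monotonicity of the frequency function \emph{without} a minimizing or stability hypothesis. In the minimizing case one uses that comparison competitors have larger energy; here one has only the Euler–Lagrange (stationarity) equations, so the differential inequality for $\log I(r)$ must be extracted purely from the first variation identities (domain and target variations) for the graph, controlling the error terms coming from the nonlinearity of the area functional and the Lipschitz (as opposed to small-slope) regime by the a priori Lipschitz bound. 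Making this quantitative — and in particular showing the frequency is finite and upper semicontinuous under blow-up — is the technical heart of the argument.
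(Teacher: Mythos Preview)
Your outline has the right high-level architecture (Almgren's scheme: linearization, frequency, blow-up, dimension reduction), but it misses the two technical obstacles that are the entire point of the paper, and Step~1 as written does not work.

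\textbf{Step 1 does not decouple.} You assert that $\eta\circ f$ solves a uniformly elliptic system and is analytic, so that the problem reduces to the average-free part $g$. For the \emph{area} functional the equations for $\eta\circ f$ and $g$ are genuinely coupled through the nonlinearity $\sqrt{|g(f_l)|}\,g^{ij}(f_l)$; there is no clean elliptic equation for $\eta\circ f$ alone, and it is not known to be analytic. This is exactly why the paper (following \cite{DLS_Center}) builds a \emph{center manifold}: a $C^{3,\kappa}$ surface that plays the role of a smooth average, constructed by a Whitney-type gluing of tilted harmonic approximations, together with a multivalued \emph{normal approximation} $N$ over it. Your ``split off the average and study $g$'' shortcut skips this entire construction, and nothing replaces it.

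\textbf{The real gap is in Step 2: blow-ups need not be functions.} You write that blow-ups are ``homogeneous $2$-valued $\Dir$-minimizing (or at least homogeneous stationary) functions''. In the \emph{minimizing} case strong $W^{1,2}$-compactness of the normalized sequence is a consequence of energy comparison; here there is no comparison, and a bounded-energy sequence of stationary (for inner and outer variations) $Q$-valued maps need \emph{not} converge strongly in $W^{1,2}$. The limit object is only an $\Iq$-\emph{generalized gradient Young measure} --- it carries an oscillation part and a possible concentration part $F^\infty$ --- and one must develop frequency monotonicity, homogeneity of blow-ups, cylindrical splitting, and unique continuation for this class of \emph{measure solutions} (Sections~\ref{ss:meas}--\ref{ss:meassol}). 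Your frequency argument, which treats the limit as a classical function and then runs Federer reduction on classical homogeneous blow-ups, does not go through: you have no mechanism to rule out energy concentration or to pass the stationarity to a pointwise object. Relatedly, the capacity argument at the end cannot use continuity of the limit (as in \cite{DLS_Blowup}); the paper replaces it with a Sobolev-level precise-representative argument.

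\textbf{You also miss where $Q=2$ enters.} The Almgren strong approximation (Lipschitz approximation with error $E^{1+\gamma}$) requires a higher-integrability estimate for $|Df|^2$. In the minimizing case this comes from a competitor construction; here it is obtained by a Gehring-type argument (Theorem~\ref{thm:high}) whose key step is a Poincar\'e inequality at collapsed points, proved via the monotonicity formula and the fact that when there is no $2$-point the map splits into two \emph{single-valued} minimal graphs. This is the only place $Q=2$ is used, and without it none of the approximation theory (hence none of the center manifold estimates) is available. Your proposal does not mention higher integrability at all.

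Finally, your reasoning for the $m-4$ bound conflates it with the $m-2$ bound for branch sets of $\Dir$-minimizers; these are different statements with different mechanisms.
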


In codimension $1$ and under the additional assumption of stability of the regular part, works of Schoen-Simon, Wicramasekera, Minter and Minter-Wickramasekera, provide beautiful partial results \cite{ScSi,Wic0,Wic,MiWi,Min1,Min2}.

When the varifold is associated to an area minimizing current, then a celebrated result of Almgren \cite{Alm}, later revisited by De Lellis-Spadaro \cite{DS,DLS_Currents,DS1,DLS_Center,DLS_Blowup}, shows that the optimal dimension of the singular set is $(m-2)$. Recently De Lellis-Minter-Skorobogatova and Krummel-Wickramasekera \cite{DSk1,DSk2,DSk3,KWw1,KWw2}, proved that in fact such singular set is $(m-2)$ rectifiable. When the varifold is associated to an area minimizing current mod $p$, then work of De Lellis-Hirsch-Marchese-Stuvard \cite{DHMS} shows that the optimal dimension of the singular set is $(m-1)$, with a finer description achieved in codimension $1$ in work of De Lellis-Hirsch-Marchese-Spolaor-Stuvard \cite{DHMSS1,DHMSS2,DHMSS3}
combined with a result of Minter-Wickramasekera \cite{MiWi}. Our situation is somewhat more similar to this case, at least in the fact that for stationary varifolds the singular set can be of dimension $(m-1)$ and branch points can occur, however the minimizing assumption is used crucially in these works, while it's missing in the this note.   

For $C^{1,\alpha}$ multivalued maps, works of Simon and Wickramasekera \cite{SiWi} and Krummel and Wickramasekera \cite{KW1,KW2} investigate the size and the structure of the branching set. 

As a corollary to our paper and deep work of Minter \cite{Min1}, we can give a dimensional bound on the singular set of stationary stable varifolds  in codimension one, in the multiplicity $2$ class and with no triple junctions.

\begin{corollary}[Multiplicity $2$ branching set for codimension $1$ stationary stable varifolds with no triple junctions]
    Let $V\in \mathcal S_2$ as in \cite[Theorem B]{Min1}. Then $\dim(\mathcal B\cup \mathcal T\cup \mathcal C )\leq n-1$.
\end{corollary}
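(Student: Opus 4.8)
The plan is to deduce this directly from Theorem~\ref{thm:main} together with the structural results of \cite{Min1}. Recall that for $V\in\mathcal S_2$ the singular set splits as $\sing V=\mathcal C\cup(\mathcal B\cup\mathcal T)$, where $\mathcal C$ is the set of \emph{classical singularities}, i.e.\ points at which some tangent cone is a non-planar sum of multiplicity one half-hyperplanes, and $\mathcal B\cup\mathcal T$ is the set of points of $\spt V$ at which some tangent cone is a hyperplane of multiplicity $2$ (the branch points and the touching singularities). Since triple junctions are excluded, the only classical cones are pairs of transverse multiplicity one hyperplanes, so by the classical regularity theory (see \cite{Min1} and the references therein) $\mathcal C$ is locally an $(n-1)$-dimensional $C^{1,\alpha}$ submanifold; in particular $\dim\mathcal C\le n-1$, and it remains to bound $\mathcal B\cup\mathcal T$.

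For this, I would fix $X_0\in\mathcal B\cup\mathcal T$ and, after rotating coordinates so that the multiplicity $2$ tangent plane at $X_0$ is $\R^n\times\{0\}$, invoke \cite[Theorem B]{Min1} to obtain a ball $B\subset\R^n$, a number $r>0$ and a Lipschitz map $f\colon B\to\I2(\R)$ with $V\res\big(B\times(-r,r)\big)=\bG_f$; here the stationarity of $V$ is what rules out a multiplicity one sheet terminating inside the cylinder, so that $V$ is genuinely a $2$-valued graph there and $\bG_f$ is a stationary varifold to which Theorem~\ref{thm:main} applies. The next step is to check that, under this identification, a point $(y,f(y))$ with $y\in B$ lies in $\mathcal B\cup\mathcal T$ if and only if $y\in\Sing(f)$: if $y$ is a regular point of $f$ then near $y$ either $\spt V$ is a pair of disjoint analytic minimal graphs (a smooth multiplicity one point of $V$) or the two graphs coincide (a smooth multiplicity two point of $V$), so $(y,f(y))\notin\mathcal B\cup\mathcal T$; conversely the touching singularities are exactly the points where two analytic sheets meet with $f_1(y)=f_2(y)$ but $f_1\not\equiv f_2$, and the branch points are exactly those admitting no analytic decomposition at all. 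Since $y\mapsto(y,f(y))$ is bi-Lipschitz, Theorem~\ref{thm:main} then yields $\dim\big((\mathcal B\cup\mathcal T)\cap(B\times(-r,r))\big)=\dim(\Sing(f))\le n-1$.

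Finally I would cover $\mathcal B\cup\mathcal T$ by countably many such cylinders, using the second countability of $\R^{n+1}$; since the Hausdorff dimension is stable under countable unions this gives $\dim(\mathcal B\cup\mathcal T)\le n-1$, and combining with $\dim\mathcal C\le n-1$ proves the claim. I expect the main obstacle to be not the dimension count — which is immediate from Theorem~\ref{thm:main} once the reduction is in place — but rather the careful invocation of \cite[Theorem B]{Min1}: one must make sure it really furnishes a \emph{single} $2$-valued Lipschitz graph, representing $V$ with the correct multiplicities and with no lower-multiplicity leftover, on a full cylindrical neighbourhood of every point of $\mathcal B\cup\mathcal T$, and that Minter's branch/touching classification matches the notion of singular point introduced above. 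Both should be essentially contained in \cite{Min1}.
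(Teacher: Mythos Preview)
Your proposal is correct and matches the paper's intended argument: the corollary is stated without proof, simply as a consequence ``of our paper and deep work of Minter \cite{Min1}'', i.e.\ exactly the reduction you carry out---use \cite[Theorem~B]{Min1} to write $V$ locally near $\mathcal B\cup\mathcal T$ as a stationary $2$-valued Lipschitz graph and then apply Theorem~\ref{thm:main}, handling $\mathcal C$ by the classical theory. Your identification of $\Sing(f)$ with $\mathcal B\cup\mathcal T$ and the countable covering are the right way to make this precise.
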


\subsection{Strategy and main new contributions}
 By standard arguments using monotonicity formula and dimension reduction, it is enough to understand the size of the branching set, that is the collection of points where at least one blow-up is a plane with multiplicity. To understand such set we use Almgren's approach \cite{Alm} in the revisited form of De Lellis and Spadaro \cite{DS1,DLS_Center,DLS_Currents}. In order to do that, except for minor technicalities, the main difficulties are: the construction of a small Lipschitz approximation to $\bG_f$ with errors that are superlinear in the excess, the development of a suitable linearization theory for stationary graphs and of  unique continuation and regularity theories for multivalued maps that arise through such linearization (in particular which are stationary, but not necessarily minimizing for the Dirichlet energy), and a suitable capacity argument to reach a contradiction at the linearized level.

To overcome these difficulties, the main new ingredients of our proof with respect to Almgren's approach are a
a higher integrability estimate for the Dirichlet energy of $f$, that allows to prove the existence of a superlinear small Lipschitz approximation (see Theorem \ref{thm:high}) and of a strong Dir-stationary approximation (see Corollary \ref{cor:harmapprox}) to $\bG_f$. This is where we use crucially the $Q=2$ assumption. Moreover, in order to have good compactness properties for stationary sequences, we introduce the notion of $\Iq$-generalized gradient Young measures and we study their regularity and unique continuation type properties under various assumptions of stationariety: this seems to be the correct linear problem in the stationary setting. Finally we revisit the capacity argument of \cite{DLS_Blowup}, replacing it with a weaker, but more general, argument that doesn't require any stronger regularity than Sobolev. This is needed, since we cannot guarantee that our final blow-up sequence converges to a strong solution, but only to a measure solution, as the higher integrability statement is not preserved when subtracting averages from multivalued functions.

\subsection{About our assumptions} We wish to make some remark on the three main assumptions of Theorem \ref{thm:main}: graphicality, multiplicity $2$ and stationariety of $\bG_f$.

\begin{remark}[Graphicality and multiplicity $2$] The multiplicity two  and graphicality are used only in the proof of the higher integrability Theorem \ref{thm:high}. In particular, we use crucially the fact that in the regions where there are no singular points of maximal densities, the multivalued graph splits into two single valued graphs.    
    
\end{remark}

\begin{remark}[On the stationariety of multivalued graphs]
Given a Lipschitz function $f\colon \Omega \to \Iqn$, we  consider the following quantities:
\[
g_{ij}(f):=\delta_{ij}+\partial_if\cdot \partial_jf\,,
\qquad g^{ij}(f):=(g_{ij}(f))^{-1}
\qquad \text{and}\qquad 
|g(f)|=\det(g_{ij}(f))\,,
\] 
and we define the
\emph{outer variation formula for area}
\begin{align}\label{eq:outer}
    \mathcal O^A(f, \psi):=&\int \sum_{l=1}^Q \sqrt{|g(f_l)|}\,g^{ij}(f_l)\, \partial_if_l^\alpha(x)\, \partial_j( \psi^\alpha(x,f_l(x))) \,dx
\end{align}
where $\psi\in C^\infty(\Omega_x\times \R^n_u;\R^n)$ with $\Omega_x$ compact and 
\[
|D_u\psi|\leq C<\infty\qquad \text{and}\qquad |\psi|+|D_x\psi|\leq C\, (1+|u|)\,,
\]
and the \emph{inner variation formula for area}
\begin{equation}\label{eq:inner}
    \mathcal I^A(f, \phi):= \int \left(\sum_{l=1}^Q \sqrt{|g(f_l)|} g^{ij}(f_l)\right)\, \partial_i\phi^j \,dx\,,\qquad \forall \phi \in C^\infty_c(\Omega, \R^m)\,,
\end{equation}
and we will say that $f$  is \emph{stationary with respect to the area functional} if 
    \[
\mathcal O^A(f,\psi)=0 \qquad \text{and}\qquad \mathcal I^A(f,\varphi)=0\,,
\]
for all admissible $\psi\in C^\infty(\Omega_x\times \R^n_u;\R^n)$, with $\Omega_x$ compact, and all $\varphi \in C^\infty_c(\Omega, \R^m)$.
Here and in the sequel we use Einstein's convention, meaning repeating indexes are implicitly summed. 
Clearly if $\bG_f$ is stationary, then $f$ is stationary with respect to area. If $Q=1$, then the converse is also true. If $Q=2$ and the codimension $n=1$ or the dimension $m=2$, then the two notions are in fact equivalent, as we show in Appendix \ref{app:statio}. If a weaker version of the Lawson and Osserman's conjecture was known to be true in every dimension, then for $Q=2$ the two notions would be always equivalent.

We remark that stationariety of the graph is needed to prove the monotonicity formula of the mass ratio which we use both for the usual stratification and in the proof of the higher integrability Theorem \ref{thm:high}. We do not know whether a function which is stationary for the area satisfies such monotonicity.
\end{remark}

\subsection{Organization of the paper} In Section \ref{ss:meas} we introduce the notion of $\Iq$-generalized gradient Young measures and in Section \ref{ss:meassol} we study the regularity and compactness properties of such measures under various stationariety assumptions. In Section \ref{ss:almapp} we prove the key higher integrability estimate and use it to obtain the results of \cite{DS1}. In Section \ref{ss:CmandNa} we point out the modification needed for the results of \cite{DLS_Center} to hold. Finally in Section \ref{ss:BU}, we modify the blow-up argument and the capacity argument of \cite{DLS_Blowup} to conclude the proof.

\subsection{Some notations} We will follow the notations of \cite{DS1,DLS_Currents,DLS_Center,DLS_Blowup,DS}, and we will recall some of it when needed. In particular, given $f\colon \Omega\to \Iq(\R^n)$ and $g\colon \Omega \to \R^n$, we will use the notations
\[
\eta\circ f(x):=\frac1Q\sum_{l=1}^Qf_l(x)\,,\qquad f\ominus g(x) =\sum_{l=1}^Q\a{f_l(x)-g(x)}\qquad \text{and}\qquad \overset{\circ}{f}(x):=f(x)\ominus Q(\eta\circ f(x))\,. 
\]

\subsection{Acknowledgments}  The second author is grateful for the support of the NSF Career Grant DMS-2044954.

\section{$\Iq$ generalized gradient Young measures}\label{ss:meas}

We are going to denote with 
\[
\mathbb V:=\underbrace{\R^n}_{=:\mathbb V_1}\times \underbrace{\R^{n\times m}}_{=:\mathbb V_2}\times \underbrace{\R^{(n\times m)^2}}_{=:\mathbb V_3}\,,
\]
with $\pi_i\colon \mathbb V\to \mathbb V_i$, $i=1,2,3$, the corresponding orthogonal projections, and with $v=(y,p,M)\in \mathbb V_1\times\mathbb V_2\times\mathbb V_3$ the corresponding variables. Moreover on the space of zero dimensional normal currents $\mathcal N^0(\mathbb V)$, we define the mass
\[
\mass^w(T)=\sup\left\{ T(\phi)\,:\,\phi\in C^\infty_c(\mathbb V)\,,\,\sup_{(y,p,M)\in \mathbb V}\frac{|\phi(y,p,M)|}{1+|y|^{2^*}+|p|^2+|M|}\leq1 \right\}\,,
\]
where $p^*$ denotes the Sobolev conjugate of $p$.

\begin{definition}[$\Iq$ generalized Young measures]\label{def:young} A couple $\mathcal F:=(F(x)\otimes dx,F^{\infty})$ is called \emph{$\Iq$ generalized Young measure} if it satisfies the following conditions:
\begin{enumerate}
    \item the map $F\colon \Omega \to \mathcal N^0(\mathbb V)$ is Lebesgue measurable;
    \item $F^\infty\in \mathcal N^0(\Omega \times \mathbb{S}_{\mathbb V_3})$, where $\mathbb{S}_{\mathbb V_3}:=\partial B_1^{\mathbb V_3}$ is the unit sphere in $\mathbb V_3$;
    \item $F(x)(\chi_{\mathbb V})=\mass(F(x))=Q$ for a.e. $x\in \Omega$, where $\chi_A$ denotes the characteristic function of the set $A$;
    \item $\int_\Omega\mass^w(F(x))\,dx+\mass(F^\infty)<\infty$.
\end{enumerate}
We will denote the space of such measures by $\mathcal Y^Q(\Omega, \mathbb V)$, or simply $\mathcal Y^Q$ when clear from the context. Moreover we shall denote by 
\[ 
\|\mathcal F\|_{\mathcal Y^Q}:=\int_\Omega\mass^w(F(x))\,dx+\mass(F^\infty)\,.
\]
Next we define the \emph{space of test functions $\mathcal{C}(\Omega\times \mathbb{V})$} as the set of functions $\phi \in C^0(\Omega\times \mathbb{V})$ such that $\spt(\phi) \in K \times \mathbb{V}$ for some $K \Subset \Omega$, and such that there is a function $\phi^\infty\in C^0(\Omega\times \mathbb {V}_3)$ which is $1$-homogeneous in $M$ and satisfies
\[
\lim_{|(y,p,M)|\to \infty}\frac{|\phi(x,y,p,M)-\phi^\infty(x,M)|}{1+|y|^{2^*}+|p|^2+|M|}=0 \qquad \forall x\in \Omega \,.
\]
We will call the function $\phi^\infty$ the \emph{recession function} of $\phi$.
An element $\mathcal F\in \mathcal Y^Q$ acts on $\mathcal{C}(\Omega\times \mathbb{V})$ by 
\[
\mathcal F(\phi):=( F\otimes dx)(\phi)+ F^\infty(\phi^\infty)=\int_\Omega F(x)(\phi)\,dx+ F^\infty(\phi^\infty)\,.
\]
Finally we will write $\mathcal F_k \rightharpoonup \mathcal F$ if $\lim_{k\to \infty} \mathcal F(\phi)=\mathcal F(\phi)$ for every $\phi \in \mathcal C(\Omega\times \mathbb V)$.
% for any $\phi$ such that there is a $1$-homogeneous function $\phi^\infty(x,M)$    is a  satisfying
% \[
% \lim_{|(y,p,M)|\to \infty}\frac{|\phi(x,y,p,M)-\phi^\infty(x,M)|}{1+|y|^{2*}+|p|^2+|M|}=0 \quad \forall x 
% \]
\end{definition}

In the definition above we can interpret $F^\infty$ as the concentration measure, while $F(x)$ is the oscillation measure. Moreover, analogously to usual Young measures, we have the following closure theorem.

\begin{proposition}[$\mathcal Y^Q$ is weakly closed]\label{le:weakclosed}
    Let $\mathcal F_k:=(F_k\otimes dx, F_k^\infty)\in \mathcal Y^Q$ be a sequence of $\Iq$ generalized Young measures such that
    \begin{gather}
        \|\mathcal F_k\|_{\mathcal Y^Q}\leq C<\infty\qquad \text{for all $k\in \N$}\,,
    \end{gather}
    then there exists $\mathcal F:=(F\otimes dx, F^\infty)\in \mathcal Y^Q(\Omega)$ such that the followings hold
    \begin{gather}
        \|\mathcal F\|_{\mathcal Y^Q} \leq  \liminf_{k\to \infty} \|\mathcal F_k\|_{\mathcal Y^Q}\,, \label{eq:lscYoung}\\
        \lim_{k\to \infty} \mathcal F_k(\phi)=\mathcal F(\phi)\qquad \forall \phi\in \mathcal{C}(\Omega\times \mathbb{V})\,.\label{eq:cYoung}
    \end{gather}
\end{proposition}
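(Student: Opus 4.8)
The plan is to establish this as a weak-$*$ compactness statement, via the standard strategy for generalized Young measures; the only genuinely new feature is the anisotropic weight $w(v):=1+|y|^{2^*}+|p|^2+|M|$, which is super-linear in $y$ and $p$ but exactly linear in $M$, and this dictates how concentration is recorded in the limit.

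First I would reformulate. On $\mathcal C(\Omega\times\mathbb V)$ put the norm $\|\phi\|:=\sup_{\Omega\times\mathbb V}|\phi|/w+\sup_{\Omega\times\mathbb{S}_{\mathbb V_3}}|\phi^\infty|$; by H\"older's inequality on each slice $\{x\}\times\mathbb V$ and the definitions, every $\mathcal F\in\mathcal Y^Q$ is a linear functional on $\mathcal C$ with $|\mathcal F(\phi)|\le\|\phi\|\,\|\mathcal F\|_{\mathcal Y^Q}$, and $\mathcal F_k$ is represented by the finite measure $\Lambda_k:=w\,(F_k\otimes dx)+F_k^\infty$ on $\overline\Omega\times\widehat{\mathbb V}$, where $\widehat{\mathbb V}$ is a suitable metrizable compactification of $\mathbb V$, via $\mathcal F_k(\phi)=\int\tilde\phi\,d\Lambda_k$ with $\tilde\phi:=\phi/w$ extended continuously to the boundary; here $\mass(\Lambda_k)=\|\mathcal F_k\|_{\mathcal Y^Q}\le C$. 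Since $\overline\Omega\times\widehat{\mathbb V}$ is compact metric, $\mathcal C$ embeds in the separable space $C(\overline\Omega\times\widehat{\mathbb V})$, so by Banach--Alaoglu I may pass to a subsequence (not relabeled) along which $\Lambda_k\rightharpoonup\Lambda$ weak-$*$, equivalently $\mathcal F_k(\phi)\to L(\phi)$ for every $\phi\in\mathcal C$; it then suffices to identify $L$ with $\mathcal F(\cdot)$ for some $\mathcal F\in\mathcal Y^Q$, which gives \eqref{eq:cYoung}.

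Next I would identify the limit. I split $\Lambda=\Lambda^{\mathrm{osc}}+F^\infty$, with $F^\infty$ the part of $\Lambda$ carried by $\Omega\times\mathbb{S}_{\mathbb V_3}$ and $\Lambda^{\mathrm{osc}}\ge0$ the weak-$*$ limit of the positive oscillation parts $w\,(F_k\otimes dx)$, and set $F(x)\otimes dx:=w^{-1}\big(\Lambda^{\mathrm{osc}}\mres(\Omega\times\mathbb V)\big)$ disintegrated over $\Omega$. Testing with $\phi(x,v)=\zeta(x)$, $\zeta\in C_c(\Omega)$ (so $\phi^\infty\equiv0$), gives $\mathcal F_k(\zeta)=Q\int_\Omega\zeta$, hence in the limit the $\Omega$-marginal of $w^{-1}(\Lambda^{\mathrm{osc}}\mres(\Omega\times\mathbb V))$ equals $Q\,\mathcal L^m\mres\Omega$; being absolutely continuous, the disintegration theorem yields a Lebesgue measurable family $x\mapsto F(x)\in\mathcal N^0(\mathbb V)$ with $F(x)(\chi_{\mathbb V})=Q$ a.e., so (1), (2) hold. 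Positivity $F(x)\ge0$ a.e.\ (hence $\mass(F(x))=F(x)(\chi_{\mathbb V})=Q$ a.e., which is (3)) follows by passing to the limit in $\mathcal F_k(\zeta\chi)\ge0$ for $0\le\zeta\in C_c(\Omega)$ and $0\le\chi\in C_c(\mathbb V)$; along the way I would use the $k$-uniform tightness $\int_\Omega F_k(x)(\{w>cR\})\,dx\le C/R$ (from $\int_\Omega\mass^w(F_k(x))\,dx\le C$ and $\inf_{|v|>R}w\gtrsim R$) to ensure that no oscillation mass is lost to the boundary of $\widehat{\mathbb V}$ other than as genuine $M$-concentration carried by $\Omega\times\mathbb{S}_{\mathbb V_3}$, so that the splitting of $\Lambda$ and the identification $L=\mathcal F(\cdot)$ are consistent. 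Finally, \eqref{eq:lscYoung} and finiteness (4) follow from the duality formula $\|\mathcal F\|_{\mathcal Y^Q}=\sup\{\mathcal F(\phi):\phi\in\mathcal C,\ |\phi|\le w,\ |\phi^\infty|\le1\}$ (proved using $F(x)\ge0$ and the fact that any $g\in C(\Omega\times\mathbb{S}_{\mathbb V_3})$ with $|g|\le1$ is realized as $\phi^\infty$ for some $\phi\in\mathcal C$ with $|\phi|\le w$), since for every such $\phi$ one has $\mathcal F(\phi)=\lim_k\mathcal F_k(\phi)\le\liminf_k\|\mathcal F_k\|_{\mathcal Y^Q}$, once the subsequence is chosen from the outset to realize that $\liminf$.

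I expect the main obstacle to be the bookkeeping at infinity dictated by the anisotropic weight: one has to set up $\widehat{\mathbb V}$ and analyze $\Lambda$ so that concentration in the $y$- and $p$-directions is genuinely invisible to all test functions (because $w$ is super-linear there, so $\tilde\phi=\phi/w\to0$ along those directions), while concentration in $M$ is retained only through its normalized direction on $\mathbb{S}_{\mathbb V_3}$ (because $w$ is exactly linear there and $\phi^\infty$ is $1$-homogeneous in $M$), and then verify that $\Lambda$ splits as above with the remaining boundary mass attributed correctly between $F(x)\,dx$ and $F^\infty$ and nothing needed for (3) lost. Granting this, the remaining points are routine.
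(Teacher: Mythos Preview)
Your approach is correct and is essentially the standard generalized Young measure framework (weight, compactify, Banach--Alaoglu, disintegrate), and the paper's proof is the same in spirit but more concrete in its handling of the anisotropic weight. Rather than building an abstract compactification $\widehat{\mathbb V}$ of all of $\mathbb V$, the paper compactifies \emph{only in the $M$-direction} via the explicit map $\eta(M)=M/\sqrt{1+|M|^2}$, pushing $\sqrt{1+|M|^2}\,(F_k\otimes dx)$ forward to a bounded sequence of normal currents on $\Omega\times\overline{B_1^{\mathbb V_3}}$; the restriction of the limit to the boundary sphere $\mathbb S_{\mathbb V_3}$ then gives exactly the new concentration part that is added to the weak limit of $F_k^\infty$ to form $F^\infty$. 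The $(y,p)$-variables are handled separately by a tightness step (their ``Step~1''): the super-linearity of $w$ in $y,p$ ensures that any $\phi$ decaying relative to $w$ passes to the limit without compactification, and the test $\phi(x,v)=\psi(x)$ yields the product structure $\hat F=F\otimes dx$ by disintegration, just as in your argument. This sidesteps precisely the bookkeeping you flag at the end: by compactifying only where the weight is linear, no auxiliary boundary strata for $(y,p)$-escape are needed and the splitting of $\Lambda$ into $F\otimes dx$ and $F^\infty$ is immediate. Your abstract route is closer to the DiPerna--Majda/Kristensen--Rindler literature and would transfer unchanged to other weights; the paper's route is shorter for this particular $w$ at the cost of the ad hoc change of variables.
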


In the sequel we will be interested in generalized Young measures which arise as limits of sequences of $W^{1,2}$ functions, so we give the following:

\begin{definition}[Elementary measures] Let $f\in W^{1,2}(\Omega, \Iq(\R^n))$, then we define the \emph{elementary Young measure $\mathcal E_f$} associated to $f$ as the $\Iq$ generalized Young measure $\mathcal E_f=(E_f\otimes dx,0)$ where
\[
E_f(x)(\phi):=\sum_{l=1}^Q \phi(x,f_l(x),D f_l(x),Df_l(x)\otimes Df_l(x))\qquad \forall \phi \in \mathcal{C}(\Omega\times \mathbb{V})\,.
\]
\end{definition}

It is easy to check that
\[
\mass(E_f(x))=Q \qquad \text{and}\qquad \mass^w(E_f(x))\leq \sum_{l=1}^Q\left(|f_l(x)|^2+2|Df_l(x)|^2\right)\,,
\]
so that in particular
\begin{equation}\label{eq:elYoungbd}
\mathcal E_f\in \mathcal Y^Q
    \qquad \text{and}\qquad 
    \|f\|_{W^{1,2}(\Omega, \Iq(\R^n))}\leq \|\mathcal E_f\|_{\mathcal Y^Q}\leq 2 \, \|f\|_{W^{1,2}(\Omega, \Iq(\R^n))}\,.
\end{equation}

Finally we come to the definition of the class of objects we will be mostly interested in, that is the limits of elementary Young measures. In analogy with the classical theory we define them as follows:

\begin{definition}[$\Iq$ generalized gradient Young measures]\label{de:Aqgradmeasures}
    We say that $\mathcal F\in \mathcal Y^Q$ is a \emph{$\Iq$ generalized gradient Young measure}, and write $\mathcal F\in \operatorname{grad}\mathcal Y^Q$, if there exists a sequence of elementary Young measures $\mathcal E_{f_k}$ such that
    \[
    \lim_{k\to \infty}\mathcal E_{f_k}(\phi)=\mathcal F(\phi)\qquad \forall \phi \in \mathcal{C}(\Omega\times \mathbb{V})\,.
    \]
\end{definition}

The following are the main properties of generalized gradient Young measures.

\begin{proposition}[Elementary properties of gradient measures]\label{p:elementarylimit}
    Let $\mathcal F=(F(x)\otimes dx, F^\infty)\in \operatorname{grad}\mathcal Y^Q$, then there exists $f\in W^{1,2}(\Omega, \Iq(\R^n))$ and a family of probability measures 
    \[
    \{\nu_{x,y} \}_{(x,y)\in \Omega \times \mathbb V^1}\in \mathcal P(\mathbb V^2\times\mathbb V^3)
    \]
    such that
    \begin{gather}\label{eq.gradientmeasure01}
        F(x)(\phi)=\int \sum_{l=1}^Q\phi(f_l(x), p, M) \, d\nu_{x, f_l(x)}\,,\\
        \label{eq.gradientmeasure02}
        \int_{\mathbb{V}_2\times \mathbb{V}_3} p_i^\alpha \,d\nu_{x,f_l(x)}=\partial_if^\alpha_l(x)\qquad \forall l=1,\dots,Q\,,\text{ a.e. }x\in \Omega\,,\\\label{eq.gradientmeasure03}
        \int_{\mathbb{V}_2\times \mathbb{V}_3} M^{\alpha\beta}_{ij}\,d\nu_{x,f_l(x)} = \int_{\mathbb{V}_2\times \mathbb{V}_3} p^\alpha_ip_j^\beta\,d\nu_{x,f_l(x)} \geq \partial_if_l^\alpha(x) \,\partial_j f_l^\beta(x)\qquad\forall l=1,\dots,Q\,,\text{ a.e. }x\in \Omega\,,\\\label{eq.gradientmeasure04}
        \spt(F^\infty) \subset \Omega \times \left(\mathbb{S}_{\mathbb{V}^3} \cap \{ M \ge 0\}\right)\,,
        \end{gather}
        where the inequalities above should be understood in the sense of matrices. 
\end{proposition}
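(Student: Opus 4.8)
The plan is to produce $f$ and the family $\{\nu_{x,y}\}$ by compactness and disintegration, and then to verify \eqref{eq.gradientmeasure01}--\eqref{eq.gradientmeasure04} by pairing $\mathcal F$ with carefully chosen elements of $\mathcal C(\Omega\times\mathbb V)$. By the uniform boundedness principle we may assume $\sup_k\|\mathcal E_{f_k}\|_{\mathcal Y^Q}<\infty$, so by \eqref{eq:elYoungbd} the maps $f_k$ are bounded in $W^{1,2}(\Omega,\Iq(\R^n))$; by the compactness theorem for multivalued Sobolev maps (\cite{DS}) I would pass to a subsequence with $f_k\to f$ in $L^2_{loc}$ and a.e., $f\in W^{1,2}$, using lower semicontinuity of the Dirichlet energy. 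For \eqref{eq.gradientmeasure01}, test against $\phi(x,y,p,M)=\psi(x)\chi(y)$ with $\psi\in C_c(\Omega)$, $\chi\in C_b(\R^n)$ (so $\phi^\infty\equiv 0$): dominated convergence gives $\mathcal F(\phi)=\int_\Omega\psi\sum_l\chi(f_l)\,dx$, whence the $\mathbb V_1$-marginal of $F(x)$ equals $\sum_l\a{f_l(x)}$ for a.e.\ $x$ (here $F(x)\ge 0$, which follows from $\mass(F(x))=F(x)(\chi_{\mathbb V})=Q$); a measurable disintegration of $F(x)$ along $\pi_1$ then produces probability measures $\nu_{x,y}$ on $\mathbb V_2\times\mathbb V_3$ with $F(x)=\sum_l\delta_{f_l(x)}\otimes\nu_{x,f_l(x)}$ (coinciding sheets automatically sharing $\nu_{x,y}$), which is \eqref{eq.gradientmeasure01}; absolute convergence of the pairing against $\phi\in\mathcal C(\Omega\times\mathbb V)$ follows from $\int_\Omega\mass^w(F(x))\,dx<\infty$. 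Testing instead against bounded $\phi$ that vanish near the paraboloid $\{M=p\otimes p\}$, resp.\ near the cone $\{M\ge 0\}$ — on which every $\mathcal E_{f_k}$ is concentrated — gives $\spt\nu_{x,y}\subset\{M=p\otimes p\}$ and $\spt F(x)\subset\{M\ge 0\}$.

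The crux is \eqref{eq.gradientmeasure02}. Here I would fix $H\in C^1(\R^n)$ with bounded gradient and pair $\mathcal F$ with $\phi(x,y,p,M)=\psi(x)\,\partial_\alpha H(y)\,p^\alpha_i$, which lies in $\mathcal C(\Omega\times\mathbb V)$ with $\phi^\infty\equiv 0$ (linear growth in $p$). On the one hand \eqref{eq.gradientmeasure01} gives $\mathcal F(\phi)=\int_\Omega\psi\sum_l\partial_\alpha H(f_l)\big(\int p^\alpha_i\,d\nu_{x,f_l(x)}\big)\,dx$. On the other hand the chain rule gives $\mathcal E_{f_k}(\phi)=\int_\Omega\psi\,\partial_i\big(\sum_l H((f_k)_l)\big)=Q\int_\Omega\psi\,\partial_i\big(\eta\circ(H_\sharp\circ f_k)\big)$, where $H_\sharp\circ f_k:=\sum_l\a{H((f_k)_l)}\in W^{1,2}(\Omega,\Iq(\R))$; since $H$ has bounded gradient, $\eta\circ(H_\sharp\circ f_k)$ is bounded in $W^{1,2}_{loc}$ and converges in $L^2_{loc}$ to $\eta\circ(H_\sharp\circ f)$, so its gradient converges weakly in $L^2_{loc}$ and $\mathcal F(\phi)=Q\int_\Omega\psi\,\partial_i(\eta\circ(H_\sharp\circ f))=\int_\Omega\psi\sum_l\partial_\alpha H(f_l)\,\partial_i f^\alpha_l$. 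Comparing the two formulas, letting $\psi$ vary, and then choosing $H$ so that $DH$ is prescribed independently at the finitely many distinct values among $f_1(x),\dots,f_Q(x)$, one obtains that for a.e.\ $x$ and every value $a$ the barycenter $\int p^\alpha_i\,d\nu_{x,a}$ equals the average of $\partial_i f^\alpha_l(x)$ over the sheets $l$ with $f_l(x)=a$. To pass from this average to the pointwise identity \eqref{eq.gradientmeasure02} I would invoke the structural fact that for a.e.\ $x$ the sheets of a $W^{1,2}$ multivalued map with the same value have the same approximate differential (equivalently, the first-order splitting of $\boldsymbol{\xi}\circ f$ is trivial at such points, since $\boldsymbol{\xi}\circ f$ is approximately differentiable a.e.): this, together with the weak continuity $\eta\circ(H_\sharp\circ f_k)\weak\eta\circ(H_\sharp\circ f)$, is the delicate input of the whole proof and the place where one must make sure that the $\nu_{x,y}$ genuinely record each sheet rather than only their average.

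The remaining two properties are then routine. For \eqref{eq.gradientmeasure03}: $\nu_{x,f_l(x)}$ is carried by $\{M=p\otimes p\}$ by the previous step, so $\int M^{\alpha\beta}_{ij}\,d\nu_{x,f_l(x)}=\int p^\alpha_i p^\beta_j\,d\nu_{x,f_l(x)}$ (both finite since $\int(|p|^2+|M|)\,d\nu_{x,f_l(x)}<\infty$ a.e.), and by \eqref{eq.gradientmeasure02} the matrix $\int p^\alpha_i p^\beta_j\,d\nu_{x,f_l(x)}-\partial_i f^\alpha_l\,\partial_j f^\beta_l$ equals $\int p^\alpha_i p^\beta_j\,d\nu-(\int p^\alpha_i\,d\nu)(\int p^\beta_j\,d\nu)$, the covariance of $\nu_{x,f_l(x)}$, hence positive semidefinite. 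For \eqref{eq.gradientmeasure04} I would first prove $F^\infty\ge 0$: given $0\le\psi\in C_c(\Omega)$ and $0\le h\in C^0(\mathbb S_{\mathbb V_3})$ with $1$-homogeneous extension $\tilde h\ge 0$, the functions $\phi_R(x,y,p,M):=\psi(x)\,\tilde h(M)\,\frac{(|M|-R)^+}{|M|}$ lie in $\mathcal C(\Omega\times\mathbb V)$ with recession $\phi_R^\infty(x,M)=\psi(x)\,h(M/|M|)$ \emph{independent of} $R$, so $F^\infty(\psi\otimes h)=\mathcal F(\phi_R)-(F\otimes dx)(\phi_R)$ for every $R$; since $\mathcal E_{f_k}(\phi_R)\ge 0$ (the matrices $Df_l\otimes Df_l$ being positive semidefinite) we have $\mathcal F(\phi_R)\ge 0$, while $(F\otimes dx)(\phi_R)\to 0$ as $R\to\infty$ because $\int_\Omega\int_{\mathbb V}|M|\,dF(x)\,dx<\infty$; hence $F^\infty(\psi\otimes h)\ge 0$, and varying $\psi,h$ yields $F^\infty\ge 0$. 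Choosing now $h$ supported in $\mathbb S_{\mathbb V_3}\setminus\{M\ge 0\}$ makes the integrand defining $\mathcal E_{f_k}(\phi_R)$ vanish identically, so $F^\infty(\psi\otimes h)=0$, and by positivity this forces $\spt F^\infty\subset\Omega\times(\mathbb S_{\mathbb V_3}\cap\{M\ge 0\})$, which is \eqref{eq.gradientmeasure04}. Thus the only genuinely hard part is the end of the second paragraph; everything else is disintegration, Jensen's inequality, and the recession calculus built into Definition \ref{def:young}.
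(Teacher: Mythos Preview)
Your argument is correct, and for \eqref{eq.gradientmeasure01}, \eqref{eq.gradientmeasure03}, and \eqref{eq.gradientmeasure04} it runs essentially parallel to the paper's proof (your observation that $\spt\nu_{x,y}\subset\{M=p\otimes p\}$ is in fact a slightly cleaner way to obtain the equality in \eqref{eq.gradientmeasure03} than the paper's cutoff computation with $\chi(M/R)$, though the content is the same).

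The genuine difference is in \eqref{eq.gradientmeasure02}. The paper proceeds by induction on $Q$: it first proves a locality statement (if two generating sequences agree on large sets, the limiting $F(x)$ agree a.e.\ there), then handles the collapsed set $\{f=Q\a{\eta\circ f}\}$ directly via weak convergence of $\eta\circ f_k$, and finally uses Lipschitz retractions $\chi_i\colon\Iqs\to\A_{Q_i}$ near any noncollapsed $Q$-point to split the problem into lower-multiplicity pieces. Your route avoids the induction entirely: testing with $\psi(x)\,\partial_\alpha H(y)\,p_i^\alpha$ and using the chain rule $\partial_i\big(\sum_l H((f_k)_l)\big)=\sum_l\partial_\alpha H((f_k)_l)\partial_i(f_k)_l^\alpha$ gives the identity $\sum_l DH(f_l(x))\cdot\big(\int p_i\,d\nu_{x,f_l(x)}-\partial_i f_l(x)\big)=0$ for a countable $C^1$-dense family of $H$, from which a separation argument yields the sheetwise identity. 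The ``structural fact'' you invoke is precisely condition~(ii) in \cite[Definition~1.9]{DS} combined with the a.e.\ approximate differentiability of $W^{1,2}$ multivalued maps \cite[Corollary~2.7]{DS}, so it is available; the paper's induction implicitly uses the same ingredient through the locality step. Your approach is more direct and does not require setting up the retraction machinery; the paper's approach, on the other hand, makes the reduction to the single-valued case and the role of the collapsed set more transparent, and its locality lemma is reused elsewhere. One small point worth making explicit in your write-up: the null set on which the test identity fails depends on $H$, so you should fix a countable $C^1$-dense family of $H$'s first and then argue at a.e.\ $x$ that $(DH(a_1),\ldots,DH(a_K))$ ranges over a spanning set of $(\R^n)^K$ for the distinct values $a_1,\ldots,a_K$ of $f(x)$.
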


\subsection{Proof of Proposition \ref{le:weakclosed}}  We start by using convergence as currents to construct subsequential limits. Notice that $F_k\otimes dx \in \mathcal N^0(\Omega \times \mathbb V)$, for every $k$, since 
\[
 \mass(F_k\otimes dx)\leq \int_\Omega \mass(F_k(x))\,dx=Q\,|\Omega|\,.
\]
In particular we can apply the weak compactness for $\mathcal N^0(\Omega\times \mathbb V)$, that is there exists $\hat{F}\in \mathcal N^0(\Omega\times \mathbb V) $ such that
\begin{equation}\label{eq:currconv}
    \hat{F}(\varphi) =\lim_{k\to \infty}(F_k\otimes dx)(\varphi) \qquad \forall \varphi\in C^{\infty}_c(\Omega\times \mathbb V)\,.
\end{equation}
 Analogously we have that $\mass(F^\infty_k)\leq \|\mathcal F\|_{\mathcal Y^Q}\leq C<\infty$, and so again by compactness for normal currents there exists $\hat{F}^\infty\in \mathcal N^0(\Omega\times \mathbb S_{\mathbb{V}_3})$ such that
\[
\hat{F}^\infty(\varphi) =\lim_{k\to \infty}F_k^\infty(\varphi)\,, \qquad \forall \varphi\in C^{\infty}_c(\Omega\times \mathbb S_{\mathbb{V}_3})\,.
\]
Finally we consider the sequence
\[
\tilde{F}_k:=\eta_{\sharp} \left(\sqrt{1+|M|^2}\, F_k \otimes dx\right)\,,\qquad \text{where }\eta(M):=\frac{M}{\sqrt{1+|M|^2}}\,,
\]
that is
\begin{align*}
    \tilde{F}_k(\psi) 
        &=\int  F_k(x)\left(\psi\left(x, \frac{M}{\sqrt{1+|M|^2}}\right)\, \sqrt{1+|M|^2}\right) \,dx\\
        &\leq \|\psi\|_{\infty} \,\|\mathcal F_k\|_{\mathcal Y^Q} \leq C\,.
\end{align*}
This implies that $(\tilde{F}_k)_k$ is a uniformly bounded sequence in $\mathcal N^0(\Omega\times B_1^{\mathbb V_3})$ and so there exists a normal current $\tilde F\in (\Omega\times \overline{B_1}^{\mathbb V_3})$ such that up to subsequence
\[
\tilde{F}(\varphi) =\lim_{k\to \infty}\tilde{F}_k(\varphi) \qquad \forall \varphi\in C^{\infty}_c(\Omega\times \overline{B_1}^{\mathbb V_3})\,.
\]

Now we claim that there is a Lebesgue measurable map $F\colon \Omega \to \mathcal N^0(\mathbb V)$ such that
\begin{equation}\label{properties12}
    \hat{F}=F\otimes dx \qquad \text{and}\qquad Q=F(x)(\chi_{\mathbb V})\leq \mass(F(x))\leq Q\,,
\end{equation}
for almost every $x\in \Omega$, and moreover that \eqref{eq:cYoung} holds if we set $F^\infty=\tilde{F}^\infty+\hat{F}^\infty$, with $\tilde{F}^\infty:=\tilde{F}\res (\mathbb S_{\mathbb V_3})$. These two claims together will conclude the proof.

\medskip

\noindent \emph{Step 1:} We claim that for every $\phi \in C^\infty(\Omega\times \mathbb V)$ such that there exists $\psi \in C^\infty_c(\Omega)$ such that for every $\delta>0$ there exists $R_{\delta}>0$ with 
\[
|\phi(x,y,p,M)|\leq \delta \,\psi(x)\,\left( 1+|y|^{2^*}+|p|^2+|M| \right)\qquad \forall v=(y,p,M) \text{ such that }|v|>R_\delta\,,
\]
we have
\[
\hat{F}(\phi) =\lim_{k\to \infty} (F_k\otimes dx)(\phi)\,.
\]
Indeed, let $\delta>0$ and $\chi$ be a smooth function supported in $B_1^{\mathbb V}$ which is identically $1$ on $B_{1/2}^{\mathbb V}$. Let $\chi_\delta(x):=\chi(x/R_\delta)$, with $R_\delta$ as above, the we have
\begin{align*}
    \limsup_{k\to \infty} &\left|(F_k\otimes dx)(\phi)-\hat{F}(\phi)  \right|\\
        &\leq \limsup_{k\to \infty} \left|(F_k\otimes dx)(\chi_\delta\,\phi)- \hat{F}(\chi_\delta\,\phi)  \right|+2 \,\delta \, \sup_k\|\mathcal F_k\|_{\mathcal Y^Q}\\
        &\leq 0+ C\,\delta
\end{align*}
where in the last inequality we used \eqref{eq:currconv} and the lower semicontinuity of $\|\cdot\|_{\mathcal Y^Q}$ with respect to the convergence in \eqref{eq:currconv}.

Notice that, choosing $\phi(x,v)=\psi(x)$, which satisfies the above condition with $R_\delta=1/\delta$ for every $0<\delta<1$, implies  \eqref{properties12} by disintegration (see for instance \cite[Theorem 2.28]{AFP})

\medskip

\noindent \emph{Step 2:}
Finally we prove that \eqref{eq:cYoung} holds if we set $F_\infty=\tilde{F}^\infty+\hat{F}^\infty$, with $\tilde{F}^\infty:=\tilde{F}\res (\mathbb S_{\mathbb V_3})$.

To see this let $\phi(x,M)\in \mathcal C(\Omega\times \mathbb V)$ be as in Step 1, then the function 
$$\tilde{\phi}(x,M):=\sqrt{1+|M|^2}\,\,\phi\left(x,\frac{M}{\sqrt{1+|M|^2}}\right)$$ 
extends to an element of $C^0_C(\Omega\times \overline{B_1}^{\mathbb V_3})$ and so we have
\begin{equation}\label{eq:tilde1}
\tilde{F}(\tilde{\phi})=\lim_{k\to \infty} \tilde{F}_k(\tilde{\phi})=\lim_{k\to \infty}( F_k\otimes dx)(\phi)=(F\otimes dx)( \phi)\,.
\end{equation}
Now suppose $\phi\in \mathcal C(\Omega\times\mathbb V)$ is such that $\phi(x,v)=\phi(x,M)$ and $\phi(x, \lambda M)=\lambda \phi(x, M)$, then we have
\begin{equation}\label{eq:tilde3}
\lim_{k\to \infty}(F_k\otimes dx)(\phi) =\lim_{k\to \infty}\tilde{F}_k(\phi)= \tilde{F}(\phi)\,.
\end{equation}
Let $\chi_\eps(t)$ be a smooth function that is identically $1$ on $[0,1-\eps]$ and $0$ at $1$, then 
\[
 \tilde F(\phi) =  \tilde F\left( \chi_\eps(|M|)\phi\right)+ \tilde F\left( (1-\chi_\eps(|M|))\phi\right)\,.
\]
Notice that by construction
\[
\lim_{\eps \to 0}\tilde F\left((1-\chi_\eps(|M|))\phi\right)= \tilde{F}^\infty (\phi) \,,
\]
while by \eqref{eq:tilde1} we have as $\eps \to 0$ that
\[
 \tilde F( \chi_\eps(|M|)\phi)= (F\otimes dx)\left( \chi_\eps\left(\frac{|M|}{{\sqrt{1+|M|^2}}}\right)\phi\right)
\to   (F\otimes dx)(\phi)\,,
\]
so that
 \begin{equation}\label{eq:tilde2}
     \tilde{F}(\phi)=\tilde{F}^\infty(\phi)+(F\otimes dx)(\phi)\,.
 \end{equation}
 
Finally let $\phi\in \mathcal C(\Omega\times \mathbb V)$ and let $\phi^\infty(x, M)$ be the corresponding $1$-homogeneous function in $M$, and set $\psi(x,v)=\phi(x,v)-\phi^\infty(x, M)$. Then $\psi$ satisfy the assumption of Step 1 and so
\[
\lim_{k\to \infty}  (F_k\otimes dx) (\psi)=(F\otimes dx)(\psi)\,.
\]
On the other hand $\phi^\infty$ satisfies \eqref{eq:tilde3} and \eqref{eq:tilde2}, that is 
\[
\lim_{k\to \infty} (F_k\otimes dx)(\phi^\infty)= \tilde{F}^\infty(\phi^\infty)+(F\otimes dx)(\phi^\infty)\,.
\]
Combining the last equalities proves the claim.
\qed

\subsection{Proof of Proposition \ref{p:elementarylimit}} We divide the proof in several parts.

\medskip 
\noindent \emph{Proof of \eqref{eq.gradientmeasure01}: } 
Let us fix a generating sequence $f_k \in W^{1,2}(\Omega, \Iqs)$, i.e. $\mathcal{E}_{f_k} \rightharpoonup \mathcal{F}$. In particular the uniform boundedness principle implies that 
\[ \limsup_{k} \norm{f_k}_{W^{1,2}(\Omega,\Iqs)} \le C\,.\]
Hence Sobolev embedding, \cite[Proposition 2.11]{DS} provides a function $f \in W^{1,2}(\Omega, \Iqs)$ such that, up to a subsequence, $\lim_{k} \norm{\G(f_k,f)}_{L^p(\Omega)}=0$ for all $p<2^*$ and $\norm{Df}_{L^2(\Omega)} \le \liminf_k \norm{Df_k}_{L^2(\Omega)}$.

%To deduce \eqref{eq.gradientmeasure01} we argue similar as above. Let $\phi \in C_c(\Omega \times \R^n)$ and $\chi \in C^\infty_c(\mathbb{V}_3)$ such that $\chi=1$ on $B_1^{\mathbb{V}_3}$. 
Let $\phi(x,y) \in C_c^0(\Omega \times \R^n)$, one has $|E_{f_k}(x)(\phi)-E_{f}(x)(\phi)|\le \norm{D_y\phi}_\infty \G(f_k(x),f(x)) $, hence \eqref{eq:cYoung} implies that 
\[\mathcal{F}(\phi) = (F\otimes dx)(\phi) = \lim_{k} \mathcal{E}_{f_k}(\phi) = \mathcal{E}_f(\phi)\,,\]
or in other words $(\pi_0\otimes \pi_1)_{\sharp} \mathcal{F} = \sum_{l=1}^Q \a{f_l(x)} \otimes dx$. 
Now we can apply the classical disintegration theorem to deduce \eqref{eq.gradientmeasure01}.

\medskip

\noindent\emph{Proof of \eqref{eq.gradientmeasure02}: } We divide the proof in steps.

\noindent \emph{Step 1:} We claim the following. Given 
\begin{enumerate} 
    \item a sequence of set $E_k \subset E$ with $|E\setminus E_k| \to 0$;
    \item two uniformly bounded sequences $f_k, g_k \in W^{1,2}(\Omega, \Iqs)$, with $\mathcal{E}_{f_k} \rightharpoonup F\otimes dx + F^\infty $ and $\mathcal{E}_{g_k} \rightharpoonup G\otimes dx + G^\infty $ and such that $f_k=g_k$ on $E_k$,
\end{enumerate} 
then we have $F(x)=G(x)$ a.e. $x\in E$. 

Firstly observe that as a result of the approximate differentiability of $Q$-valued functions, \cite[Corollary 2.7]{DS} we have that 
\[ 
T_{x_0}f_k(x)= T_{x_0}g_k(x) \qquad \text{for all $x$ and a.e. $x_0 \in E_k$}\,,
\]
where we have used the notation of \cite[Definiton 1.9]{DS} for the first-order approximations of $f,g$ at $x_0$. 
In particular this implies that for a.e. $x \in E_k$ we have $E_{f_k}(x)(\phi)=E_{g_k}(x)(\phi)$. Now given any $\phi$ such that for some $\epsilon >0$ 
\[A(x) = \sup_{y,p,M} \frac{|\phi(x,y,p,M)|}{(1+|y|^{2^*} + |p|^2 +|M|)^{1-\epsilon}}\] 
is uniformly bounded, we deduce that 
\[ \mathbf{1}_{E\setminus E_k} \left(|E_{f_k}(x)(\phi)|+|E_{g_k}(x)(\phi)| \right) \le C \mathbf{1}_{E\setminus E_k} A(x) (1+ |f_k(x)|^{2^*} +|Df_k|^2 + |g_k(x)|^{2^*} +|Dg_k|^2)^{1-\epsilon}\,. \]
This implies that for $k \to \infty$ 
\[ \int_{E\setminus E_k} \left(|E_{f_k}(x)(\phi)|+|E_{g_k}(x)(\phi)| \right) \le \norm{A}_{L^\infty} \left(\norm{f_k}_{W^{1,2}(\Omega)}+\norm{g_k}_{W^{1,2}(\Omega)}\right)^{1-\epsilon}|E\setminus E_k|^\epsilon \to 0\,.\]
In particular, we conclude that for every compactly supported $\phi$ we have 
\[ \int_E F(x)(\phi) \, dx = \int_E G(x)(\phi) \, dx\,,\]
which given the arbitrariness of $\phi$ gives the claim .

\noindent \emph{Step 2:}
We claim that for a.e. $x \in E=\{ f = Q \a{\eta \circ f}\}$ we have 
\begin{equation}\label{eq.gradientoncollapsed}
    \int_{\mathbb{V}} p_i^\alpha \,d\nu_{x,\eta\circ f} = \partial_i(\eta\circ f)^\alpha.
\end{equation}
Firstly we note that $\eta \circ f_k \rightharpoonup \eta \circ f$ in $W^{1,2}(\Omega, \R^n)$, hence we deduce that for any $\varphi \in C_c^0(\Omega)$, the function $\phi(x,p)=\varphi(x)p^\alpha_i$ has $\phi^\infty=0$, so that 
\[ \mathcal{F}(\phi)=(F\otimes dx) (\phi)=\lim_{k\to \infty}\mathcal{E}_{f_k} (\phi) = \lim_{k\to \infty} Q \int_{\Omega} \varphi(x) \partial_i (\eta \circ f_k)^\alpha  = Q \int_{\Omega} \varphi(x) \partial_i (\eta \circ f)^\alpha\,.\]
By the arbitrariness of $\varphi$, it follows that for a.e. $x \in \Omega$ 
\begin{equation}\label{eq:average} Q \,\partial_i(\eta \circ f)^\alpha= \sum_{l=1}^Q \int_{\mathbb{V}} p^\alpha_i \,d\nu_{x,f_l(x)} \end{equation}
Now we consider any $\psi(x,y) \in C^0_c(\Omega \times \R^n)$ and the associated $\phi(x,y,p)=\psi(x,y)p_i^\alpha$. We define the auxiliary function $\tilde{\phi}(x,y,p)=\psi(x,\eta \circ f(x)) p_i^\alpha$. Furthermore given any $E\subset U$ open and $\chi\in C^\infty_c(U)$ such that $\mathbf{1}_E \le \chi \le \mathbf{1}_{U}$, we note that for $k \to \infty$, since $f=Q\a{\eta \circ f}$ on $E$, 
\begin{align*}
    &\left|\mathcal{E}_{f_k}(\chi \phi) - \mathcal{E}_{f_k}(\chi \tilde{\phi})\right|
    \le \int_{E} |D_y\psi(x,y)| \, \G(f_k, f) \, |Df_k| + \int_{U\setminus E} \norm{\psi}_\infty |Df_k|\\
   & \le \norm{\psi}_{C^1}\,\norm{Df_k}_{L^2(\Omega)}(\norm{\G(f_k,f)}_{L^2} + |U\setminus E|^\frac12) \to C \norm{\psi}_{C^1}|U\setminus E|^\frac12\,.
\end{align*}
Passing to the limit, and using \eqref{eq:average} we obtain
\[
\int \chi \psi(x,y) \int_{\mathcal V}p^\alpha_i \,d\nu_{x,f_l(x)} dx =F\otimes dx(\chi\phi)=F\otimes dx(\chi\tilde \phi)=Q \int \chi \psi(x,\eta\circ f) \partial_i(\eta\circ f)^\alpha\,.
\]
Approximating $E$ by open sets $U$, we conclude the claim.

\noindent \emph{Step 3:} Now we can conclude \eqref{eq.gradientmeasure02} by induction on $Q$. 
The case $Q=1$ is just a statement on the weak convergence of Sobolev functions. 
Hence we may assume it holds for all $Q'<Q$. Due to Step 2, \eqref{eq.gradientmeasure02} holds on $E_0=\{ f = Q \a{\eta\circ f}\}$. Fix any $S \in \Iqs$ with $\sep(S)>0$. Hence there is $\epsilon_S>0$, $Q=Q_1+Q_2$ with $Q_i \ge 1$ and Lipschitz retractions $\chi_i\colon \Iqs \to \A_{Q_i}(\R^n)$ such that 
\[ 
T= \chi_1(T) + \chi_2(T) \qquad \forall \G(T,S) \le 2 \epsilon_S\,.
\] 
We consider $E=\{ x \colon \G(f(x),S)<\epsilon_S \}$ and we define the sequences 
\[ f^i_k= \chi_i \circ f_k \in W^{1,2}(\Omega, \A_{Q_i}(\R^n) )\]
and $E_k= \{ x \in E \colon \G(f_k(x), S) < \epsilon_S \}$. We clearly have $|E\setminus E_k| \to 0$. Furthermore we have that for a.e. $x \in E_k$ 
\[ E_{f_k^1}(x) + E_{f_k^2}(x) = E_{f_k}(x)\,,\]
by the almost everywhere differentiability of $W^{1,2}$ multivalued functions. We may assume that for $i =1,2$ we have $\mathcal{E}_{f_k^i} \rightharpoonup F^i\otimes dx +(F^i)^\infty$. Now due to Step 1 we have for a.e. $x \in E$
\[ F(x)\otimes dx = F^1(x)\otimes dx + F^2(x) \otimes dx\,. \]
Hence we can apply the inductive hypothesis to $f_k^i$ for $i=1,2$ separately and deduce that \eqref{eq.gradientmeasure02} holds a.e. on $E$. Choosing a dense family in $\Iqs \setminus \{ Q \a{p} \colon p \in \R^n \}$ we cover a.e. all of $\Omega\setminus E_0$ by sets of the above type, which concludes the proof.

\medskip

\noindent\emph{Proof \eqref{eq.gradientmeasure03}: } 
The equality part can be seen as follows. Fix $\chi \in C^\infty_c(\mathbb{V}_3)$ with $\chi = 1$ on $B_1^{\mathbb{V}_3}$ and let $\psi \in C^\infty_c(\Omega \times \R^n)$ be arbitrary. Now consider $\phi(x,y,M) = \psi(x,y) \chi(\frac{M}{R}) M^{\alpha \beta}_{ij}$ and $\tilde{\phi}(x,y,p) = \psi(x,y) \chi(\frac{p^tp}{R}) p^\alpha_i p^\beta_j$ then 
\begin{align*}
    &\int \sum_{l=1}^Q \psi(x,f_l(x)) \int \chi( \frac{M}{R}) M^{\alpha\beta}_{ij} \, d\nu_{x,f_l(x)} \; dx = (F\otimes dx)(\phi)=\lim_{k \to \infty} \mathcal{E}_{f_k}(\phi)\\
    &=\lim_{k \to \infty} \mathcal{E}_{f_k}(\tilde{\phi}) = (F\otimes dx)(\tilde{\phi})= \int \sum_{l=1}^Q \psi(x,f_l(x)) \int \chi\left( \frac{p^tp}{R}\right) p^\alpha_i p^\beta_j \, d\nu_{x,f_l(x)}\;dx\,.
\end{align*}
Taking the limit $R\to \infty$ provides the equality part. 

The inequality now follows from Jensen's inequality applied to the family of convex function $G(p)=p^\alpha_ip^\beta_j \xi^i\xi^j \eta_\alpha \eta_\beta$ for $\xi \in \R^m, \eta \in \R^n$ i.e. 
\begin{align*}
    \xi^i\xi^j \eta_\alpha \eta_\beta \int M^{\alpha\beta}_{ij} \, d\nu_{x,f_l(x)} &= \xi^i\xi^j \eta_\alpha \eta_\beta \int p_i^\alpha p_j^\beta \, d\nu_{x,f_l(x)} = \int G(p) \, d\nu_{x,f_l(x)}\\
    &\ge G\left(\int p \, d\nu_{x,f_l(x)}\right) = (\eta\cdot \partial_\xi f_l(x))^2\,.
\end{align*}

\medskip

\noindent\emph{Proof \eqref{eq.gradientmeasure04}: } Let $M_0 \in \mathbb{V}^3$ with $\dist(M_0,\{M\ge 0\})=2\delta>0$ be given. Consider a smooth function $\varphi$ compactly supported in $B_{\delta}(M_0)$ and its $1$-homogeneous modification $\varphi^\infty(M)=|M|\varphi\left(\frac{M}{|M|}\right)$. Furthermore we consider for any $\eta \in C^\infty_c(\Omega)$ and any $R>0$ the test function $\phi_R(x,M) = \eta(x)\rho(\frac{|M|}{R}) \varphi^\infty(M)$ where $\rho$ is non-decreasing function with $\rho(t)=0$ for $t<\frac12$ and $\rho(t)=1$ for $t\ge 1$. We note that $\phi^\infty=\eta(x) \varphi^\infty$ is the associated recession function to $\phi_R$ for all $R$. Hence we can deduce that 
\begin{align*}
    F^\infty(\phi^\infty)= \lim_{R\to \infty} \mathcal{F}(\phi_R) = \lim_{R\to \infty} \lim_{k \to \infty} \mathcal{E}_{f_k}(\phi_R) =0\,.
\end{align*}
This proves the claim.

\qed

% Finally we will be mostly interested in the following types of Young measures.

\section{Linear theory for measure solutions}\label{ss:meassol}

We introduce the following notions of stationary measures for the Dirichlet energy.

\begin{definition}[Stationary measures for energy]
    An $\Iq$-generalized gradient Young measure $\mathcal F:=(F\otimes dx,F^\infty)\in \operatorname{grad}\mathcal Y^Q$ is a \emph{inner and outer measure solution in $\Omega$} if 
    \begin{gather}
        \mathcal O(\mathcal F, \varphi):=( F\otimes dx)(p_i^\alpha y^\alpha \partial_i\varphi)+ \mathcal F(M_{ii}^{\alpha\alpha}\varphi)=0 \quad \forall \varphi \in C_c^\infty(\Omega) \label{eq:outervar}\\
        \mathcal I(\mathcal F, \phi):=\mathcal F\left((2 M_{ij}^{\alpha\alpha}-M_{kk}^{\alpha\alpha} \delta_{ij}) \partial_i\phi^j\right) =0 \quad \forall \phi\in C^{\infty}_c(\Omega,\R^m)\label{eq:innervar}\\
        \mathcal F (p_i^\alpha\,\partial_i\psi^\alpha)=0 \quad \forall \psi\in C^\infty_c(\Omega,\R^n)\label{eq:averageharmonic}\,.
    \end{gather}
    In the sequel we will refer to $\mathcal O$ as outer variation and $\mathcal I$ as inner variation. Moreover we will say that $\mathcal{F}$  satisfies \emph{the strong outer variations in $\Omega$} if $F^\infty = 0$\footnote{Let us shortly elaborate on the condition $F^\infty=0$. The main issue is that otherwise the needed test function $\phi(x,y,p,M)=M_{ii}^{\alpha \beta} \partial_{y^\beta}\varphi^\alpha(x,y)$ does not admit a $1$-homogeneous recession function $\phi^\infty(x,M)$. Nonetheless we want to emphasis that if $F^\infty=0$ then the function $\phi$ can be approximate by $\phi_R(x,y,p,M) = \chi(\frac{M}{R}) \phi(x,y,p,M)$ where $\chi \in C^\infty_c(\mathbb{V}^3)$ and $1$ on $B_1^{\mathbb{V}_3}$.} and 
\begin{equation}\label{eq:strongouter}
        \mathcal{S}(\mathcal{F}, \varphi) = (F\otimes dx)\left( p_i^\alpha \partial_i\varphi^\alpha(x,y) + M_{ii}^{\alpha \beta} \partial_{y^\beta}\varphi^\alpha(x,y) \right) \quad \forall \varphi \in C^1_c(\Omega \times \R^n, \R^n)\footnote{by approximation with $\chi(\frac{y}{R}) \varphi(x,y)$ we can consider all the test functions $\varphi(x,y)$ that had been considered in \cite{DS}.}\,.
    \end{equation}
    % for all $\varphi \in C^1(\Omega \times \R^n, \R^n)$ such that $\spt\varphi \subset K\times \R^n$ with $K \Subset \Omega$.
    We will call \emph{stationary measures} the measure solutions  that satisfy the strong outer variations, and we will say that $f\in W^{1,2}(\Omega, \Iq(\R^n))$ is a \emph{classical solution} if $\mathcal E_f$ is a stationary measure, in particular \eqref{eq.gradientmeasure03} holds with equality.
\end{definition}
 
In this section we will prove two main results: a unique continuation/regularity type result for inner and outer measure solutions and a compactness theorem for uniformly higher integrable almost classical solutions. While the first result follows from the monotonicity of a suitably defined frequency function, which requires only inner and outer variations to be zero, the second result requires a stronger assumption since we will need more general test functions (i.e., projections) in the outer variation.

\begin{theorem}[Unique continuation of inner and outer measure solutions]\label{thm:UCformeas}
 Let $\mathcal F\in \operatorname{grad}\mathcal Y^Q$ be a inner and outer measure solution, then either
 \[
 \dim(\{x\in \Omega\,:\,f(x)=Q\a{\etab \circ f(x)}\})\leq m-1 
 \]
 or $f\equiv Q\a{\etab \circ f}$ in $\Omega$.
\end{theorem}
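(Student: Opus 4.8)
The plan is to reduce the statement to a classical unique-continuation result for a single scalar quantity, namely the \emph{frequency function} associated to $\overset{\circ}{f} := f \ominus Q\a{\etab\circ f}$, and to show that the frequency is monotone for inner and outer measure solutions. Write $g := \etab\circ f$ and $h := f\ominus g$, so that $h$ is a $W^{1,2}$ multivalued map with $\etab\circ h \equiv 0$. The set we want to bound, $\{x : f(x) = Q\a{\etab\circ f(x)}\} = \{x : h(x) = Q\a{0}\}$, is precisely the zero set of the function $x \mapsto \mathcal{G}(h(x), Q\a{0})^2 = \sum_l |h_l(x)|^2$. The key point is that, just as in Almgren's theory and the De Lellis--Spadaro account \cite{DS1}, one expects that either this zero set is "large" (codimension $\le 1$) or $h\equiv Q\a{0}$, and that the dichotomy is governed by whether a frequency function $I_h(r)$ stays bounded as $r\to 0$.

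First I would set up the relevant Dirichlet and "height" quantities at a point $x_0$, defined \emph{through the measure} rather than through $h$ itself: $D(r) := \int_{B_r(x_0)} (2M_{ij}^{\alpha\alpha} - M_{kk}^{\alpha\alpha}\delta_{ij})\,(\text{appropriate test})$ — more precisely one uses the inner variation \eqref{eq:innervar} with radial vector fields $\phi$ to get the Rellich/Pohozaev-type identity relating $\frac{d}{dr}D(r)$ to a boundary integral, and the outer variation \eqref{eq:outervar} with $\varphi$ a cutoff depending on $|x - x_0|$ to control $\int M_{ii}^{\alpha\alpha}$ against the boundary term $\int_{\partial B_r} p_i^\alpha y^\alpha \nu_i$. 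The subtlety, relative to the classical case, is that $D(r)$ must be taken to be the \emph{measure-theoretic} energy $\mathcal{F}(M_{ii}^{\alpha\alpha}\,\chi_{B_r})$, which by \eqref{eq.gradientmeasure03} dominates $\int_{B_r}|Dh|^2$ (after subtracting the average piece, which by \eqref{eq:averageharmonic} is harmonic and hence smooth and contributes negligibly near a zero of $h$ of positive order). One must also use \eqref{eq.gradientmeasure04} to ensure the concentration part $F^\infty$ enters with a favorable sign in these identities, so that it does not destroy monotonicity. Combining the two variation identities in the standard way yields that $I_h(r) := \frac{r\,D(r)}{H(r)}$ (with $H(r)$ the $L^2$ height on $\partial B_r$) is monotone non-decreasing in $r$, wherever $H(r) > 0$; in particular $N_0 := \lim_{r\to 0} I_h(r)$ exists and is finite whenever $H$ does not vanish identically near $x_0$.

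Given monotonicity of the frequency, the conclusion follows by a now-standard argument. If $h\not\equiv Q\a{0}$ on $\Omega$, then by the unique continuation built into the frequency (if $h \equiv Q\a{0}$ on some ball then $I_h$ would be $+\infty$ there, contradicting finiteness where $H>0$ on larger balls, connectedness of $\Omega$ giving propagation), one shows $H_{x_0}(r) > 0$ for all small $r$ and all $x_0$, and $N_{x_0} < \infty$. Then the upper bound $H_{x_0}(r) \le C r^{2N_{x_0}}$ together with a Federer-type dimension reduction on the stratification of the zero set by the value of $N_{x_0}$ forces $\dim\{h = Q\a{0}\} \le m-1$: the top-dimensional stratum $\{N_{x_0} = 0\}$ is exactly the set where the density of the zero set fails to be small, and at such points the vanishing order being $0$ means $h(x_0)\neq Q\a{0}$, a contradiction, so in fact every point of the zero set has $N_{x_0} \ge 1$ — wait, more carefully: one stratifies by the spine dimension of blow-ups of $h$ at $x_0$, and a blow-up of positive frequency that vanishes on a subspace of dimension $m$ is identically zero, forcing the contradiction; the largest possible non-trivial spine has dimension $m-1$, giving the bound. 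Throughout, the multivalued blow-up theory of \cite{DS1} (Almgren's strong convergence at points of finite frequency) is invoked to make the blow-up step rigorous in the $\Iq$ setting.

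\textbf{Main obstacle.} The hard part will be \emph{justifying the two variation identities at the level of the measure} $\mathcal{F}$ rather than of an honest $W^{1,2}$ map — i.e., propagating \eqref{eq:outervar}--\eqref{eq:averageharmonic}, which hold only for test functions $\varphi,\phi,\psi$ that are smooth and compactly supported in $\Omega$ and (crucially) have the structure making the paired integrand admit a $1$-homogeneous recession function, through the cutoff-and-limit procedure needed to get radial/Pohozaev identities. In particular the inner-variation integrand $(2M_{ij}^{\alpha\alpha} - M_{kk}^{\alpha\alpha}\delta_{ij})\partial_i\phi^j$ is linear in $M$, so it \emph{does} have a good recession function and can be tested against Lipschitz radial $\phi$; but the "mixed" term appearing when one differentiates $D(r)$ and wants to recognize the square of the radial derivative of $h$ requires \eqref{eq.gradientmeasure03} as an inequality, so one only gets monotonicity of a frequency built from the measure-energy $D(r)$, and must separately argue that this measure-energy still controls the actual height so that its vanishing forces $h \equiv Q\a{0}$. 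Reconciling the possible gap between $\int M_{ii}^{\alpha\alpha}$ and $\int|Dh|^2$ — and showing it is harmless for the unique-continuation dichotomy even though it is \emph{not} harmless for the finer regularity theory later — is the technical crux of this proof.
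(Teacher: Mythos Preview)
Your plan is essentially the paper's own: subtract the harmonic average (Proposition~\ref{prop.propertiesMeasureSolutions}\eqref{subtrackingHarmonic}), establish monotonicity of a frequency defined through $\mathcal F$ using the inner and outer variations with radial test functions (Proposition~\ref{prop.propertiesMeasureSolutions}\eqref{frequencyMonoton}, where indeed \eqref{eq.gradientmeasure03}--\eqref{eq.gradientmeasure04} guarantee the sign of the remainder), and then run Federer dimension reduction via blow-ups. The paper phrases the last step as an induction on $m$, with the base case $m=1$ handled directly (Proposition~\ref{prop.propertiesMeasureSolutions}\eqref{1dimensionalSolutions}), but this is equivalent to your spine stratification.

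One point where your sketch is off, however, is the appeal to ``Almgren's strong convergence'' from \cite{DS1}. That compactness is for Dir-minimizers and is not available here: blow-ups of $\mathcal F$ converge only \emph{weakly} as $\Iq$-generalized gradient Young measures (Corollary~\ref{cor.frequencyblowup}), and the limit is again only a measure solution, not a classical one. Two consequences you will need to supply: (i) passing the zero set to the limit is done not via strong $W^{1,2}$ convergence but via the fact that $f_k\to\hat f$ uniformly off sets of arbitrarily small $q$-capacity for $q<2$, which is enough to preserve positive $\mathcal H^t_\infty$-content; and (ii) the dimension-reduction step --- showing that a homogeneous measure solution with a zero at $e_1$ blows up to one that is invariant along $e_1$ \emph{and} whose $x_1$-slices are themselves $(m{-}1)$-dimensional inner and outer measure solutions --- is genuinely new work for this class (Lemma~\ref{lem:cylbu}), requiring a disintegration of $F^\infty$ and an argument that its singular part in $x_1$ vanishes. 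These are the places where your plan cannot simply quote \cite{DS1}.
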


To state the compactness result in its most general form, which will be needed later, we first fix some notation. The argument is based on a concentration compactness  argument applied to a sequence of $Q$-points $T_k \in \Iq(\R^n)$. Associated to such a sequence we can find a sequence of points $p^k_1, \dots, p^k_N \in \R^n$ and a sequence $s_k \to \infty$ such that 
\begin{enumerate}
    \item $|p_k^i-p_k^j|>4s_k$ for all $i\neq j$;
    \item\label{finitediam2} $T_k=\sum_j^N T_k^j$ with $T_k^j \in \A_{Q_j}(\R^n)$, $\limsup_k|T_k^j\ominus p_k^j|<\infty$.
\end{enumerate}
Note that (2) implies that $\G(T_k, P_k)< s_k $, $P_k = \sum_{j=1}^N Q_j \a{p_k^j}$ for $k$ sufficient large, which we will assume from now on. In case the diameter of the $T_k$ is bounded we may choose any sequence $p_k \in B_{2 \diam(T_k)}(\eta \circ T_k)\subset \R^n$ and set $P_k=Q\a{p_k}$.

Furthermore the splitting of the $T_k$ introduces Lipschitz retractions, compare \cite[Lemma 3.7]{DS} \[\chi_k^j \colon \Iqs \to B_{2s_j}\left(Q_j\a{p_k^j}\right)\subset \A_{Q_j}(\R^n)\,.
\]
In the bounded case note that the retractions reduce to the identity map and we can drop the extension by the artificial dimension as in the sequel.

With their help we define the following sequence of ``projection map'' (the inner variation prevents us from considering each projection on its own): 
\begin{equation}\label{eq.projectionmap}
    \chi_k(T)=\sum_{j=1}^N \chi_k^j(T)\ominus(p_k^j - j e_{0})\,,
\end{equation}
where we have implicitly used the identification of $\R^n$ with $\{ x_{0} =0 \} \subset \R^{n+1}$. 
%On $\A_{Q}(\R^n)$ we may similarly define projections $\hat{\chi}_j \colon \A_{Q}(\R^{n+1}) \to \A_{Q_j}(\R^{n+1})$ associated to the retraction onto $\sum_j A_{Q_j}( B_{\frac14}(j) \times \R^n)$ given by $\hat{\chi}=\sum_j \hat{\chi}_j$

Now we define an ``almost inverse''to $\chi_k$ by
\begin{equation*}\label{eq.recoverymap}
    \chi^{+}_k(y,y_{n+1})= \sum_{j=1}^N y+ \sigma_j(y_{n+1}) p_k^j\,,
\end{equation*}
where $(\sigma_j)_j$ is a partition of unity subordinate to the intervals $(j-2/3,j+2/3)$.
It behaves like an almost left inverse since 
\[ \chi^+_k(\chi_k(T))=\sum_{j=1}^N \chi_k^j(T) \]
and the right hand side agrees with $T$ if $\G(T,P_k)< 2s_k$.
Concerning the other direction we have for $S\in \Iq(\R^{n+1})$ that
\[ \chi_k(\chi^+_k(S)) = S \]
if $\pi_0(S) = \sum_{j=1}^N Q_j \a{j}$ and $|S|<s_k$, where $\pi_0$ is the orthogonal projection onto $\R \times \{0\}$\,.

\begin{theorem}[Compactness for higher integrable classical solutions]\label{thm:compactness} Let $(f_k)_k$ be a sequence satisfying the following assumptions
\begin{itemize}
    \item[c1)]\label{ass.compactness1} uniform energy bounds, i.e. 
    \[
\limsup_{k\to\infty}\int_{B_4}|Df_k|^2\leq C<\infty\,,
\]
    \item[c2)]\label{ass.compactness2} uniform higher integrability, that is there exists $p>1$ such that 
    \[
\left(\mint_{B_s(x)}|Df_k|^{2p}\right)^{\frac1p}\leq C\, \mint_{B_{2s}(x)} |Df_k|^2\,,\qquad\text{ for every $B_{2s}(x)\in B_4$}\,,
\]
    \item[c3)]\label{ass.compactness3} almost stationariety, that is 
    \[
     \lim_{k\to \infty}\mathcal S(\mathcal E_{f_k},\varphi)=\lim_{k\to \infty}\mathcal I(\mathcal E_{f_k},\phi)=\lim_{k\to \infty} \mathcal E_{f_k}(p_i^\alpha\,\partial_i\psi^\alpha)=0\,,
    \]
    for every admissible $\varphi,\phi, \psi$ as in \eqref{eq:outervar}, \eqref{eq:innervar} and \eqref{eq:averageharmonic} respectively,
\end{itemize}
then there exists a classical solution $f\in W^{1,2}(B_3, \Iqn)$, satisfying c2) in $B_2$ and such that for each $r<3$ one has
\begin{equation}\label{eq.strongconvergence}
   \lim_k \norm{\G(f_k, \chi_k^+\circ f)}_{L^2(B_r)}+\norm{|Df_k|-|Df|}_{L^2(B_r)}=0\,. 
\end{equation}

If in addition to c1) one has a uniform $L^2$-bound, i.e. $\limsup_{k \to \infty} \int_{B_2} |f_k|^2 < \infty$, the map $\chi_k= \operatorname{Id}$ for all $k$. In particular the sequence is compact. 
\end{theorem}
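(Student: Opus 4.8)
\textbf{Proof plan for Theorem \ref{thm:compactness}.}

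The plan is to run a concentration-compactness argument on the sequence $(f_k)_k$ in two stages: first extract an $\Iq$-generalized gradient Young measure limit $\mathcal F$ of the elementary measures $\mathcal E_{f_k}$, then use the higher integrability assumption c2) to upgrade weak convergence to the strong convergence \eqref{eq.strongconvergence}, and finally identify the strong limit $f$ as a classical solution. First I would normalize: after composing with the retractions $\chi_k$ built above, the sequence $\tilde f_k := \chi_k \circ f_k$ lives (on each ball) near a fixed configuration $\sum_j Q_j \a{j e_0}$, with uniformly bounded $W^{1,2}$ norm by c1) together with the Poincaré inequality applied on each sheet. Hence by \eqref{eq:elYoungbd} the elementary measures $\mathcal E_{\tilde f_k}$ are bounded in $\mathcal Y^Q$, and Proposition \ref{le:weakclosed} produces a subsequential limit $\mathcal F \in \operatorname{grad}\mathcal Y^Q$; Proposition \ref{p:elementarylimit} gives the associated $f \in W^{1,2}(B_3,\Iqn)$ and the disintegration $\nu_{x,y}$, with \eqref{eq.gradientmeasure03} as an inequality (possibly strict) encoding the concentration/oscillation defect. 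Passing c3) to the limit shows $\mathcal F$ is an inner and outer measure solution and, because $F^\infty = 0$ will follow from the higher integrability, a \emph{stationary} measure in the sense of the definition, i.e. the strong outer variation \eqref{eq:strongouter} holds. The final map is then $f$ transported back by $\chi_k^+$, which explains the form $\chi_k^+ \circ f$ in \eqref{eq.strongconvergence}.

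The heart of the matter — and the step I expect to be the main obstacle — is showing \emph{strong} convergence of the gradients, equivalently that the Young measure does not concentrate or oscillate in the $M$-variable: that $F^\infty = 0$ and that $\nu_{x,y}$ is (for a.e.\ $(x,y)$) a Dirac mass in $p$, so that \eqref{eq.gradientmeasure03} holds with equality and $\|Df_k\| \to \|Df\|$ in $L^2$. This is where assumption c2) is used decisively. The strategy is the standard Gehring-type self-improvement combined with a quantitative defect estimate: the higher integrability $\left(\mint |Df_k|^{2p}\right)^{1/p} \lesssim \mint |Df_k|^2$ gives equi-integrability of $|Df_k|^2$, which immediately rules out concentration, hence $F^\infty = 0$ and the mass is not lost at infinity; this in turn legitimizes using the projection-type test functions in the strong outer variation \eqref{eq:strongouter} (as flagged in the footnote to the definition, the test function $M_{ii}^{\alpha\beta}\partial_{y^\beta}\varphi^\alpha$ only makes sense when $F^\infty=0$, and can then be reached by the cutoff $\chi(M/R)$). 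To kill the oscillation defect one tests the (almost) outer and inner variations of $f_k$ against $f_k - \chi_k^+\circ f$ itself — the usual ``the equation pairs with its own solution'' trick — producing after passing to the limit an identity of the form $\int \sum_l |Df_l|^2 \geq \lim_k \int |Df_k|^2 \geq \int \sum_l \int |p|^2 d\nu$, which combined with the Jensen inequality \eqref{eq.gradientmeasure03} forces equality a.e.\ and hence $\nu_{x,f_l(x)} = \delta_{Df_l(x)}$. A technical point here is that subtracting the retracted limit and the multivalued nature of the maps obstruct a naive "difference quotient" comparison, so one localizes using the Lipschitz retractions $\chi_k^j$ of \cite[Lemma 3.7]{DS} and the selection results for $W^{1,2}$ $Q$-valued functions to reduce, on each region of positive separation, to a finite sum of single-valued (or lower-$Q$) problems, exactly as in the inductive proof of \eqref{eq.gradientmeasure02}.

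Granting the strong convergence, the remaining steps are comparatively routine. Since $|Df_k| \to |Df|$ in $L^2$ and $\G(\tilde f_k, \chi_k\circ\chi_k^+\circ f)\to 0$, one passes c3) to the limit term by term: the inner variation $\mathcal I(\mathcal E_{f_k},\phi)\to 0$ becomes $\mathcal I^A$-type stationarity of $f$ after undoing the retraction (the retractions $\chi_k$ are affine on the relevant region, so inner variations transform covariantly), the average-harmonicity $\mathcal E_{f_k}(p_i^\alpha\partial_i\psi^\alpha)\to 0$ passes directly, and $\mathcal S(\mathcal E_{f_k},\varphi)\to 0$ together with $F^\infty=0$ yields the strong outer variation for $f$; hence $f$ is a classical solution. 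The higher integrability c2) for $f$ in $B_2$ is inherited by lower semicontinuity of the reverse-Hölder constant under the strong $L^2_{\mathrm{loc}}$ convergence of $|Df_k|$ (the left side passes to the limit by equi-integrability, the right side by strong convergence). Finally, in the bounded case $\limsup_k\int_{B_2}|f_k|^2<\infty$, no concentration of \emph{values} occurs either, so the configuration need not be spread out: $N=1$, $P_k=Q\a{p_k}$ with $p_k$ bounded, the retractions $\chi_k$ are the identity, and the artificial extra dimension is unnecessary — giving honest compactness $\G(f_k,f)\to 0$ in $L^2(B_r)$. I would organize the write-up as: (i) normalization and extraction of $\mathcal F$; (ii) equi-integrability $\Rightarrow F^\infty=0$; (iii) the energy-identity argument $\Rightarrow$ no oscillation defect $\Rightarrow$ strong convergence; (iv) identification of $f$ as a classical solution and inheritance of c2); (v) the bounded special case.
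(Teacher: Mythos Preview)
Your overall architecture --- normalize via the retractions $\chi_k$, extract a generalized gradient Young measure $\mathcal F$, use c2) to kill the concentration part $F^\infty$, pass c3) to the limit --- matches the paper, and steps (i), (ii), (v) are essentially right. The gap is in step (iii). The ``energy-identity'' trick you sketch is circular in the multivalued setting. Testing the outer variation of $f_k$ against ``$f_k - \chi_k^+\circ f$'' is not well-defined: there is no subtraction on $\Iq(\R^n)$, and the local splitting you propose (Lipschitz retractions, induction on $Q$) requires you to know \emph{in advance} that the limit $f$ is continuous, so that sheets stay separated on definite neighborhoods. Without continuity, the collapsed set $\{f=Q\a{\eta\circ f}\}$ may a priori be large and admits no local selection, so the induction never starts. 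Even granting a local selection, the inequality $\int|Df|^2 \ge \lim_k\int|Df_k|^2$ you write down comes from the equation for $f$, which is exactly what you have not yet established; the argument does not close.

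The paper fills this gap by an indirect route you do not mention, and which carries the real content: Propositions~\ref{prop:higofmeas} and~\ref{prop:regformeas}. After verifying (for $\hat f_k=\chi_k\circ f_k$, with the error on the bad set $U_k$ controlled via c2)) that the limit $\mathcal F$ is a stationary measure with $F^\infty=0$ and inherits a reverse-H\"older inequality in the $M$-variable, Proposition~\ref{prop:higofmeas} proves a Campanato decay $D_{\mathcal F}(x_0,r)\le\theta\,D_{\mathcal F}(x_0,1)$ by a compactness--contradiction: failure of decay forces (via the inner variation) the radial energy to vanish on an annulus, and then testing the \emph{strong} outer variation with $\eta(x)\tilde\theta(y)$, where $\tilde\theta$ is a sum of cutoffs centred at cluster points of averages, forces the whole energy to vanish. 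The decay gives H\"older continuity of $f$. Only then does Proposition~\ref{prop:regformeas} run the induction on $Q$ you describe (local splitting on $\{|f|>0\}$ using continuity, plus a logarithmic cutoff $\theta(\ln|y|/\ln\delta)y$ in the strong outer variation to handle $\{|f|=0\}$) and conclude $\mathcal F=\mathcal E_f$. Strong convergence \eqref{eq.strongconvergence} and the remaining conclusions then follow as you say. In short: you are missing the continuity step, and your proposed shortcut around it does not work.
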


In the course of proving Theorem \ref{thm:compactness} we will also establish the following two regularity results for measure solutions that satisfy the stronger outer variation and are either higher integrable or continuous. We state them separately as we believe that they could be of independent interests.

\begin{proposition}[Continuity of higher integrable stationary measures]\label{prop:higofmeas}
    Let $\mathcal F\in \operatorname{grad}\mathcal Y^Q$ be a stationary measure in $B_2$, and suppose that there exists $p>1$ such that for all $x_0\in B_1, r<1$
    \begin{equation}
        \mathcal F\left(r^{-m} \varphi\left(\frac{|x-x_0|}{r} \right) |M|^{p}\right)^{\frac1{p}}\lesssim \mathcal F\left(r^{-m} \varphi\left(\frac{|x-x_0|}r \right) |M|\right)\,,
    \end{equation}
    where $\varphi(t)$ is identically $1$ in $[0,1]$ and $0$ for $t>2$.
    Then the associated map $f\in W^{1,2}(B_2, \Iq(\R^n))$ as in Proposition \ref{p:elementarylimit} is H\" older continuous in $B_1$.
\end{proposition}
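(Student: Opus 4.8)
The plan is to prove H\"older continuity of the map $f$ associated to a higher integrable stationary measure $\mathcal F$ via a frequency-function monotonicity argument combined with the higher integrability hypothesis, following the scheme of the single-valued and $Q$-valued Dirichlet-minimizer theory of \cite{DS1,DLS_Center} but in the measure-theoretic setting. First I would fix a base point $x_0\in B_1$, and using the inner and outer variation identities \eqref{eq:innervar}, \eqref{eq:outervar} (which hold since a stationary measure is in particular an inner and outer measure solution), derive the almost-monotonicity of the Almgren frequency function
\[
I_{x_0}(r):=\frac{r\,D_{x_0}(r)}{H_{x_0}(r)}\,,\qquad
D_{x_0}(r):=\mathcal F\Big(\varphi\big(\tfrac{|x-x_0|}{r}\big)M_{ii}^{\alpha\alpha}\Big)\,,\quad
H_{x_0}(r):=(F\otimes dx)\Big(\varphi\big(\tfrac{|x-x_0|}{r}\big)\,|y|^2\,\tfrac{\text{(surface factor)}}{r}\Big)\,,
\]
where $D_{x_0}$ is the (measure-)Dirichlet energy on $B_r(x_0)$ and $H_{x_0}$ the boundary $L^2$-norm of $f$, with suitable smooth cutoffs in place of sharp balls. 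The key point is that the outer variation with radial test vector fields controls $H'_{x_0}(r)$, the strong outer variation \eqref{eq:strongouter} (available because $F^\infty=0$ for a stationary measure) controls $D_{x_0}(r)$ in terms of the boundary term, and the inner variation \eqref{eq:innervar} gives the differential identity for $D'_{x_0}(r)$; the inequality \eqref{eq.gradientmeasure03} (which is an equality for a classical/stationary solution) means the measure-energy coincides with the energy of $f$ up to the oscillation defect, which is where one must be careful.

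The second main step is to \emph{use the higher integrability hypothesis} to upgrade frequency monotonicity into a decay estimate for the Dirichlet energy. Once $I_{x_0}(r)$ is almost-monotone and bounded (say $I_{x_0}(0^+)\le I_0<\infty$ for every $x_0\in B_1$, which follows from the finite total energy), the standard comparison of $H_{x_0}$ and $D_{x_0}$ gives
\[
\frac{1}{r^{m-2}}D_{x_0}(r)\ \lesssim\ r^{\,2\gamma}\qquad\text{for some }\gamma=\gamma(I_0,m)>0,
\]
i.e. a uniform Morrey-type decay of the measure-Dirichlet energy at \emph{every} point of $B_1$. Here the hypothesis that
\[
\mathcal F\Big(r^{-m}\varphi\big(\tfrac{|x-x_0|}{r}\big)|M|^{p}\Big)^{1/p}\lesssim \mathcal F\Big(r^{-m}\varphi\big(\tfrac{|x-x_0|}{r}\big)|M|\Big)
\]
enters twice: it controls the concentration part $F^\infty$-free measure so that $|M|$ and $|p|^2$ are comparable in an averaged sense (so the frequency defined with $M$ is comparable to the one with $|Df_l|^2$), and it provides a reverse-H\"older inequality that converts the $L^2$-Morrey decay of $M$ into an $L^{2p}$-Morrey decay, hence by Morrey's embedding (applied to the map $f$, whose gradient is controlled by $\int p\,d\nu_{x,f_l(x)}=Df_l$ via \eqref{eq.gradientmeasure02}, together with \eqref{eq.gradientmeasure03}) into H\"older continuity of $f$ on $B_1$ with exponent depending only on $\gamma$, $p$, and $m$.

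Concretely, after establishing the decay I would argue: \eqref{eq.gradientmeasure02} gives $|Df_l(x)|\le \big(\int |p|^2\,d\nu_{x,f_l(x)}\big)^{1/2}$, and by \eqref{eq.gradientmeasure03} $\int|p|^2\,d\nu_{x,f_l}=\operatorname{tr}\int M\,d\nu_{x,f_l}$, so $\sum_l|Df_l|^2\le F(x)(M_{ii}^{\alpha\alpha})$ pointwise a.e., whence the Morrey decay of the right-hand side transfers to $\int_{B_r(x_0)}\sum_l|Df_l|^2=\int_{B_r(x_0)}|Df|^2$; the reverse-H\"older bound then yields $\int_{B_r(x_0)}|Df|^{2p}\lesssim r^{m-2p+2p\gamma}$, and Morrey–Campanato for the single $Q$-valued Sobolev function $f$ (see \cite[Proposition 2.14]{DS}) gives $f\in C^{0,\gamma}(B_1,\Iq(\R^n))$.

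I expect the main obstacle to be the rigorous derivation of frequency almost-monotonicity at the level of the measure $\mathcal F$ rather than of a genuine $W^{1,2}$ map. Two difficulties compound here: first, the "energy" one differentiates in $r$ is the \emph{measure-energy} $\mathcal F(\varphi M_{ii}^{\alpha\alpha})$, which by \eqref{eq.gradientmeasure03} dominates but need not equal $\int\varphi\,|Df|^2$, so the usual cancellations in the first variation of the frequency (which rely on the Rellich/Pohozaev identity pairing inner and outer variations exactly) only produce inequalities unless one is on a stationary measure, where the equality in \eqref{eq.gradientmeasure03} is precisely what restores them; care is needed to see that the admissible test functions in \eqref{eq:outervar}--\eqref{eq:innervar}--\eqref{eq:strongouter} suffice (in particular that $F^\infty=0$ legitimizes the test function $M_{ii}^{\alpha\beta}\partial_{y^\beta}\varphi^\alpha$ with no $1$-homogeneous recession, as flagged in the footnote). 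Second, the smooth cutoff $\varphi(|x-x_0|/r)$ must be handled with the usual $\varepsilon\to 0$ limiting argument to recover sharp-ball quantities, and the higher integrability hypothesis has to be shown to be stable under this limit; I would treat $D_{x_0}(r)$, $H_{x_0}(r)$ as absolutely continuous functions of $r$ obtained as such limits, differentiate, and only at the end pass $\varepsilon\to0$. Everything else—the boundedness of the frequency from the global energy bound, the comparison $\frac{d}{dr}\log H_{x_0}(r)=\frac{2}{r}I_{x_0}(r)+O(1)$, the integration to get decay—is routine once the monotonicity is in place.
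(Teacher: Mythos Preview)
Your proposal has a genuine gap at the core step: frequency monotonicity gives only an \emph{upper} bound $I_{x_0}(0^+)\le I_{x_0}(r_0)$, not a lower bound, and it is a uniform \emph{lower} bound $I_{x_0}(0^+)\ge\gamma>0$ that is equivalent to the Morrey decay $r^{2-m}D_{x_0}(r)\lesssim r^{2\gamma}$ you claim. For Dir-minimizers this lower bound is obtained via comparison with competitors; for a stationary measure there is no such mechanism, and nothing in the hypotheses of the proposition forces $I_{x_0}(0^+)$ to be bounded away from zero. A second problem is your appeal to equality in \eqref{eq.gradientmeasure03}: the paper only asserts this for \emph{classical} solutions, i.e.\ for $\mathcal F=\mathcal E_f$, which is precisely the content of Proposition~\ref{prop:regformeas} and requires continuity of $f$ as a hypothesis---so you cannot invoke it here without circularity.

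The paper's argument is entirely different and does not use the frequency. It proves the one-scale decay $D(x_0,r)\le\theta\,D(x_0,1)$ for some $\theta<1$ by contradiction: along a normalized sequence with $D_{\mathcal F_k}(r)/D_{\mathcal F_k}(1)\to 1$, the formula \eqref{eq.D'} for $D'$ forces the radial part $\int_{B_{1/2}\setminus B_{1/3}}\sum_l|Df_l\cdot x/|x||^2\to 0$. One then tests the \emph{strong} outer variation \eqref{eq:strongouter} with a $y$-dependent vector field of concentration-compactness type, $\eta(x)\,\tilde\theta_k(y)$ with $\tilde\theta_k(y)=\sum_j\theta(|y-p^k_j|/s_k)(y-p^k_j)$ built from the splitting points of the averages $T_k$; the resulting identity bounds $\mathcal F_k(\eta|M|)$ by the vanishing radial term plus a term supported on $\{\G(f_k,S_k)>s_k\}$, whose measure goes to zero, and the higher integrability hypothesis is used exactly here (via H\"older) to make that contribution vanish. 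This contradicts $D_{\mathcal F_k}(r)\ge\theta_k\to 1$. Iterating the one-scale decay and using \eqref{eq.gradientmeasure03} as an inequality gives the Morrey bound for $|Df|^2$ and hence H\"older continuity.
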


While to prove continuity in the previous proposition we require both inner variation and strong outer variations to be zero, in the following proposition we do not require anything on the inner variation. This is consistent with the results in \cite{HS}.

\begin{proposition}[Regularity of continuous strongly outer stationary measures]\label{prop:regformeas}
    Let $\mathcal F\in \operatorname{grad}\mathcal Y^Q$ satisfy the strong outer variation and let $f$ be the associated $W^{1,2}(\Omega, \Iq(\R^n))$ map as in Proposition \ref{p:elementarylimit}. If $f$ is continuous then $f$ is a classical solution and  $\mathcal F=\mathcal E_f$, in particular \eqref{eq.gradientmeasure03} holds with equality.
\end{proposition}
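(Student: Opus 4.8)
The plan is to show that under the hypotheses—$\mathcal{F}$ satisfies the strong outer variation \eqref{eq:strongouter} and the associated map $f$ is continuous—the defect measures in \eqref{eq.gradientmeasure03} and \eqref{eq.gradientmeasure04} must vanish, so that $\nu_{x,f_l(x)}$ is supported at the single point $p = Df_l(x)$ for a.e.\ $x$, which forces $\mathcal{F} = \mathcal{E}_f$; the equation \eqref{eq:averageharmonic} plus the strong outer variation then immediately exhibit $f$ as a classical solution. First I would exploit continuity of $f$ to make the "frozen coefficient" trick rigorous: fix a Lebesgue point $x_0$ and a value $S = f(x_0)$; if $\sep(S) > 0$, continuity of $f$ gives a neighborhood on which $f$ splits into lower-multiplicity pieces via the retractions $\chi_i$ of \cite[Lemma 3.7]{DS}, so after an induction on $Q$ we reduce to the genuinely branched case where $S = Q\a{p_0}$, i.e.\ all $Q$ sheets collapse. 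Near such a point, continuity means $f$ is uniformly close to $Q\a{\eta\circ f}$, so on a small ball the relevant test functions $\varphi(x,y)$ in \eqref{eq:strongouter} can be taken to depend on $y$ only through a smooth cutoff localized near $S$, and we may insert $y$-polynomial test functions that probe the second moment $\int p_i^\alpha p_j^\beta\, d\nu_{x,y}$.

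The key computation is to feed into the strong outer variation a test function of the form $\varphi^\alpha(x,y) = \eta(x)\,\zeta(y)\,(y^\beta - c^\beta)$ with $\zeta$ a cutoff equal to $1$ near $S$ and $c = \eta\circ f(x_0)$; differentiating in $y$ produces the term $M_{ii}^{\alpha\beta}\partial_{y^\beta}\varphi^\alpha = M_{ii}^{\alpha\alpha}\eta(x)$ up to lower-order errors controlled by continuity and $\osc f$, while $p_i^\alpha\partial_i\varphi^\alpha$ contributes a first-order term. Comparing \eqref{eq:strongouter} with the outer variation \eqref{eq:outervar} (which $\mathcal{F}$ also satisfies, being a measure solution—note a stationary measure is in particular an inner and outer measure solution) isolates the trace $\mathcal{F}(M_{ii}^{\alpha\alpha}\eta) - (F\otimes dx)(p_i^\alpha p_i^\alpha \eta)$, i.e.\ the total defect $\sum_l \int (|M| - |p|^2)\,d\nu$ tested against $\eta \ge 0$. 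Since by \eqref{eq.gradientmeasure03} the integrand $\int M_{ii}^{\alpha\alpha} - p_i^\alpha p_i^\alpha\, d\nu_{x,f_l} \ge 0$ pointwise (taking the trace of the matrix inequality, which by \eqref{eq.gradientmeasure04} is also the $F^\infty$-part's sign), vanishing of its integral against all nonnegative $\eta$ forces it to be zero a.e.; combined with \eqref{eq.gradientmeasure04} (support in $\{M \ge 0\}$) and the fact that a nonnegative matrix with zero trace is zero, this kills both $F^\infty$ and the oscillation defect $\int (M_{ij}^{\alpha\beta} - p_i^\alpha p_j^\beta)\, d\nu = 0$. Then $\nu_{x,f_l(x)}$ has $\int |p - Df_l|^2\, d\nu = 0$, so $\nu_{x,f_l(x)} = \delta_{(Df_l(x), Df_l(x)\otimes Df_l(x))}$, giving $\mathcal{F} = \mathcal{E}_f$.

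Having identified $\mathcal{F} = \mathcal{E}_f$ with $f \in W^{1,2}$, it remains only to record that \eqref{eq:strongouter} and \eqref{eq:averageharmonic} are then exactly the statement that $\mathcal{E}_f$ is a stationary measure—the strong outer variation becomes $\int \sum_l \bigl(Df_l : D_x\varphi(x,f_l) + Df_l \otimes Df_l : D_y\varphi(x,f_l)\bigr) = 0$, which is the first variation of Dirichlet energy along outer deformations in the sense of \cite{DS}—so $f$ is a classical solution by definition. I would also note that the inner variation $\mathcal{I}(\mathcal{E}_f,\phi) = 0$ is inherited directly since $\mathcal{F}$ was assumed to be an inner and outer measure solution and $\mathcal{I}$ only sees the second moments, which we've now shown coincide with $Df_l \otimes Df_l$.

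The main obstacle I anticipate is the localization/freezing step: the test functions admissible in \eqref{eq:strongouter} are only those of the form considered in \cite{DS} (with the footnoted growth conditions), and to extract a clean pointwise matrix identity for $\nu$ one must justify that the $y$-dependence can be localized near the (moving) value $f(x)$ without the error terms—coming from $\osc_{B_r(x_0)} f$ times $|Df|^2$, which is only $L^1$—overwhelming the main term after dividing by $r^m$. This is where continuity of $f$ is essential and must be used quantitatively: the oscillation $\to 0$ as $r \to 0$ while the energy density stays $L^1$-bounded in the averaged sense, so a Lebesgue-point argument at a.e.\ $x_0$ pushes the error to zero. The induction on $Q$ via the retractions (handling the strata where $f$ is not fully collapsed) is routine given Step 1 of the proof of \eqref{eq.gradientmeasure02}, but one must check the retractions $\chi_i$ interact correctly with the strong outer variation—i.e.\ that $\chi_i \circ f$ still satisfies a strong outer variation on the relevant region, which follows because on that region $f = \chi_1\circ f + \chi_2\circ f$ identically and the variation is local and additive over the sheets.
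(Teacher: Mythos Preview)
Your induction scheme via the retractions on the non-collapsed region is correct and matches the paper. The gap is in the collapsed case.

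First, the quantity you call ``the defect'' is already zero: by the \emph{equality} part of \eqref{eq.gradientmeasure03} one has $\int M_{ij}^{\alpha\beta}\,d\nu_{x,f_l} = \int p_i^\alpha p_j^\beta\,d\nu_{x,f_l}$ for any gradient Young measure, so $\mathcal{F}(M_{ii}^{\alpha\alpha}\eta) - (F\otimes dx)(|p|^2\eta) = 0$ holds automatically and carries no information. What must be killed is the Jensen gap $\sum_l \bigl(\int |p|^2\,d\nu_{x,f_l} - |Df_l|^2\bigr)$, and your test function $\varphi^\alpha(x,y) = \eta(x)\zeta(y)(y^\alpha - c^\alpha)$ in \eqref{eq:strongouter} does not produce a $|Df_l|^2$ term---it yields
\[
\int \eta \sum_l \int |M|\,d\nu_{x,f_l} + \int \partial_i\eta \sum_l \partial_i f_l^\alpha(f_l^\alpha - c^\alpha) = 0\,,
\]
relating the measure-energy to a boundary term, not to the classical energy of $f$. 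Second, even if one reads this (after subtracting the harmonic average so $c=0$ and $Df_l=0$ a.e.\ on $\{f_l=0\}$) as an attempt to show the measure-energy vanishes on the collapsed set, the scaling fails: with $\eta$ a cutoff at scale $r$ the boundary term is $\lesssim r^{-1}\,\mathrm{osc}_{B_r}(f)\int_{B_r}|Df|$, so after dividing by $r^m$ one needs $\mathrm{osc}_{B_r}(f)/r \to 0$, i.e.\ differentiability, not mere continuity. Your Lebesgue-point remark overlooks the $r^{-1}$ coming from $\nabla\eta$.

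The paper circumvents this with a logarithmic cutoff in the $y$-variable: after reducing to $\eta\circ f\equiv 0$, it tests \eqref{eq:strongouter} with $\varphi(x,y) = \eta(x)\,\theta\bigl(\ln|y|/\ln\delta\bigr)\,y$. The $y$-derivative now carries a factor $1/|\ln\delta|$, so the error is $O(1/|\ln\delta|)$ while the main term converges as $\delta\to 0$ to the measure-energy on $\{|f|=0\}$, forcing it to vanish without any rate of continuity. This is the missing idea. (A minor point: the proposition assumes only the strong outer variation, so your closing appeal to $\mathcal{I}(\mathcal{E}_f,\phi)=0$ is not available from the hypotheses and is in any case unnecessary for the stated conclusion.)
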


\subsection{Basic properties of inner and outer measure solutions}

In this subsection we collect some basic properties of inner and outer measure solutions: compactness, harmonicity of the average, classification of $1$-dimensional solutions and monotonicity of an appropriate frequency function, defined as follows:
\begin{align}\label{eq.definitionD}
    D_{\mathcal F}(x_0,r)&=r^{2-m} \mathcal{F}\left(\varphi\left(\frac{|x-x_0|}{r}\right) M^{\alpha\alpha}_{ii}\right)\,,\\\label{eq.definitionH}
    H_{\mathcal F}(x_0,r)&= r^{-m} (F\otimes dx)\left( \psi\left(\frac{|x-x_0|}{r}\right) |y|^2\right) = \frac1{r^m}\int \psi\left(\frac{|x-x_0|}{r}\right) |f|^2 \,dx \,,\\\label{eq.definitionI}
    I_{\mathcal F}(x_0,r)&=\frac{D_{\mathcal F}(x_0,r)}{H_{\mathcal F}(x_0,r)}\,,
\end{align}
where $\varphi(t) \in C^\infty_c((-1,1))$ is a fixed non-increasing function satisfying $\varphi(t)=1$ for $t<\frac12$ and $\psi(t)=\frac{-\varphi'(t)}{t} \ge 0$. We will drop the subscript $\mathcal F$ when the measure is clear from the context.

\begin{remark}
As in the classical case one can consider the frequency as a scale independent object. To do so we introduce the scaling map 
\begin{equation}\label{eq.scalingmap}
\hat{\eta}^\lambda_{x_0,r}(x,y,p,M)=\left(\frac{x-x_0}{r},\lambda y, \lambda rp, \lambda^2 r^2M\right).     
\end{equation}
corresponding to the scaling $\eta_{x_0,r}(x)=\frac{x-x_0}{r}$ and the multiplication by the scalar $\lambda$. We will drop $\lambda$ for $\lambda =1$. Moreover, given $\mathcal F\in \operatorname{grad} \mathcal Y^Q$, we define the rescaled measure
\[
(\hat{\eta}^\lambda_{x_0,r})_\sharp\mathcal{F}=\mathcal{F}^\lambda_{x_0,r}\,,
\]
and we remark that a direct computation shows that if $\mathcal{E}_{f_k}$ generates $\mathcal{F}$ then $\mathcal{E}_{\lambda f_k\circ\eta^{-1}_{x_0,r}}$ generates $\mathcal{F}^\lambda_{x_0,r}$.
Moreover we have 
\[ \lambda^2 D_{\mathcal{F}}(x_0,r)= D_{\mathcal{F}^\lambda_{x_0,r}}(0,1) \qquad \lambda^2 H_{\mathcal{F}}(x_0,r)= H_{\mathcal{F}^\lambda_{x_0,r}}(0,1) \qquad \text{and}\qquad I_{\mathcal{F}}(x_0,r)= I_{\mathcal{F}^\lambda_{x_0,r}}(0,1)\,.\]

\end{remark}

\begin{proposition}[Properties of measure solutions]\label{prop.propertiesMeasureSolutions}
The following properties hold. 
\begin{enumerate}
    \item\label{weaklyClosed} The class of inner and outer measure solutions is weakly closed, that is if $(\mathcal F_k)_k$ is a sequence of measure solutions such that $\mathcal F_k(\phi)\to \mathcal F(\phi)$ for all $\phi \in \mathcal C(\Omega\times \mathbb V) $, then $\mathcal F$ is a measure solution. 
    \item\label{hHrmonic} Let $f\in W^{1,2}(\Omega, \Iq(\R^n))$ be a function associated to a measure solution $\mathcal F$, then $\eta\circ f$ is a classical harmonic function.
    \item\label{subtrackingHarmonic} If $\mathcal F=(F\otimes dx, F^\infty )$ is a inner and outer measure solution and $h\colon \Omega \to \R^n$ is a classical harmonic function, then $\tilde{ \mathcal F}=(H_\sharp F\otimes dx, F^\infty)$ is a inner and outer measure solution, where $H\colon \Omega\times \mathbb V \to \Omega\times \mathbb V  $ is defined by
    \[
    H(x, y, p, M):= (x, y-h(x), p_i^\alpha-\partial_ih^\alpha(x),M^{\alpha\beta}_{ij}-p_i^\alpha\, \partial_jh^\beta(x)-\partial_ih^\alpha(x)\, p_j^\beta+\partial_ih^\alpha(x)\, \partial_jh^\beta(x)) \,.
    \]
    Notice in particular that if $f$ is associated to $\mathcal F$, then $f\ominus h$ is associated to $\tilde{\mathcal F}$.
    \item\label{frequencyMonoton}  Let $r_0\in ]0,\dist(0,\partial \Omega)[$ be such that  $H(x_0,r_0)>0$, then the function $(r_0, \dist(x_0, \partial \Omega)) \ni r\mapsto I(x_0,r)$ is monotone non decreasing.
%, and moreover
%\begin{equation}\label{eq:freqremainder}
%    I(x_0,t)-I(x_0,s)\geq 2\int_{s}^tr^{1-n} \left( \int \frac{\varphi'(\frac{|x|}{r})}{|x|r} \sum_{l=1}^Q\left| Df_l(x)x - I(r) f_l\right|^2\right)\,dr
%\end{equation}
    \item\label{integrationH}
     Either $H(x_0, \dist(0,\partial \Omega))=0$ or $H(r)>0$ for any $0<r<R<\dist(0,\partial \Omega)$ and the following holds 
    \begin{equation}\label{eq.integrationH}
        \left(\frac{R}{r}\right)^{I(r)}\le \frac{H(R)}{H(r)} \le \left( \frac{R}{r}\right)^{I(R)}\,.
    \end{equation}
    \item\label{1dimensionalSolutions} If $\mathcal F\in \operatorname{grad} \mathcal Y^{Q}((-R,R)\times \mathbb V)$ is a inner and outer measure solution satisfying $\lim_{r\to0} H(0,r) = 0$, then $\mathcal F=\mathcal E_f$, for the associated function $f\in W^{1,2}(\R, \Iq(\R^n))$, and moreover there are $T_+,T_- \in \Iqs$ with $|T_+|=|T_-|$ such that
    \[ f(t)= \begin{cases} t \,T_+ &\text{ for } t>0\\ t\,T_- &\text{ for } t<0
    \end{cases}\,.\]
\end{enumerate}
\end{proposition}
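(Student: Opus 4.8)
The plan is to treat the seven items in order, since later ones rest on the earlier structural facts. For item \eqref{weaklyClosed}, weak closedness, I would note that each of the three defining functionals $\mathcal O(\cdot,\varphi)$, $\mathcal I(\cdot,\phi)$, $\mathcal F\mapsto\mathcal F(p_i^\alpha\partial_i\psi^\alpha)$ is, after fixing the test function, a pairing against a fixed element of the test class $\mathcal C(\Omega\times\mathbb V)$: in \eqref{eq:outervar} the function $p_i^\alpha y^\alpha\partial_i\varphi + M_{ii}^{\alpha\alpha}\varphi$ has recession $M_{ii}^{\alpha\alpha}\varphi$ (linear, hence $1$-homogeneous in $M$), in \eqref{eq:innervar} the function $(2M_{ij}^{\alpha\alpha}-M_{kk}^{\alpha\alpha}\delta_{ij})\partial_i\phi^j$ is itself $1$-homogeneous in $M$ and is its own recession, and in \eqref{eq:averageharmonic} the function $p_i^\alpha\partial_i\psi^\alpha$ has vanishing recession. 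Therefore each vanishes in the limit by the very definition of $\mathcal F_k\rightharpoonup\mathcal F$. One must also record that $\mathcal F\in\operatorname{grad}\mathcal Y^Q$ by Proposition \ref{le:weakclosed} together with a diagonal argument over generating sequences (or simply because $\operatorname{grad}\mathcal Y^Q$ is closed under this convergence, being defined as the closure of elementary measures).

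For item \eqref{hHrmonic}, harmonicity of $\eta\circ f$: test \eqref{eq:averageharmonic} with $\psi(x)=\varphi(x)e_\alpha$ for $\varphi\in C^\infty_c(\Omega)$ and use \eqref{eq.gradientmeasure02} of Proposition \ref{p:elementarylimit}, which gives $\sum_l\int p_i^\alpha\,d\nu_{x,f_l(x)}=\sum_l\partial_if_l^\alpha=Q\,\partial_i(\eta\circ f)^\alpha$; hence $\int_\Omega\partial_i(\eta\circ f)^\alpha\,\partial_i\varphi=0$ for all $\varphi$, so $\eta\circ f$ is weakly, thus classically, harmonic. For item \eqref{subtrackingHarmonic}, I would simply verify by direct substitution that, since $h$ is harmonic, the map $H$ transforms the three defining identities of $\tilde{\mathcal F}$ into the three defining identities of $\mathcal F$ plus terms that vanish because $\Delta h=0$; the bookkeeping for the $M$-variable uses $\partial_i\partial_jh^\alpha$ contractions that collapse under the trace thanks to harmonicity. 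One also checks $H_\sharp F\otimes dx$ still has the correct mass $Q$ a.e.\ (the $y$-shift does not affect $\chi_{\mathbb V}$) and that $\spt F^\infty$ is unchanged so \eqref{eq.gradientmeasure04} persists; and that $f\ominus h$ generates $\tilde{\mathcal F}$ by applying $H$ to the generating sequence.

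The heart of the proposition is item \eqref{frequencyMonoton}, monotonicity of the frequency $I(x_0,r)$, from which \eqref{integrationH} in item \eqref{integrationH} follows by the standard computation $\frac{d}{dr}\log H(r)=\frac{2}{r}I(r) + (\text{error})$ integrated between $r$ and $R$ (the two-sided bound in \eqref{eq.integrationH} comes from monotonicity of $I$). For monotonicity I would, exactly as in the classical Almgren frequency argument adapted to the measure setting: (a) differentiate $H(x_0,r)$ in $r$ and use the outer variation \eqref{eq:outervar} with a radial cutoff $\varphi(|x-x_0|/r)$ to relate $H'(r)$ to $D(x_0,r)$, getting $H'(r)=\frac{2}{r}D(r)+\frac{m-?}{r}H(r)$ up to the normalization built into the $r$-powers; (b) differentiate $D(x_0,r)$ and use the inner variation \eqref{eq:innervar} with the radial vector field $\phi(x)=\varphi(|x-x_0|/r)(x-x_0)$ to produce the Rellich–Nečas type identity, yielding $D'(r)\ge \frac{2}{r}\,\mathcal F(\varphi\,|p\cdot(x-x_0)/|x-x_0||^2\text{-type term})$; (c) combine (a) and (b) with the Cauchy–Schwarz inequality in the measure $F(x)\otimes dx$ — here the crucial point, and the main obstacle, is that one only has the inequality \eqref{eq.gradientmeasure03} $\int M_{ij}^{\alpha\beta}d\nu\ge\partial_if^\alpha\partial_jf^\beta$ rather than equality, so the Cauchy–Schwarz step must be arranged so that the defect terms enter with the favorable sign; and (d) conclude $\frac{d}{dr}\log I(r)\ge 0$. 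The main difficulty throughout is carrying out these integration-by-parts identities rigorously against a normal $0$-current $F\otimes dx$ plus a concentration part $F^\infty$ supported on $\{M\ge 0\}$: one must check that the radial test functions are admissible in $\mathcal C(\Omega\times\mathbb V)$ (their recession functions are the expected $1$-homogeneous traces) and that $F^\infty$ contributes with the correct sign in each identity — precisely because $\spt F^\infty\subset\{M\ge 0\}$ the concentration part only helps in the lower bounds for $D'$ and in the Cauchy–Schwarz defect.

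Finally item \eqref{1dimensionalSolutions}: on an interval, harmonicity of $\eta\circ f$ forces $\eta\circ f$ affine; subtracting it via item \eqref{subtrackingHarmonic} we reduce to $\eta\circ f\equiv 0$. The hypothesis $\lim_{r\to0}H(0,r)=0$ together with the monotonicity \eqref{eq.integrationH} and item \eqref{integrationH} forces $H>0$ away from $0$ and gives a finite frequency $I(0,0^+)=:\mu$; scaling invariance of $I$ and the compactness item \eqref{weaklyClosed} produce a blow-up limit at $0$ that is frequency-homogeneous of degree $\mu$, and on $\R$ a $W^{1,2}$ $\Iq$-valued homogeneous function of degree $\mu$ with $\eta\circ f=0$ must have $\mu=1$ (any other exponent is either not $W^{1,2}$ near $0$ or not locally bounded / fails the measure constraint), forcing $f(t)=tT_+$ for $t>0$, $f(t)=tT_-$ for $t<0$ with $T_\pm\in\Iqs$; the matching $|T_+|=|T_-|$ comes from continuity of $D(0,r)$ and $H(0,r)$ across $r$ (equivalently from the inner variation \eqref{eq:innervar} tested across the origin, which equates the one-sided energy densities). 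Since this $f$ is genuinely single-layered in its gradient, \eqref{eq.gradientmeasure03} holds with equality, i.e.\ $\mathcal F=\mathcal E_f$, as claimed.
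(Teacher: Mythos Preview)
Your treatment of items \eqref{weaklyClosed}--\eqref{integrationH} is correct and follows the same lines as the paper: the test functions in \eqref{eq:outervar}--\eqref{eq:averageharmonic} lie in $\mathcal C(\Omega\times\mathbb V)$, the identity $\tfrac12\,rH'(r)=D(r)$ (exact, no error term, thanks to the normalizations in \eqref{eq.definitionD}--\eqref{eq.definitionH}) combined with the inner-variation identity for $D'$ produces the two nonnegative remainders \eqref{eq.remainder1}--\eqref{eq.remainder2}, the first being nonnegative precisely by the matrix inequality \eqref{eq.gradientmeasure03}, and integration of $(\log H)'=\tfrac{2}{r}I$ gives \eqref{eq.integrationH}.

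There is, however, a genuine gap in your argument for item \eqref{1dimensionalSolutions}. First, the claim that a $\mu$-homogeneous $W^{1,2}$ $Q$-valued function on $\R$ forces $\mu=1$ is false: any $\mu>\tfrac12$ is compatible with local $W^{1,2}$ regularity. Second, your blow-up only yields a tangent \emph{measure} at the origin, not information about $\mathcal F$ on all of $(-R,R)$; you never return from the tangent to $f$ itself. Third, the assertion that ``$f$ single-layered in its gradient'' implies $\mathcal F=\mathcal E_f$ is unjustified: knowing the associated map $f$ does not determine the oscillation measures $\nu_{x,f_l(x)}$ or exclude a nontrivial $F^\infty$. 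The paper bypasses all of this by using the one-dimensional inner variation \emph{directly}: for $m=1$ the identity $\mathcal F(M_{11}^{\alpha\alpha}\phi')=0$ for all $\phi\in C^\infty_c$ says the total energy density is a constant $m_0$, so $D(r)=D_0\,r^2$; then $\tfrac12 rH'=D$ with $H(0)=0$ gives $H(r)=D(r)$, hence $I(r)\equiv 1$ on the \emph{entire} interval. This forces both remainders \eqref{eq.remainder1} and \eqref{eq.remainder2} to vanish identically, which simultaneously shows that $f$ is $1$-homogeneous and that equality holds a.e.\ in \eqref{eq.gradientmeasure03} (hence $\mathcal F=\mathcal E_f$, with $F^\infty=0$ coming from the constancy of the energy density). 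You in fact invoke the key $1$D inner-variation identity at the very end (for the matching $|T_+|=|T_-|$), but it should be the starting point of the argument rather than a finishing touch.
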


\begin{proof}
\noindent \emph{Proof of \ref{weaklyClosed}: } We observe that for any $\varphi \in C^\infty_c(\Omega)$ the test functions appearing in the inner and outer variations, i.e.  
\begin{align*}
    &\phi_1(x,y,p,M)= p_i^\alpha y^\alpha \partial_i \varphi+ M^{\alpha\alpha}_{ii} \varphi \quad\text{ for } \varphi \in C^\infty_c(\Omega)\\
    &\phi_2(x,y,p,M) = p_i^\alpha\partial_i\psi^\alpha \quad\text{ for } \psi \in \C^\infty_c(\Omega, \R^n)\\
    &\phi_3(x,y,p,M)= \left(2M_{ij}^{\alpha\alpha} - \delta_{ij} M^{\alpha\alpha}_{kk}\right)\partial_i\phi^j \quad\text{ for } \phi \in C^\infty_c(\Omega, \R^m),
\end{align*}
are all in the space of test functions $\mathcal{C}(\Omega\times \mathbb{V})$. Hence Proposition \ref{le:weakclosed} applies and the claim follows.
\medskip

\noindent \emph{Proof of \ref{hHrmonic}:}
We notice that for the test function $\phi_2(x,y,p,M)= p_i^\alpha\partial_i\psi^\alpha, \psi \in C^\infty_c(\Omega,\R^n)$ one has that the recession function $\phi_2^\infty =0$, hence we can appeal to \eqref{eq.gradientmeasure02} and deduce that 
\begin{equation}\label{eq:ave}
    0=\mathcal{F}(\phi_2) = Q \int \partial_i (\eta\circ f) \partial_i \psi \, dx\,,
\end{equation}
and the claim follows. 

\medskip

\noindent \emph{Proof of \ref{subtrackingHarmonic}:} Let $(\mathcal E_{f_k})_k$ be the generating sequence of $\mathcal F$, then $(\mathcal E_{f_k\ominus h})_k$ is the generating sequence of $\tilde{\mathcal F}$. Indeed let $\phi \in (\Omega\times \mathbb V)$. It is easy to check that
\[
(E_{f_k\ominus h}\otimes dx)(\phi)=(E_{f_k}\otimes dx)(\phi\circ H)\,.
\]
and moreover that
\[
\lim_{|v|\to\infty} \frac{|\phi(H(x,v))-\phi^\infty(x,v)|}{1+|y|^{2*}+|p|^2+|M|}
\le \lim_{|v|\to\infty} \frac{|\phi(H(x,v))-\phi^\infty (H(x,v))|}{1+|y|^{2*}+|p|^2+|M|}+\lim_{|v|\to\infty} \frac{|\phi^\infty(H(x,v)-\phi^\infty(x,v)|}{1+|y|^{2*}+|p|^2+|M|}=0\,,
\]
which implies that the recession function of $\phi$ is the same as that of $\phi\circ H$. This implies that
\[
\tilde{\mathcal F}(\phi)=\mathcal F(\phi \circ H)=\lim_k \mathcal E_{f_k}(\phi\circ H)=\lim_k \mathcal E_{f_k\ominus h}(\phi)\,,
\]
and so $\tilde{\mathcal F}$ is a $\Iq$ generalized gradient Young measure. In particular this also shows that if $f$ is associated to $\mathcal F$, then $f\ominus h$ is associated to $\tilde{\mathcal F}$, and so $\tilde {\mathcal F}$ satisfies \eqref{eq:averageharmonic}, by \eqref{eq:ave}, harmonicity of $h$ and the fact that 
\[
\eta\circ (f\ominus h)=(\eta\circ f)-Q\,h\,.
\]

Next we check \eqref{eq:outervar}. We compute
\begin{align*}
    \mathcal O(\tilde{\mathcal F}, \varphi)
        &=(H_\sharp F\otimes dx)(p_i^\alpha y^\alpha \partial_i\varphi+M_{ii}^{\alpha\alpha} \, \varphi)+ (F^\infty, M_{ii}^{\alpha\alpha} \, \varphi)\\
        &= \mathcal O(\mathcal F, \varphi)+(F\otimes dx)(\partial_ih^{\alpha}h^\alpha \partial_i\varphi+|Dh|^2 \varphi-\partial_ih^\alpha y^\alpha \partial_i\varphi-h^\alpha p_i^\alpha \partial_i\varphi-2 \partial_ih^\alpha p_i^\alpha \varphi )\\
        &=0+\int_\Omega \Div(\varphi\,h^\alpha\,\nabla h^\alpha)\,dx-Q\,\int_\Omega \left(\Div((\eta\circ f)^\alpha\,\varphi\,\nabla h^\alpha)+\Div(\varphi\, h^\alpha\,\nabla( \eta\circ f)^\alpha)\right)\,dx\\
        &=0\,,
\end{align*}
where in the second inequality we used that both $h$ and $\eta\circ f$ are harmonic, and in the last equality that $\varphi$ is compactly supported.

Finally we check \eqref{eq:innervar}. We have
\begin{align*}
    \mathcal I(\tilde{\mathcal F}, \phi)
    &=(H_\sharp F\otimes dx+F^\infty)\left((2 M_{ij}^{\alpha\alpha}-M_{kk}^{\alpha\alpha} \delta_{ij}) \partial_i\phi^j\right)\\
    &=\mathcal I(\mathcal F, \phi)+\underbrace{(F\otimes dx)\left((2 \partial_ih^\alpha\partial_jh^\alpha -\delta_{ij}\partial_kh^\alpha\partial_kh^\alpha)\,\partial_i\phi^j\right)}_{=0\,\text{ since }\Delta h=0}\\
     &\quad-2(F\otimes dx)\left(  (p_i^\alpha \partial_jh^\alpha+ p_j^\alpha \partial_ih^\alpha)\partial_i\phi^j+\Div(\phi) p_k^\alpha \partial_kh^\alpha\right)\\
     &=0-2Q\,\int_\Omega \left(  (\partial_i(\eta\circ f)^\alpha \partial_jh^\alpha+ \partial_j(\eta\circ f)^\alpha \partial_ih^\alpha)\partial_i\phi^j+\Div(\phi) \partial_k(\eta\circ f)^\alpha \partial_kh^\alpha\right)\,dx\\
     &=2Q\,\int_\Omega \left(  \partial_i(\eta\circ f)^\alpha( \partial_{ji}h^\alpha \,\phi^j+\partial_ih^\alpha\Div(\phi))+ \partial_j(\eta\circ f)^\alpha \partial_ih^\alpha\,\partial_i\phi^j\right)\,dx\\
     &=2Q\,\int_\Omega \left(  \partial_i(\eta\circ f)^\alpha\,\partial_j( \partial_{i}h^\alpha \,\phi^j)+ \partial_j(\eta\circ f)^\alpha \,\partial_i(\partial_ih^\alpha\,\phi^j)\right)\,dx=0\,,
\end{align*}
 where we used that $\eta\circ f$ and $h$ are harmonic respectively in the second to last and last equalities above.

\medskip\noindent \emph{Proof of \ref{frequencyMonoton}: }
The computations are essentially the same as in the classical case, but we present them here for completeness. We can assume without loss of generality that $x_0=0$ and write $D(x_0,r)=D(r)$ and analogously for $H$ and $I$.

Firstly we use the classical vector fields $\phi(x)= \varphi(\frac{|x|}{r})\,x$ in the the inner and $\varphi(\frac{|x|}{r})$ in the outer variations to obtain 
\begin{align*}
    0 &= \mathcal{F}\left((2-m)\varphi\left(\frac{|x|}{r}\right)M^{\alpha\alpha}_{kk}-\varphi'\left(\frac{|x|}{r}\right)\frac{|x|}{r} M^{\alpha\alpha}_{kk}\right) + 2 \mathcal{F}\left(\varphi'\left(\frac{|x|}{r}\right)\,\frac{|x|}{r} \,\frac{x^t}{|x|} M \frac{x}{|x|}\right)\,,\\
    0 &= \mathcal{F}\left(\varphi\left(\frac{|x|}{r}\right)M^{\alpha\alpha}_{kk}\right) + \int \frac{\varphi'\left(\frac{|x|}{r}\right)}{|x|r} \sum_{l=1}^Q f_l(x) \cdot Df_l(x)x\,dx\,,
\end{align*}
where we have used in the last equality \eqref{eq.gradientmeasure02}. 
These identities enable us to calculate the derivatives of $D$ and $H$:
\begin{align}\label{eq.D'}
    D'(r)&= r^{1-m} \mathcal{F}\left((2-m) \varphi\left(\frac{|x|}{r}\right) M_{kk}^{\alpha\alpha}-\varphi'\left(\frac{|x|}{r}\right)\frac{|x|}{r}M_{kk}^{\alpha\alpha}\right)\\
    &=-2r^{1-m} \mathcal{F}\left(\varphi'\left(\frac{|x|}{r}\right)\,\frac{|x|}{r} \,\frac{x^t}{|x|} M \frac{x}{|x|}\right)\,,\\\nonumber
    \frac12 r H'(r)&= r^{-m} \int \psi\left(\frac{|x|}{r}\right) \sum_l f_l(x) Df_l(x)\,x \, dx = D(r)\,,
\end{align}
where in the last equality we have used the $\psi(t)=-\frac{\varphi'(t)}{t}$. In particular these equalities show that $D(r)$ and $H(r)$ are monotone non decreasing, so that $H(r_0)>0$ implies $H(r)>0$ for every $r\in ]0, \dist(0, \partial \Omega)[$. This implies that $I(r)$ is well defined in this range. 
Now we can combine these last identities to calculate the derivative of the frequency. We have 
\begin{align}\nonumber
    \frac12 r H(r) I'(r) =& \frac12 r \left(D'(r)- I(r)H'(r)  \right)\\ =& r^{2-m} \mathcal{F}\left(-\frac{\varphi'(\frac{|x|}{r})}{|x|r} x^t M x\right) - r I(r)\,H'(r) + I^2(r)\,H(r) \\\label{eq.remainder1}
    =&r^{2-m} \left(\mathcal{F}\left(-\frac{\varphi'(\frac{|x|}{r})}{|x|r} x^t M x\right)- \int -\frac{\varphi'(\frac{|x|}{r})}{|x|r} \sum_{l=1}^Q |Df_l(x)x|^2\,d x \right) \\\label{eq.remainder2}
    &+r^{2-m} \left( \int -\frac{\varphi'(\frac{|x|}{r})}{|x|r} \sum_{l=1}^Q\left| Df_l(x)x - I(r) f_l\right|^2\right)\,.
\end{align}
Since by \eqref{eq.gradientmeasure03} the first term in the remainder above is nonnegative,  the proof is complete.

\medskip
\noindent \emph{Proof of \ref{integrationH}:}
The claim follows as for classical solutions by the identities established in the previous step and the monotonicity of $I$. Indeed we have  
\[
\frac{d}{dr} \ln(H(r)) = \frac{2}{r} \, \frac{\frac12rH'(r)}{H(r)} = \frac{2}{r} \frac{D(r)}{H(r)} = \frac{2}{r} I(r)\,.
\]
Integrating the above and appealing to the monotonicity of $I$ leads to \eqref{eq.integrationH}. This in particular implies that if $H(r_0)>0$ then $H(r)>0$ for all $0<r<r_0$. 

\medskip
\noindent \emph{Proof of \ref{1dimensionalSolutions}:}
Let $\mathcal{F}$ be a one-dimensional measure solution on an interval $I\subset \R$. The inner variation reads
\[ 0= \mathcal{F}( M^{\alpha\alpha}_{11} \phi' ) \qquad \text{ for all } \phi \in C^\infty_c(I, \R)\,.\]
This implies that there is a constant $m_0\geq0$ such that
\[ (F\otimes dx + F^\infty)(\varphi(x)M^{\alpha\alpha}_{11}) = \int \varphi(x) m_0 \, dx \qquad \forall \varphi \in C^\infty_c(I)\,.\]
In particular we deduce that due to \eqref{eq.gradientmeasure03} the map $f$ is Lipschitz continuous. Furthermore it implies that $D(r)= r^2 D_0$ for some $D_0\ge 0$. Using the above established identity $\frac12 r H'(r)=D(r)$ and the assumption $H(0)=0$ provides $H(r)=D(r)$ and therefore $I(r)=1$ for all $r<R$. Finally we can use the remainders in $I'(r)=0$: \eqref{eq.remainder1} tells us that equality holds a.e. in \eqref{eq.gradientmeasure03} and \eqref{eq.remainder2} that $f$ is $1$-homogeneous. Hence the claim follows. 
\end{proof}

Due to the fact that we have to keep track of the concentration part in the norm of the gradient $M$, we will provide all necessary adaptations related to the monotonicity of the frequency and its consequences. 

\begin{corollary}[{Constant frequency and homogeneity \cite[Corollary 3.16]{DS}}]\label{cor:homogeneousmeassol}
Let $\mathcal{F}$  be a measure solution then $I(0,r)\equiv \alpha$ if and only if $\mathcal F$ is $\alpha$-homogeneous, that is: 
\begin{align}\label{eq.alphaHomogeneous}
    f(\lambda x ) = \lambda^\alpha f(x) \text{ for } \lambda >0\quad \text{ and }\quad\int x^t M x \, d\nu_{x,f_l(x)} &= \alpha^2 |f_l(x)|^2 \text{ for a.e. } x\\\nonumber x^t M^{\alpha \alpha}x \res F^\infty&=0 \;.
\end{align}
\end{corollary}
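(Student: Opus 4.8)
The plan is to deduce both implications from the pointwise identity for $I'$ established in the proof of item~\ref{frequencyMonoton} of Proposition~\ref{prop.propertiesMeasureSolutions}. I would write $D(r)=D_{\mathcal F}(0,r)$, $H(r)=H_{\mathcal F}(0,r)$, $I(r)=I_{\mathcal F}(0,r)$; since $I$ is assumed to make sense, we are in the regime $H(r)>0$ (otherwise $f\equiv 0$ and there is nothing to prove). Combining \eqref{eq.D'}, \eqref{eq.remainder1} and \eqref{eq.remainder2} gives
\[
\tfrac12\,r\,H(r)\,I'(r)=A(r)+B(r),
\]
where $B(r):=r^{2-m}\int -\tfrac{\varphi'(|x|/r)}{|x|r}\sum_{l=1}^Q\big|Df_l(x)x-I(r)f_l(x)\big|^2\,dx\ge 0$ and $A(r):=r^{2-m}\big(\mathcal F(-\tfrac{\varphi'(|x|/r)}{|x|r}x^tMx)-\int -\tfrac{\varphi'(|x|/r)}{|x|r}\sum_l|Df_l(x)x|^2\,dx\big)$. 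The key preliminary observation is that $A(r)$ splits as a sum of two \emph{separately} nonnegative pieces: writing $\mathcal F=(F\otimes dx)+F^\infty$, the absolutely continuous part of $A(r)$ equals $r^{2-m}\int -\tfrac{\varphi'(|x|/r)}{|x|r}\sum_l\big(\int x^tMx\,d\nu_{x,f_l(x)}-|Df_l(x)x|^2\big)\,dx$, which is $\ge 0$ by \eqref{eq.gradientmeasure02}--\eqref{eq.gradientmeasure03}, while the concentration part equals $r^{2-m}F^\infty\big(-\tfrac{\varphi'(|x|/r)}{|x|r}\,x^tM^{\alpha\alpha}x\big)$, which is $\ge 0$ because $\spt F^\infty\subset\Omega\times\{M\ge 0\}$ by \eqref{eq.gradientmeasure04} and $-\varphi'(t)/t\ge 0$.

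For the implication ``$\alpha$-homogeneous $\Rightarrow I\equiv\alpha$'' I would simply change variables $x=ry$ in \eqref{eq.definitionH} and use $f(\lambda x)=\lambda^\alpha f(x)$ to obtain $H(r)=r^{2\alpha}H(1)$, hence $D(r)=\tfrac12 rH'(r)=\alpha H(r)$ and $I\equiv\alpha$; note that this direction uses only the homogeneity of $f$ itself, not the two moment identities in \eqref{eq.alphaHomogeneous}.

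For the converse, if $I\equiv\alpha$ then $I'\equiv 0$, so $A(r)=B(r)=0$ for all admissible $r$. Since the weight $-\varphi'(|x|/r)/(|x|r)$ is nonnegative and its support sweeps out $\Omega\setminus\{0\}$ as $r$ varies, $B(r)\equiv 0$ yields the Euler relation $\sum_{l}|Df_l(x)x-\alpha f_l(x)|^2=0$, i.e.\ $\sum_i x^i\partial_i f_l(x)=\alpha f_l(x)$ for a.e.\ $x$; arguing exactly as in \cite[Corollary 3.16]{DS}, this infinitesimal relation upgrades to $f(\lambda x)=\lambda^\alpha f(x)$ for every $\lambda>0$. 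From $A(r)\equiv 0$ and the separate nonnegativity recorded above, both summands vanish: the concentration part gives $x^tM^{\alpha\alpha}x\res F^\infty=0$, and vanishing of the absolutely continuous part forces equality in \eqref{eq.gradientmeasure03} in the direction $x\otimes e_\alpha$, i.e.\ $\int x^tMx\,d\nu_{x,f_l(x)}=|Df_l(x)x|^2$ a.e., which combined with the Euler relation reads $\int x^tMx\,d\nu_{x,f_l(x)}=\alpha^2|f_l(x)|^2$ a.e.; together these are exactly the three assertions of \eqref{eq.alphaHomogeneous}. The only content beyond \cite[Corollary 3.16]{DS} is the bookkeeping of the concentration current $F^\infty$ and the gradient moments $\nu_{x,f_l(x)}$ --- the point to get right being that $A(r)$ genuinely splits into two independently nonnegative terms --- and the one step I expect to require real care, the passage from the infinitesimal Euler relation to honest homogeneity of the $Q$-valued map $f$, can be imported verbatim from \cite{DS}.
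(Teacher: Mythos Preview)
Your proposal is correct and follows essentially the same route as the paper: both directions are read off from the decomposition $\tfrac12 r H(r) I'(r)=A(r)+B(r)$ with $A(r)\ge 0$ (split into the oscillation piece via \eqref{eq.gradientmeasure03} and the concentration piece via \eqref{eq.gradientmeasure04}) and $B(r)\ge 0$, exactly as you describe. The only cosmetic difference is that where you defer the passage from the Euler relation $Df_l(x)x=\alpha f_l(x)$ to genuine homogeneity of $f$ to \cite[Corollary~3.16]{DS}, the paper spells this step out explicitly via Fubini along rays, the $W^{1,2}$ selection principle \cite[Proposition~1.2]{DS}, and integration of the resulting ODE; since that argument uses no minimality, your deferral is legitimate.
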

\begin{proof}The proof is similar to the one given in \cite[corollary 3.16]{DS}.

If $f$ is $\alpha$-homogeneous then one clearly has $H(r)= r^{2\alpha} H_0$, but then $D(r)=\frac12 r H'(r) = \alpha r^{2\alpha} H_0$. Hence $I(r) \equiv \alpha$. This implies $I'(r)\equiv 0$ and therefore the second identity follows from the first part of the remainder in the derivative of $I$, that is \eqref{eq.remainder1}, and \eqref{eq.gradientmeasure03} and \eqref{eq.gradientmeasure04}.

If $I'(r)\equiv 0$ for all $r<r_0$, then the expression in the remainder \eqref{eq.remainder1} must be zero, which reasoning as above gives the second part of \eqref{eq.alphaHomogeneous}. It remains to show that $f$ is $\alpha$-homogeneous. The second remainder \eqref{eq.remainder2} being zero implies that 
    \[\int_{B_{r_0}} \sum_{l=1}^Q |Df_l(x)x-\alpha f_l(x)|^2 \,dx =0\,,\qquad \text{with }\alpha=I(0)\,. \]
Fubini's theorem implies that for a.e. $y \in \partial B_1$ we have $t \mapsto f(ty) \in W^{1,2}((0,r_0), \Iqs))$ and $\int_0^{r_0} \sum_{l=1}^Q |Df_l(ty)ty-\alpha f_l(ty)|^2\, dt=0$. Using the selection principle for $W^{1,2}$-functions \cite[Proposition 1.2]{DS} we can find $Q$ maps $\tilde{f}_l(t) \in W^{1,2}((0,r_0), \R^n)$ such that $f(ty) = \sum_{l=1}^Q \tilde{f}_l(t)$. Combining these two identities we deduce that $\tilde{f}'_l(t)t= \alpha \tilde{f}_l(t)$ for a.e. $t$. Integration provides the claim. 
\end{proof}

Next we introduce the usual blow-up sequence via frequency function and prove that its subsequential limits are homogeneous.

\begin{definition}[Frequency blow-up]
    Let $x_0\in \Omega$ and $r\in ]0,\dist(x_0,\partial \Omega[$ and assume that $D(x_0,r)>0$. We define the \emph{frequency blow up sequence} by
    \[\mathcal{F}_{x_0,r}= (\hat{\eta}_{x_0,r}^{\lambda_r})_\sharp \mathcal{F}\,,\qquad \text{with }\lambda_r:=\frac{1}{\sqrt{H_{\mathcal{F}}(x_0,r)}}\,.
    \]
    We will call the subsequential limits of the sequence $(\mathcal F_{x_0,r})_r$ \emph{tangent measures to $\mathcal F$ at $x_0$}.
\end{definition}

\begin{corollary}[Blow-ups]\label{cor.frequencyblowup}
    Let $\mathcal{F}$ be a inner and outer measure solution on $B_1$. Assume $\lim_{r\downarrow 0} H(r) = 0$ and $D_{\mathcal F}(\rho)>0$ for every $\rho\leq 1$. Then, for any sequence $\{\mathcal F_{\rho_k}\}$ with $\rho_k \downarrow 0$, a subsequence, not relabeled, converges weakly to a $\Iq$-generalized gradient Young measure $\hat{\mathcal F}$ with the following properties:
\begin{enumerate}
    \item $H_{\hat{\mathcal F}}(1)=1$ and $\hat{\mathcal F}$ is a inner and outer measure solution;
    \item $\hat{\mathcal F}$ is $\alpha$ homogeneous, that is \eqref{eq.alphaHomogeneous} holds with $\alpha=I_{\mathcal F}(0,0)>0$. 
\end{enumerate}
\end{corollary}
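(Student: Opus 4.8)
The plan is to follow the classical scheme for frequency blow-ups, adapting it to the language of $\Iq$-generalized gradient Young measures. First I would record the normalization: by the definition of $\mathcal F_{\rho_k}=(\hat\eta^{\lambda_{\rho_k}}_{0,\rho_k})_\sharp\mathcal F$ with $\lambda_{\rho_k}=H_{\mathcal F}(0,\rho_k)^{-1/2}$, the scaling identities in the Remark after \eqref{eq.definitionI} give $H_{\mathcal F_{\rho_k}}(0,1)=1$ and $D_{\mathcal F_{\rho_k}}(0,1)=I_{\mathcal F}(0,\rho_k)$ for every $k$. Since $r\mapsto I_{\mathcal F}(0,r)$ is monotone non-decreasing by Proposition \ref{prop.propertiesMeasureSolutions}\ref{frequencyMonoton} and $D_{\mathcal F}(\rho)>0$ for all $\rho\le 1$, the limit $I_{\mathcal F}(0,0)=:\alpha$ exists and is finite; and $\alpha>0$ since $I_{\mathcal F}(0,\rho)>0$ for $\rho\le 1$ by positivity of $D$ and $H$ together with \eqref{eq.integrationH}. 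In particular $\|\mathcal F_{\rho_k}\|_{\mathcal Y^Q}$ restricted to any fixed ball is uniformly bounded: the energy $\mathcal F_{\rho_k}(\varphi(|x|)M^{\alpha\alpha}_{ii})$ is controlled by $D_{\mathcal F_{\rho_k}}(0,1)=I_{\mathcal F}(0,\rho_k)\le \alpha+1$, and the $|y|^{2^*}$ part is controlled on any fixed ball by combining the Poincaré inequality for $W^{1,2}$ multivalued functions with the normalization $H_{\mathcal F_{\rho_k}}(0,1)=1$ and interior energy bounds obtained from monotonicity of $D$.

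Next I would extract the limit. By the uniform bound just described and Proposition \ref{le:weakclosed} (applied on an exhaustion $B_{R_j}$ with a diagonal argument), a subsequence of $(\mathcal F_{\rho_k})_k$ converges weakly to some $\hat{\mathcal F}\in\mathcal Y^Q$; since each $\mathcal F_{\rho_k}$ is itself generated by elementary measures $\mathcal E_{\lambda_{\rho_k}f_k\circ\eta^{-1}_{0,\rho_k}}$ (as remarked in the text), a further diagonal argument in $k$ shows $\hat{\mathcal F}\in\operatorname{grad}\mathcal Y^Q$. By Proposition \ref{prop.propertiesMeasureSolutions}\ref{weaklyClosed}, $\hat{\mathcal F}$ is again an inner and outer measure solution. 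The normalization $H_{\hat{\mathcal F}}(0,1)=1$ passes to the limit because $y\mapsto\psi(|x|)|y|^2$ is an admissible test function (it grows like $|y|^2\le 1+|y|^{2^*}$, has vanishing recession part, and is compactly supported in $x$), and because $H_{\mathcal F_{\rho_k}}(0,1)=1$ for all $k$ together with the lower bound $H(r)>0$ for $r<1$ from Proposition \ref{prop.propertiesMeasureSolutions}\ref{integrationH} rules out the measure degenerating; this gives claim (1).

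For claim (2), the point is to show $I_{\hat{\mathcal F}}(0,r)\equiv\alpha$ and then invoke Corollary \ref{cor:homogeneousmeassol}. The inclusion $I_{\hat{\mathcal F}}(0,r)\le\alpha$ follows from upper semicontinuity of $D$ under weak convergence (using lower semicontinuity of the concentration mass, \eqref{eq:lscYoung}) together with the continuity of $H$; the reverse inequality follows because, by the scaling identities, $I_{\hat{\mathcal F}}(0,r)=\lim_k I_{\mathcal F}(0,r\rho_k)\ge\lim_k I_{\mathcal F}(0,\rho_k)=\alpha$ using monotonicity of $I_{\mathcal F}$ for $r\ge$ some fixed value, and then monotonicity of $I_{\hat{\mathcal F}}$ (Proposition \ref{prop.propertiesMeasureSolutions}\ref{frequencyMonoton}) propagates it to all $r$. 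With $I_{\hat{\mathcal F}}\equiv\alpha$, Corollary \ref{cor:homogeneousmeassol} yields $\alpha$-homogeneity, i.e. \eqref{eq.alphaHomogeneous}. The main obstacle I expect is the verification that no energy or oscillation escapes to spatial infinity or concentrates on $\partial B_1$ when passing $H$ and $D$ to the limit — i.e. that the test functions $\psi(|x|)|y|^2$ and $\varphi(|x|)M^{\alpha\alpha}_{ii}$ really are admissible in $\mathcal C(\Omega\times\mathbb V)$ and that the weak convergence genuinely captures them without loss; this is exactly where the careful bookkeeping of the concentration part $F^\infty$ and the weak-closure Proposition \ref{le:weakclosed} must be used, rather than the softer classical arguments.
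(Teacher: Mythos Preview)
Your approach is essentially the same as the paper's---the classical frequency blow-up scheme---but you overcomplicate the two key steps, and your own stated ``main obstacle'' dissolves once you make the right observation.

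For the uniform $\mathcal Y^Q$-bound on $B_R$, the paper avoids Poincar\'e entirely: the scaling identities and \eqref{eq.integrationH} give directly
\[
\|\mathcal F_{\rho_k}\|_{\mathcal Y^Q(B_R)} \le \frac{1}{H_{\mathcal F}(\rho_k)}\bigl(H_{\mathcal F}(\rho_k R)+2D_{\mathcal F}(\rho_k R)\bigr) \le R^{I(r_0)}\bigl(1+2I_{\mathcal F}(r_0)\bigr),
\]
which is a cleaner route than controlling the $|y|^{2^*}$ part via Sobolev--Poincar\'e.

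More importantly, your worry about $D$ and $H$ passing to the limit is unfounded: the integrands $\varphi(|x|/r)\,M^{\alpha\alpha}_{ii}$ and $\psi(|x|/r)\,|y|^2$ are both admissible test functions in $\mathcal C(\Omega\times\mathbb V)$---the first is $1$-homogeneous in $M$ so equals its own recession function, and the second has vanishing recession since $|y|^2=o(1+|y|^{2^*})$. Hence weak convergence in $\mathcal Y^Q$ gives $D_{\hat{\mathcal F}}(R)=\lim_k D_{\mathcal F_{\rho_k}}(R)$ and $H_{\hat{\mathcal F}}(R)=\lim_k H_{\mathcal F_{\rho_k}}(R)$ \emph{exactly}, with no semicontinuity losses. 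The paper then concludes in one line:
\[
I_{\hat{\mathcal F}}(R)=\lim_k I_{\mathcal F_{\rho_k}}(R)=\lim_k I_{\mathcal F}(\rho_k R)=I_{\mathcal F}(0)=\alpha
\]
for every $R$, since $\rho_k R\to 0$. Your upper/lower semicontinuity maneuver is thus unnecessary, and your inequality ``$I_{\mathcal F}(0,r\rho_k)\ge I_{\mathcal F}(0,\rho_k)$ for $r\ge$ some fixed value'' is both awkward (it only works for $r\ge 1$) and superfluous---the limit is already $\alpha$ regardless of $r$.
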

\begin{proof}
    Let $R>1$ be arbitrarily fixed, then for any $r>0$ such that $rR<r_0\le 1$ we have
    \[\norm{\mathcal{F}_{r}}_{\mathcal{Y}^Q} \le \frac{1}{H_{\mathcal F}(r)} \left(H_{\mathcal F}(rR) + 2D_{\mathcal F}(rR) \right)\le R^{I(r_0)}\left( 1 + 2 I_{\mathcal F}(r_0)\right)\,,\] 
    where we have used the monotonicity of $I$ and \eqref{eq.integrationH}. Hence for any $\rho_k \downarrow 0$ the sequence $\mathcal{F}_k = \mathcal{F}_{x_0,\rho_k}$ is uniformly bounded on $B_R$ for any $R>1$. Appealing to Propositions \ref{le:weakclosed} and \ref{prop.propertiesMeasureSolutions} we can pass to a subsequence converging weakly to a inner and outer measure solution $\hat{\mathcal{F}}\in \operatorname{grad}\mathcal Y^Q(\R^m\times\mathbb V)$. As observed above, the test functions in the definitions of $D_\mathcal{F}$ and $H_\mathcal{F}$ are in the class of test functions $\mathcal{C}(B_1 \times \mathbb{V})$, so that for any $R>1$
    % \begin{align*}
    %     &D_{\hat{\mathcal{F}}}(x_0,R) = \lim_{k\to \infty} D_{\mathcal{F}_{\rho_k}}(x_0,R) = H(\rho_k)^{-2} D_{\mathcal{F}}(\rho_k x_0, \rho_k R)\\
    %     & H_{\hat{\mathcal{F}}}(x_0,R) = H_{\mathcal{F}_{\rho_k}}(x_0,R) = H(\rho_k)^{-2} H_{\mathcal{F}}(\rho_k x_0, \rho_k R)\;.
    % \end{align*}
     \begin{align*}
        %& H_{\hat{\mathcal{F}}}(R) = \lim_{k\to \infty} H_{\mathcal{F}_{\rho_k}}(R) =\lim_{k\to \infty} H(\rho_k)^{-1} H_{\mathcal{F}}(\rho_k R)= \lim_{k\to \infty} \frac{H(R\rho_k)}{H(\rho_k)} \le R^{I(r_0)}\\
        %&D_{\hat{\mathcal{F}}}(R) = \lim_{k\to \infty} D_{\mathcal{F}_{\rho_k}}(R) =\lim_{k\to \infty} H(\rho_k)^{-1} D_{\mathcal{F}}(\rho_k R)= \lim_{k\to \infty} \frac{H(R\rho_k)}{H(\rho_k)} I(\rho_k R) \le R^{I(r_0)} I(0)\\
        &I_{\hat{\mathcal{F}}}(R)= \lim_{k\to \infty} I_\mathcal{F}(\rho_k R) = I(0)\,.
    \end{align*}
    Therefore  Corollary \ref{cor:homogeneousmeassol} applies, and the conclusion follows.
\end{proof}

Next we consider blow-ups of homogeneous functions.

\begin{lemma}[Cylindrical blow-ups {\cite[Lemma 3.24]{DS}}]\label{lem:cylbu}
    Let $\mathcal F\in \operatorname{grad}\mathcal Y^Q(\R^m\times\mathbb V)$ be a inner and outer measure solution which is $\alpha=I_{\mathcal F}(0)$-homogeneous, that is it satisfies \eqref{eq.alphaHomogeneous}, and such that $H_{\mathcal F}(0,1)>0$. Suppose moreover that $\lim_{r\to 0}H_{\mathcal F}(e_1,r)=0$. Then any tangent measure $\hat{\mathcal F}=(F_{x_1} \otimes dx', \hat{F}^\infty_{x_1})\otimes dx_1 $ to $\mathcal F$ at $e_1$ satisfies:
    \begin{enumerate}
        \item $\hat{\mathcal F} $ is $\alpha$ homogeneous and $H_{\hat{\mathcal F}}(0,1)=1$;
        \item if $\hat{f}$ is the map associated to $\hat{\mathcal F}$, then $\hat{f}(x_1,x_2,\dots,x_m)=\hat{f}(x_2,\dots,x_m)$ and $\hat{f}(s e_1)=Q\a{0}$. Moreover we have for a.e. $x$ and any $l=1,\dots,Q$
        \[ \int M e_1 \, d\nu_{x, \hat{f}_l(x)} = 0\,. \]
        \item Inner and outer variations for $\hat{\mathcal F}$ do not depend on $x_1$, that is denoting with $x'=(x_2,\dots,x_n)$ we have for a.e. $x_1\in \R$ that $\mathcal{F}_{x_1}=(F_{x_1} \otimes dx', \hat{F}^\infty_{x_1})$ is a $\A_Q$ stationary gradient Young measure i.e.  
        \begin{align*}
        	\mathcal{F}_{x_1}\left(p_j^\alpha y^\alpha \partial_j\varphi +\varphi M_{jj}^{\alpha\alpha} \right) &=0 \quad  \forall \varphi \in C^\infty_c(\R^{m-1})\\
        	\mathcal{F}_{x_1}\left((2M^{\alpha\alpha}_{ij}-M^{\alpha\alpha}_{kk}\delta_{ij})\partial_i\phi^j\right)&=0 \quad \forall \phi \in C^\infty_c(\R^{m-1},\R^{m-1}) 
        \end{align*}

        %{\color{red}
        %\begin{gather*}
         %   \int\sum_{l=1}^Q \partial_j\hat{f}_l(x')\hat{f}_l(x')\partial_j\varphi(x') \,dx'+ \int \varphi(x')M^{\alpha\alpha}_{jj}\, d\nu_{(x_1,x'),\hat{f}_l(x')}\,dx'=0\quad \forall \varphi \in C^\infty_c(\R^{m-1})\,,\\
         %   \int (2M^{\alpha\alpha}_{ij}-M^{\alpha\alpha}_{kk}\delta_{ij}) \, d\nu_{(x_1,x'),\hat{f}_l(x')}\, \partial_i\phi^j(x')=0 \quad \forall \phi \in C^\infty_c(\R^{m-1},\R^{m-1})\,,
        %\end{gather*}}
        where $i,j=2,\dots,n$. Moreover $\eta \circ \hat{f}(x')$ is harmonic.
    \end{enumerate}

\end{lemma}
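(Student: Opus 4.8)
The plan is to follow the same strategy as in \cite[Lemma 3.24]{DS}, but carefully tracking the concentration part $F^\infty$ through each step. First I would record that by the frequency-blow-up machinery (Corollary \ref{cor.frequencyblowup}, applied with center $e_1$) any tangent measure $\hat{\mathcal F}$ at $e_1$ is well-defined whenever $D_{\mathcal F}(e_1,r)>0$ for all small $r$; when $D_{\mathcal F}(e_1,r)=0$ for some $r$ the monotonicity forces $f\equiv Q\a{\etab\circ f}$ near $e_1$, which combined with homogeneity and harmonicity of $\etab\circ f$ gives the conclusion trivially, so we may assume $D_{\mathcal F}(e_1,r)>0$. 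Then Corollary \ref{cor.frequencyblowup} immediately gives item (1): $\hat{\mathcal F}$ is an $\alpha'$-homogeneous inner and outer measure solution with $H_{\hat{\mathcal F}}(0,1)=1$, where $\alpha'=I_{\mathcal F}(e_1,0)$; the point is to identify $\alpha'=\alpha$. This uses the $\alpha$-homogeneity of $\mathcal F$ about the origin, exactly as in the classical case: the homogeneity about $0$ lets one compute $I_{\mathcal F}(e_1,r)$ for small $r$ and show the limit equals $\alpha$ (one uses that $\mathcal F$ restricted to a neighborhood of $e_1$ ``looks like'' a solution with the same frequency, via the algebraic identity relating the Almgren frequency at $0$ and at $e_1$ for homogeneous maps).

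For item (2), the key input is $I_{\hat{\mathcal F}}\equiv\alpha$ together with homogeneity about \emph{two} points, namely $0$ and $e_1$ — more precisely, $\hat{\mathcal F}$ is $\alpha$-homogeneous about $0$, and one shows using the remainder term \eqref{eq.remainder2} in the frequency derivative (evaluated now with center $e_1$, which vanishes because $I$ is constant) that the map is also translation-invariant in the $e_1$ direction. Concretely: constancy of the frequency about $e_1$ forces $\int |Df_l(x)(x-e_1)-\alpha\hat f_l(x)|^2=0$ a.e., while constancy about $0$ gives $\int|Df_l(x)x-\alpha\hat f_l(x)|^2=0$; subtracting, $Df_l(x)e_1=0$ a.e., hence $\hat f(x_1,x')=\hat f(x')$ after the selection argument of \cite[Proposition 1.2]{DS}. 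The corresponding statement for the Young measure, $\int Me_1\,d\nu_{x,\hat f_l(x)}=0$, follows from the sharpened version of \eqref{eq.alphaHomogeneous}: the constancy of $I$ about both points forces equality a.e. in \eqref{eq.gradientmeasure03}, so $\int M_{ij}^{\alpha\beta}d\nu = \partial_i\hat f_l^\alpha\partial_j\hat f_l^\beta$, and the $i=1$ components vanish. The condition $x^tM^{\alpha\alpha}x\res F^\infty=0$ from \eqref{eq.alphaHomogeneous}, applied now about $e_1$, combined with homogeneity about $0$, gives $(e_1)^tM\cdot\text{(something)}\res\hat F^\infty$ type identities that upgrade to $Me_1\res\hat F^\infty=0$; one should check the concentration part separately since it is not covered by the disintegration $\nu_{x,y}$. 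That $\hat f(se_1)=Q\a{0}$ follows from $H_{\hat{\mathcal F}}(0,r)\to 0$ as $r\to 0$ (the defining property of this tangent) combined with $\alpha$-homogeneity and translation invariance: if $\hat f$ doesn't depend on $x_1$, then along the $x_1$-axis $|\hat f|$ is constant, and homogeneity about $0$ forces that constant to be $0$.

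Item (3) is then essentially bookkeeping: since $\hat f$ and $\hat{\mathcal F}$ are independent of $x_1$, one disintegrates $\hat{\mathcal F} = (\mathcal F_{x_1})\otimes dx_1$ with $\mathcal F_{x_1}$ a measure on the $(x_2,\dots,x_m)$-slice, and plugs test functions $\varphi, \phi$ depending only on $x'$ into the inner and outer variations \eqref{eq:outervar}, \eqref{eq:innervar} for $\hat{\mathcal F}$. Because nothing depends on $x_1$, the $x_1$-derivative terms drop and one is left exactly with the $(m-1)$-dimensional inner/outer variation identities for $\mathcal F_{x_1}$, for a.e.\ $x_1$ (using Fubini and a countable dense family of test functions). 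Harmonicity of $\etab\circ\hat f(x')$ is inherited from Proposition \ref{prop.propertiesMeasureSolutions}(\ref{hHrmonic}) applied to $\hat{\mathcal F}$ together with the $x_1$-independence. The main obstacle, I expect, is the careful treatment of the concentration measure $\hat F^\infty$ throughout: the disintegration theorem of Proposition \ref{p:elementarylimit} only controls the oscillation part $F(x)$ via $\nu_{x,y}$, so the translation-invariance, the vanishing $\int Me_1 d\nu$, and the slicing in item (3) all require separately verifying that $\hat F^\infty$ is concentrated on $\{M e_1 = 0\}$ (equivalently $\spt\hat F^\infty\subset\{M\ge 0,\ Me_1=0\}$) and is itself a product in $x_1$; this is where one must genuinely go beyond \cite{DS} and exploit \eqref{eq.gradientmeasure04} and the constant-frequency identities \eqref{eq.alphaHomogeneous} simultaneously about the origin and about $e_1$.
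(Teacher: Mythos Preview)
Your overall strategy aligns with the paper's, but there is a missing step and a genuine error in your treatment of the measure part of item (2).

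\textbf{The missing step.} You assert that the frequency of $\hat{\mathcal F}$ is constant about the point $e_1$, then invoke the vanishing remainders \eqref{eq.remainder1}--\eqref{eq.remainder2} at that center. But Corollary~\ref{cor.frequencyblowup} only yields constancy of $I_{\hat{\mathcal F}}$ about the \emph{origin}; nothing yet says anything about $I_{\hat{\mathcal F}}(e_1,\cdot)$. The paper fills this gap by applying the scaling identity $I_{\mathcal F}(x_0,r)=I_{\mathcal F}(\lambda x_0,\lambda r)$ (a consequence of $\alpha$-homogeneity of $\mathcal F$ about $0$) \emph{at the level of the blow-up sequence} $\mathcal F_k=\mathcal F_{e_1,r_k}$: one computes
\[
I_{\mathcal F_k}(te_1,r)=I_{\mathcal F}\bigl((1+tr_k)e_1,\,r_kr\bigr)=I_{\mathcal F}\Bigl(e_1,\,\tfrac{r_k r}{1+tr_k}\Bigr)=I_{\mathcal F_k}\Bigl(0,\,\tfrac{r}{1+tr_k}\Bigr)
\]
and passes to the limit to obtain $I_{\hat{\mathcal F}}(te_1,r)=I_{\hat{\mathcal F}}(0,r)=\alpha$ for \emph{every} $t\in\R$. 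Only then does one get homogeneity of $\hat{\mathcal F}$ about each $te_1$, from which the translation invariance of $\hat f$ along $e_1$ follows via the identity $\hat f(x)=\hat f\bigl(x+(\lambda-\lambda^{-1})e_1\bigr)$.

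\textbf{The error.} Your claim that ``constancy of $I$ about both points forces equality a.e.\ in \eqref{eq.gradientmeasure03}, so $\int M_{ij}^{\alpha\beta}d\nu=\partial_i\hat f_l^\alpha\partial_j\hat f_l^\beta$'' is incorrect. Vanishing of the remainder \eqref{eq.remainder1} at a center $x_0$ only gives equality when the matrix inequality is tested against the single radial direction $(x-x_0)\otimes(x-x_0)$ (and only after tracing in $\alpha$), not the full matrix equality. The paper instead uses constancy at \emph{all} $te_1$: setting
\[
g(t)=\hat{\mathcal F}\bigl(\phi(x)\,(x-te_1)^tM^{\alpha\alpha}(x-te_1)\bigr)-\int\phi(x)\sum_l|D\hat f_l(x)(x-te_1)|^2\,dx,
\]
one has $g(t)\equiv 0$, and differentiating twice gives $0=g''(t)=2\,\hat{\mathcal F}(\phi\,M^{\alpha\alpha}_{11})$ (using $\partial_1\hat f=0$). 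Positivity of $M^{\alpha\alpha}$ on the supports of $\nu_{x,\hat f_l(x)}$ and $\hat F^\infty$ then yields $\int M^{\alpha\alpha}_{i1}\,d\nu=0$ and $M^{\alpha\alpha}_{11}\res\hat F^\infty=0$. Your two-center idea \emph{can} be salvaged, but not via full equality in \eqref{eq.gradientmeasure03}: rather, the traced matrix $A_{ij}:=\int M^{\alpha\alpha}_{ij}\,d\nu-\partial_i\hat f_l\cdot\partial_j\hat f_l$ is positive semidefinite, so $x^tAx=0$ and $(x-e_1)^tA(x-e_1)=0$ force $Ax=A(x-e_1)=0$ and hence $Ae_1=0$. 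This is not the argument you wrote, and it still presupposes the missing step above.
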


\begin{proof}
Given any sequence $r_k\downarrow 0$, we consider a related blow-up sequence $\mathcal{F}_k=\mathcal{F}_{e_1, r_k}$ as in Corollary \ref{cor.frequencyblowup}. We may assume that we fixed a converging subsequence with $\mathcal{F}_k \to \hat{\mathcal{F}}$. Then (1) follows immediately from Corollary \ref{cor.frequencyblowup}.

Observe that since $\mathcal{F}$ is homogeneous we have for any $x_0, r,\lambda>0$ that $I_{\mathcal{F}}(x_0, r)= I_{\mathcal{F}}(\lambda x_0, \lambda r)$. Using this in the second equality below, we obtain for any $t\in \R$ and $r>0$
\[I_{\mathcal{F}_k}(t e_1, r)= I_{\mathcal{F}}(e_1 + tr_k e_1, r_k r) = I_{\mathcal{F}}\left(e_1, \frac{r_k}{1+t r_k} r\right)=I_{\mathcal{F}_k}\left(0, \frac{r}{1+ tr_k}\right)\,.\]
Taking the limit $k\to \infty$ we obtain 
\begin{equation}\label{eq.splittingFrequency}
    I_{\hat{\mathcal F}}(te_1, r) = I_{\hat{\mathcal F}}(0,r)=\alpha\qquad \forall t\in \R\,, r>0.
\end{equation}
Thus we can appeal to Corollary \ref{cor:homogeneousmeassol} and deduce that $(\hat{\eta}_{te_1, 1})_\sharp \hat{\mathcal{F}}$ is $\alpha$ homogeneous for every $t\in \R$. In particular $\hat{f}(e_1+\lambda y)=\lambda^\alpha \hat{f}(e_1+y)$, so that  
\[ 
\hat{f}(x) = \lambda^{-\alpha} \hat{f}(\lambda x) = \lambda^{\alpha} \hat{f}(e_1 + \lambda^{-1}\left(\lambda x -e_1\right)) = \hat{f}(x + (\lambda - \lambda^{-1})e_1)\,.  
\]
Hence $\hat{f}$ is invariant along the line $e_1$, which proves (2).
Finally we want to show that for a.e. $x$ and any $l$
\[ \int_{\mathbb V} M^{\alpha \alpha}_{i1} \, d\nu_{x, \hat{f}_l(x)} = 0 \quad \text{ for all } i\qquad \text{and}\qquad M^{\alpha\alpha }_{11}\res F^{\infty}\,. \]
Consider for a fixed non-negative function $\phi(x)$
\[ g(t)= \hat{\mathcal{F}}(\phi(x)(x-te_1)^t M^{\alpha\alpha} (x- te_1) ) - \int \phi(x) \sum_{l=1}^Q |D\hat{f}_l(x)(x-te_1)|^2 \, dx\,. \]
Due to \eqref{eq.splittingFrequency}, the remainder \eqref{eq.remainder2} at $x_0=te_1$ must be zero, so that $g(t) \equiv 0$. We may differentiate twice in $t$ and conclude that  
\[0 = g''(t) = \hat{\mathcal{F}}(\phi(x) M^{\alpha \alpha}_{11} ) - \int \phi(x) \sum_{l=1}^Q |D\hat{f}_l(x)e_1|^2= \hat{\mathcal{F}}(\phi(x) M^{\alpha\alpha}_{11} )\,,\]
where in the last equality we used that $\hat{f}$ is independent of $e_1$. This concludes since $\int_{\mathbb{V}_2\times \mathbb{V}_3} M^{\alpha\alpha}_{ij}\nu_{x, f_l(x)}$ and $F^\infty$ are supported by \eqref{eq.gradientmeasure03} and \eqref{eq.gradientmeasure04} on positive definite matrices.

It remains to show that every slice $x_1=t$ is itself a measure solution. We will use $x' =(x_2, \dotsc, x_n) \in \R^{n-1}$. Since $\partial_1 (\eta \circ \hat{f})=0$ and $\Delta (\eta \circ \hat{f})=0$ we clearly have \eqref{eq:averageharmonic} for the map $\hat{f}(x')$. 

Applying the disintegration theorem we have $\hat{\mathcal{F}}=(\hat{F}\otimes dx, \nu^\infty_{x_1} \otimes d\lambda(x_1))$, with $d\lambda(x_1)=\rho(x_1)\, dx_1 + d\lambda^{\sing}$. Since $\hat{F}$ itself is an $\A_Q$-gradient Young measure we deduce as a consequence of Fubini's theorem that $\hat{\mathcal{F}}_{x_1}=(\hat{F}_{x_1}\otimes dx', \rho(x_1) \nu^\infty_{x_1})$ is a $\A_Q$-gradient Young measure.\footnote{This can be seen as follows: Let $\mathcal{E}_{f_k} \rightharpoonup \mathcal{F}$ and $(1+|f_k|^{2p^*}+|Df_k|^2) \,dx \to \mu=\nu_{x_1}\otimes d\sigma(x_1)$. Since for every $x_1\notin \spt(d\sigma^\sing)$ we have $\lim_k \int (1+|f_k(x_1,x')|^{2p^*} + |Df_k(x_1, x')|^2) \, dx' < \infty$, we deduce that $\mathcal{E}_{f_k(x_1, \cdot)} \rightharpoonup \mathcal{F}_{x_1}$ in the sense of $\A_Q$-gradient young measures. On the other hand applying the disintegration theorem to $\hat{F}^\infty=\nu^\infty \otimes d\lambda(x)1)$ i.e. $\mathcal{F}=(F_{x_1}\otimes dx')\otimes dx_1 + \nu^\infty \otimes d\lambda(x_1)$ we deduce that for any admissible test function $\psi=\psi(x',y,p,M)$ independent of $p_1, M_{1\cdot}$ and $\eta\in C_c^\infty(\R)$ 
\begin{align*}
    &\int \eta(x_1)\, \left( (F_{x_1}\otimes dx')(\psi) \, dx_1 + \nu^\infty(\psi)\, d\lambda(x_1)\right)= \mathcal{F}(\eta(x_1)\psi) = \lim_k \mathcal{E}_{f_k}(\eta(x_1)\psi)\\
    &=\lim_k \int \eta(x_1) \mathcal{E}_{f_k(x_1, \cdot)}(\psi)\,.
\end{align*}
Hence we deduce that for a.e. $x_1$ we have $\mathcal{F}_{x_1}= (F_{x_1}\otimes dx') + \rho(x_1)\nu^\infty$\,.} 

\smallskip

\noindent\emph{Outer variations:} For any $\eta(x_1)\in C^1_c(\R), \varphi(x')\in C^1_c(\R^{n-1})$, implicitly summing in $j=2, \dots, n$, we have 
\begin{align*}
    0 &= \hat{\mathcal{F}}(p_1^\alpha y^\alpha \eta'(x_1)\varphi(x') + \eta(x_1) p_j^\alpha y^\alpha \partial_j\varphi(x') + \eta(x_1)\varphi(x') (M_{11}^{\alpha\alpha}+ M_{jj}^{\alpha\alpha}))\\& = \hat{\mathcal{F}}(\eta(x_1) \left(p_j^\alpha y^\alpha \partial_j\varphi(x') +\varphi(x') M_{jj}^{\alpha\alpha} \right) )\\&
    %= \int \eta(x_1) \left( \sum_{l=1}^Q \partial_j\hat{f}_l(x')\hat{f}_l(x')\partial_j\varphi(x') + \int \varphi(x')M^{\alpha\alpha}_{jj}\, d\nu_{x,\hat{f}_l(x')}\right) \, dx,
\end{align*}
where in the second equality we used (2) in the statement.
Note that the above implies that for $\Psi_O= \left(p_j^\alpha y^\alpha \partial_j\varphi(x') +\varphi(x') M_{jj}^{\alpha\alpha} \right)$ we have \[(\hat{F}_{x_1}\otimes dx')(\Psi_O)\, dx_1 + \nu^\infty_{x_1}(\Psi_O)\, d\lambda \equiv 0 \,. \]
This in particular implies that $\nu^\infty_{x_1}(\Psi_O)\, d\lambda= \nu^\infty_{x_1}(\varphi M^{\alpha \alpha}_{jj})\, d\lambda$ does not have a singular part for any of these $\Psi_O$'s. Since $\nu^\infty_{x_1}$ is supported on the non-negative matrices and $\varphi$ is arbitrary, this implies that $d\lambda$ does not have a singular part, i.e. $d\lambda=\rho(x_1) \, dx_1$ \footnote{One could this information to show that every $\mathcal{F}_{x_1}$ is a $\A_Q$-gradient young measure that is stationary for every $x_1$.}.  Setting $\hat{F}^\infty_{x_1}:=\rho(x_1)\nu^\infty_{x_1}$ we conclude that $\mathcal{F}_{x_1}$ is stationary for the outer variations for almost every $x_1$.  

\smallskip

\noindent\emph{Inner variations:} For any $\eta(x_1)\in C^1_c(\R), \phi(x')\in C^1_c(\R^{n-1},\R^{n-1})$ implicitly summing in $i,j,k=2, \dots, n$ we have 

\begin{align*}
    0&=\hat{\mathcal{F}}( 2M^{\alpha\alpha}_{1j}\eta'(x_1)\phi^j(x')-M^{\alpha\alpha}_{11}\eta(x_1)\operatorname{div}\phi(x') + \eta(x_1)\left((2M^{\alpha\alpha}_{ij}-M^{\alpha\alpha}_{kk}\delta_{ij})\partial_i\phi^j(x') \right))\\
    &=\hat{\mathcal{F}}(\eta(x_1)\left((2M^{\alpha\alpha}_{ij}-M^{\alpha\alpha}_{kk}\delta_{ij})\partial_i\phi^j(x') \right))\\
    %&= \int \eta(x_1) \left(\int (2M^{\alpha\alpha}_{ij}-M^{\alpha\alpha}_{kk}\delta_{ij}) \, d\nu_{x,\hat{f}_l(x')}\, \partial_i\phi^j(x') \right) \, dx\,,
\end{align*}
where we used (2) in the second equality once again. 

% Note that $I_{\mathcal{F}}(e_1 + r_k s e_1, r_k r) = I_{\mathcal{F}_k}(e_1, r)$ for all $r_k$. 
% Now we want to show that $I_{\hat{\mathcal{F}}}(t e_1, r) = I_{\hat{\mathcal{F}}}(0, R)$ for all $t\in \R$ and $0<r,R$. 

 Arguing as for the outer variation, we deduce that 

\[\left((\hat{F}_{x_1}\otimes dx')(\Psi_I) + \hat{F}^\infty_{x_1}(\Psi_I)\right) \, dx_1 \equiv 0 \,, \]
where $\Psi_I=(2M^{\alpha\alpha}_{ij}-M^{\alpha\alpha}_{kk}\delta_{ij})\partial_i\phi^j(x')$. 
\end{proof}

\subsection{Regularity: proof of Theorem \ref{thm:UCformeas}}
    Due to proposition \ref{prop.propertiesMeasureSolutions} part \ref{subtrackingHarmonic} we may assume that $\eta\circ f \not\equiv 0$. 
    We prove the theorem as in the case of classical solution by induction on $m$. 
    
    The base case $m=1$ coincides with Proposition \ref{prop.propertiesMeasureSolutions} part \ref{1dimensionalSolutions}, i.e. the set $\{f = Q\a{0}\}$ is at most one point. 

    Suppose the theorem holds for $m'<m$ and assume by contradiction that $\mathcal{H}^{t}(E)>0$ for some $t>m-1$ and $E=\{f=Q\a{0}\}$. If $\Omega \neq E$, there is a point $x_0 \in \Omega\cap E$ with positive $\mathcal H^t\res E$ density that is not in the measure theoretic interior of $E$, i.e. \begin{equation}\label{eq:positivedensity}
        \limsup_{r \to \infty} \frac{ \mathcal{H}^{t}(E\cap B_r(x_0))}{r^{t}} >0 \quad \text{ and } \quad \liminf_{r\to \infty} \frac{ \mathcal{H}^{n}(B_r(x_0)\setminus E)}{r^{n}} >0\,.
    \end{equation}
    After translation we may assume $x_0 =0$. Now the conclusion follows almost by the same arguments presented in \cite[Subsection 3.6.2]{DS}. Indeed
    let $r_k \downarrow 0$ be a subsequence realising the $\limsup$ in \eqref{eq:positivedensity} and consider the corresponding blow-up sequence $\mathcal{F}_{r_k}$. By Corollary \ref{cor.frequencyblowup}, we find a nontrivial $\alpha$-homogeneous measure solution $\hat{\mathcal{F}}$.  Moreover  by \eqref{eq:positivedensity} and the fact that $f_{k} \to \hat{f}$ converges uniformly up to a set of arbitrary small $q$-capacity for every $q<2$ we conclude that $\hat{f}\not\equiv Q\a{0}$ and satisfies 
$$\mathcal{H}^t_\infty( B_1 \cap \{ \hat{f}=Q\a{0}\})>0.$$ 
Therefore there exists $y \in \partial B_1 \cap \{\hat{f}=Q\a{0}\}$ once again with positive $\mathcal{H}^t_\infty$-density and not in the measure theoretic interior. After rotation we may assume that $y=e_1$. Now we can argue as above and perform a second blow-up at $e_1$. However the corresponding tangent function $\hat{\hat{\mathcal{F}}}$ is nontrivial, homogeneous and inner and outer measure solution independent of $e_1$, but still satisfies $\mathcal{H}^{t-1}_\infty( B_1 \cap \{ \hat{\hat{f}}=Q\a{0}\})>0$. This contradicts the inductive assumption.
\qed

\subsection{Proof of Propositions \ref{prop:higofmeas} and \ref{prop:regformeas}} In this subsection we will work with stationary measures $\mathcal F$, in particular the concentration part $F^\infty\equiv 0$. 

\begin{proof}[Proof of \ref{prop:higofmeas}]
The proof is based on a simple contradiction argument which combines inner variation and strong outer variation.

\smallskip

\noindent\emph{Claim: }
Let $\mathcal{F}=(F\otimes dx,0)$ be a measure solution that satisfies the strong form of the outer variations \eqref{eq:strongouter} and 
\begin{equation}\label{eq.hiInt1scale}
    \left((F\otimes dx)\left(2^{-m}\varphi(2|x|)|M|^p\right)\right)^{\frac1p} \le C (F\otimes dx)\left(\varphi(|x|)|M|\right)
\end{equation}
Then for any $r<\frac13$ there is $\theta=\theta(m,n,Q,C,p,r)<1$ such that 
\begin{equation}\label{eq.campanoatodecay} D(r)\le \theta D(1)\,.\end{equation}

Suppose the claim does not hold, hence there is a sequence $(F_k\otimes dx)$ contradicting \eqref{eq.campanoatodecay} for a sequence $\theta_k \uparrow 1$. Passing to the normalised sequence $(\hat{\eta}^{\lambda_k})_\sharp (F_k\otimes dx)$, with $\lambda_k^{-2}= D_{\mathcal{F}_k}(1)$, we may assume that $D_{\mathcal{F}_k}(1)=1$ for all $k$. Integrating the expression for the differential of $D$ in \eqref{eq.D'}, we have for $\tilde{\varphi}(t)-\tilde{\varphi}(s)= \int_s^t \varphi'(\tau)\tau^{m-2}\,d\tau$, 
\begin{align} c \int_{B_{\frac12}\setminus B_{\frac13}} \sum_{l} \left|Df_l(x)\frac{x}{|x|}\right|^2 \, dx 
    &\le (F_k\otimes dx)\left(\left(\tilde{\varphi}(|x|)-\tilde{\varphi}\left(\frac{|x|}{r}\right)\right) \frac{x^tMx}{|x|^m}\right)\nonumber\\
    &\le D_{\mathcal{F}_k}(1)-D_{\mathcal{F}_k}(r) \le (1-\theta_k)\,,\label{eq:radtozero}
\end{align}
where we have used \eqref{eq.gradientmeasure02} and that $\tilde{\varphi}(|x|)-\tilde{\varphi}(\frac{|x|}{r})>c$.% Note that $\tilde{\varphi}(|x|)-\tilde{\varphi}(\frac{|x|}{r})>0$ on $\frac13<|x|<\frac12$. 

We claim that this contradicts the strong outer variation. The argument is close to the arguments used in the concentration compactness. Since we need to preserve the inner variation, an argument based on induction on $Q$ is cumbersome. Hence we argue directly. The same argument would apply for the concentration compactness as well. 
We select a sequence of averages $T_k$ for the associated maps $f_k$, i.e. $\norm{\G(f_k, T_k)}_{L^2(B_\frac12)} \lesssim \norm{Df_k}_{L^2(B_\frac12)}\le D(1)$. Associated to $T_k$ we can find a sequence of points $p^k_1, \dots, p^k_N \in \R^n$ and a sequence $s_k \to \infty$ such that 
\begin{enumerate}
    \item $|p^k_i-p^k_j|>4s_k$ forall $i\neq j$;
    \item\label{finitediam} $T_k=\sum_j^N T^k_j$ with $T_j^k \in \A_{Q_j}(\R^n)$, $\limsup_k|T^k_j\ominus p^k_j|<\infty$.
    % \item $\spt(T_k) \subset \bigcup_{j} B_{s_k}(p_j^k)$ i.e. $T=\sum_{j}^N T_j$ wi
    % \item $\operatorname{card}(T_k \cap B_{s_k}(p_j^k))=Q_j$ independent of $k$.
\end{enumerate}
% We may fix Lipschitz retractions $\chi_i^k\colon \Iqs \to B_{2s_k}(Q_j \a{p_j^k})$. By construction we have 
% \[
% \limsup_{k\to \infty} \norm{\chi_i^k(f_k)\ominus p_j^k}_{L^2(B_\frac12)} \lesssim 1\,. 
% \]
Let us define $S_k= \sum_j Q_j \a{p_j^k}$ and observe that \eqref{finitediam} implies that $\norm{\G(f_k, S_k)}_{L^2(B_\frac12)} \lesssim 1$.
Now we use the strong form of the outer variation. Fix a smooth non-decreasing function $\theta$ vanishing on $[2,\infty)$ and equal to $1$ on $[0,1]$. Next, define the function 
\[\tilde{\theta}_k(y)=\sum_{j} \theta\left(\frac{|y-p_j^k|}{ s_k}\right) (y-p_j^k)\] 
and observe that  
\[\mathbf{1}-D_y\tilde{\theta} = \underbrace{\sum_{j} \left(1-\theta\left(\frac{|y-p_j^k|}{s_k}\right)\right) \mathbf{1}}_{=:\Theta_k^1(y)} +  \underbrace{ \frac{|y-p_k^j|}{s_k} \theta'\left(\frac{|y-p_j^k|}{s_k}\right) \frac{y-p_j^k}{|y-p_j^k|}\otimes \frac{y-p_j^k}{|y-p_j^k|}}_{=:\Theta_k^2(y)}\,.\]
The matrix valued functions $\Theta_k^i(y)$ are bounded and supported in $\cup_j B_{s_k}(p_j^k)$. 
Hence we can test the strong outer variation with $\eta(x)\,\tilde{\theta}(y)$, where $\eta$ is a smooth function which is $1$ in the ball of radius $r$, less than or equal to $1$, and supported in $B_1$, to obtain the  estimate 
\begin{align*}
    \mathcal{F}_k(\eta\, |M|) &= \mathcal{F}_k(\eta\,\mathbf{1}\colon  M)=\mathcal{F}_k(\eta \,\Theta_k^1(y): M) - \mathcal{F}_k(\eta\, \Theta_k^2(y):M) - \mathcal{F}_k\left(p^\alpha_i\frac{x_i}{|x|} \theta^\alpha(y) \eta'\right)\\
   &\lesssim |\{\G(f_k, S_k)>s_k\}|^{1-\frac1p} (\mathcal{F}_k(\varphi(2|x|))|M|^p)^\frac{1}{p} + \norm{\G(f_k,S_k)}_{L^2(B_\frac12)} \norm{Df_k\frac{x}{|x|}}_{L^2(B_\frac12\setminus B_{\frac13})}\\
   &\to 0,
\end{align*}
where in the last line we used \eqref{eq:radtozero} and the fact that $|\{\G(f_k,S_k)>s_k\}|\to 0$ as $k\to \infty$, as a consequence of Chebyshev and the fact that $\norm{\G(f_k, S_k)}_{L^2(B_{1/2})}\lesssim 1$ and $s_k \to \infty$.

This contradicts $D_{\mathcal{F}_k}(r) \ge \theta_k$ for sufficiently large $k$ and proves the claim.

\smallskip
Now we may fix $\rho<\frac13$ and iterating the above by considering the re-scaled solutions $(\hat{\eta}^1_{x_0, \rho^k})_\sharp \mathcal{F}$ leads to
\[ D(x_0,\rho^k) \le \theta^k D(x_0,\frac12) \quad \forall x_0 \in B_1, k \in \N\,.\]
Appealing once more to \eqref{eq.gradientmeasure02} this implies to a Morrey decay: there is a $\alpha >0$, $C>0$ such that for all $x_0 \in B_1$ and $r<1$ 
\begin{equation}\label{eq:Morrey} r^{2-m-2\alpha} \int_{B_r(x_0)} |Df_k|^2 \le C D(0,2)\,. \end{equation}
This concludes the proof.

% \[\varphi(x,y)=\eta(|x|) \left(y-P^k(y)\right)\]
% where $P^k\in C^1(\R^n,\R^n)$ with $P^k(y)=p_j^k$ if $y \in B_{s_k}(p_j^k)$ and $\eta(t) \in C^1$ non-increasing, $\eta(t)=1$ for $t<\frac13$ and $\eta=0$ for $t\ge \frac12$.
\end{proof}

\begin{proof}[Proof of \ref{prop:regformeas}] Due to proposition \ref{prop.propertiesMeasureSolutions} part \ref{subtrackingHarmonic}, we can assume without loss of generality that the map $f$ associated to $\mathcal F$ is average free, that is $\eta \circ f\equiv 0$.
We prove the statement by induction on $Q$.

\emph{$Q=1$ :} We need to show that $\mathcal{F}( \varphi(x) |M|)=0 $ for all $ \varphi \in C^\infty_c(\R^m)$. This follows immediately from \eqref{eq:outervar} and the fact that $(F\otimes dx)(p_i^\alpha y^\alpha \partial_i\varphi) = \int \partial_i f(x) f(x) \partial_i \varphi(x) \, dx =0$, since $f=\eta\circ f\equiv 0$.

\emph{$Q'\to Q$ :} Let $E:=\{x\in \Omega\,:\, |f(x)|>0\}$. 
Since $f$ is continuous, for every $x_0 \in E$ we can find non-empty disjoint open sets $O_1, O_2$ and a radius $r=r(x_0)>0$ such that $\spt f(x) \in O_1 \cup O_2 \quad \forall x \in B_r(x_0)$. Furthermore we may assume that $\spt f(x_0) \cap O_i \neq \emptyset$. Hence there are related Lipschitz retractions $\chi_i\colon \Iq \to \A_{Q_i}$ for $i=1,2$ such that  
\[ f(x)=\chi_1\circ f(x) + \chi_2\circ f(x) \quad \forall x \in B_r(x_0)\,.\]

Now we can argue analogously as in the case of Proposition \ref{p:elementarylimit} part \eqref{eq.gradientmeasure02}.
Let $\mathcal{E}_{f_k}$ be a generating sequence for $\mathcal{F}$. By the strong outer variation assumption, it does not generate a concentration part, i.e $F^\infty\equiv 0$. We deduce that $F\otimes dx = F_1\otimes dx + F_2\otimes dx$, where $F_i\otimes dx$ is generated by $\mathcal{E}_{\chi_i\circ f_k}$.
Finally using a test function $\varphi(x,y)$ vanishing on $\overline{O}_2$ in \eqref{eq:strongouter} we deduce that 
\[0 = (F_1\otimes dx)(p_i^\alpha \partial_i\varphi^\alpha(x,y) + M_{ii}^{\alpha \beta} \partial_{y^\beta}\varphi^\alpha(x,y))\,.\]
This implies that $F_1\otimes dx$ satisfies itself the strong outer variation. By induction we deduce that $F_1\otimes dx = \mathcal{E}_{f_1}$ on $B_r(x_0)$ for some Sobolev function $f_1 \in W^{1,2}(B_r(x_0),\A_{Q_1})$. The same argument for $F_2\otimes dx$ shows that the claim holds on $B_r(x_0)$. 

It remains to show that the set $\{|f|=0\}$ does not contribute in the outer variation. Here we can argue as in \cite[theorem 1.6]{HS}. Fix a smooth non-decreasing function $\theta$ vanishing on $(-\infty, 1]$ and equal to $1$ on $[2,\infty)$. Moreover let $\eta$ be a smooth function which is $1$ in the ball of radius $r$, less than or equal to $1$, and supported in $B_1$. Then we test \eqref{eq:strongouter} with the test function
\[
\varphi(x,y):=\eta(x)\,\theta\left(\frac{\ln|y|}{\ln\delta}\right)\, y\,,
\]
to obtain
\begin{align*}
\mathcal F\left(\eta\,\theta\left(\frac{\ln |y|}{\ln\delta}\right) M_{ii}^{\alpha\alpha}  \right)
    &=-\mathcal F\left(y^\alpha\,p_i^\alpha \partial_i\eta\, \theta\left(\frac{\ln|y|}{\ln \delta} \right)\right)-\frac{1}{\ln\delta}\mathcal F\left(\eta\, \theta'\left(\frac{\ln|y|}{\ln\delta}\right)M_{ii}^{\alpha\beta}\,\frac{y^\alpha}{|y|}\, \frac{y^\beta}{|y|} \right)\\
    &\leq \frac{C}{|\ln \delta|}\,\|\mathcal F\|_{\mathcal Y^Q}\to 0\,, \qquad \text{as }\delta\to 0 \,.
\end{align*}
This concludes the proof. 

% By continuity of $f$, for every $x_0\in E$ we can argue similarly as in the case of Proposition \ref{p:elementarylimit} part \eqref{eq.gradientmeasure02}. There is a point $S\in \Iq$\footnote{For instance one may apply \cite[Lemma 3.8]{DS} to $f(x_0)$.} and related Lipschitz retractions $\chi_i\colon \Iq \to \A_{Q_i}$ for $i=1,2$ and  $Q_1+Q_2=Q$ and  a radius $r=r(x_0)>0$ such that 
% \[ f(x)=\chi_1\circ f(x) + \chi_2\circ f(x) \quad \forall x \in B_r(x_0)\,.\]
% Let $\mathcal{E}_{f_k}$ be a generating sequence for $\mathcal{F}$. In particular it does not generate a concentration part. As argued in the proof of \eqref{eq.gradientmeasure02}, we deduce that $F\otimes dx = F_1\otimes dx + F_2\otimes dx$, where $F_i\otimes dx$ is generated by $\mathcal{E}_{\chi_i\circ f_k}$.  

% and two disjoint open sets $O_1, O_2 \subset \R^n$ such that 
% \[ \spt (f(x)) \in O_1 \cup O_2 \quad \forall x \in B_r(x_0)\,.\]
% We may fix $O_1 \Subset O \subset \overline{O_2}^c$ and fix a smooth partition of unity $\chi_1,\chi_2$ subordinate to the open sets $O$ and $\overline{O_1}^c$. Now consider the maps
% \[\hat{\chi}_i(x,y,p,M) = (x, \chi_i(y), D\chi_i(y)p, D\chi_i^t(y)MD\chi_i(y)\,.\]
% One readily checks that if $\mathcal{E}_{f_k}$ generates $\mathcal{F}$ then
% We fix a smooth partition of unity subordinate to 

% and two functions $f_i\colon B_r(x_0)\to \I{Q_i}(\R^n)$, $Q_i\geq 1$ with $Q_1+Q_2=Q$, such that
% \[
% f(x)=\a{f_1(x)}+\a{f_2(x)}\qquad \forall x\in B_r(x)\,.
% \]
% Notice that we may use %proposition \ref{p:elementarylimit} \eqref{de:Aqgradmeasures}
% to define 
\end{proof}

\subsection{Compactness: proof of Theorem \ref{thm:compactness}}
% {\color{blue}Since it is now for the fourth time that a similar concentration compactness argument appears - shall we move it to the appendix?}
% The proof is a modification of the concentration compactness argument presented in \cite[Theorem 1.6]{HS}. The main difference is that we want to be able to reverse the concentration along the sequence. Hence we need to keep track where the concentration happens. An additional problem arises since the inner variation does not localise, hence we are not allowed to consider the different pieces on their own.
After rescaling we may assume without loss of generality that $C=1$ in c1) i.e. $\limsup_k \int_{B_4} |Df_k|^2 \le 1$ for all $k$. 

Associated to a sequence of averages $T_k$ for $f_k$, i.e. $\norm{\G(f_k, T_k)}_{L^2(B_1)} \lesssim \norm{Df_k}_{L^2(B_1)}\lesssim 1$, we may fix the sequence of projections $\chi_k$, as in  \eqref{eq.projectionmap}, and their almost inverse $\chi_k^+$. Note that under the additional assumption of a $L^2$ bound, the diameter of the $T_k$ is bounded since 
\[ |T_k| |B_1|^\frac12 \le \norm{\G(f_k,T_k)}_{L^2(B_1)} + \norm{f_k}_{L^2(B_1)} \le C \qquad \forall k\,,\]
hence we may choose $\chi_k \equiv \operatorname{Id}$ for all $k$ in such situation.

\noindent\emph{Claim 1: \footnote{This part is only needed if there is no additional $L^2$ bound. In the case of an additional $L^2$ bound we have $\chi_k=\operatorname{Id}$ and therefore $\hat{f}_k=f_k$.} } $\hat{f}_k=\chi_k \circ f_k$ is uniformly bounded in $W^{1,2}(B_2)$ and moreover 
\begin{align}\label{eq.smallenergyerror}
    &\lim_k\norm{\G(\chi_k^+\circ\hat{f}_k, {f}_k)}_{L^2(B_{3})}+ \norm{|D\chi_k^+\circ \hat{f}_k|-|D{f}_k|}_{L^2(B_{3})}=0\\
    &\left(\fint_{B_r(x)} |D\hat{f}_k|^{2p}\,dx\right)^{1/p} \lesssim \fint_{B_{2r}(x)} |D\hat{f}_k|^2 + o(1) \quad \forall x \in B_3, r<\frac12\,.
\end{align}

\smallskip

\noindent\emph{Proof of Claim 1: }
To prove the claim we define the open sets 
\begin{equation}\label{eq.Uk-faraway}
    U_k=\{ x \colon \G(f_k(x), T_k)> s_k \} \cap B_4 \,.
\end{equation}
Note that if $x \notin U_k $ then 
\[ \chi_k^+ \circ \hat{f}_k(x) = f_k(x)\,.\]
And we can estimate 
\begin{equation}\label{eq.estimateU_k}
    |U_k|^{1/{2^*}} \lesssim s_k^{-1} \norm{Df_k}_{L^2(B_4)}\lesssim s_k^{-1} \lesssim \frac12|B_1|\,.
\end{equation}
Thus $T_k$ works as well as an average for $\chi_k^+\circ \hat{f}_k$. 
Hence we deduce 
\begin{align*}
    &\lim_{k} \norm{\G(\chi_k^+\circ\hat{f}_k, f_k)}_{L^2(B_3)}+ \norm{|D\chi_k^+\circ \hat{f}_k|-|Df_k|}_{L^2(B_3)}\\
    &\le \lim_{k}\norm{\G(\chi_k^+\circ\hat{f}_k, T_k)}_{L^2(U_k)}+ \norm{\G( f_k,T_k)}_{L^2(U_k)}+\norm{|D\chi_k^+\circ \hat{f}_k|-|Df_k|}_{L^2(U_k)}\\
    %&\lesssim |U_k|^{1/2-1/p}\norm{Df_k}_{L^p(B_3)} \lesssim |U_k|^{1/2-1/p}\norm{Df_k}_{L^2(B_4)}{\color{red}\lesssim s_k^{1/p-1/2}}\,.\\
    &\lesssim |U_k|^{\frac12-\frac1{2p}}\norm{Df_k}_{L^{2p}(B_3)} \lesssim |U_k|^{\frac12-\frac1{2p}}\norm{Df_k}_{L^2(B_4)}\lesssim s_k^{\frac1{2p}-\frac12}\to 0\,.
\end{align*} 
where in the second to last inequality we used Poincare and Holder inequality and in the last inequality we used Chebyshev inequality.
It remains to show the uniform boundedness and  almost higher integrability.
We have
\begin{align*}
    \norm{D\hat{f}_k}_{L^2(B_3)}&\lesssim \norm{Df_k}_{L^2(B_3)}\\
    \norm{\hat{f}_k}_{L^2(B_3)} &\le \norm{\G(\hat{f}_k, \chi_k(T_k))}_{L^2(B_3)} + |\chi_k(T_k)|\,|B_3|^{1/2}\\
    &\lesssim \norm{Df_k}_{L^2(B_3)} + |\chi_k(T_k)|\,.
\end{align*}
Note that \eqref{finitediam2} ensures that $\limsup_k |\chi_k(T_k)|<\infty$\,.
Concerning the higher integrability we have for any $x\in B_2, r<1$
\begin{align}\label{eq.almosthigherintegrability1}r^{-n/{2p}}\norm{D\hat{f}_k}_{L^{2p}(B_r(x))} &\lesssim r^{-n/2} \norm{Df_k}_{L^2(B_{2r}(x))} \\\label{eq.almosthigherintegrability2} &\lesssim r^{-n/2} \norm{D\hat{f}_k}_{L^2(B_{2r}(x)\setminus U_k)} + r^{-n/2}|U_k|^{1/2-1/2p} \norm{Df_k}_{L^2(B_3)}\end{align}

\medskip

\noindent\emph{Claim 2:} The limit $\mathcal{E}_{\hat{f}_k} \rightharpoonup \mathcal{F}$ is a continuous classical measure solution, i.e. $\mathcal{F}=\mathcal{E}_{f}$ for some $f\in W^{1,2}(B_3, \I{Q}(\R^{n+1}))\cap C^{0,\alpha}(B_3,\Iqn)$, satisfying c2) and such that $\pi_0\circ f \equiv \sum_{j} Q_j\a{j}$

\smallskip

\noindent\emph{Proof of Claim 2.} To prove the claim, let $\mathcal{F}$ be the generalised gradient young measure generated by $\mathcal{E}_{\hat{f}_k}$, which exists due to the uniform $W^{1,2}$-bounds for $\hat{f}_k$ established in Claim 1. Furthermore we denote with $f$ the $W^{1,2}$-map associated to $\mathcal{F}$.

We start by using the higher integrability to show that $F^\infty\equiv 0$, so that in particular later we will need to test only with compactly supported functions. Although the argument is classical in the context of measurable functions and Young measures, we present it since we deal with currents.

We may choose $\varphi$ as in the definition of \eqref{eq.definitionD} and let $\sigma \in C^1$ non-decreasing such that $\sigma(t)=0$ if $t< 1$ and $\sigma =1 $ for $t>2$.
For each $x_0 \in B_2$ and any real number $m_0>0$ the function $\phi(x,M)=\varphi(x) \sigma(\frac{|M|}{m^2_0})M^{\alpha\alpha}_{jj}$ is admissible, that is it is an element of $\mathcal{C}(\Omega \times \mathbb{V})$. This implies 
\begin{align*}
    \mathcal{F}(\phi) &= \lim_k \mathcal{E}_{\hat{f}_k}(\phi) \le \lim_k |\{x\in B_3 \colon |D\hat{f}_k|>m_0\}|^{1-1/p} \norm{ D\hat{f}_k}_{L^{2p}(B_1(x_0))}^2 \\
    &\lesssim \lim_k |\{x\in B_3 \colon |D\hat{f}_k|>m_0\}|^{1-1/p}\lesssim m_0^{2/p-2}\,,
\end{align*}
where we have used \eqref{eq.almosthigherintegrability1} in the last inequality. Hence sending $m_0\to \infty$ we conclude that $F^\infty=0$.
Now, by approximation with compactly supported functions, we are allowed to use as well function that grow faster than linear in $M$. In particular, we set $\phi_r(x-x_0,M)=\varphi(\frac{|x-x_0|}{r})|M|$, and we observe that \eqref{eq.almosthigherintegrability1} reads for any $x_0 \in B_2, r<1/2$ 
\[ r^{-n/p} \left(\mathcal{E}_{\hat{f}_k}(\phi^p_{r}(x-x_0,M))\right)^{1/p} \lesssim r^{-n} \mathcal{E}_{\hat{f}_k}(\phi_{5r/2}(x-x_0,M))+ r^{-n} |U_k|^{2-2/p}\,.\]
Now we can take $k\to \infty$ and obtain the desired higher integrability inequality
\begin{equation}\label{eq.higherintegrability}
   r^{-n/p} \left(\mathcal{F}(\phi^p_{r}(x-x_0,M))\right)^{1/p} \lesssim r^{-n} \mathcal{F}(\phi_{5r/2}(x-x_0,M)) \,,
\end{equation}
that is property c2).

Next we check that inner and strong outer variations of $f$ are zero. We already know that $F^\infty\equiv 0$.

\noindent\emph{Outer-variations: $e_0$-direction.}
Since we have by definition that $\pi_0(\hat{f}_k)= \sum_j Q_j \a{j}$ we deduce that $\pi_0(f)=\sum_j Q_j \a{j}$ and moreover
$$\sum_l e_0 D(f_l)_k^t D(f_l)_k e_0 =0\qquad \text{a.e}\,.
$$
 Using the test-function $\phi(x,M)=\varphi(x) M^{00}_{ii}$, we have 
$\mathcal{F}(\phi)=\lim_{k} \mathcal{E}_{\hat{f}_k}(\phi) =0$. Since $\mathcal{F}$ is supported on the non-negative matrices we deduce that \[\mathcal{F}( p_i^0\partial_i\varphi(x,y) + M^{0\alpha}_{ii} \partial_{y^\alpha}\varphi(x,y)) =0 \quad \forall \varphi(x,y) \in C^1_c(B_2\times \R^n)\,.\]

\noindent\emph{Outer-variations: the other directions.}
To a given a vector field $\psi \in C^1_c(B_2\times \R^{n+1}, \{0\}\times\R^n)$, we associate $\tilde{\psi}_k \in C^1_c(B_2\times \R^n, \R^n)$ by 
\[ \tilde{\psi}_k(x,y) =  \sum_{j=1}^N \psi(x,(j, y-p_k^j))\,.\]
Firstly for a.e. $x \in B_2 \setminus  U_k$, with $U_k$  defined in \eqref{eq.Uk-faraway} assuming that $s_k$ sufficient large, we have 
\begin{align*}
 &\sum_{l=1}^Q \partial_i (f_k)^\alpha_l \partial_i\tilde{\psi}^\alpha_k(x,(f_k)_l) +  \partial_i (f_k)^\alpha_l \partial_k (f_k)_l^\beta \partial_{y^\beta}\tilde{\psi}_k^\alpha(x, (f_k)_l) \\
 =&\sum_{l=1}^Q \partial_i (\hat{f}_k)^\alpha_l \partial_i\psi^\alpha(x,(\hat{f}_k)_l) +  \partial_i (\hat{f}_k)^\alpha_l \partial_k (\hat{f}_k)_l^\beta \partial_{y^\beta}\psi^\alpha(x, (\hat{f}_k)_l)  
\end{align*}
Thus we can estimate 
\begin{align*}
    &\left|\int \sum_{l=1}^Q \partial_i(\hat{f}_k)_l^\alpha(x)\, \partial_i( \psi^\alpha(x,(\hat{f}_k)_l(x))) \,dx\right| \le \underbrace{\left|\int_{B_2\cap U_k} \sum_{l=1}^Q \partial_i(\hat{f}_k)_l^\alpha(x)\, \partial_i( \psi^\alpha(x,(\hat{f}_k)_l(x))) \,dx\right|}_{a_k}\\& + \underbrace{\left|\int_{B_2\cap U_k} \sum_{l=1}^Q \partial_i(f_k)_l^\alpha(x)\, \partial_i( \tilde\psi_k^\alpha(x,(f_k)_l(x))) \,dx\right|}_{b_k} + \underbrace{\left|\int \sum_{l=1}^Q \partial_i(f_k)_l^\alpha(x)\, \partial_i( \tilde\psi_k^\alpha(x,(f_k)_l(x))) \,dx\right|}_{c_k}
\end{align*}
Since $c_k\to 0$ by assumption c3), it remains to estimate $a_k$ and $b_k$:
\begin{align*}
    a_k \le\norm{D\psi}_\infty \int_{B_2\cap U_k} (|D\hat{f}_k|+|D\hat{f}_k|^2) \le \norm{D\psi}_\infty \left(|U_k|^{1/2} + |U_k|^{1-1/p}\right)\int_{B_3} |D{f}_k|^2\, dx \to 0
\end{align*}
where we have used the higher integrability \eqref{eq.almosthigherintegrability1}.
Very similarly we can estimate $b_k$ using that $\norm{D_x\tilde{\psi}_k}_\infty \le \norm{D_x\psi}_\infty $ and $\norm{D_y\tilde{\psi}_k}_\infty \le \norm{D_y\psi}_\infty $ so that
\begin{align*}
    b_k \le \norm{D\psi}_\infty \int_{B_2\cap U_k} |Df_k| + |Df_k|^2 \,dx \le  \norm{D\psi}_\infty \left(|U_k|^{1/2} + |U_k|^{1-1/p}\right)\int_{B_3} |D{f}_k|^2\, dx \to 0\,.
\end{align*}
Now using that $\mathcal{E}_{\hat{f}_k}$ generates $\mathcal{F}$ we conclude in combination with the outer variation in direction $e_0$.
\[ \mathcal{F}( p_i^\alpha \partial_i\psi^\alpha(x,y) + M^{\alpha\beta}_{ii} \partial_{y^\beta}\psi^\alpha(x,y)) =0 \quad \forall \psi \in C^1_c(B_2\times \R^{n+1}, \R^{n+1})\,.\]

\medskip 
\noindent\emph{Inner variations:}
As observed above that for a.e. $x \in B_3\setminus U_k$
\[ \sum_{l=1}^Q D(\hat{f}_k)_l(x)^t D(\hat{f}_k)_l(x) = \sum_{l=1}^Q D({f}_k)_l(x)^t D({f}_k)_l(x)\,.\]
Given any $\phi \in C_c^1(B_2,\R^m)$ we can estimate
\begin{align*}
    &\left|\int \sum_{l=1}^Q \left(\frac12 |D(\hat{f}_k)_l|^2 \delta^i_j - \partial_i (\hat{f}_k)_l \partial_j (\hat{f}_k)_l \right) \partial_i\phi^j\, dx\right| \le \underbrace{ \int_{B_2\cap U_k} |D\hat{f}_k|^2 |D\phi| \, dx}_{a_k}\\&+\underbrace{\int_{B_2\cap U_k} |D{f}_k|^2 \, dx}_{b_k} + \underbrace{\left| \int \sum_{l=1}^Q \left(\frac12 |D({f}_k)_l|^2 \delta^i_j - \partial_i ({f}_k)_l \partial_j ({f}_k)_l \right) \partial_i\phi^j\, dx \right|}_{c_k} \,.
\end{align*}
The first two terms are estimated as in the outer variation:
\[a_k+b_k \lesssim \norm{D\phi}_\infty |U_k|^{1-1/p} \int_{B_3} |D\tilde{f}_k|^2\, dx \to 0\,,\]
while $c_k\to 0$ by assumption c3), so that $\mathcal{E}_{\hat{f}_k}$ generates $\mathcal{F}$ so that 
\[ \mathcal{F}\left( \left(\frac12 |M|\delta_i^j - M^{\alpha \alpha}_{ij}\right)\,\partial_i\phi^j(x) \right)=0 \quad \forall \phi \in C^1_c(B_2,\R^m)\,. \]
Hence we can appeal to Proposition \ref{prop:higofmeas} and thereafter to Proposition \ref{prop:regformeas} to deduce the claim. 

\medskip
\noindent\emph{Claim 3:} We have 
\[\lim_k \norm{\G({f}_k, \chi^+_k\circ f)}_{L^2(B_2)}+\norm{|D\tilde{f}_k|-|D f|}_{L^2(B_2)}=0\,.\]

\smallskip

%\noindent\emph{Proof of Claim 3:}
%{\color{red} There is a problem - perhaps we should go back to your proposal to keep the different $\chi_k^j\circ \tilde{f}_k \ominus p_k^j$ separate.}
 
\noindent \emph{Proof of Claim 3.} This claim is a direct consequence of the the two previous claims. Firstly we observe that if $\hat{T}, \hat{S} \in \pi_0^{-1}(\sum_j Q_j\a{j})$ then we have 
\begin{align*}
   \G(\chi_k^+(T), \chi_k^+(S)) &= \G(T,S) \quad \text{ if } \G(T,S) \le \frac12\,,\\
   \G(\chi_k^+(T), \chi_k^+(S)) &\le |T| + |S|,
\end{align*}
where the second inequality follows from the triangle inequality with middle point $\chi_k(P_k)= \sum_{j} Q_j \a{je_0}$. 

We clearly have 
\[\norm{\G({f}_k, \chi_k^+\circ {f})}_{L^2(B_3)}\le \underbrace{\norm{\G({f}_k,\chi_k^+\circ \hat{f}_k)}_{L^2(B_3)}}_{a_k}+\underbrace{\norm{\G(\chi_k^+\circ\hat{f}_k, \chi_k^+\circ {f})}_{L^2(B_3)}}_{b_k}\,.\]
The first claim implies that $a_k\downarrow 0$. We let $V_k= \{ x \colon \G(\hat{f}_k, f) > 1/2 \}$ and observe that $|V_k|^{1/2^*}\le 2 \norm{\G(\hat{f}_k, f)}_{L^2(B_3)} \to 0$. Hence we have the $L^2$-convergence 
\[
    b_k \le \norm{\G(\hat{f}_k, f)}_{L^2(V_k^c)} + |V_k|^{1/n}\left(\norm{\hat{f}_k}_{L^{2^*}(B_3)}+\norm{f}_{L^{2^*}(B_3)}\right) \to 0\]
Now we come to the $L^2$-convergence of the gradient. The first part is the same 
\[\norm{|D{f}_k|-|D f|}_{L^2(B_2)}\le \underbrace{\norm{|D{f}_k|-|D\chi^+_k\circ \hat{f}_k|}_{L^2(B_2)}}_{\tilde{a}_k}+\underbrace{\norm{|D\chi^+_k\circ\hat{f}_k|-|D f|}_{L^2(B_2)}}_{\tilde{b}_k}\,.\]
As before the $\tilde{a}_k\downarrow 0$ due to claim 1.
Estimating $\tilde{b}_k$ is even simpler since $|D\chi_k^+\circ \hat{f}_k|=|D\hat{f}_k|$ because $\pi_0\circ \hat{f}_k\equiv \sum_j Q_j\a{j}$. The same argument applies to $f$. Thus, since by Claim 2 $\mathcal E_{\hat{f_k}}\to \mathcal E_{f}$, we conclude
\[\tilde{b}_k = \norm{|D\hat{f}_k|-|D f|}_{L^2(B_2)} \to 0\,.
\] 

\qed

\section{Almgren's Strong approximation}\label{ss:almapp}

In this section we prove a graphical approximation result with superlinear error in the excess and  a small Lipschitz constant, as first proven by Almgren for area minimizing currents in \cite{Alm} and later revisited by De Lellis and Spadaro in \cite{DS1}. Like in \cite{Alm,DS1}, the key step is to prove a higher integrability estimate for the excess. However our proof is fundamentally different from all the previous ones in that we cannot rely on area minimality, that is on the construction of suitable competitors. Our approach is based on a variant of Gehring's lemma, and one of the key ingredient is Poincar\'e inequality at collapsed points.  

Following \cite{DS1, DLS_Center, DLS_Blowup}, we will denote with $\pi_0:=\R^m\times \{0\}$. Open balls in $\R^{m+n}$ will be denoted by $\bB_r(p)$. For any linear subspace $\pi\subset \R^{n+m}$, $\pi^\perp$ is its orthogonal complement, $p_\pi$ the orthogonal projection onto $\pi$, $B_r(q,\pi)$ the disk $\bB_r(q)\cap(q +\pi)$ and $\bC_r(p, \pi)$ the cylinder $\{(x+y) \,:\, x \in \bB_r(p)\,,\,\, y\in \pi^\perp\}$ (in both cases $q$ is omitted if it is the origin and $\pi$ is omitted if it is clear from the context or if $\pi=\pi_0$), so
\[
\bC_{r}(x):=\bC_r(x,\pi_0)=B_{r}(x)\times \R^n\,.
\]
We also assume that each $\pi$ is oriented by a $k$-vector $\vec{\pi}:= v_1\wedge\dots\wedge v_k$ (thereby making a distinction when the same plane is given opposite orientations).

We need some notations for integral currents $T\in \bI_{m}(\R^{m+n})$:
\begin{itemize}
    \item $\Theta(T, x)$ will denote the density of the current $T$ at the point $x$;
    \item $\bE(T, A, \pi):=(\omega_m r^m)^{-1} \int_A\left|\vec{T}-\vec{\pi} \right|\,d\|T\|$, where $A=\bB_r(x)$ or $A=\bC_r(x,\pi')$, will denote the excess of the spherical and cylindrical excess of current respectively;
    \item $\bh(T, A, \pi):=\sup_{x,y\in \spt(T)\cap A}\left|\p_{\pi^\perp}(x)-\p_{\pi^\perp}(y) \right|$ will denote the height of the current in the set $A\subset \R^{n+m}$.
\end{itemize}

\begin{theorem}[Almgren's strong approximation]\label{thm:almstr}
    There exist constants $C, \gamma, \eps>0$, depending on $m,n$ with the following property. Assume that $f\colon \Omega=B_{4r}(x) \to \Iq(\R^n)$ is a Lipschitz function with stationary graph, with $Q=2$, and suppose that $E=\bE(\bG_f, \bC_{4r}(x))<\eps$. Then there is a map $\hat{f}\colon B_r(x)\to \I2(\R^n)$ and a closed set $K\subset B_r(x)$ such that
    \begin{gather}
        \Lip(\hat{f})\leq C\, E^\gamma \,,\label{e:lipbd}\\
        \bG_{\hat{f}}\res (K\times \R^n)=\bG_{f}\res (K\times \R^n)\quad\text{and}\quad |B_r(x)\setminus K|\leq C\, E^{1+\gamma}\,r^m\,, \label{e:graphcoincide}\\
        \left|\|\bG_f\|(\bC_{\sigma r}(x))-2\,\omega_m\,(\sigma r)^m-\frac12\int_{B_{\sigma r}(x)}|D\hat{f}|^2 \right|\leq C\, E^{1+\gamma}\,r^m\quad\forall 0<\sigma\leq 1\,,\label{e:areaenergy}\\
        {\rm osc}_{B_r(x)}(\hat{f}):=\inf_p \sup_{y\in B_r(x)} \G(\hat{f}(y),2\a{p})\leq C \,\bh(\bG_f,\bC_{4r}(x), \pi_0)+C\, E^{\frac12}\,,\label{e:oscillation}\\
        \frac1{r^2}\int_{B_r(x)}\G(f,\hat{f})^2+\int_{B_r(x)}\left(|D f|-|D\hat{f}|\right)^2+\int_{B_r(x)}\left|D(\etab\circ f)-D(\etab\circ \hat{f})\right|^2\leq C\,E^{1+\gamma}\,r^m \,.\label{eq:approxerror}
    \end{gather}
\end{theorem}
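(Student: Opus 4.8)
The plan is to follow the scheme of Almgren \cite{Alm} and De Lellis--Spadaro \cite{DS1} for the Lipschitz approximation of area minimizing currents, replacing the two places where that scheme uses the minimizing property by, respectively, the higher integrability estimate of Theorem \ref{thm:high} and the monotonicity formula for the stationary varifold $\bG_f$. After translating and dilating we may assume $x=0$, $r=1$, and we abbreviate $E=\bE(\bG_f,\bC_4,\pi_0)$. The only genuinely hard step is Theorem \ref{thm:high} itself, proved separately via a Gehring-type reverse H\"older argument resting on a Poincar\'e inequality at collapsed points; everything below is a reorganization of \cite{DS1}.

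\emph{First Lipschitz approximation and higher integrability.} Since $\bG_f$ is an integral current with $(\p_{\pi_0})_\sharp\bG_f=2\a{B_4}$, $(\partial\bG_f)\res\bC_4=0$ and $E$ small, Almgren's strong excess estimate — in the $\Iq$-valued formulation of \cite{DS1}, which uses no minimality — yields a Lipschitz $f'\colon B_2\to\Iq(\R^n)$ and a closed $K'\subset B_2$ with $\Lip(f')\le CE^{\beta_0}$ for a dimensional $\beta_0>0$, $\bG_{f'}\res(K'\times\R^n)=\bG_f\res(K'\times\R^n)$ and $|B_2\setminus K'|\le CE^{1+\beta_0}$; combining the Taylor expansion of the area integrand on $K'$ (where $|Df'|\le CE^{\beta_0}$) with the bound $\|\bG_f\|(\bC_2)\le 2\omega_m2^m(1+CE)$ — a consequence of small excess and of the monotonicity formula for $\bG_f$ — gives moreover $\int_{B_2}|Df'|^2\le CE$. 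Applying Theorem \ref{thm:high} to $\bG_f$ produces $p>1$ with
\[
\Big(\mint_{B_{2s}(y)}|Df'|^{2p}\Big)^{1/p}\le C\,\mint_{B_{4s}(y)}|Df'|^2\quad\text{whenever }B_{4s}(y)\subset B_4\,,
\]
hence $\int_{B_2}|Df'|^{2p}\le CE^p$; this is the step that uses $Q=2$, graphicality and, via the monotonicity formula, stationarity, and it is precisely what prevents the mass of $\bG_f$ from concentrating on small sets.

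\emph{Construction of $\hat f$.} Fix $\gamma\in(0,\beta_0]$ so small that $p(1-2\gamma)\ge 1+\gamma$, put $\lambda:=E^\gamma$, and set
\[
K:=K'\cap\Big\{x\in B_1:\ \mathbf M\big(\mathbf 1_{B_2}|Df'|^{2p}\big)(x)\le\lambda^{2p}\Big\}\,,
\]
$\mathbf M$ being the uncentered maximal operator. By the weak $(1,1)$ bound,
\[
|B_1\setminus K|\le|B_2\setminus K'|+\lambda^{-2p}\!\int_{B_2}|Df'|^{2p}\le CE^{1+\beta_0}+CE^{p(1-2\gamma)}\le CE^{1+\gamma}\,.
\]
On $K$ the chaining estimate for $W^{1,2}$ $\Iq$-valued maps \cite{DS1} gives $\G(f'(z),f'(w))\le C\lambda|z-w|$ for Lebesgue points $z,w\in K$, so the Lipschitz extension theorem \cite{DS} produces $\hat f\colon B_1\to\Iq(\R^n)$ with $\hat f=f'$ on $K$ and $\Lip(\hat f)\le C\lambda=CE^\gamma$, which is \eqref{e:lipbd}. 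Since $\hat f=f'=f$ $\mathcal H^m$-a.e.\ on $K\subset K'$, we get $\bG_{\hat f}\res(K\times\R^n)=\bG_f\res(K\times\R^n)$, which together with the previous display is \eqref{e:graphcoincide}; and \eqref{e:oscillation} follows from the standard height bound for currents of small excess in $\bC_2$ combined with $\Lip(\hat f)\le CE^\gamma$, exactly as in \cite{DS1}.

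\emph{Area--energy and error estimates.} Write $\mathbf J g:=\sqrt{\det(\mathrm{Id}+Dg^{t}Dg)}$, so that $\|\bG_f\|(\bC_\sigma)=\int_{B_\sigma\cap K}\sum_l\mathbf Jf_l+\|\bG_f\|(\bC_\sigma\setminus(K\times\R^n))$. On $K$ we have $f=\hat f$ with $|D\hat f|\le CE^\gamma$, so $\sum_l\mathbf J\hat f_l=2+\tfrac12|D\hat f|^2+O(|D\hat f|^4)$; combined with $|B_\sigma\cap K|=\omega_m\sigma^m+O(E^{1+\gamma})$, $\int_{B_\sigma\setminus K}|D\hat f|^2\le\Lip(\hat f)^2|B_1\setminus K|\le CE^{1+3\gamma}$ and $\int_{B_\sigma}|D\hat f|^2\le CE$, the first term equals $2\omega_m\sigma^m+\tfrac12\int_{B_\sigma}|D\hat f|^2+O(E^{1+\gamma})$. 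The second term is nonnegative, and bounded above by splitting $B_\sigma\setminus K=(B_\sigma\cap(K'\setminus K))\cup(B_\sigma\setminus K')$: on the first set $\bG_f=\bG_{f'}$ with $|Df'|\le CE^{\beta_0}$, contributing $\le 2|B_\sigma\setminus K|+CE^{2\beta_0}|B_2\setminus K|=O(E^{1+\gamma})$, while the mass of $\bG_f$ over $\bC_\sigma\setminus(K'\times\R^n)$ is $O(E^{1+\gamma})$ by the non-concentration coming from Theorem \ref{thm:high}; this gives \eqref{e:areaenergy}. Here the competitor comparison of \cite{DS1} is replaced by monotonicity plus Theorem \ref{thm:high}, and the main bookkeeping is to check — which holds for $\gamma$ small — that all the resulting exponents exceed $1+\gamma$; this, rather than any single calculation, is where the argument differs essentially from \cite{DS1}. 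Finally, in \eqref{eq:approxerror} each integrand vanishes on $K$, so each reduces to an integral over $B_1\setminus K$; on $K'\setminus K$ one uses $\G(f,\hat f)\le CE^{\beta_0}\dist(\cdot,K)$ and $|Df|+|D\hat f|\le CE^{\beta_0}$, on $B_1\setminus K'$ one uses the small-excess mass bound on $\bC_1\setminus(K'\times\R^n)$, the height bound and H\"older's inequality with the $L^{2p}$-bound for $Df'$ (applying Poincar\'e before subtracting averages, since higher integrability is not stable under $\ominus(\eta\circ f')$), and again $\gamma$ small closes all exponents, giving \eqref{eq:approxerror}.
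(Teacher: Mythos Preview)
Your argument is essentially correct and follows the same scheme as the paper, but you take an unnecessary detour. The paper exploits the hypothesis that $f$ is \emph{already} a Lipschitz multigraph: it applies Theorem~\ref{thm:high} directly to $f$, sets
\[
K=R^{E^{2\gamma}}:=\Big\{y\in B_2:\ \mint_{B_r(y)}|Df|^2\le E^{2\gamma}\ \ \forall r<1\Big\}\,,
\]
and bounds $|B_1\setminus K|$ by combining the weak-type maximal function estimate with H\"older and the $L^p$ bound on $|Df|^2$, obtaining $|B_1\setminus K|\le CE^{(1-2\gamma)(1-1/p)-2\gamma}\cdot E\le CE^{1+\gamma}$ for $\gamma$ small. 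Then $\hat f$ is the Lipschitz extension of $f|_K$, and \eqref{e:areaenergy} follows from the Taylor expansion of the area integrand on $K$ plus the mass bound $\|\bG_f+\bG_{\hat f}\|((B_\sigma\setminus K)\times\R^n)\le C|B_\sigma\setminus K|$.

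You instead first pass through the weak Lipschitz approximation $f'$ of \cite{DS1}, which is designed for general currents that are not a priori graphs; since here $\bG_f$ is already a Lipschitz $2$-graph, this step is redundant. It is not wrong, but it forces you to transfer the higher integrability from $Df$ to $Df'$ (your displayed reverse H\"older is literally for $Df'$, while Theorem~\ref{thm:high} gives it for $Df$; the transfer via $f=f'$ on $K'$ and $\Lip(f')\le CE^{\beta_0}$ is easy but should be stated), and to split the bad set into $K'\setminus K$ and $B_1\setminus K'$ in the area--energy and error estimates. The paper's direct route avoids all of this bookkeeping. Both approaches ultimately rest on the same two ingredients you correctly identify: Theorem~\ref{thm:high} in place of the competitor argument, and the Lipschitz extension of the restriction to the good set.
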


As a nontrivial consequence of this theorem and the regularity result in the linear theory section, we will also deduce a strong harmonic approximation result. 

\begin{theorem}[Harmonic approximation]\label{cor:harmapprox}
Let $\gamma$ be the constant of Theorem \ref{thm:almstr}. Then, for every $\eta>0$, there is a positive constant $\eps>0$ with the following property. Assume that $f$ is as in Theorem \ref{thm:almstr}, $E := \bE(\bG_f, \bC_{4r}(x))<\eps$, then there exists a continuous classical solution $u\in W^{1,2}(B_r(x), \I2(\R^n))\cap C^{0,\alpha}(B_r(x), \I2(\R^n))$ such that
\begin{equation}\label{eq:harmonicapp}
    \frac1{r^2}\int_{B_r(x)}\G(f,u)^2+\int_{B_r(x)}\left(|D f|-|Du|\right)^2+\int_{B_r(x)}\left|D(\etab\circ f)-D(\etab\circ u)\right|^2\leq \eta\,E\,r^m\,.
\end{equation}
\end{theorem}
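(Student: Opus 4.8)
The plan is to run the classical contradiction--compactness argument that produces strong harmonic approximation from the $\pi$-approximation, but with ``Dir-minimizer'' replaced by ``continuous classical solution'' and with Theorem \ref{thm:compactness} playing the role of compactness of Dir-minimizers. After translating and dilating we may take $x=0$, $r=1$. Assume the statement fails: there are $\eta_0>0$ and Lipschitz maps $f_k\colon B_4\to\I2(\R^n)$ with stationary graphs and $E_k:=\bE(\bG_{f_k},\bC_4)\to 0$ for which \emph{no} continuous classical solution $u\in W^{1,2}(B_1,\I2(\R^n))\cap C^{0,\alpha}(B_1,\I2(\R^n))$ satisfies the estimate \eqref{eq:harmonicapp} with $\eta$ replaced by $\eta_0$.

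First I would apply Theorem \ref{thm:almstr} to each $f_k$ (legitimate once $E_k<\eps$), obtaining $\hat f_k\colon B_1\to\I2(\R^n)$ with $\Lip(\hat f_k)\le C E_k^{\gamma}$, with $\int_{B_1}|D\hat f_k|^2\lesssim E_k$ (from \eqref{e:areaenergy} together with the monotonicity of the mass ratio, which bounds $\|\bG_{f_k}\|(\bC_1)-2\omega_m$ by $C E_k$), with the reverse--H\"older inequality for $|D\hat f_k|^2$ that is built into the construction via the higher integrability estimate (Theorem \ref{thm:high}), and --- crucially --- with the superlinear error bound \eqref{eq:approxerror}, i.e.
\[
\int_{B_1}\G(f_k,\hat f_k)^2+\int_{B_1}\big(|Df_k|-|D\hat f_k|\big)^2+\int_{B_1}\big|D(\etab\circ f_k)-D(\etab\circ\hat f_k)\big|^2\le C E_k^{1+\gamma}=o(E_k).
\]
Thus it is enough to find, for $k$ large, a continuous classical solution that is $o(E_k)$-close to $\hat f_k$ in these three norms.

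Next I would normalise: set $g_k:=E_k^{-1/2}\hat f_k$. Then $\sup_k\int_{B_1}|Dg_k|^2<\infty$ (hypothesis c1), after a trivial dilation to $B_4$) and the reverse--H\"older inequality (hypothesis c2)) is scale invariant. The central point is hypothesis c3): since $\bG_{f_k}$ is stationary for the area, $f_k$ is stationary for the area functional, so $\mathcal O^A(f_k,\cdot)=\mathcal I^A(f_k,\cdot)=0$; and since $\hat f_k$ agrees with $f_k$ off a set of measure $\le C E_k^{1+\gamma}$, has energy $\lesssim E_k$, and has vanishing Lipschitz constant, one estimates $\mathcal O^A(\hat f_k,\cdot)-\mathcal O^A(f_k,\cdot)$ and $\mathcal O^A(\hat f_k,\cdot)-\mathcal S(\mathcal E_{\hat f_k},\cdot)$ (and the analogous inner and average expressions) by quantities of the form $C(|B_1\setminus K|^{1-1/p}+\Lip(\hat f_k)^2)\int_{B_1}|D\hat f_k|^2+C\|\G(f_k,\hat f_k)\|_{L^2}^2=o(E_k)$, using Almgren's estimates and the higher integrability. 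Dividing by $E_k$ gives $\mathcal S(\mathcal E_{g_k},\varphi)\to0$, $\mathcal I(\mathcal E_{g_k},\phi)\to0$, $\mathcal E_{g_k}(p_i^\alpha\partial_i\psi^\alpha)\to0$ for all admissible test functions. Theorem \ref{thm:compactness} then produces, up to a subsequence, a continuous classical solution $g\in W^{1,2}\cap C^{0,\alpha}$ on a ball containing $B_1$ (modulo the fixed loss of radius in that theorem, which only costs an immaterial shrinking of $r$), with $\G(g_k,g)\to0$ and $|Dg_k|\to|Dg|$ in $L^2(B_1)$; since the limiting Young measure is elementary ($\mathcal F=\mathcal E_g$, no oscillation), the convergence $g_k\to g$ is strong in $W^{1,2}(B_1)$, hence also $D(\etab\circ g_k)\to D(\etab\circ g)$ in $L^2(B_1)$. (If the two sheets of $f_k$ drift apart, the projections $\chi_k$ of Theorem \ref{thm:compactness} enter and one argues with $\chi_k^+\circ g$; the smallness of the energy keeps $f_k\ominus\etab\circ f_k$ uniformly small, which is all that is used.)

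Finally, the class of classical solutions is invariant under the rescaling maps $\hat\eta^\lambda$, so $u_k:=E_k^{1/2}g$ is again a continuous classical solution. Undoing the normalisation and combining the Step~1 bound with the Step~2 convergences via the triangle inequality yields $\int_{B_1}\G(f_k,u_k)^2=o(E_k)$, $\int_{B_1}(|Df_k|-|Du_k|)^2=o(E_k)$ and $\int_{B_1}|D(\etab\circ f_k)-D(\etab\circ u_k)|^2=o(E_k)$; for $k$ large their sum is $<\eta_0 E_k$, contradicting the choice of $f_k$. The step I expect to be the genuine obstacle is the verification of c3): one must show that area-stationarity of $\bG_{f_k}$ descends, with errors that are truly superlinear in $E_k$, to the linearised Dirichlet outer/inner/average stationarity of the normalised Almgren approximation, and this is precisely where the superlinear bound \eqref{eq:approxerror}, the estimate on $|B_1\setminus K|$, the smallness of $\Lip(\hat f_k)$, and the higher integrability must be combined; everything else (the compactness theorem, scale invariance, the triangle inequality) is routine.
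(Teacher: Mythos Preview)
Your outline is the paper's argument---contradiction plus Theorem~\ref{thm:compactness}---and you correctly identify the verification of c3) as the heart of the matter. The difference is that you route the compactness through the Lipschitz approximation $\hat f_k$, and that detour creates a gap: you assert that c2) (the reverse-H\"older inequality \emph{at every scale}) holds for $\hat f_k$ because it is ``built into the construction via the higher integrability estimate'', but Theorem~\ref{thm:high} gives the inequality for $f_k$, not for $\hat f_k$, and modifying $f_k$ on the bad set $B_1\setminus K$ can destroy it on small balls meeting that set (a Lipschitz bound alone does not produce a reverse-H\"older inequality---it only gives $|D\hat f_k|^{2p}\le (CE_k^\gamma)^{2p-2}|D\hat f_k|^2$, which is the wrong shape).

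The paper sidesteps this by normalising $\tilde f_k=E_k^{-1/2}f_k$ directly: c2) for $\tilde f_k$ is then Theorem~\ref{thm:high} verbatim (the inequality is invariant under scalar multiplication), and c3) is checked by comparing the Dirichlet and area variations via the pointwise bounds $|\sqrt{|g|}\,g^{ij}-\delta^{ij}|\lesssim |Df_k|^2$ and~\eqref{eq.innervarationexpansion}, with the higher integrability of $f_k$ itself absorbing the cubic and quartic error terms. The candidate approximant is $u_k=E_k^{1/2}\,\chi_k^+\circ f$ (a classical solution since $\chi_k^+$ just translates the separated clusters by constants), and~\eqref{eq.strongconvergence} delivers all three terms of~\eqref{eq:harmonicapp}; no appeal to $\hat f_k$ or to~\eqref{eq:approxerror} is needed. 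Your triangle-inequality step through $\hat f_k$ becomes redundant once you feed $f_k$ itself to Theorem~\ref{thm:compactness}.
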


\begin{remark}
    Notice that, thanks to \eqref{eq:approxerror}, we can replace $f$ with $\hat{f}$ in \eqref{eq:harmonicapp} thus obtaining the same statement as in \cite[Theorem 2.6]{DS1}
\end{remark}

We will also need the following persistency of the $Q$-point result:

\begin{proposition}[Persistency of $2$-points]\label{prop:basicpersistency} Let $f\colon \Omega=B_{4r}(x) \to \I2(\R^n)$ be a Lipschitz function with stationary graph, and suppose that there exists $y_0\in \Omega$ such that $f(y_0)=2 \a{t}$ for some $t\in \R^n$.  Then there exists a constant $C$, depending on the Lipschitz constant of $f$, such that 
\begin{equation}\label{eq:basicper}
\sup_{x\in B_r(y_0)}\G(f(x), 2\a{t}) \leq C\, r^{2+m} \int_{B_{4r(x_0)}}|Df|^2\,,\qquad \forall B_{4r}(x_0)\subset \Omega\,. 
\end{equation}
Moreover, under the assumptions of Theorem \ref{thm:almstr}, we can replace $f$ with $\hat f$ in \eqref{eq:basicper}.
\end{proposition}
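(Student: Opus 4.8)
The plan is to derive \eqref{eq:basicper} from the monotonicity formula for the stationary varifold $\bG_f$ centered at $p:=(y_0,t)\in\R^{m+n}$, combined with the elementary area bound for Lipschitz graphs. By the area formula $\|\bG_f\|(\bC_R(y_0))=\sum_{l=1}^{2}\int_{B_R(y_0)}\sqrt{|g(f_l)|}$ and $|Df|\le L:=\Lip(f)$ one has $\|\bG_f\|(\bC_R(y_0))\le 2\,\omega_m R^m+C(L)\int_{B_R(y_0)}|Df|^2$; since $\bB_R(p)\subset\bC_R(y_0)$ and, by monotonicity, $\|\bG_f\|(\bB_R(p))\ge\Theta(\bG_f,p)\,\omega_m R^m$, this already forces $\Theta(\bG_f,p)\le 2+C(L)(\omega_m R^m)^{-1}\int_{B_R(y_0)}|Df|^2$ for all admissible $R$. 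Feeding this into the monotonicity identity
\[
\frac{\|\bG_f\|(\bB_R(p))}{\omega_m R^m}-\Theta(\bG_f,p)=\int_{\bB_R(p)}\frac{|P_{\pi(x)^{\perp}}(x-p)|^2}{|x-p|^{m+2}}\,d\|\bG_f\|(x)\,,
\]
with $\pi(x)$ the approximate tangent plane of $\bG_f$ at $x$, and bounding the left-hand side from above by the same area estimate, one obtains the \emph{conical-deficit bound}
\[
\int_{\bB_R(p)}\frac{|P_{\pi(x)^{\perp}}(x-p)|^2}{|x-p|^{m+2}}\,d\|\bG_f\|(x)\le\frac{C(L)}{\omega_m R^m}\int_{B_R(y_0)}|Df|^2\,,
\]
valid irrespective of the actual value of $\Theta(\bG_f,p)$.

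The heart of the matter is to convert this into the pointwise estimate \eqref{eq:basicper}. At a graph point $x=(z,f_l(z))$ the plane $\pi(x)$ makes an angle $\le\arctan L$ with $\pi_0$, so $|P_{\pi(x)^{\perp}}(x-p)|$ controls, up to a constant depending on $L$, the distance of $x$ from the affine plane $p+\pi(x)$; a dyadic telescoping over the annuli $\bB_{2^{-j}}(p)\setminus\bB_{2^{-j-1}}(p)$ — using that a Lipschitz graph cannot fold back, so that $\spt\bG_f\cap\bB_{2^{-j}}(p)$ is confined to a slab of the corresponding thickness — upgrades the integral bound into control of $\sup_{B_\rho(y_0)}\G(f,2\a{t})$. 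Under the assumptions of Theorem \ref{thm:almstr}, where $E=\bE(\bG_f,\bC_{4r})$ is small, this is clean: small cylindrical excess forces the tangent cone of $\bG_f$ at the $2$-point $p$ to be the plane $\pi_0$ with multiplicity $2$, so the telescoped estimate is genuinely an oscillation bound and \eqref{eq:basicper} follows after rescaling by $f\mapsto\rho^{-1}f(\rho\,\cdot)$ (which preserves $\Lip(f)$ and stationarity). In general one further combines the deficit bound with an interpolation between $\G(f(x),2\a{t})\le L|x-y_0|$ and the $L^2$-control of $\G(f,2\a{t})$ coming from $f(y_0)=2\a{t}$, Poincar\'e, and the energy bound; this accommodates the harmless possibility that the tangent cone at $p$ is a non-flat stationary graphical cone of Lawson–Osserman type, harmless precisely because such a cone carries positive Dirichlet energy at every scale, keeping \eqref{eq:basicper} consistent.

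Equivalently — and in the spirit of the rest of the paper — one can argue by compactness. Normalizing $f\ominus t$ by the inverse square root of its Dirichlet energy and invoking Theorems \ref{thm:almstr} and \ref{thm:compactness}, a subsequential limit is a classical solution (a stationary measure) with average zero and a full $2$-point at $y_0$; for such a map the frequency function $I_{\mathcal F}$ is monotone (Proposition \ref{prop.propertiesMeasureSolutions}) and satisfies $I_{\mathcal F}(y_0,0^+)\ge 1$ at a full collapse point, whence $\int_{B_r(y_0)}|f|^2\lesssim r^2\int_{B_{2r}(y_0)}|Df|^2$, and a standard sup estimate for stationary measures gives $\sup_{B_r(y_0)}\G(f,2\a{0})^2\lesssim r^{2-m}\int_{B_{4r}(y_0)}|Df|^2$, i.e. \eqref{eq:basicper} in rescaled form.

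Finally, the replacement of $f$ by $\hat f$ follows by combining the bound just proved for $f$ with the properties of $\hat f$ from Theorem \ref{thm:almstr}: by \eqref{e:graphcoincide} $\hat f$ agrees with $f$ off a set of measure $\le CE^{1+\gamma}r^m$, by \eqref{e:lipbd} its Lipschitz constant is $\le CE^{\gamma}$, and by \eqref{eq:approxerror} it is $L^2$-close to $f$; since \eqref{eq:basicper} for $f$ forces $\bG_f$ to be $L^2$-close near $(y_0,t)$ to two copies of the horizontal disk through $t$, these facts propagate the estimate to $\sup_{B_r(y_0)}\G(\hat f,2\a{t})$, with $\int|D\hat f|^2$ in place of $\int|Df|^2$ by \eqref{eq:approxerror}. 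I expect the main obstacle to be exactly the conversion of the integral conical deficit into a pointwise oscillation bound — where one must run the telescoping while tracking the two-valued structure near $y_0$ (thanks to $Q=2$, either two single-valued graphs or a genuine $2$-point) and handle a possibly non-flat tangent cone, which is precisely why the energy-quantified form of \eqref{eq:basicper}, rather than a uniform oscillation estimate, is the right statement.
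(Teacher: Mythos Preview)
Your argument has a genuine gap at the very first step. The conical-deficit bound
\[
\int_{\bB_R(p)}\frac{|P_{\pi(x)^{\perp}}(x-p)|^2}{|x-p|^{m+2}}\,d\|\bG_f\|(x)\le\frac{C(L)}{\omega_m R^m}\int_{B_R(y_0)}|Df|^2
\]
does \emph{not} hold ``irrespective of the actual value of $\Theta(\bG_f,p)$''. From the area expansion you only get $\|\bG_f\|(\bB_R(p))/(\omega_m R^m)\le 2+C R^{-m}\int|Df|^2$, so the left-hand side of the monotonicity identity is bounded by $(2-\Theta(\bG_f,p))+C R^{-m}\int|Df|^2$. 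Unless you know $\Theta(\bG_f,p)\ge 2$, the constant $2-\Theta$ survives and the estimate is vacuous. For a $2$-valued Lipschitz graph the a priori lower bound from graphicality alone is only $\Theta\ge 2(1+L^2)^{-m/2}$, and in general the tangent cone at a collapsed point need not be flat, so $\Theta<2$ is not excluded without further input. The paper supplies exactly this missing ingredient as a separate \emph{Lower density estimate} lemma: using stationarity, the blow-up at $(y_0,t)$ is a $1$-homogeneous stationary Lipschitz graph, and a dimension-reduction argument (eventually reducing to the $1$-dimensional case, where the link is either a pair of geodesics or a single closed geodesic, the latter being incompatible with $2$-valued graphicality) forces $\Theta\ge 2$. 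Your hand-wave about Lawson--Osserman type cones ``carrying positive Dirichlet energy'' does not substitute for this.

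Two further remarks. First, for the conversion from the integral deficit to a pointwise bound, the paper does something much simpler than your dyadic telescoping: it lower-bounds the deficit integrand by $R^{-(m+2)}|(x-p)^{\perp_{\pi_0}}|^2$ minus a term of size $|\pi-\pi_0|^2$ (which is controlled by $|Df|^2$), obtaining $\int_{B_R(y_0)}|f\ominus t|^2\le C\int_{B_{CR}(y_0)}|Df|^2$, and then applies the $L^\infty$--$L^2$ estimate of Corollary~\ref{cor.LinftyL2} (a De Giorgi class argument using only the outer variation). Second, your ``equivalent'' compactness route is circular: Theorem~\ref{thm:compactness} requires the uniform higher integrability c2), which is Theorem~\ref{thm:high}, whose proof (Case~2 of Lemma~\ref{l:key}) is precisely where the estimate \eqref{eq:basicper} is established.
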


The key estimate to derive both of the above results is the following higher integrability result for the gradient of $f$.  We remark that it is in proving this result that we crucially use the assumption $Q=2$.  

\begin{theorem}[Higher integrability]\label{thm:high}
There exist $p>1$ and a constant $C=C(\Lip(f))> 0$ such that if  $f\colon B_1\to \I2(\R^n)$ is a Lipschitz function with stationary graph, then 
\begin{equation}
    \left( \int_{B_{\frac12}} |Df|^{2p} \right)^\frac1p \le C \int_{B_1} |Df|^2\,. 
\end{equation}
\end{theorem}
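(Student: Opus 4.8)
The plan is to deduce Theorem~\ref{thm:high} from a scale--invariant reverse H\"older inequality for $g:=|Df|^2$ together with a (variant of the) Gehring self--improvement lemma. Concretely, set $q:=\tfrac{m}{m+2}\in(0,1)$; I will prove that there are a fixed $\Lambda>1$ and a constant $C=C(m,n,\Lip f)$ with
\begin{equation}\label{eq:planrH}
\fint_{B_r(x_0)}|Df|^2\,dx\;\le\;C\,\Big(\fint_{B_{\Lambda r}(x_0)}|Df|^{2q}\,dx\Big)^{1/q}\qquad\text{for every }B_{\Lambda r}(x_0)\subset B_1\,.
\end{equation}
Since $|Df|^{2q}\in L^{1/q}$ with $1/q>1$, applying Gehring's lemma to \eqref{eq:planrH} produces an exponent $\delta=\delta(m,n,\Lip f)>0$ with $|Df|^{2q}\in L^{1/q+\delta}_{\mathrm{loc}}$, that is $|Df|\in L^{2p}_{\mathrm{loc}}$ for $p:=1+q\delta>1$, together with the claimed quantitative estimate after renormalising to $B_1$ and $B_{1/2}$. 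Thus everything reduces to \eqref{eq:planrH}, which I establish for a fixed ball by a dichotomy according to whether $B_{2r}(x_0)$ contains a point of multiplicity two.

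\medskip
\noindent\emph{Case 1: $f(y)\neq 2\a{p}$ for all $y\in B_{2r}(x_0)$ and $p\in\R^n$.} This is where the hypothesis $Q=2$ enters. By continuity the separation $\sep(f)$ is bounded below by a positive constant on the compact ball $\overline{B_{3r/2}(x_0)}$, so on the simply connected ball $B_{3r/2}(x_0)$ the function decomposes as $f=\a{f_1}+\a{f_2}$ with $f_1,f_2$ Lipschitz, $\Lip f_i\le\Lip f$, and $f_1\neq f_2$ everywhere. The graphs of $f_1$ and $f_2$ are then uniformly separated on a slightly smaller ball, hence $\bG_f=\bG_{f_1}+\bG_{f_2}$ with disjoint supports there and each $\bG_{f_i}$ is a single--valued area--stationary Lipschitz graph; in particular each $f_i$ satisfies the outer variation identity \eqref{eq:outer}. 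Testing it with $\psi^\alpha(x,u)=\zeta^2(x)\,(u^\alpha-\bar f_i^\alpha)$, where $\zeta$ is a cutoff equal to $1$ on $B_r(x_0)$ and vanishing outside $B_{5r/4}(x_0)$ and $\bar f_i:=\fint_{B_{5r/4}(x_0)}f_i$, and using that $g^{ij}(f_i)$ is uniformly elliptic and $\sqrt{|g(f_i)|}$ is bounded above and below with constants depending only on $\Lip f$, one absorbs the quadratic term and obtains the Caccioppoli inequality $\int_{B_r(x_0)}|Df_i|^2\le C\,r^{-2}\int_{B_{5r/4}(x_0)}|f_i-\bar f_i|^2$. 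The Sobolev--Poincar\'e inequality with the exponent $2q$ (chosen exactly so that its Sobolev conjugate equals $2$) gives $r^{-2}\int_{B_{5r/4}(x_0)}|f_i-\bar f_i|^2\le C\big(\int_{B_{5r/4}(x_0)}|Df_i|^{2q}\big)^{1/q}$, and summing over $i=1,2$ (using $|Df|^2=|Df_1|^2+|Df_2|^2$ and $(a+b)^q\ge\tfrac12(a^q+b^q)$) yields \eqref{eq:planrH} in this case.

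\medskip
\noindent\emph{Case 2: there is $y_0\in B_{2r}(x_0)$ with $f(y_0)=2\a{t}$ for some $t\in\R^n$.} Now I test the outer variation \eqref{eq:outer} for $f$ itself with the admissible test function $\psi^\alpha(x,u)=\zeta^2(x)\,(u^\alpha-t^\alpha)$ ($\zeta$ a cutoff as above). The same absorption argument, together with $\G(f,2\a{t})^2=\sum_l|f_l-t|^2$, yields the \emph{collapsed Caccioppoli inequality}
\begin{equation}\label{eq:plancacc}
\int_{B_r(x_0)}|Df|^2\,dx\;\le\;C\,r^{-2}\int_{B_{5r/4}(x_0)}\G\big(f,2\a{t}\big)^2\,dx\,.
\end{equation}
It then remains to prove a \emph{Poincar\'e inequality at collapsed points}: using crucially that $f(y_0)=2\a{t}$ and the stationarity of $\bG_f$,
\begin{equation}\label{eq:planpoinc}
r^{-2}\int_{B_{5r/4}(x_0)}\G\big(f,2\a{t}\big)^2\,dx\;\le\;C\,\Big(\int_{B_{4r}(y_0)}|Df|^{2q}\,dx\Big)^{1/q}\,.
\end{equation}
Since $B_{5r/4}(x_0)\subset B_{4r}(y_0)\subset B_{6r}(x_0)$, combining \eqref{eq:plancacc} and \eqref{eq:planpoinc} gives \eqref{eq:planrH} with $\Lambda=6$ also in this case, completing the reduction.

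\medskip
The step I expect to be the genuine obstacle is the Poincar\'e inequality at collapsed points \eqref{eq:planpoinc}. The naive proof---integrating $|Df|$ along segments issuing from $y_0$---fails, because a priori $f$ could carry a branch singularity at $y_0$ along which $|Df|$ is not even square integrable on the segment, so the one--dimensional Poincar\'e inequality degenerates. This is precisely where the stationarity of $\bG_f$ (and not merely the stationarity of $f$ for the area functional) is indispensable: at the density--two point $(y_0,t)$ the monotonicity formula for the mass ratio yields a Morrey--type decay $\int_{B_\rho(y_0)}|Df|^2\le C\rho^m$ for $\rho\le r$ (equivalently, a uniform bound on the Dirichlet density ratio at $y_0$), which rules out such concentration and makes \eqref{eq:planpoinc} accessible through the $Q$--valued Poincar\'e machinery. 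Granting \eqref{eq:planpoinc}, the remaining ingredients---the two Caccioppoli estimates, the Sobolev--Poincar\'e inequality, and the Gehring lemma---are routine, if somewhat delicate in the bookkeeping of radii and constants.
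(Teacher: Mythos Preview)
Your overall architecture---dichotomy according to the presence of a collapsed point, followed by Gehring---is exactly the paper's, and your Case~1 is fine (the paper uses an $L^\infty$--$L^2$ estimate rather than Sobolev--Poincar\'e, but either works there).

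The gap is in Case~2. You correctly identify \eqref{eq:planpoinc} as the crux, but the mechanism you propose does not yield it. The monotonicity formula at $(y_0,t)$, once you know $\Theta(\bG_f,(y_0,t))\ge 2$ (itself a nontrivial lemma the paper proves separately by tangent--cone analysis and dimension reduction), gives only the $L^2$--$L^2$ Poincar\'e
\[
\int_{B_{cr}(y_0)}\G(f,2\a{t})^2\;\le\;C\,r^2\int_{B_{Cr}(y_0)}|Df|^2\,,
\]
not the $L^2$--$L^{2q}$ version you need. If you try to reach $2\a{t}$ via the $Q$--valued Sobolev--Poincar\'e average $T$, you must still bound $\G(T,2\a{t})$ by $(\int|Df|^{2q})^{1/(2q)}$, and the only available control on $\G(T,2\a{t})$ comes from the $L^2$--$L^2$ inequality above, i.e.\ from $\int|Df|^2$---which by Jensen dominates, not is dominated by, $(\int|Df|^{2q})^{1/q}$. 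The Morrey bound $\int_{B_\rho(y_0)}|Df|^2\le C\rho^m$ you mention is a byproduct of monotonicity but does not bridge this gap.

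The paper sidesteps the issue by \emph{not} aiming for the standard Gehring form. It uses the $L^2$--$L^2$ Poincar\'e from monotonicity, upgrades it to an $L^\infty$ bound
\[
\sup_{B_{cr}(y_0)}\G(f,2\a{t})^2\;\le\;C\int_{B_{Cr}(y_0)}|Df|^2
\]
via a De~Giorgi--class argument (the component functions $\max_l e\cdot(f_l-t)$ lie in $DG(\Omega)$), and then tests the outer variation with $\psi(x,u)=\varphi(x)(u-t)$ to obtain
\[
\int_{B_r(x_0)}|Df|^2\;\le\;C\,\sup\G(f,2\a{t})\cdot\int_{B_{cr}(x_0)}|Df|\;\le\;C\Big(\int_{B_{Mr}(x_0)}|Df|^2\Big)^{1/2}\int_{B_{Mr}(x_0)}|Df|\,.
\]
This alternative reverse H\"older form (with $|Df|$ rather than $|Df|^{2q}$ on the right) still feeds into a variant of Gehring's lemma. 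The $L^\infty$--$L^2$ step is the missing ingredient in your scheme; once you have it, both cases produce the same inequality and the argument closes.
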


\subsection{Preliminaries on multivalued Lipschitz functions} Given a Lipschitz function $f\colon \Omega \to \R^n$, we consider the following quantities:
\[
g_{ij}(f):=\delta_{ij}+\partial_if\cdot \partial_jf\,,
\qquad g^{ij}(f):=(g_{ij}(f))^{-1}
\qquad \text{and}\qquad 
|g(f)|=\det(g_{ij}(f))\,.
\]
We notice that if $f=\sum_{l=1}^Qf_l$ is a Lipschitz function, then by \cite{DS} the quantities $g_{ij}(f_l)$ and $|g(f_l)|$ are well defined almost everywhere, and moreover the following estimates hold for every $l=1,\dots Q$ and almost every $x\in \Omega$:
\begin{equation}\label{eq:lip1}
    \delta_{ij}\le g_{ij}(f_l) \le \delta_{ij}(1+\Lip(f)^2)
\end{equation}
\begin{equation}\label{eq:lip2}
    \frac{1}{(1+\Lip(f)^2)^{\frac 12}} \le 
\sqrt{|g(f_l)|}g^{ij}(f_l) \le (1+\Lip(f)^2)^{\frac{m-1}{2}}. 
\end{equation}

\subsection{Higher integrability: proof of Theorem \ref{thm:high}}
The goal of this section is to prove the following higher integrability result. Before coming to the proof of Theorem \ref{thm:high}, we need some preliminary results. We start with a lower density estimate at collapsed points on the graph.

\begin{lemma}[Lower density estimate]
    Let $f$ be as Theorem \ref{thm:high} and $f(0)=2\a{t}$, then 
    \[\Theta(\bG_f,(0,t)) \ge 2\,.\]
\end{lemma}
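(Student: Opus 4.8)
\emph{Plan.} Since $\bG_f$ is a stationary integral varifold, Allard's monotonicity formula applies, so $D:=\Theta(\bG_f,(0,t))$ exists and equals $\Theta(C,0)$ for every tangent cone $C$ of $\bG_f$ at $(0,t)$. I would first identify such a $C$. The rescaled currents $(\iota_{(0,t),r_k})_\sharp \bG_f$ are exactly the graphs of the maps $f_{r_k}(x):=r_k^{-1}\big(f(r_kx)\ominus t\big)$, which are $\Lip(f)$-Lipschitz and satisfy $f_{r_k}(0)=2\a{0}$. By Arzel\`a--Ascoli in the $\G$-metric and the continuity of $g\mapsto \bG_g$ on equi-Lipschitz families (cf.\ \cite{DS}), along a subsequence $f_{r_k}\to g$ locally uniformly and $\bG_{f_{r_k}}\to \bG_g$, where $g\colon \R^m\to\I2(\R^n)$ is $\Lip(f)$-Lipschitz; since a tangent cone is a cone, $g$ is positively $1$-homogeneous (so automatically $g(0)=2\a{0}$) and $C=\bG_g$. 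Thus it suffices to show: \emph{every stationary, positively $1$-homogeneous, $2$-valued Lipschitz graph cone $\bG_g$ with $g(0)=2\a{0}$ has $\Theta(\bG_g,0)\ge 2$} — note the naive projection bound gives only $\Theta(\bG_g,0)\ge 2(1+\Lip(g)^2)^{-m/2}$, which is why the argument below is needed.

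\emph{Base case: the sheets meet only at the vertex.} If $g=2\a{h}$ for a single-valued Lipschitz (hence $1$-homogeneous) $h$, then $\bG_g=2\,\bG_h$ with $\bG_h$ a stationary Lipschitz graph through $(0,0)$, so $\Theta(\bG_h,(0,0))\ge 1$ by Allard and $\Theta(\bG_g,0)\ge 2$. Otherwise let $Z$ be the set of $x\in\R^m$ over which the two sheets of $\bG_g$ coincide — a closed cone with $0\in Z\subsetneq\R^m$ — and suppose $Z=\{0\}$. For $m\ge2$, $\R^m\setminus\{0\}$ is connected and simply connected, so there $g$ splits globally as $\a{g_1}+\a{g_2}$ with $g_i$ $1$-homogeneous Lipschitz, $g_i(0)=0$ and $g_1\ne g_2$ off the origin; hence $\bG_g=\bG_{g_1}+\bG_{g_2}$ with $\spt\bG_{g_1}\cap\spt\bG_{g_2}=\{(0,0)\}$. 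Testing the stationarity of $\bG_g$ with fields supported away from $(0,0)$ shows each $\bG_{g_i}$ is stationary in $\R^{m+n}\setminus\{(0,0)\}$; since $\{(0,0)\}$ is $\mathcal H^{m-1}$-null and $\|\bG_{g_i}\|$ charges no point, the first-variation measure of $\bG_{g_i}$ cannot concentrate there, so $\bG_{g_i}$ is stationary on $\R^{m+n}$ and $\Theta(\bG_{g_i},(0,0))\ge 1$ by Allard; therefore $\Theta(\bG_g,0)=\Theta(\bG_{g_1},(0,0))+\Theta(\bG_{g_2},(0,0))\ge 2$. For $m=1$ the graph over each of $\{t>0\}$ and $\{t<0\}$ has multiplicity $2$, so $\Theta(\bG_g,0)=2$ directly.

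\emph{Reduction to the base case.} If $Z\supsetneq\{0\}$, pick $x_1\in Z\setminus\{0\}$, let $y_1$ be the common sheet value ($g(x_1)=2\a{y_1}$), and blow $\bG_g$ up at $p_1:=(x_1,y_1)$. As in the first paragraph the rescalings $r^{-1}\big(g(x_1+r\,\cdot)\ominus y_1\big)$ are equi-Lipschitz and vanish at $0$, so a tangent cone $C_1=\bG_{g^{(1)}}$ is again a stationary $1$-homogeneous $2$-valued Lipschitz graph cone; moreover $C_1$ is invariant under translation by $p_1$ (a standard property of tangent cones of cones), hence its coincidence set contains $\R x_1$, and by cone monotonicity $\Theta(C_1,0)=\Theta(\bG_g,p_1)\le\Theta(\bG_g,0)$. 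Iterating — at step $j$ choosing a direction in the current coincidence set outside $\mathrm{span}(x_1,\dots,x_{j-1})$, which is possible as long as the reduction has not finished — the coincidence sets contain subspaces of strictly increasing dimension, so the process stops after $N\le m$ steps at a cone $C_N$ whose coincidence set is a subspace $V$: if $V=\R^m$ we are in the double-sheet case, if $V=\{0\}$ the base case applies directly, and otherwise $C_N$ splits as a metric product $\widehat C\times G$ (with $G$ the subspace of translation-invariances, $\dim G=\dim V$), where $\widehat C$ is a stationary $2$-valued Lipschitz graph cone of dimension $m-\dim V$ with trivial coincidence set, to which the base case applies; in every case $\Theta(C_N,0)\ge2$. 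Since $\Theta(\bG_g,0)\ge\Theta(C_1,0)\ge\cdots\ge\Theta(C_N,0)\ge2$, we get $D=\Theta(\bG_f,(0,t))=\Theta(\bG_g,0)\ge2$.

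\emph{Main obstacle.} The delicate points are the removable-singularity step in the base case and the bookkeeping in the reduction — propagating graphicality, the equi-Lipschitz bound, the $2$-valued structure, the identity $\mathbf p_\sharp\bG_{g^{(j)}}=2\llbracket\R^m\rrbracket$, and the density monotonicity $\Theta(C_{j+1},0)\le\Theta(C_j,0)$ through each blow-up; everything else is routine monotonicity-formula and compactness theory.
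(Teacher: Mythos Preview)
Your proof is correct and follows essentially the same strategy as the paper: reduce to a $1$-homogeneous tangent cone, then do dimension reduction by blowing up at collapsed points (the paper phrases this as formal induction on $m$, you as an iterated blow-up), and handle the base case by splitting the sheets via simple connectedness and a removable-singularity argument. The one place the paper is more careful is in identifying the \emph{varifold} tangent cone with $\bG_g$ for the function limit $g$ --- current convergence alone does not give the mass convergence needed for the density statement, so one must argue (as the paper does, via Allard integral compactness and ``density cannot drop'' for Lipschitz graphs) that the stationary varifold limit actually equals $V_{\bG_g}$; you cite this as continuity of $g\mapsto\bG_g$ on equi-Lipschitz families, which is fine provided you mean varifold convergence and are prepared to justify it.
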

\begin{proof}
Passing to $f\ominus t$ we can assume without loss of generality that $t=0$. Furthermore, by the monotonicity formula of area for the graph of $f$, we know that that the limit of the sequence
\[ 
(\eta_{r})_\sharp \bG_f = \bG_{f_r}\,,
\]
with $f_r(x) = \frac{1}{r}f(rx)$ and $\eta_r(x)=x/r$, is a $1$-homogeneous Lipschitz graph $\hat{f}$ that is stationary and satisfies for all $r\ge 0$
\[ 
\Theta(\bG_f, 0) = \Theta( \bG_{\hat{f}} , B_r)\,.
\]

Hence it is sufficient to prove the claim under the additional assumption that $f$ is 1-homogeneous. We now apply a dimension reduction argument. 
If $f$ is $1$-dimensional and $1$-homogeneous, we easily deduce that $f(t)= \a{A_1t} + \a{A_2t}$ for two vectors $A_i \in \R^n$, hence $\Theta(\bG_f, 0)=2$.

Now we come to the induction step. Assume we have proven the claim for $m$, and $f$ is a $1$-homogeneous stationary Lipschitz function in dimension $m+1$. 

Either there is $x_0 \in \partial B_1$ s.t. $f(x_0)= 2\a{t}$, in which case we deduce from the upper semi-continuity of the density that 
\[\Theta(\bG_f,0) \ge \Theta(\bG_f,(x_0,t)) =\Theta(\bG_{Tf_{x_0}},0)\ge 2\,.\]
where $Tf_{x_0}$ denotes the blow-up at the point $(x_0,t)$, whose graph is stationary and splits a line. Here we have used the following observation. Let $f_{x_0,r}(y)=\frac{f(x_0+ry)-f(x_0)}{r}$ be the blow-up sequence at the point $x_0$. Then a sub-sequence $f_k=f_{x_0,r_k}$ convergences as $r_k\to 0$ to a Lipschitz function $Tf_{x_0}$. On the other hand, the varifolds associated the graphs of $f_k$, $V_k={\eta_{x_0,r_k}}_\sharp V$, converge in the varifold sense to a stationary varifold $V_\infty$. This stationary varifold is a cone due to the monotonicity formula, and even more, it splits a line (see for instance \cite{Whstrat}). Since $\mathbf{G}_{f_k}$ convergence in Hausdorff $\spt(V_\infty) \subset \spt(\mathbf{G}_{Tf})$. By the Lipschitz graphicality, the density cannot drop, hence we deduce that $\mathbf{G}_{Tf}$ has $V_\infty$ as a stationary varifold, compare \cite[Proposition 11.53]{GiaquintaMartinazzi}. This implies that $Tf$ is stationary, splits a line and hence we can apply the induction hypothesis.

If there is no collapsed point on $\partial B_1$ we have to distinguish between dimensions $m=1$ and $m\ge 2$.

For $m\ge 2$ the $m$-sphere $\partial B_1$ is simply connected and hence $f(x)= \a{f_1(x)} + \a{f_2(x)}$ for two Lipschitz functions $f_i\colon \partial B_1 \to \R^n$ satisfying $f_1(x)\neq f_2(x)$ for all $x \in \partial B_1$. The $1$-homogeneity of $f$ implies that each $\bG_{f_i}$ is itself stationary, where we have identified $f_i$ with their $1$-homogeneous extension. (In particular, this implies that $f_i$ are $1$-homogenous stationary $1$-valued Lipschitz graphs and therefore linear functions.) Hence the result follows.

If $m=1$ there are two possibilities: either the link is disconnected, in which case the same argument as above applies, or it is connected. In the latter case we would have a $1$-dimensional connected minimal curve in $\partial B_1^{m+n}$, which therefore must be a single geodesic. This contradicts the fact that $f$ is a $2$-valued Lipschitz graph. 
\end{proof}

The key estimate to prove Theorem \ref{thm:high} is contained in the following lemma.

\begin{lemma}[Key estimate]\label{l:key}
    Let $L>0$. There exist dimensional constants $C,M> 0$, depending on $L$, such that if  $f$ is as in Theorem \ref{thm:high} and $\Lip(f)\leq L$, and $B_{Mr}(x_0)\subset B_1$, then 
\begin{equation}
\mint_{B_r(x_0)} |Df|^2\leq \left(\mint_{B_{Mr}(x_0)}|Df|^2\right)^{\frac12}\,\mint_{B_{Mr}(x_0)}|Df|\,.
\end{equation}
\end{lemma}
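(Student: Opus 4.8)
The plan is to reduce the estimate by scaling to a unit–ball statement, to prove a Caccioppoli–type inequality for the $2$–valued stationary graph, and then to argue by a dichotomy according to whether the two sheets of $f$ come close to collapsing near $x_0$ or stay apart. Since the map $x\mapsto f(x_0+Mr\,x)/(Mr)$ preserves both the Lipschitz constant and the stationarity of the graph, and since the asserted inequality is invariant under it, it suffices to find a constant $M=M(m,n,L)$ such that for every Lipschitz $f\colon B_1\to\I2(\R^n)$ with $\Lip(f)\le L$ and $\bG_f$ stationary one has $\mint_{B_{1/M}}|Df|^2\le\big(\mint_{B_1}|Df|^2\big)^{1/2}\mint_{B_1}|Df|$.

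The first step is a Caccioppoli inequality. Testing the outer variation $\mathcal O^A(f,\psi)=0$ from \eqref{eq:outer} with the admissible field $\psi(x,u)=\varphi(x)^2\,(u-c)$, where $\varphi$ is a cut–off and $c\in\R^n$ is arbitrary, and using the ellipticity bounds \eqref{eq:lip1}--\eqref{eq:lip2} to absorb the lower–order term, one obtains
\[
\mint_{B_\rho(y)}|Df|^2\ \le\ \frac{C(m,L)}{\rho^2}\,\inf_{c\in\R^n}\mint_{B_{2\rho}(y)}\G\big(f,2\a{c}\big)^2\qquad\text{for all }B_{2\rho}(y)\subset B_1.
\]
The crucial point is that the competitor $2\a{c}$ appearing on the right is a \emph{multiplicity} $2$–point; this is where $Q=2$ is convenient, since $\sum_{l=1}^{2}|f_l-c|^2=\G(f,2\a{c})^2$. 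Splitting $\G(f,2\a{c})^2\le 2\,\G(f,2\a{\etab\circ f})^2+4\,|\etab\circ f-c|^2$ and choosing $c$ to be the mean of the single–valued map $\etab\circ f$, the classical Poincar\'e inequality controls the second term by $\rho^2\mint_{B_{2\rho}}|Df|^2$, so the whole issue is reduced to estimating the genuinely $2$–valued contribution $\mint_{B_{2\rho}(y)}\G(f,2\a{\etab\circ f})^2$.

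To handle this term I would distinguish two cases. In the \emph{separated case}, where $f$ has no point of multiplicity two in $B_{1/2}$, the simple connectedness of $B_{1/2}$ yields a decomposition $f=\a{f_1}+\a{f_2}$ with $f_1,f_2$ single–valued Lipschitz and each $\bG_{f_i}$ a stationary, hence minimal, graph. For a single–valued minimal Lipschitz graph, Caccioppoli (from the $Q=1$ outer variation) combined with the Sobolev–Poincar\'e inequality at the exponent $\tfrac{2m}{m+2}<2$ gives the reverse–H\"older bound $\mint_{B_\rho}|Df_i|^2\lesssim_L\big(\mint_{B_{2\rho}}|Df_i|^{2m/(m+2)}\big)^{(m+2)/m}$; interpolating $L^{2m/(m+2)}$ between $L^1$ and $L^2$ converts this into $\mint_{B_\rho}|Df_i|^2\lesssim_L(\mint_{B_{2\rho}}|Df_i|^2)^{1-2/m}(\mint_{B_{2\rho}}|Df_i|)^{4/m}$, and iterating this finitely many times from scale $1/M$ up to scale $1$, while using the monotonicity of the mass ratio of $\bG_f$ to compare $\mint_{B_{1/2}}|Df|^2$ with $\mint_{B_1}|Df|^2$ up to a dimensional constant, produces the product estimate for $M$ large enough. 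In the \emph{collapsed case}, where there is $y_0\in B_{1/2}$ with $f(y_0)=2\a{t}$, one takes $c=t$ in the Caccioppoli inequality and must bound $\mint_{B_{2\rho}(y_0)}\G(f,2\a{t})^2$ by $\rho^2$ times the energy on a comparable ball, \emph{without} the crude loss $\G(f,2\a{t})\le L|x-y_0|$. This Poincar\'e inequality at the collapsed point is obtained from the Lower density estimate together with the monotonicity of $r\mapsto r^{-m}\|\bG_f\|(\bB_r)$, which force the two sheets to remain close to $t$ at the rate dictated by the energy; plugging it back into the Caccioppoli inequality, and applying the same interpolation as above to the harmonic–like average $\etab\circ f$ and to the local sheet structure near $y_0$, again yields the product estimate.

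The main obstacle is the collapsed case: one must exclude that near a multiplicity point the two sheets separate like a fixed–slope cone, in which case $\G(f,2\a{t})$ would be comparable to $L\rho$ and could not be controlled by the average energy. This is exactly where stationarity of $\bG_f$ is indispensable — it enters only through the density lower bound and the monotonicity formula — and it is also the step that genuinely uses $Q=2$, since at higher multiplicity the structure of the collapsed set is not that of one nearby sheet. By contrast, the other ingredients (the Caccioppoli inequality, the single–valued reverse H\"older, the interpolation, and the scale comparison) are routine.
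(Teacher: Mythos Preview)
Your dichotomy (separated vs.\ collapsed) and your identification of the crucial Poincar\'e inequality at a collapsed point via the monotonicity formula together with the lower density bound $\Theta\ge 2$ match the paper's strategy exactly. The gap is in how you extract the specific \emph{product} form $(\mint|Df|^2)^{1/2}\mint|Df|$ from the PDE.

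Your route is: Caccioppoli $\mint_{B_\rho}|Df|^2\lesssim\rho^{-2}\mint_{B_{2\rho}}\G(f,2\a{c})^2$, then Sobolev--Poincar\'e at exponent $\tfrac{2m}{m+2}$, interpolation between $L^1$ and $L^2$, and finally an iteration in scale. The iteration does not close. From
\[
a_{k}\ \le\ C\, b_{k-1}^{4/m}\,a_{k-1}^{1-2/m},\qquad a_k:=\mint_{B_{2^{-k}}}|Df|^2,\quad b_k:=\mint_{B_{2^{-k}}}|Df|,
\]
you cannot replace the inner-scale average $b_{k-1}$ by the outer-scale one $b_0$, and the monotonicity of the mass ratio of $\bG_f$ does \emph{not} give two-sided comparability of $\mint_{B_r}|Df|^2$ across scales (it only bounds the mass ratio from below by the density). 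For $m\ge 5$ even a single step fails to yield the exponents $(\tfrac12,1)$. In the collapsed case the argument is weaker still: plugging the Poincar\'e-at-$y_0$ into your Caccioppoli inequality only produces the doubling estimate $\mint_{B_\rho}|Df|^2\le C\,\mint_{B_{C\rho}}|Df|^2$, and there is no ``local sheet structure near $y_0$'' to fall back on---the two sheets can touch on an arbitrary closed set.

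The missing ingredient is an $L^\infty$--$L^2$ bound. For every $e,t\in\R^n$ the scalar function $u(x)=\max_l e\cdot(f_l(x)-t)$ lies in the De~Giorgi class (test the outer variation \eqref{eq:outer} with $\varphi^2(e\cdot(u-t)-k)^+e$ and use \eqref{eq:lip1}--\eqref{eq:lip2}), hence
\[
\sup_{B_\rho}|f\ominus c|^2\ \le\ C\,\mint_{B_{2\rho}}|f\ominus c|^2\,.
\]
With this in hand one does \emph{not} absorb in the outer variation: test with $\psi(x,u)=\varphi(x)(u-c)$ (linear in $\varphi$, not $\varphi^2$) and keep the cross term as
\[
\int_{B_1}|Df|^2\ \lesssim\ \|f\ominus c\|_{L^\infty(B_{3/2})}\int_{B_2}|Df|\,.
\]
Combining this with the $L^\infty$--$L^2$ bound and either the single-valued Poincar\'e (separated case, $c=(f_l)_1$) or the monotonicity-based Poincar\'e at the collapsed point (collapsed case, $c=t$) gives the product form in a \emph{single} step, with $M=2$ and $M=5(L+1)$ respectively. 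The absorption you perform to get Caccioppoli is precisely what loses the $L^1$ factor on the right-hand side.
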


\begin{proof} After scaling and translating we can assume that $r=1$ and $x_0=0$. Next, we distinguish two cases.

\smallskip

\noindent\emph{Case 1:} If $\sep(f(x))>0$ for every $x\in B_2$, then, since $B_2$ is simply connected, there are two Lipschitz functions $f_l\colon B_2\to \R^n$ such that $f_1(x)\neq f_2(x)$ and $f(x)=\sum_{l=1}^2\a{f_l(x)}$ for all $x\in B_2$. By Corollary \ref{cor.LinftyL2} (or standard elliptic estimates), together with Poincare inequality, we have
\begin{equation}\label{eq:linfl2}
\|f_l-(f_l)_1\|^2_{L^{\infty}(B_{\frac32})}\leq C\, \int_{B_2}|f_l-(f_l)_1|^2\leq C\,\int_{B_2} |Df_l|^2\,.
\end{equation}
Since $\mathcal S(f_l,\cdot)\equiv0$ for both $l=1,2$, we can test it with a function $\psi(x,u)=\varphi(x) (u-(f_l)_r)$, with $\varphi\equiv 1$ on $B_1$ and $0$ outside $B_{\frac32}$,    to obtain
\[
\int \varphi \sqrt{|g(f_l)|} g^{ij}(f_l) \,\partial_if_l^\alpha\partial_j f_l^\alpha-\int \partial_j\varphi\sqrt{|g(f_l)|} g^{ij}(f_l) \,\partial_if_l^\alpha (f_l-(f_l)_1)^\alpha=0\,,
\]
which using the bounds \eqref{eq:lip1} and \eqref{eq:lip2}, the estimate \eqref{eq:linfl2}, and Cauchy-Schwarz inequality, implies that
\[
\frac{1}{(1+L^2)^{\frac12}}\int_{B_1}|Df_l|^2\leq C\,\|f_l-(f_l)_1\|_{L^\infty(B_{\frac32})}\,\left(\int_{B_2} |Df_l|\right)\leq C\,\left(\int_{B_2}|Df_l|^2\right)^{\frac12}\,\int_{B_2} |Df_l| \,.
\]
Summing over $l$ the result follows, with $M=2$.
\smallskip 

\noindent\emph{Case 2:} Suppose there exists a point $y_0\in B_2$ such that $f(y_0)=2\a{t}$, for some $t\in \R^n$. Notice that by the Lipschitz continuity of $f$ we have 
\[
\bG_f\res \bC_4(y_0)\subset \bB_{4(1+L)}(y_0)\,.
\]
By the monotonicity formula for $\bG_f$, denoting with $p_0:=(y_0,t)$, we have
\begin{align}\label{eq:mono}
\int_{\bB_{4(1+L)}(p_0)} \frac{|(p-p_0)^\perp|^2}{|p-p_0|^{m+2}}\,d\|\bG_f\|
    &\leq \frac{\|\bG_f\|(\bB_{4(1+L)}(p_0))}{(4(1+L))^m\,\omega_m}-\Theta(\bG_f,p_0)\nonumber\\
    &\leq \frac{1}{(4(1+L))^m\,\omega_m}\left(\|\bG_f\|(\bC_{4(1+L)}(x_0))-2  \right)
\end{align}
where in the second inequality we used Lemma \ref{l:key}.

We can bound the left hand side in \eqref{eq:mono}  from below by
\begin{align*}
    &\int_{\bB_{4(1+L)}(p_0)} \frac{|(p-p_0)^\perp|^2}{|p-p_0|^{m+2}}\,d\|\bG_f\| \\&\ge \frac{1}{((1+L)4)^{m+2}} \int_{\bB_{4(1+L)}(p_0)\cap \bC_4(y_0)} |(p-p_0)^{\perp_0}|^2\, d\|\bG_f\| - C \int_{\bB_{4(1+L)}(p_0)\cap \bC_4(y_0)} |\pi - \pi_0|^2\, d\|\bG_f\|
\end{align*}
Expressing the geometric inequality above in terms of the Lipschitz function using
\begin{align*}
    \frac12|\pi(f_l)-\pi_0|^2=\tr(\pi(f_l)\pi_0^\perp)=g(f_l)^{ij} \partial_if_l\cdot \partial_jf_l\\
    \sqrt{|g(f_l)|}-1 \le \frac12 \sqrt{|g(f_l)|}|Df_l|^2
\end{align*}
we found, using once again \eqref{eq:lip1} and \eqref{eq:lip2},
\begin{equation*}
    \int_{B_4(y_0)}|f\ominus t|^2 \le C \int_{B_{4(L+1)}(y_0)} |Df|^2 
\end{equation*}
Combining the inequalities above, with the estimate in Corollary \ref{cor.LinftyL2}, we obtain
\begin{equation}\label{eq:basi}
\sup_{B_{\frac72}(y_0)}|f\ominus t|^2\leq C\,\int_{B_{4(L+1)}(y_0)} |Df|^2\,. 
\end{equation}
Finally, testing the outer variation for $f$ with the vector field $\psi(x,u)=\varphi(x)\,(u-t)$, we obtain
\[
\int \varphi \sum_{l=1}^2\sqrt{|g(f_l)|} g^{ij}(f_l) \,\partial_if_l^\alpha\partial_j f_l^\alpha-\int \partial_j\varphi\sum_{l=1}^2\sqrt{|g(f_l)|} g^{ij}(f_l) \,\partial_if_l^\alpha (f_l-t)^\alpha=0\,,
\]
Letting $\varphi$ be as in the previous case, with the same reasoning we obtain
\[
\int_{B_1}|Df|^2\leq C \, \left( \int_{B_{5(L+1)}}|Df|^2\right)^{\frac12}\, \int_{B_{5(L+1)}} |Df|\,,
\]
which concludes the proof with $M=5(L+1)$.

\end{proof}

\begin{proof}[Proof of Theorem \ref{thm:high}]
The result follows immediately applying Gehring's lemma, see \cite[Section 4.2]{HS}. 
\end{proof}

\subsection{Almgren's approximation: proof of Theorem \ref{thm:almstr} and Proposition \ref{prop:basicpersistency}}

Given the higher integrability of the previous subsection, the proof follows by a standard modification of \cite[Theorem 6.14]{EG}: we provide some details for the reader's convenience. 

Following the argument of \cite[Theorem 6.14]{EG} with $g=|Df|^2$, we have that if we set
\[
R^\delta:=\left\{y\in B_2\,:\,\mint_{B_r(y)}|Df|^2\leq \delta \,,\forall r<1  \right\}\,,
\]
then
\[
|B_2\setminus R_2^\delta|\leq \frac{C}{\delta} \, \int_{B_3\cap \{|Df|^2>\delta/2\} }|Df|^2
\]
so that, for $\delta=E^{2\gamma}$ and using Young's inequality, we conclude
\begin{align*}
|B_1\setminus R^\delta|
   &\leq \frac{C}{E^{2\gamma}} |B_3\cap \{M(|Df|^2)>2^{-m}\,E^{2\gamma}\}|^{1-\frac1p} \,\left(\int_{B_3}|Df|^{2p}\right)^{\frac1p}\\
    &
    \leq C\,  E^{(1-2\gamma)(1-\frac1p)-2\gamma} \, E\leq C\, E^{1+\gamma}\,,
\end{align*}
up to choosing $\gamma>0$ sufficiently small. Finally setting $K:=R^{E^{2\gamma}}$ concludes the proof of \eqref{e:lipbd}, \eqref{e:graphcoincide} and \eqref{e:oscillation}, since $f|_{K}$ is Lipschitz, with $\Lip(f|_K)\leq C\, E^\gamma$.

To prove \eqref{e:areaenergy} we notice that
\begin{align*}
&\left|\|\bG_f\|(\bC_{\sigma}(x))-\,\omega_m\,\sigma^m-\frac12\int_{B_{\sigma }(x)}|D\hat{f}|^2 \right|\\
&\leq
\|\bG_f+\bG_{\hat f}\|((B_{\sigma}(x)\setminus K)\times \R^n)+\left|\|\bG_{\hat {f}}\|(\bC_\sigma(x))-\,\omega_m\,\sigma^m-\frac12\int_{B_{\sigma }(x)}|D\hat{f}|^2\right|\\
&\leq C(\Lip(f),\Lip(\hat f))\,|B_\sigma(x)\setminus K|+C\,\Lip(\hat {f})\, \int_{B_\sigma}|D\hat{f}|^2\\
&\leq C\, E^{1+\gamma}\,,
\end{align*}
for a constant $C$ depending on the Lipschitz constant of $f$, where in the second inequality we used the Taylor expansion for the area of a multivalued graph in \cite{DLS_Currents}.

To prove Proposition \ref{prop:basicpersistency} notice that \eqref{eq:basicper} follows from \eqref{eq:basi} in Case 2 of the proof of Theorem \ref{thm:high}, while the last sentence follows from \eqref{e:oscillation}.
\qed

\subsection{Harmonic approximation: proof of Theorem \ref{cor:harmapprox}} We reason by contradiction, i.e. there is a $\eta>0$ such that the claim fails, in particular there is a sequence of maps $(f_k)_k$ as in Theorem \ref{thm:almstr} failing \eqref{eq:harmonicapp} but satisfying 
\[E_k= \bE(\bG_{f_k}, C_4) \approx \int_{B_4} |Df_k|^2\, dx\to 0\,. \]
Consider the renormalised sequence $\tilde{f}_k=f_k/E_k^\frac12$. Notice that, by Theorem \ref{thm:high}, assumptions c1) and c2) of Theorem \ref{thm:compactness} hold for the sequence $(\tilde{f_k})_k$. Next we prove that c3) hold.

\medskip

\noindent\emph{Outer-variations.}
We note that \eqref{eq:lip1}-\eqref{eq:outer} implies that for any $\psi \in C^1_c(B_2\times \R^n, \R^n)$ 
\begin{align}\label{eq.outervariationestimate1}
    &\left|\int \sum_{l=1}^Q \partial_i(f_k)_l^\alpha(x)\, \partial_i( \psi^\alpha(x,(f_k)_l(x))) \,dx \right|\nonumber\\&\le \int \sum_{l=1}^Q |\sqrt{|g((f_k)_l)|}\,g^{ij}((f_k)_l)-\delta^{ij}| \left(|D(f_k)_l(x)||D_x\psi|(x,(f_k)_l(x)) + |D(f_k)_l(x)|^2|D_y\psi(x,(f_k)_l(x))|\right)\,dx\nonumber\\
    &\lesssim \int \sum_{l=1}^Q |D(f_k)_l(x)|^2 \left(|D(f_k)_l(x)||D_x\psi|(x,(f_k)_l(x)) + |D(f_k)_l(x)|^2|D_y\psi(x,(f_k)_l(x))|\right)\, dx
\end{align}
To a given a vector field $\psi \in C^1_c(B_2\times \R^{n+1}, \{0\}\times\R^n)$ we associate $\tilde{\psi}_k \in C^1_c(B_2\times \R^n, \R^n)$ by 
\[ \tilde{\psi}_k(x,y) = E_k^{-1/2} \left(\sum_{j=1}^N \psi(x,(j, \frac{y}{E_k^{1/2}})\right)\,,
\]
so that
\begin{align*}
&|\mathcal S(\tilde{f}_k, \psi)|
    =|\mathcal S(f_k,\tilde{\psi}_k)|\\
    &\quad\leq \int \sum_{l=1}^Q |D(f_k)_l(x)|^2 \left(|D(\tilde{f}_k)_l(x)||D_x\psi|(x,(\tilde{f}_k)_l(x)) + |D(\tilde{f}_k)_l(x)|^2|D_y\psi(x,(\tilde{f}_k)_l(x))|\right)\, dx\\
    &\quad\leq \norm{D\psi}_\infty \left(\Lip(f_k)^{3-2p}\,E_k^{p-\frac12}+\Lip(f_k)^{4-2p} E_k^{p-1}\right) \int_{B_2} |D\tilde{f}_k|^{2p}  \\
    &\quad \leq \|D\psi\|_{\infty}\,  \left(\Lip(f_k)^{3-2p}\,E_k^{p-\frac12}+\Lip(f_k)^{4-2p} E_k^{p-1}\right)\, \|D\tilde{f}_k\|_{L^2(B_3)}^2\to 0\,,
\end{align*}
where in the last inequality we used the higher integrability Theorem \ref{thm:high} with $p<3/2$.

\medskip
\noindent\emph{Inner variations.}
We observe that for any $L>0$ there is a constant $C=C(L)$ such that if $\Lip(f)\leq L$ then
\begin{equation}\label{eq.innervarationexpansion}
    \left|\left(\sqrt{|g(p)|}g^{ij}(p) - \delta^{ij}\right)-\left(\frac12 |p|^2 \delta_i^j - p_i^\alpha p_j^\alpha \right)\right|\le C |p|^4 \quad \forall p\in \R^{n\times m}.
\end{equation}
Let $\phi$ be an admissible inner variation with $\int \Div {\phi}\,, dx=0$, and define  $\tilde{\phi}_k= E_k^{-1} \phi$. Then using  \eqref{eq:inner} we deduce that
\begin{align*}
    |\mathcal I(\tilde f_k, \phi)|&=|\mathcal I(f_k,\tilde \phi)|\le C \norm{D\phi}_\infty \int_{B_2} E_k^{-1} |Df_k|^4 \, dx \le C \norm{D\phi}_\infty \Lip(f_k)^{4-2p} E_k^{p-1} \int_{B_2} |D\tilde{f}_k|^{2p}\,dx\\& \le C \norm{D\phi}_\infty E_k^{p-1} \norm{D\tilde{f}_k}_{L^2(B_3)}^{2p} \to 0. 
\end{align*}

\medskip

Applying Theorem \ref{thm:compactness} we reach the desired contradiction.
\qed

\section{Center Manifold and Normal approximation}\label{ss:CmandNa}

The constructions of the Center Manifold and of the Normal approximation follow in the same way as in \cite{DLS_Center}, by using the results of the previous two sections. However, some of the proofs in \cite{DLS_Center} depend on the theory of Dir-minimizing functions, which we need to replace with measure solutions. In the next subsections we will present the main changes needed for all the results in \cite{DLS_Center} to hold.

\subsection{Existence of Center Manifold and Normal approximations} All the results of \cite[Sections 1, 2, 4, 5]{DLS_Center} hold for the integral current $\bG_f$ by Theorem \ref{thm:almstr} (instead of \cite[Theorem 2.4]{DS1} therein) and the stationarity of the current. The same is true for \cite[Proposition 3.1 and Corollary 3.2]{DLS_Center}.

\subsection{Quantitative unique continuation for strong measure solutions} Since the higher integrability property for the gradient of $f$ doesn't imply that of its average free counterpart, we need to modify some additional results to prove that the remaining results in \cite[Section 3]{DLS_Center} hold.

\begin{lemma}[See {\cite[Lemma 7.1, Proposition 7.2]{DLS_Center}}]\label{lem.Lemma7.1.replacement}
    For every $\eta \in (0,1)$ and $c>0$, there exists $\gamma>0$ with the following property. If $w\colon B_{2r} \mapsto \Iq(\R^n)$ is a classical solution satisfying 
    \begin{enumerate}
        \item[a1)] $c\le \int_{B_{r}} |Dw|^2 \le \int_{B_{2r}} |Dw|^2 \le 1$,
        \item[a2)]\label{ass:uniquecontinuation-a2} $\int_{B_{(1+\lambda)r}} |D\overset{\circ}{w}|^2 \ge c \int_{B_{2r}} |Dw|^2 $,
        \item[a3)]$\left(\fint_{B_s(x)}|Dw|^{2p}\right)^{\frac1p}\leq C \,\fint_{B_{2s}(x)}|Dw|^2$, for every $B_{2s}(x)\subset B_{2r}$ and $p>1$,
    \end{enumerate}
    then
    \begin{equation}\label{eq.non-degeneracy}
       \min\left\{\frac{1}{r^2}\int_{B_s(x)}|\overset{\circ}{w}|^2, \int_{B_s(x)} |D\overset{\circ}{w}|^2 \right\}\ge \gamma \qquad\text{ for every } B_s(x)\subset B_{2r} \text{ with } s\ge \eta r\,.
    \end{equation}
\end{lemma}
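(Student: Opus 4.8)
The statement is a quantitative, scale-invariant non-degeneracy estimate: if a classical solution on $B_{2r}$ is non-trivial and its average-free part carries a definite fraction of its energy, then on every sub-ball of comparable radius either the $L^2$ norm of $\overset{\circ}{w}$ or the Dirichlet energy of $\overset{\circ}{w}$ is bounded below. As usual this kind of statement is proven by contradiction and compactness. By scaling we may assume $r=1$. Suppose the conclusion fails: there are classical solutions $w_k \colon B_2 \to \Iq(\R^n)$ satisfying a1)--a3) (with the same $\eta$, $c$, $C$, $p$), and balls $B_{s_k}(x_k) \subset B_2$ with $s_k \ge \eta$, such that
\[
\min\left\{\int_{B_{s_k}(x_k)}|\overset{\circ}{w_k}|^2,\ \int_{B_{s_k}(x_k)}|D\overset{\circ}{w_k}|^2\right\}\to 0\,.
\]
Passing to a subsequence, $x_k \to x_\infty$ and $s_k \to s_\infty \in [\eta,2]$, so that $B_{\eta/2}(x_\infty) \subset B_{s_k}(x_k)$ for $k$ large.

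\textbf{Compactness step.} By a1), the sequence $(w_k)_k$ has uniformly bounded Dirichlet energy on $B_2$; by a3) it satisfies the higher integrability hypothesis c2) of Theorem \ref{thm:compactness} on $B_{2}$ (on, say, $B_{7/4}$ to have room), and being classical solutions it satisfies c3) exactly (the defects are zero). We also have a uniform $L^2$ bound after subtracting averages: replacing $w_k$ by $w_k \ominus \etab\circ w_k$ changes neither $\overset{\circ}{w_k}$ nor $|Dw_k|$ (since $\etab\circ(w\ominus h) = \etab\circ w - h$), and by Poincaré $\|w_k\ominus\etab\circ w_k\|_{L^2(B_{7/4})} \lesssim \|Dw_k\|_{L^2(B_{7/4})} \le 1$; hence Theorem \ref{thm:compactness} applies with $\chi_k = \operatorname{Id}$. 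We obtain a classical solution $w_\infty \in W^{1,2}(B_{3/2},\Iqn)$, satisfying c2) in $B_1$, with $\G(w_k,w_\infty) \to 0$ in $L^2(B_r)$ and $|Dw_k| \to |Dw_\infty|$ in $L^2(B_r)$ for every $r < 3/2$. In particular $\int_{B_1}|Dw_\infty|^2 \ge c$, so $w_\infty$ is non-trivial, and by Proposition \ref{prop:higofmeas} $w_\infty$ is Hölder continuous in $B_1$ (being a continuous classical solution it is in particular a genuine stationary $W^{1,2}$ map).

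\textbf{Passing the defect to the limit.} From $|Dw_k| \to |Dw_\infty|$ in $L^2$ and $\G(w_k,w_\infty)\to 0$ we get $D(\etab\circ w_k) \to D(\etab\circ w_\infty)$ in $L^2$ (recall $\etab\circ w$ is harmonic with energy controlled by that of $w$, and harmonic functions converging in $W^{1,2}$ weakly with converging $L^2$ gradients converge strongly), whence $|D\overset{\circ}{w_k}|^2 = |Dw_k|^2 - Q|D(\etab\circ w_k)|^2 \to |Dw_\infty|^2 - Q|D(\etab\circ w_\infty)|^2 = |D\overset{\circ}{w_\infty}|^2$ in $L^1(B_r)$, and similarly $\overset{\circ}{w_k} \to \overset{\circ}{w_\infty}$ in $L^2(B_r)$. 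Passing to the limit in a2) gives $\int_{B_1}|D\overset{\circ}{w_\infty}|^2 \ge c > 0$, so $\overset{\circ}{w_\infty} \not\equiv 0$; in particular $w_\infty \not\equiv Q\a{\etab\circ w_\infty}$ on $B_1$. On the other hand, passing to the limit in the contradiction hypothesis on the fixed ball $B_{\eta/2}(x_\infty)$ yields
\[
\int_{B_{\eta/2}(x_\infty)}|\overset{\circ}{w_\infty}|^2 = 0 \quad\text{or}\quad \int_{B_{\eta/2}(x_\infty)}|D\overset{\circ}{w_\infty}|^2 = 0\,.
\]
Either way, on the open ball $B_{\eta/2}(x_\infty)$ we have $w_\infty \equiv Q\a{\etab\circ w_\infty}$: in the first case because $\overset{\circ}{w_\infty}=0$ a.e. there directly, in the second because $D\overset{\circ}{w_\infty}=0$ a.e. forces $\overset{\circ}{w_\infty}$ locally constant, hence $\equiv \overset{\circ}{w_\infty}(x_\infty)$ by continuity, and the constant is a multiple of $\a{0}$ only if it is $0$, since $\etab\circ\overset{\circ}{w_\infty}=0$. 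Thus $\{x : w_\infty(x) = Q\a{\etab\circ w_\infty(x)}\}$ contains the open ball $B_{\eta/2}(x_\infty)$, so it has positive $m$-dimensional measure and in particular Hausdorff dimension $m > m-1$. But $\mathcal E_{w_\infty}$ is a stationary measure, hence in particular an inner and outer measure solution, so Theorem \ref{thm:UCformeas} forces $w_\infty \equiv Q\a{\etab\circ w_\infty}$ on all of $B_1$, contradicting $\overset{\circ}{w_\infty}\not\equiv 0$. This contradiction proves the lemma.

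\textbf{Main obstacle.} The delicate point is the strong $L^2$ convergence of the average-free parts and of $|D\overset{\circ}{w_k}|$: Theorem \ref{thm:compactness} only gives $L^2$ convergence of $\G(w_k,w_\infty)$ and of $|Dw_k|$, and as the authors emphasize elsewhere the higher integrability is \emph{not} inherited by $\overset{\circ}{w_k}$, so one must be careful to extract convergence of $\overset{\circ}{w_k}$ only in the weak/$L^2$ topology and then identify the limit using strong convergence of $\etab\circ w_k$ (which, being harmonic with equibounded energy, is compact in $W^{1,2}_{\mathrm{loc}}$) together with the identity $\|Dw\|^2 = Q\|D(\etab\circ w)\|^2 + \|D\overset{\circ}{w}\|^2$. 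Once this bookkeeping is done correctly, the rest is a clean application of Theorems \ref{thm:compactness} and \ref{thm:UCformeas}.
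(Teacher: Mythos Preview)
Your overall strategy (contradiction, compactness via Theorem \ref{thm:compactness}, unique continuation) matches the paper's, but there are two genuine gaps.

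\textbf{First gap: the Poincar\'e claim is false.} You assert that replacing $w_k$ by $w_k\ominus\etab\circ w_k=\overset{\circ}{w_k}$ leaves $|Dw_k|$ unchanged and that Poincar\'e gives $\|\overset{\circ}{w_k}\|_{L^2}\lesssim\|Dw_k\|_{L^2}$. Both are wrong. Subtracting the full average changes the energy to $|D\overset{\circ}{w_k}|^2=|Dw_k|^2-Q|D(\etab\circ w_k)|^2$; more importantly, there is \emph{no} Poincar\'e bound $\|\overset{\circ}{w}\|_{L^2}\lesssim\|Dw\|_{L^2}$ for $Q$-valued maps. Take $w_k=\a{h+kv}+\a{g-kv}$ with $h,g$ distinct harmonic and $v\ne 0$ constant: this is a classical solution satisfying a1)--a3), yet $\|\overset{\circ}{w_k}\|_{L^2}\to\infty$. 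So you cannot force $\chi_k=\operatorname{Id}$. The paper only subtracts the constant $\etab\circ w_k(0)$ (preserving a3)), accepts the general $\chi_k$ from Theorem \ref{thm:compactness}, and shows $\chi_k=\operatorname{Id}$ \emph{only} in the case $\int_{B_{s_k}(q_k)}|\overset{\circ}{w_k}|^2\to 0$, via the correct inequality $\int_{B_2}|\overset{\circ}{w_k}|^2\le C\int_{B_2}|D\overset{\circ}{w_k}|^2+\int_{B_s}|\overset{\circ}{w_k}|^2$.

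\textbf{Second gap: Theorem \ref{thm:UCformeas} is not enough in the Dirichlet-small case.} When $\int_{B_{s_k}(q_k)}|D\overset{\circ}{w_k}|^2\to 0$, you correctly get $D\overset{\circ}{w_\infty}=0$ on a ball, hence $\overset{\circ}{w_\infty}\equiv T$ there for some $T\in\Iqs$ with $\etab\circ T=0$. But $\etab\circ T=0$ does \emph{not} imply $T=Q\a{0}$ (e.g.\ $T=\a{v}+\a{-v}$), so you cannot conclude that $\{w_\infty=Q\a{\etab\circ w_\infty}\}$ contains the ball, and Theorem \ref{thm:UCformeas} does not apply as you want. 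The paper fills this by first proving a stronger unique continuation statement (UC): a \emph{continuous} classical solution with $\int_J|Dw|^2=0$ on some open $J$ is constant on the whole connected domain. The case $T=Q\a{0}$ reduces to Theorem \ref{thm:UCformeas}, but general $T$ requires a local splitting (via \cite[Proposition 6.1]{HS}) to reduce to collections of single-valued harmonic functions and then classical unique continuation. Applying (UC) to $\overset{\circ}{w_\infty}$ gives $\overset{\circ}{w_\infty}$ globally constant, hence $\int|D\overset{\circ}{w_\infty}|^2=0$, contradicting the limit of a2).
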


\begin{proof}
The proof is by contradiction and, similarly to the one of\cite[Lemma 7.1]{DLS_Center}, it is split into two parts: the ``unique continuation'' statement and the reduction of the Lemma to it. 

\medskip

We start by proving the following unique continuation statement:

\begin{itemize}
    \item[(UC)] If $\Omega$ is a connected open set and $w \in W^{1,2}(\Omega, \Iq(\R^n))$ is a continuous classical solution, then either $w$ is constant or $\int_{J} |Dw|^2 >0$ for any open set $J \subset \Omega$.
\end{itemize} 

\emph{Proof of (UC): } Assume $J \subset \{ |Dw|=0\}$ is a non-empty, connected open set. Hence due to continuity of $w$ we must have $w=T$ on $J$ for some $T \in \Iq(\R^n)$. Let $J'$ be the interior of $\{w=T\}$ and $J\subset K=\overline{J'}\cap \Omega$. Since $K$ is closed and nonempty and $\Omega$ is connected, we can conclude by showing that $K$ is open. We will first show this for $T=Q\a{0}$, and then we will show that locally the claim can always be reduced to this case. 

\smallskip

\emph{Case $T=Q\a{0}$ :} Clearly $\mathcal{E}_w$ is a stationary measure and therefore Theorem \ref{thm:UCformeas} holds. Since $w=Q\a{0}= Q \a{\eta \circ w }$ on an open set we must have $w\equiv Q\a{\eta\circ w}$ in $\Omega$. Now we can appeal to the classical unique continuation for harmonic functions to deduce that $w\equiv Q\a{0}$ in $\Omega$.

\smallskip

\emph{Case $T=\sum_{k=1}^K Q_k \a{p_k}$: } Fix any $x_0 \in K$ and consider $f(x)=w(x_0+x)$, i.e. the interior of $B_s(0)\cap \{ f = T\}$ is non-empty for every $s>0$. We can split $f$ as in \cite[Proposition 6.1]{HS}\footnote{Notice that as written in \cite{HS} Proposition 6.1 holds only in dimension $2$, however here we are assuming $f$ to be continuous which allows us to apply it in every dimension.}: there is $r>0$ such that 
\begin{enumerate}
    \item[i)] $f=\sum_k h_k \oplus g_k$ in $B_r$,
    \item[ii)] $\tilde{f} = \sum_k g_k$ is stationary in $B_r$ with the property that for all $x \in B_r$ with $\operatorname{card}(\supp f(x))=k$ we have $\tilde{f}(x)=Q\a{0}$,
    \item[iii)] $h_k$ are a single valued harmonic function.
\end{enumerate}
Note that we can apply the previous case to deduce that the new map $\tilde{f}\equiv Q\a{0}$, that is $g_k \equiv Q_k \a{0}$ in $B_r$. This implies that $f= \sum_k Q_k \a{h_k}$ within $B_r$ and $h_k= p_k$ on $B_r\cap (K-x_0)$. Hence $h_k \equiv p_k$ in $B_r$ by unique continuation for harmonic functions. Thus $B_r(x_0) \subset K$ and the claim follows. 

\medskip

\emph{Reduction to (UC):} Assume by contradiction that the statement does not hold, i.e. after rescaling there is a sequence $w_k \in W^{1,2}(B_2, \Iq(\R^n))$ satisfying the assumptions a1)-a2)-a3), but for some $B_{s_k}(q_k) \in B_2, s_k \ge \eta$ they satisfy
\[ \int_{B_{s_k}(q_k)} |D\overset{\circ}{w}_k|^2 \le \frac1k\, \qquad\text{ or }\qquad \int_{B_{s_k}(q_k)} |\overset{\circ}{w_k}|^2\leq \frac1k\,.\]
Passing if necessary to $w_k\ominus( \eta\circ w_k(0))$ we may assume that $\int_{B_2} \eta\circ w_k =0$\,.

By Theorem \ref{thm:compactness} we can find a convergent sub-sequence related to $(w_k)_k$ such that \[ \chi_k\circ w_k \to w \]
strongly in $W^{1,2}(B_r)$ for every $r<1$ to a continuous classical stationary solution $w$.
Note that in the second case by Poincar\'e inequality we have 
\[ \int_{B_2}|w_k|^2= \int_{B_2} |\overset{\circ}{w_k}|^2 + Q |\eta\circ w_k|^2 \le C(s) \int_{B_2} |D\overset{\circ}{w_k}|^2 + Q |D\eta\circ w_k|^2 + \int_{B_s} |\overset{\circ}{w_k}|^2 \,. \]
so that we can take $\chi_k = \operatorname{Id}$ in the second case. 

Up to passing to a further sub-sequence we can assume that $s_k \to s$ and $q_k \to q$. Hence in both cases we deduce
\[\int_{B_s(q)} |D\overset{\circ}w|^2 = 0\,. \]
Since $\overset{\circ}w$ is itself a continuous classical solution we can apply the just established unique continuation property to deduce that $\overset{\circ}w \equiv Q\a{0}$, but this contradicts the assumptions 
\begin{align*}
    0=\int_{B_{(1+\lambda)}} |D\overset{\circ}{w}|^2= \lim_{k} \int_{B_{(1+\lambda)}} |D\overset{\circ}{w}_k|^2 \overset{a2}{\ge} c\; \lim_{k} \int_{B_{2r}} |Dw_k|^2 \overset{a1}{\ge} c^2>0\,.
\end{align*}

\end{proof}

In particular, Lemma \ref{lem.Lemma7.1.replacement} is applied in \cite{DLS_Center} with assumption a2) replaced by \eqref{eq.no-decay} below.

\begin{lemma}\label{lem.no-decayconsequence}
Let $\delta_2>0$ be such that $2^{\delta_2}-(1+\lambda)^{m+2}>c_2>0$. Let $w \in W^{1,2}(B_{2r}, \Iqn)$ be a classical solution satisfying \begin{equation}\label{eq.no-decay}
    \int_{B_{(1+\lambda)r}} \G(Dw, Q\a{D(\eta\circ w)(0))})^2\geq 2^{\delta_2-m-2}\,\int_{B_{2r}}|Dw|^2\,. 
\end{equation}
then \ref{ass:uniquecontinuation-a2} holds.
\end{lemma}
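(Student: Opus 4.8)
The plan is to reduce everything to two elementary facts: an exact Pythagorean splitting of the gradient into its average part and its average-free part, and the classical quadratic decay of a harmonic function towards its value at the centre. First I would observe that, since $w$ is a classical solution, $\mathcal E_w$ is in particular an inner and outer measure solution, so Proposition~\ref{prop.propertiesMeasureSolutions}, part~\ref{hHrmonic}, tells us that $h:=\eta\circ w$ is a genuine (hence smooth) harmonic function; in particular $A_0:=D(\eta\circ w)(0)=Dh(0)$ is well defined and, by the mean value property, coincides with the average of $Dh$ over $B_{2r}$. I would also note that the only geometric restriction needed is $\lambda<1$, so that $B_{(1+\lambda)r}\subset B_{2r}$.

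The algebraic heart is the following identity: writing, for a measurable selection $w=\sum_l\a{w_l}$, $Dw_l-A_0=(Dw_l-Dh)+(Dh-A_0)$ and using that $\sum_l (Dw_l-Dh)=0$, the cross terms cancel after summing in $l$, so pointwise a.e.
\[
\G(Dw,Q\a{A_0})^2=|D\overset{\circ}{w}|^2+Q\,|Dh-A_0|^2\,,\qquad |Dw|^2=|D\overset{\circ}{w}|^2+Q\,|Dh|^2\,.
\]
Integrating the first identity over $B_{(1+\lambda)r}$ and inserting hypothesis \eqref{eq.no-decay} reduces the claim to showing that $Q\int_{B_{(1+\lambda)r}}|Dh-A_0|^2$ is a sufficiently small fraction of $\int_{B_{2r}}|Dw|^2$.

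For that last point I would invoke harmonic decay. Each scalar component of $Dh$ is harmonic on $B_{2r}$, so expanding it into homogeneous harmonic polynomials centred at the origin---where the subtraction of $A_0$ kills the degree-zero term---and comparing the two integrals mode by mode (the degree-$k$ mode contributes the factor $((1+\lambda)/2)^{2k+m}\le((1+\lambda)/2)^{m+2}$ for $k\ge 1$) yields $\int_{B_{(1+\lambda)r}}|Dh-A_0|^2\le 2^{-m-2}(1+\lambda)^{m+2}\int_{B_{2r}}|Dh-A_0|^2$. Combining this with $\int_{B_{2r}}|Dh-A_0|^2\le\int_{B_{2r}}|Dh|^2$ (the mean minimises the $L^2$-distance to constants) and $Q|Dh|^2\le|Dw|^2$ (the second identity above), and feeding it back, I get
\[
\int_{B_{(1+\lambda)r}}|D\overset{\circ}{w}|^2\ge\Bigl(2^{\delta_2-m-2}-\tfrac{(1+\lambda)^{m+2}}{2^{m+2}}\Bigr)\int_{B_{2r}}|Dw|^2=2^{-m-2}\bigl(2^{\delta_2}-(1+\lambda)^{m+2}\bigr)\int_{B_{2r}}|Dw|^2\ge 2^{-m-2}c_2\int_{B_{2r}}|Dw|^2\,,
\]
which is assumption \ref{ass:uniquecontinuation-a2} of Lemma~\ref{lem.Lemma7.1.replacement} with $c=2^{-m-2}c_2$. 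There is no real analytic obstacle here: once one knows $\eta\circ w$ is classically harmonic the whole argument is soft, and the only points requiring care are the exact cancellation of the cross terms in the Pythagorean identities and getting the harmonic-decay exponent $m+2$ (rather than $m$) right---which is exactly what makes the bookkeeping quantity $2^{\delta_2}-(1+\lambda)^{m+2}$ in the hypothesis the natural one.
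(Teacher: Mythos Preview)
Your proof is correct and follows essentially the same route as the paper: both arguments rest on the Pythagorean splitting $\G(Dw,Q\a{Dh(0)})^2=|D\overset{\circ}{w}|^2+Q|Dh-Dh(0)|^2$, $|Dw|^2=|D\overset{\circ}{w}|^2+Q|Dh|^2$, together with the harmonic decay $\int_{B_{(1+\lambda)r}}|Dh-Dh(0)|^2\le((1+\lambda)/2)^{m+2}\int_{B_{2r}}|Dh-Dh(0)|^2$ and the mean-minimising property to absorb the last integral into $\int_{B_{2r}}|Dh|^2\le Q^{-1}\int_{B_{2r}}|Dw|^2$. Your write-up is in fact slightly more explicit than the paper's about the source of the exponent $m+2$ (the mode-by-mode comparison of homogeneous harmonics) and about why $Dh(0)$ equals the average of $Dh$, but the underlying argument is identical.
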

\begin{proof}
 The claim follows from the decay of harmonic functions: 
Using scaling we may assume $r=1$. Furthermore we have writing $\bar{w}=\eta \circ w$
\begin{align*}
    \int_{B_2} |Dw|^2 &= \int_{B_2} \left(|D\overset{\circ}{w}|^2 + Q |D\bar{w}|^2\right)\\
    \int_{B_\theta} \G(Dw,Q\a{ D\bar{w}(0)})^2&= \int_{B_\theta} \left(|D\overset{\circ}{w}|^2 + Q |D\bar{w}- D\bar{w}(0)|^2\right)\,.
\end{align*}
The decay of harmonic functions implies 
\[ \int_{B_\theta}|D\bar{w}- D\bar{w}(0)|^2 \le \left(\frac{\theta}{2}\right)^{2+m}  \int_{B_1}|D\bar{w}- D\bar{w}(0)|^2 \le \theta^2 \int_{B_1}|D\bar{w}|^2\,.\]
Using the above expansion in \eqref{eq.no-decay} one has 
\begin{align*}
    \int_{B_{\theta}} |D\overset{\circ}{w}|^2 \ge c \int_{B_2} |D\overset{\circ}{w}|^2 + Q\left(c-\left(\frac{\theta}{2}\right)^{2+m}\right) \int_{B_2} |D\bar{w}|^2 \\
    \ge \left(c-\left(\frac{\theta}{2}\right)^{2+m}\right) \int_{B_2} |Dw|^2\,.
\end{align*}
For the choice $c= 2^{\delta_2-m-2}$, $\theta = (1+\lambda)$ we have 
\[ c- \left(\frac{\theta}{2}\right)^{2+m} = 2^{-m-2}(2^{\delta_2}- (1+\lambda)^{m+2})\ge c_2\,.\]
Hence the claim follows. 
\end{proof}

\subsection{Splitting and comparing center manifolds} As a consequence of the above unique continuation lemmas we have that \cite[Propositions 3.5, 3.4 $\&$ 3.7]{DLS_Center} hold with the exact same proofs replacing \cite[Lemma 7.1 \& Proposition 7.2]{DLS_Center} with Lemmas \ref{lem.no-decayconsequence} and \ref{lem.Lemma7.1.replacement}. We remark here that \cite[Lemma 7.1 \& Proposition 7.2]{DLS_Center} only need to be applied in cubes of type excess (i.e., cubes of type $\mathcal W_e$ following their notation) and their domain of influence, where assumption \ref{eq.no-decay} holds by Step I in the proof of \cite[Proposition 3.4]{DLS_Center}. 
Moreover, using Proposition \ref{prop:basicpersistency} in place of \cite[Theorem 2.7]{DS1}, we also have \cite[Proposition 3.6]{DLS_Center} with $\bar{s},\eta_2$ fixed positive constant.

\section{Blow-Up argument and proof of Theorem \ref{thm:main}}\label{ss:BU}
We follow a similar procedure as in \cite{DLS_Blowup}.
The main changes are in the capacity argument since our final blow-up might not be continuous as we don't pass higher integrability to the limit. 

\subsection{Contradiction sequence and its properties} 

We proceed by contradiction, so we assume the following

\begin{ipotesi}[Contradiction]
    There exist integers $m>2$, $n$ and a Lipschitz function $f\colon \Omega\subset \R^2 \to \I2(\R^n)$ with stationary graph, such that $\Ha^{m-1+\alpha}(\Sing(f)\cap \Omega)>0$ for some $\alpha>0$. 
\end{ipotesi}

 From Theorem \ref{thm:varstrat}, and reasoning as in \cite[Section 6]{DLS_Blowup} with $T=\bG_f$, we have the following:

\begin{proposition}[Final blow-up, cp. {\cite[Theorem 6.2]{DLS_Blowup}}]\label{prop.finalBlow-up}
    There is a sequence of maps $N^b_k$ which converge strongly in $L^2(B_{3/2})$ to a function $N^b_\infty \colon B_{3/2}\to \I{2}(\R^n)$ such that $\|N^b_\infty\|_{L^2(B_{3/2})}=1$, $\eta\circ N_\infty^b\equiv 0$.
\end{proposition}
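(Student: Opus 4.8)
The proof of Proposition \ref{prop.finalBlow-up} will follow the template of \cite[Theorem 6.2]{DLS_Blowup}, adapted to the stationary setting via the results of the previous sections. The plan is as follows. By the Assumption (Contradiction), the singular set is too large, so by the stratification result (Theorem \ref{thm:varstrat}, referenced above) and a standard argument we may assume after rescaling and selecting a point of positive $\Ha^{m-1+\alpha}$-density that we have a sequence of radii $r_k\downarrow 0$ and rescalings of $\bG_f$ for which the frequency stays bounded above and below, and the rescaled excess does not decay. Associated to each scale we have, via the center manifold $\cM_k$ constructed as in Section \ref{ss:CmandNa}, a normal approximation $N_k$; the ``final blow-up'' maps are $N^b_k := N_k / \| N_k\|_{L^2}$, renormalized in $L^2(B_{3/2})$. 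The crucial point is to pass to the limit: one needs the uniform energy bounds $\int_{B_{3/2}} |D N^b_k|^2 \le C$ (which come from the estimates on the normal approximation, exactly as in \cite{DLS_Blowup}, using Theorem \ref{thm:almstr} in place of the minimizing approximation) together with the higher integrability of $D N_k$ coming from Theorem \ref{thm:high} (which survives to $N^b_k$ but, importantly, is \emph{not} inherited by the average-free part $\overset{\circ}{N^b_k}$, which is why we only get a measure solution in the limit).

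First I would recall from \cite[Section 6]{DLS_Blowup} the construction of the contradiction sequence and the properties of $N^b_k$: the $L^2$-normalization $\|N^b_k\|_{L^2(B_{3/2})}=1$ (which we keep by construction), the uniform $W^{1,2}$ bound, the fact that $\eta \circ N^b_k \to 0$ in $W^{1,2}$ (this is a consequence of the center manifold being ``tangent to the average'' — the average of the normal approximation is a higher-order quantity, controlled by the excess powers as in \cite[Corollary 2.2, Theorem 2.3]{DLS_Center}, so after renormalization it vanishes in the limit). Next, I would invoke the compactness statement: since $(N^b_k)_k$ satisfies c1) (uniform energy bound) and c2) (uniform higher integrability, from Theorem \ref{thm:high} transferred to the center manifold setting), and c3) (almost stationarity — here one checks that the inner/outer/average variations of $\mathcal E_{N^b_k}$ tend to zero, using the stationarity of $\bG_f$, the estimates on the center manifold and normal approximation from Section \ref{ss:CmandNa}, and the error estimates of Theorem \ref{thm:almstr}), Theorem \ref{thm:compactness} applies. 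It yields a classical solution — or rather, since the higher integrability of the average-free part is lost, only an inner and outer measure solution — as the limit, together with the strong $L^2$ convergence $N^b_k \to N^b_\infty$ in $L^2(B_{3/2})$ and convergence of $|DN^b_k| \to |DN^b_\infty|$ in $L^2$.

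The strong $L^2$-convergence then immediately gives $\|N^b_\infty\|_{L^2(B_{3/2})} = \lim_k \|N^b_k\|_{L^2(B_{3/2})} = 1$, so the limit is nontrivial; and $\eta \circ N^b_\infty = \lim_k \eta \circ N^b_k = 0$ in $L^2$. The remaining content — which is pushed into the subsequent subsections — is that $N^b_\infty$ (or the associated generalized gradient Young measure $\mathcal F_\infty$) is an inner and outer measure solution with positive frequency, nonzero average-free part on a set of the wrong dimension, setting up the contradiction with Theorem \ref{thm:UCformeas} via the capacity argument.

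The main obstacle, and the point where this differs substantively from \cite{DLS_Blowup}, is verifying assumption c3) (almost stationarity) for the sequence $N^b_k$: one must transfer the stationarity of the ambient current $\bG_f$ through the center manifold construction into approximate stationarity of the normal approximation $N_k$ as a multivalued map on the plane, controlling all the error terms (the tilting of the center manifold, the difference between the current and the graph of $N_k$ over $\cM_k$, the second fundamental form of $\cM_k$) by powers of the excess that vanish after renormalization by $E_k$. This is the analogue of the variational estimates in \cite[Section 3]{DLS_Blowup}, but in the stationary — not minimizing — category, so competitor-based arguments are unavailable and one must instead differentiate the first variation identities directly; the higher integrability from Theorem \ref{thm:high} is exactly what makes the superlinear error terms $\int |DN_k|^{2p}$ controllable. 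A secondary subtlety is that, because higher integrability does not pass to $\overset{\circ}{N^b_k}$, the limiting object is only guaranteed to be a measure solution and need not be a strong ($W^{1,2}$, let alone continuous) solution — so all subsequent use of $N^b_\infty$ must go through the measure-solution framework of Section \ref{ss:meassol} rather than through a genuine multivalued Dirichlet-stationary map.
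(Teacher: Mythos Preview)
Your proposal overcomplicates the proposition and contains a genuine gap. The statement itself is modest: strong $L^2$ convergence of the normalized blow-ups, with limit of unit $L^2$-norm and vanishing average. For this the paper simply invokes \cite[Section 6]{DLS_Blowup}: the uniform $W^{1,2}$ bounds on $N^b_k$ come from the center-manifold/normal-approximation estimates of Section \ref{ss:CmandNa} exactly as in the minimizing case, and Rellich--Kondrachov for $Q$-valued functions gives strong $L^2$ compactness. The vanishing of $\eta\circ N^b_\infty$ follows from the center-manifold bounds on $|\eta\circ N_k|$ (cf.\ \cite[(7.1)]{DLS_Blowup}). No appeal to Theorem \ref{thm:compactness} is needed, and the paper makes none.

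Your attempt to invoke Theorem \ref{thm:compactness} does not go through. Hypothesis c2) there demands a uniform reverse-H\"older inequality for $|DN^b_k|$; Theorem \ref{thm:high} gives higher integrability for the original Lipschitz graph $f$, not for the normal approximation $N_k$ over the tilted center manifold $\cM_k$. The normal approximation is essentially an average-free object (its mean is controlled by higher powers of the excess), and the paper explicitly warns that ``the higher integrability statement is not preserved when subtracting averages from multivalued functions''. Hypothesis c3) is also problematic: it asks for the \emph{strong} outer variation $\mathcal S$ to vanish in the limit, whereas the paper (Lemma \ref{lem.StationaryLimit}) only verifies the weaker $\mathcal O$ and $\mathcal I$ via the expansion formulas of \cite[Theorems 4.2--4.3]{DLS_Currents}. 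Consequently your conclusion that $|DN^b_k|\to|DN^b_\infty|$ in $L^2$ is precisely what the paper says is \emph{not} available here (``at difference from \cite{DLS_Blowup} we do not have strong $W^{1,2}$ convergence''); this is the whole reason the measure-solution framework and the revised capacity argument are introduced. Finally, your own hedge---``Theorem \ref{thm:compactness} applies \ldots\ or rather \ldots\ only an inner and outer measure solution''---is self-contradictory: if the hypotheses of Theorem \ref{thm:compactness} held, the limit would be a classical solution, not merely a measure solution.
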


 At difference from \cite{DLS_Blowup} we do not have strong $W^{1,2}$ convergence and energy minimality for the limit $N^b_\infty$, however we can prove the following:

\begin{lemma}\label{lem.StationaryLimit}
    Let $N_k, N_\infty$ be as in the previous proposition, then the limit $\mathcal{F}=\lim_{k\to \infty} \mathcal{E}_{N_k}$ is an inner and outer measure solution in $B_1$.
\end{lemma}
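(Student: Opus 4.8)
The plan is to verify the three identities \eqref{eq:outervar}, \eqref{eq:innervar} and \eqref{eq:averageharmonic} defining an inner and outer measure solution by passing to the limit in the corresponding (approximate) identities satisfied by the elementary measures $\mathcal E_{N_k}$.

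First I would record that the blow-up of Proposition \ref{prop.finalBlow-up} comes with a uniform bound $\sup_k\|N_k\|_{W^{1,2}(B_{3/2})}\le C$, which combined with \eqref{eq:elYoungbd} gives $\sup_k\|\mathcal E_{N_k}\|_{\mathcal Y^Q}\le C$. Hence by Proposition \ref{le:weakclosed}, after passing to a subsequence (not relabeled), $\mathcal E_{N_k}\rightharpoonup\mathcal F$ for some $\mathcal F=(F\otimes dx,F^\infty)\in\mathcal Y^Q$, and since the $\mathcal E_{N_k}$ are elementary, $\mathcal F\in\operatorname{grad}\mathcal Y^Q$ by Definition \ref{de:Aqgradmeasures}; moreover, by Proposition \ref{p:elementarylimit} and the strong $L^2$-convergence $N_k\to N^b_\infty$, the $W^{1,2}$-map associated to $\mathcal F$ is $N^b_\infty$. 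Next, exactly as in the proof of Proposition \ref{prop.propertiesMeasureSolutions} part \ref{weaklyClosed}, the three test functions
\[
\phi_1=p_i^\alpha y^\alpha\partial_i\varphi+M^{\alpha\alpha}_{ii}\varphi,\qquad
\phi_2=p_i^\alpha\partial_i\psi^\alpha,\qquad
\phi_3=(2M^{\alpha\alpha}_{ij}-\delta_{ij}M^{\alpha\alpha}_{kk})\partial_i\phi^j
\]
entering \eqref{eq:outervar}, \eqref{eq:averageharmonic} and \eqref{eq:innervar} all belong to $\mathcal C(B_1\times\mathbb V)$, so that $\mathcal F(\phi_\ell)=\lim_k\mathcal E_{N_k}(\phi_\ell)$; it therefore suffices to show that these three limits vanish.

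For \eqref{eq:averageharmonic} I would use that $\mathcal E_{N_k}(\phi_2)=Q\int_{B_1}D(\eta\circ N_k):D\psi$, and that $\eta\circ N_k\rightharpoonup\eta\circ N^b_\infty\equiv 0$ weakly in $W^{1,2}$ — which follows from $|\eta\circ N_k-\eta\circ N^b_\infty|\le\G(N_k,N^b_\infty)\to 0$ in $L^2$ together with the uniform bound $\|D(\eta\circ N_k)\|_{L^2}\le\|DN_k\|_{L^2}\le C$ — whence $\mathcal E_{N_k}(\phi_2)\to 0$. For \eqref{eq:outervar} and \eqref{eq:innervar} the quantities $\mathcal E_{N_k}(\phi_1)$ and $\mathcal E_{N_k}(\phi_3)$ are precisely the outer and inner Dirichlet variations of $N_k$ along the fields $\varphi$ and $\phi$; I would bound them by re-running the error analysis of \cite[Section 3]{DLS_Blowup}, with the stationarity of $\bG_f$ in place of almost-minimality, to obtain $|\mathcal E_{N_k}(\phi_1)|+|\mathcal E_{N_k}(\phi_3)|\le C(\|D\varphi\|_\infty+\|D\phi\|_\infty)\,o_k$ with $o_k\to 0$ once divided by the blow-up normalization, the errors being those supplied by Theorem \ref{thm:almstr} and by the center-manifold and normal-approximation estimates of \cite[Sections 1--5]{DLS_Center} in the versions adapted in Sections \ref{ss:almapp} and \ref{ss:CmandNa}. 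Letting $k\to\infty$ gives $\mathcal F(\phi_1)=\mathcal F(\phi_3)=0$, and combining with the previous step concludes that $\mathcal F$ is an inner and outer measure solution in $B_1$.

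I expect the main obstacle to be this last step: one must check that every error term produced by the curvature of the center manifold, by the set where $\bG_f$ differs from the graph of the Lipschitz approximation, and by the distinction between cubes of excess type and of collapsed type, is genuinely superlinear in the excess, hence vanishes under the blow-up rescaling. This is precisely the bookkeeping carried out in \cite{DLS_Blowup} for area-minimizing currents; the key point is that the only property of $\bG_f$ actually used in the estimates needed here is stationarity (the vanishing of the first variation of the area), so the computation transfers once \cite[Theorem 2.4]{DS1} is replaced by Theorem \ref{thm:almstr}, \cite[Theorem 2.7]{DS1} by Proposition \ref{prop:basicpersistency}, and the Dir-minimizing input by the measure-solution theory developed in Sections \ref{ss:meas}--\ref{ss:meassol}.
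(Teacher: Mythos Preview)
Your proposal is correct and follows essentially the same route as the paper. The paper makes explicit what you leave as ``re-running the error analysis'': it invokes the first-variation expansions \cite[Theorems 4.2 and 4.3]{DLS_Currents} on the center manifold to express $\mathcal O(\mathcal E_{N_k^b},\varphi)$ and $\mathcal I(\mathcal E_{N_k^b},\phi)$ as $\delta\mathbf T_{\bar F_k}(X)$ (respectively $\delta\mathbf T_{\bar F_k}(Y)$) plus the standard error terms $\text{Err}_0,\dots,\text{Err}_3$, uses stationarity to write $|\delta\mathbf T_{\bar F_k}(X)|=|\delta\mathbf T_{\bar F_k}(X)-\delta\bar T_k(X)|\le C\mathbf h_k^{2+2\gamma}$, and then bounds each $\text{Err}_i$ by $o(1)\,\mathbf h_k^2$ via \cite[(7.1)--(7.3)]{DLS_Blowup}; your explicit treatment of \eqref{eq:averageharmonic} via $\eta\circ N_k\rightharpoonup 0$ is a clean addition that the paper leaves implicit.
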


\begin{proof}
    To the blow-up sequence of Proposition \ref{prop.finalBlow-up} we have the associated currents $\bar{T}_k=(\iota_{0,\bar{r}_k})_\sharp T_{j(k)}$, center manifolds $ \bar{\mathcal{M}}_k=(\iota_{0,\bar{r}_k})_\sharp\mathcal{M}_{j(k)}$ and normal approximations $\bar{N}_k(p)=\bar{r}_k^{-1} N_{j(k)}(\bar{r}_kp)$. Furthermore we denote with $\textbf{e}_k$ the exponential map associated to $\bar{\mathcal{M}}_k$ and the blow-up sequence ${N}_k^b= \textbf{h}_k^{-1} \bar{N}_k\circ \textbf{e}_k$. For the relevant definitions and notations we refer the reader to \cite{DLS_Blowup}.

We want to show that for every admissible $\phi, \varphi$ we have
\begin{align}\label{eq.StationaryLimit01}
    \mathcal{O}(\mathcal{E}_{{N}_k^b}, \varphi) = o(1)\\
    \mathcal{I}(\mathcal{E}_{{N}_k^b}, \phi) = o(1)\label{eq.StationaryLimit02}
\end{align}
We start with \eqref{eq.StationaryLimit01}.
Applying \cite[theorem 4.2]{DLS_Currents} to the couple $\bar{\mathcal{M}}_k, \bar{F}_k(x)=\sum_{i=1}^Q \a{x+(\bar{N}_k)_i(x)}$ we obtain for $\varphi(p)=\tilde{\varphi}\circ \textbf{e}_k^{-1}, \tilde{\varphi}\in C^\infty_c(B_{\frac32})$ that 
\begin{align}\label{eq.StationaryLimit03}
    \underbrace{\int_{\bar{\mathcal{M}}_k} \left(\varphi |D\bar{N}_k|^2  + \sum_i ((\bar{N}_k)_i\otimes D\varphi : D(\bar{N}_k)_i) \right)}_{=:O_k}= \delta\textbf{T}_{\bar{F}_k}(X) + \text{Err}_1 - \text{Err}_2 - \text{Err}_3\,.
\end{align}
First we note that appealing to \cite[Lemma 6.1 (ii)-(iii)]{DLS_Blowup} by a change of variable with $\textbf{e}_k$ 
the l.h.s satisfies 
\begin{align*}
   O_k=\textbf{h}_k^{2} \int\left( \tilde{\varphi} |DN_k^b|^2 +\sum_i ((N^b_k)_i\otimes D\varphi : D(N^b_k)_i) \right) + \text{Err}_0\,.
\end{align*}
with 
\[ |\text{Err}_0|\le C \norm{\textbf{e}_k-\operatorname{id}}_{C^1} \textbf{h}_k^2 \int_{B_{\frac32}} |DN_k^b|^2 + |DN_k^b||N_k^b| = o(1) \textbf{h}_k^2\,. \]
It remains to show that the r.h.s of \eqref{eq.StationaryLimit03} is of order $o(1)\textbf{h}_k^2$\,. To estimate these errors we can appeal to \cite[formulas (7.1)-(7.3)]{DLS_Blowup}: 
\begin{align*}
    |\delta\textbf{T}_{\bar{F}_k}(X)|&=|\delta\textbf{T}_{\bar{F}_k}(X)-\delta\bar{T}_k(X)|\le \norm{X}_{C^1} \textbf{M}((\textbf{T}_{\bar{F}_k} - \bar{T}_k)\res \textbf{p}_k^{-1}(\mathcal{B}_{3/2})) \le C \textbf{h}_k^{2+2\gamma}\\
    |\text{Err}_1|&\le C \int |\varphi||H_{\bar{\mathcal{M}_k}}| |\eta\circ \bar{N}_k|\le C \norm{H_{\bar{\mathcal{M}}_k}}_\infty \int_{\mathcal{B}_{3/2}} |\eta\circ \bar{N}_k| \le o(1) \textbf{h}_k^2\\
    |\text{Err}_2|&\le C \int |\varphi||A_{\bar{\mathcal{M}}_k}|^2|\bar{N}_k|^2 \le o(1) \textbf{h}_k^2\\
    |\text{Err}_2|&\le C(1+\norm{A_{\bar{\mathcal{M}}_k}}_\infty)\norm{\varphi}_{C^1} \Lip(\bar{N}_k) \int_{\mathcal{B}_{3/2}} |\bar{N}_k|^2+|D\bar{N}_k||\bar{N}_k| + |D\bar{N}_k|^2=o(1) \textbf{h}_k^2\,.
    \end{align*}

In the same way, applying \cite[Theorem 4.3]{DLS_Currents} to the couple $\bar{\mathcal{M}}_k, \bar{F}_k(x)$, with $\phi(p)=(\textbf{e}_k)_\sharp(\tilde{\phi}\circ \textbf{e}_k^{-1})$, we obtain
\begin{align*}
   \underbrace{ \int_{\bar{\mathcal M}_k} \left(\frac{|D\bar{N_k}|^2}{2}\, \Div_{\bar{\mathcal M}_k}{\phi}-\sum_i D({\bar{N}_k})_i\colon (D(\bar{N}_k)_i\cdot D_{\bar{\mathcal M}_k}\phi)\right)}_{=:I_k}=\delta {\bf T}_{\bar{F}_k}(\phi)+\sum_{i=1}^3\text{Err}_i\,.
\end{align*}
We can argue essential as before setting $\text{Err}_0=I-\textbf{h}_k^2 \int \left(\frac{DN_k^b}{2} \Div(\tilde{\phi}) - \sum_i (D(N^b_k)_i\cdot D\tilde{\phi})\right)$
\begin{align*}
    |\text{Err}_0| & \le C\norm{\textbf{e}_k-\operatorname{id}}_{C^2} \int_{B_{3/2}} |DN_k^b|^2=o(1)\textbf{h}_k^2\\
    |\delta\textbf{T}_{\bar{F}_k}(Y)|&\le C \norm{Y}_{C^1}\textbf{h}_k^{2+2\gamma}\\
    |\text{Err}_1| &\le C\norm{\phi}_{C^1} \norm{H_{\bar{\mathcal{M}}_k}}_\infty \int_{\mathcal{B}_{3/2}} |\eta\circ \bar{N}_k| \le o(1) \textbf{h}_k^2\\
    |\text{Err}_2| &\le C\norm{\phi}_{C^1} \norm{A_{\bar{\mathcal{M}}_k}}_\infty^2 \int_{\mathcal{B}_{3/2}} |\bar{N}_k|^2+ |D\bar{N}_k|^2 \le o(1) \textbf{h}_k^2\\
    |\text{Err}_3| &\le C\norm{\phi}_{C^1} (1+\norm{A_{\bar{\mathcal{M}}_k}}_\infty)\Lip(\bar{N}_k) \int_{\mathcal{B}_{3/2}} |\bar{N}_k|^2+|D\bar{N}_k||\bar{N}_k| + |D\bar{N}_k|^2=o(1) \textbf{h}_k^2\,.
\end{align*}
Hence we can appeal to the continuity to the functionals $\mathcal{F} \mapsto \mathcal{O}(\mathcal{F}, \varphi), \mathcal{I}(\mathcal{F},\phi)$ along $\Iq$ generalized gradient Young measures to deduce the lemma. 
\end{proof}

We will also need the following two properties, which follows from the construction of the center manifold and normal approximation. Before doing that we introduce the following notations:
\begin{align*}
    D_Q(\overline{T}_k)&=\{ p \in \bC_{3/2} \colon \Theta(\overline{T}_k,p)=Q \}\\
    \underline{D}_Q(\overline{T}_k)&=\textbf{e}_k^{-1}(\textbf{p}(D_Q(\overline{T}_k)))\,.\\
    \overline{\ell_k}&=\bar{r}_k^{-1}\sup\left\{ \ell(L) \colon L  \in \mathscr{W}^{(j(k))}_e \text{ and } L \cap B_{19\bar{r}_k/16}(0,\pi) \neq \emptyset \right\},
\end{align*}
where $\bar{T}_k, r_k, \mathcal W_e^{j(k)}$ are as in the proof of the previous lemma.

\begin{lemma}[Weak Hardt-Simon estimate] Let $(N_k^b)_k$ be as in Proposition \ref{prop.finalBlow-up}. There are universal constants $0<s<1$ and $C>0$ such that whenever $x \in \underline{D}_Q(\overline{T}_k)\setminus \{x \colon |N_k^b(x)|=0\} $ there is a radius $0<\rho=\rho(\overline{T}_k,x)<\bar{l}_k\,r_k$ such that 
\begin{equation}\label{eq.weakHardtSimonNew}
    \fint_{B_{s\rho}(x)} |N_k^b \ominus (\eta\circ N_k^b)|^2 \le C \rho^2 \fint_{B_\rho(x)} |DN_k^b|^2 \,.
\end{equation}
Moreover we have that 
    \begin{equation}\label{eq.stoppingradius}
    \overline{\ell_k}\leq o(1)\,.
\end{equation}
\end{lemma}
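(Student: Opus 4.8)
\textbf{Proof plan for the Weak Hardt--Simon estimate \eqref{eq.weakHardtSimonNew} and the stopping-radius bound \eqref{eq.stoppingradius}.}

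The plan is to first establish \eqref{eq.stoppingradius}, since it is essentially a restatement in our setting of the same fact proved in \cite[Theorem 6.2 and Section 6]{DLS_Blowup}: by construction the blow-up radii $\bar r_k$ are chosen so that the ``maximal excess-cube side length'' $\bar\ell_k$ intersecting $B_{19\bar r_k/16}$ tends to zero. Concretely, I would recall that in the Whitney decomposition of the center manifold the cubes of type $\mathscr{W}_e$ are stopped when the Dirichlet excess (or equivalently, by Theorem \ref{thm:almstr}, the normalized energy of the normal approximation) drops below a fixed threshold $\varepsilon^2$ times a power of the side length; the contradiction sequence is precisely constructed so that the frequency does not decay, forcing the relevant stopping cubes to shrink faster than $\bar r_k$. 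This is exactly the argument of \cite[Proposition 6.3]{DLS_Blowup} (or the corresponding estimate in the revised proof), and the only thing to check is that the substitution of \cite[Theorem 2.4, Theorem 2.7]{DS1} by our Theorem \ref{thm:almstr} and Proposition \ref{prop:basicpersistency}, and of the Dir-minimizing theory by the center-manifold results of Section \ref{ss:CmandNa}, does not affect the combinatorial argument --- which it does not, since that argument only uses the splitting-before-tilting estimates, now available by \cite[Propositions 3.4, 3.5, 3.7]{DLS_Center} as adapted in Section \ref{ss:CmandNa}.

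For \eqref{eq.weakHardtSimonNew} I would follow the strategy of the Hardt--Simon/persistence-of-$Q$-points argument in \cite{DLS_Blowup}, but using Proposition \ref{prop:basicpersistency} (persistency of $2$-points for $\hat f$) as the substitute for \cite[Theorem 2.7]{DS1}. Fix $x\in\underline D_Q(\bar T_k)\setminus\{|N_k^b|=0\}$; then the point $\textbf{e}_k(x)$ projects via $\textbf{p}$ to a point $p$ with $\Theta(\bar T_k,p)=Q=2$, so locally near $p$ the current $\bar T_k$ has a $2$-point on the center manifold. Transporting this through the exponential map and the change of variables relating $\bar N_k$ to $N_k^b$ (using \cite[Lemma 6.1 (ii)--(iii)]{DLS_Blowup} to control the $C^1$-distortion, as in the proof of Lemma \ref{lem.StationaryLimit}), the $2$-point of $\bar T_k$ corresponds, up to errors of higher order in $\textbf{h}_k$, to a collapse $N_k^b(y)\ominus(\eta\circ N_k^b)(y)=0$ near $x$. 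I would then choose the radius $\rho=\rho(\bar T_k,x)<\bar\ell_k r_k$ to be comparable to the side length of the excess cube of the center-manifold construction containing (the projection of) $x$, so that on $B_\rho(x)$ the normal approximation $N_k^b$ is, after the same change of variables, a small perturbation of a genuine $W^{1,2}$ graph over $\bar{\mathcal M}_k$ to which the persistency estimate \eqref{eq:basicper} of Proposition \ref{prop:basicpersistency} applies. That estimate, rescaled to $B_\rho(x)$ and combined with the higher integrability Theorem \ref{thm:high} (available since $\hat f$ is Lipschitz with stationary graph) to absorb the error terms, yields $\sup_{B_{s\rho}(x)}|N_k^b\ominus(\eta\circ N_k^b)|^2\lesssim\rho^2\fint_{B_\rho(x)}|DN_k^b|^2$, from which \eqref{eq.weakHardtSimonNew} follows by bounding the average on the left by the sup.

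The main obstacle I expect is the bookkeeping of error terms in transferring the persistency estimate from the current $\bar T_k$ (equivalently, from $\hat f$ via Theorem \ref{thm:almstr}) to the rescaled normal approximation $N_k^b$: one must verify that the distortion of the exponential map $\textbf{e}_k$, the separation between the center manifold $\bar{\mathcal M}_k$ and the Lipschitz approximation, and the difference $f-\hat f$ from \eqref{e:graphcoincide}--\eqref{eq:approxerror}, all contribute errors that are genuinely of higher order compared with $\textbf{h}_k^2\rho^2\fint_{B_\rho(x)}|DN_k^b|^2$ and not merely comparable to it. This is the same type of estimate carried out in \cite[Section 7]{DLS_Blowup} and in the proof of Lemma \ref{lem.StationaryLimit} above, so the required inputs --- \cite[Lemma 6.1]{DLS_Blowup}, the Taylor expansions of \cite{DLS_Currents}, and the $C^{3,\kappa}$ bounds on the center manifold from \cite{DLS_Center} --- are all in place; it is a matter of assembling them carefully rather than proving anything genuinely new.
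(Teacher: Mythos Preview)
Your plan is correct and follows the same underlying strategy as the paper, but you are proposing to redo work that has already been packaged. The paper's proof is much shorter: it invokes \cite[Proposition 3.6 (Persistence of $Q$-points)]{DLS_Center} directly on the normal approximation $N$, which was already shown to hold in Section~\ref{ss:CmandNa} (with Proposition~\ref{prop:basicpersistency} replacing \cite[Theorem 2.7]{DS1}). Concretely, once $x\in\underline D_Q(\bar T_k)\setminus\{|N_k^b|=0\}$, one observes that $\textbf{e}_k(x)\notin\Gamma$, so its $\pi_0$-projection lies in some $J\in\mathscr W$; the Separation estimate \cite[Proposition 3.1]{DLS_Center} rules out $J\in\mathscr W_h$, whence $J$ is in the domain of influence of some $L\in\mathscr W_e$, and \cite[Proposition 3.6]{DLS_Center} with $\eta_2=1$ gives
\[
\fint_{\mathcal B_{\bar s\ell(L)}(\textbf{p}(q))}|N\ominus(\eta\circ N)|^2\le \ell(L)^{2-m}\int_{\mathcal B_{\ell(L)}(\textbf{p}(q))}|DN|^2,
\]
after which only the $o(1)$ distortion of $\textbf{e}_k$ is needed to pass to $N_k^b$. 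Your approach---go back to Proposition~\ref{prop:basicpersistency} for $\hat f$ and then transfer through the comparison $\hat f\leftrightarrow N$---is exactly the content of the proof of \cite[Proposition 3.6]{DLS_Center} itself, so you would be reproving that result inline. Two small corrections: the cube containing the projection need not be in $\mathscr W_e$ (it may be a neighbouring cube, hence the ``domain of influence'' language and the Separation step, which you omit); and the higher integrability Theorem~\ref{thm:high} is not needed here---the error terms are handled by the standard center-manifold comparison estimates, not by an $L^{2p}$ bound. For \eqref{eq.stoppingradius} your account matches the paper's: it is \cite[Proposition 3.5 (3.3)]{DLS_Center} $\Rightarrow$ \cite[(6.8)]{DLS_Blowup}.
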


\begin{proof} We will show that \eqref{eq.weakHardtSimonNew} is an immediate consequence of \cite[Proposition 3.6 (Persistence of $Q$-points)]{DLS_Center}. In fact we may fix a $k$ in our sequence and consider the associated current $\overline{T}_k$ with associated normal approximation $N_k^b$. To make the notation simpler, we drop the subindex $k$ for this argument. If $x \in \underline{D}_Q(\overline{T})\setminus \{ x \colon |N^b|(x)=0\}$, then there is $q\in D_Q(\overline{T})$ such that $\textbf{p}(q)=\textbf{e}(x)$ and $\textbf{e}(x)\notin \Gamma$.  This means that $\textbf{p}_{\pi_0}(q)\in J$, for some $J \in \mathscr{W}$. 
Due to \cite[Proposition 3.1 (Separation)]{DLS_Center}, $J \notin \mathscr{W}_h$. Hence we must have $J$ is either an ``excess'' stopping cube or a ``neigbhouring'' stopping cube, and so in the domain of influence of an excess cube, so that we must have that $\textbf{p}_{\pi_0}(q)$ is in the domain of influence of some cube $L \in \mathscr{W}_e$. Hence we have $\dist(\textbf{p}_{\pi_0}(\textbf{p}(q)), L) \le 4 \sqrt{m} \ell(L)$. It follows that the assumptions of \cite[Proposition 3.6 (Persistence of $Q$-points)]{DLS_Center} are satisfied for the choice of $\eta_2=1$: 
\[ \fint_{\mathcal{B}_{\overline{s}\ell(L)}(\textbf{p}(q)))}|N\ominus (\eta \circ N)|^2 \le \ell(L)^{2-m} \int_{\mathcal{B}_{\ell(L)}(\textbf{p}(q)))}|DN|^2\,.\]
Since $\norm{\textbf{e}(x)-x}_{W^{1,\infty}(B_{3/2})}=o(1)$, we conclude for $\rho = r^{-1} \ell(L)\leq \bar{l}_k\,r_k$ that 
\[ \fint_{B_{\overline{s}\rho/2}(x)}|N^b\ominus \eta \circ N^b|^2 \le \rho^{2-m} \int_{B_{2\rho}(x)}|DN^b|^2\,.\]

    Finally, \eqref{eq.stoppingradius} follows from the previous section, in particular the validity of \cite[Proposition 3.5 (3.3)]{DLS_Center} which implies that \cite[Formula (6.8)]{DLS_Blowup} holds, which is precisely \eqref{eq.stoppingradius}.
\end{proof}

\subsection{Capacity argument}
We give a modified proof for the capacity argument presented in \cite{DLS_Blowup}. Our proof doesn't rely on the specific structure of the problem, but only on the properties of precise representatives in Sobolev spaces and the capacity version of the maximal function estimate. 

We consider a blow-up sequence introduced in Proposition \ref{prop.finalBlow-up}: $(N^b_k)_k$ with strong $L^2$ limit $N=N^b_\infty$ on the ball $\Omega:=B_{3/2}$. To make the notation less cumbersome we drop the superindex $b$. Our aim is to proof the following two statement:

\begin{proposition}[Capacity argument]\label{lem.Capacity}
    Suppose that there are $\delta>0, \eta>0$ such that 
\begin{equation}\label{eq.largeCollapsedSet}
    \mathcal{H}_\infty^{m-2+\delta}(D_Q(\overline{T}_k))\ge \eta \qquad \forall k\,,
\end{equation}
then there is a subsequence of the blow-up sequence $(N_k)_k$ and a set $F_\infty \subset \Omega$ with $\mathcal{H}^{m-2+\delta}(F_\infty)=0$ such that 
\begin{equation}\label{eq.largeFinalCollapsedSet}
    |N(x)| =0 \qquad \forall x \in \left(\limsup_{k} \underline{D}_Q(\overline{T}_k)\right) \setminus F_\infty. 
\end{equation}
\end{proposition}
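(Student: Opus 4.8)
The plan is to show that the precise representative of the limit map $N:=N_\infty^b$ vanishes $\mathcal{H}^{m-2+\delta}$-almost everywhere on $\limsup_k\underline{D}_Q(\overline{T}_k)$; we may assume $0<\delta<2$ (proving the statement for a smaller $\delta$ is stronger). By the construction of the blow-up sequence $\limsup_k\|N_k\|_{W^{1,2}(B_{3/2})}<\infty$, so $N\in W^{1,2}(B_{3/2},\I2(\R^n))$. Writing $\mathcal{M}_\beta g(x):=\sup_{\rho>0}\rho^\beta\fint_{B_\rho(x)}g$ for the $\beta$-fractional maximal function, the elementary capacitary maximal inequality $\mathcal{H}_\infty^{m-2+\delta}(\{\mathcal{M}_{2-\delta}g>\lambda\})\le C\lambda^{-1}\|g\|_{L^1}$ (a Vitali covering computation) gives a set $F_0$ with $\mathcal{H}^{m-2+\delta}(F_0)=0$ off which $N$ has a precise representative and $\mathcal{M}_{2-\delta}(|DN|^2)(x)<\infty$; consequently, by Poincar\'e and a geometric summation, $\fint_{B_\rho(x)}\G(N,N(x))^2\le C\rho^{\delta}\mathcal{M}_{2-\delta}(|DN|^2)(x)\to0$ as $\rho\to0$ for $x\notin F_0$, and since $\eta\circ N\equiv0$ also $\fint_{B_\rho(x)}|N|^2\to|N(x)|^2$. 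Passing to a subsequence (not relabeled) I may further assume $N_k\to N$ in $L^2(B_{3/2})$, $\tau_k:=\|\G(N_k,N)\|_{L^2(B_{3/2})}^2\to0$ with $\sum_k\sqrt{\tau_k}<\infty$, that the uniform stopping radius $R_k$ of the previous lemma (so $\rho(\overline{T}_k,x)<R_k$ for all $x$) satisfies $R_k\to0$ with $\sum_kR_k^{\delta/2}<\infty$ by \eqref{eq.stoppingradius}, and that $\epsilon_k:=\|\eta\circ N_k\|_{L^\infty(B_{5/4})}^2\to0$ while $\limsup_k\Lip(N_k)<\infty$.

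Next I would use that the $\G(N_k,N)$ are bounded in $W^{1,2}$ (because $|D\G(N_k,N)|\le|DN_k|+|DN|$ a.e.) and converge to $0$ in $L^2$: by interpolation $\|\G(N_k,N)\|_{L^r}\to0$ for all $r<2^*$, whence $\|D(\G(N_k,N)^2)\|_{L^q}\le C\|\G(N_k,N)\|_{L^{2q/(2-q)}}\|D\G(N_k,N)\|_{L^2}\to0$ for every $q<\tfrac{m}{m-1}$, i.e.\ $\G(N_k,N)^2\to0$ strongly in $W^{1,q}$. Extracting a further subsequence with $\sum_k\|\G(N_k,N)^2\|_{W^{1,q}}<\infty$, the series $\sum_k\G(N_k,N)^2$ lies in $W^{1,q}$ and has a finite precise representative off a set of zero $W^{1,q}$-capacity; letting $q\uparrow\tfrac{m}{m-1}$ this produces $F_1$ with $\mathcal{H}^{m-2+\delta}(F_1)=0$ such that $\G(N_k(x),N(x))^2\to0$ for all $x\notin F_0\cup F_1$ (this is the only place where $\delta>\tfrac{m-2}{m-1}$ is needed, and it holds in the application, where $m-2+\delta=m-1+\alpha$). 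Finally, applying the capacitary maximal inequality to $g=|DN_k|^2$ and using $\rho^2\le R_k^\delta\rho^{2-\delta}$ for $\rho<R_k$, the sets $B_k:=\{\mathcal{M}_{2-\delta}(|DN_k|^2)>R_k^{-\delta/2}\}$ satisfy $\mathcal{H}_\infty^{m-2+\delta}(B_k)\le CR_k^{\delta/2}$, so $F_2:=\limsup_kB_k$ has $\mathcal{H}^{m-2+\delta}(F_2)=0$ by Borel--Cantelli. I set $F_\infty:=F_0\cup F_1\cup F_2$.

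To conclude, fix $x\in\big(\limsup_k\underline{D}_Q(\overline{T}_k)\big)\setminus F_\infty$; then $x\in\underline{D}_Q(\overline{T}_{k_j})$ along some subsequence $k_j\to\infty$ with $x\notin B_{k_j}$, and I set $\rho_j:=\rho(\overline{T}_{k_j},x)<R_{k_j}\to0$. If $|N_{k_j}(x)|=0$ for infinitely many $j$, Lipschitz continuity gives $\fint_{B_{s\rho_j}(x)}|N_{k_j}|^2\le Q\Lip(N_{k_j})^2(s\rho_j)^2\to0$ along those $j$; otherwise $|N_{k_j}(x)|\ne0$ for $j$ large and the Weak Hardt--Simon estimate \eqref{eq.weakHardtSimonNew} yields
\[
\fint_{B_{s\rho_j}(x)}|N_{k_j}|^2\ \le\ C\rho_j^2\fint_{B_{\rho_j}(x)}|DN_{k_j}|^2+C\epsilon_{k_j}\ \le\ CR_{k_j}^\delta\,\mathcal{M}_{2-\delta}(|DN_{k_j}|^2)(x)+C\epsilon_{k_j}\ \le\ CR_{k_j}^{\delta/2}+C\epsilon_{k_j}\ \to\ 0 .
\]
In either case $\fint_{B_{s\rho_j}(x)}|N_{k_j}|^2\to0$. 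From $|N|\le\G(N,N(x))+\G(N(x),N_{k_j}(x))+\G(N_{k_j}(x),N_{k_j})+|N_{k_j}|$ and the bounds above,
\[
\fint_{B_{s\rho_j}(x)}|N|^2\le C\Big(\fint_{B_{s\rho_j}(x)}\G(N,N(x))^2+\G(N_{k_j}(x),N(x))^2+\Lip(N_{k_j})^2(s\rho_j)^2+\fint_{B_{s\rho_j}(x)}|N_{k_j}|^2\Big)\to0,
\]
the first term vanishing by the decay rate of the first paragraph (as $s\rho_j\to0$, $x\notin F_0$) and the second since $x\notin F_0\cup F_1$. Since $\fint_{B_\rho(x)}|N|^2\to|N(x)|^2$, along $\rho=s\rho_j\to0$ we get $|N(x)|=0$, as desired.

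The step I expect to be the main obstacle is this last transfer of smallness from $N_{k_j}$ near $x$ to $N$ at $x$: the stopping radius $\rho_j$ is only controlled from above by $R_{k_j}$ and may be much smaller, the convergence $N_k\to N$ is merely strong in $L^2$ (the higher integrability of Theorem \ref{thm:high} is lost in the limit, so no $W^{1,2}$-convergence is available), and an $\mathcal{H}^{m-2+\delta}$-negligible exceptional set is far too thin to be detected by $L^2$-convergence. The remedy is precisely the trio announced in the introduction: the $W^{1,q}$-smallness ($q<\tfrac m{m-1}$) of $\G(N_k,N)^2$, the precise-representative theory in Sobolev spaces, and the capacitary maximal function estimate applied at the small scales $\rho<R_k$; arranging these so that every exceptional set remains $\mathcal{H}^{m-2+\delta}$-null uniformly along the sequence (via Borel--Cantelli) is the delicate point, and for $\delta\le\tfrac{m-2}{m-1}$ it would require the more refined $W^{1,2}$-capacity version of the maximal inequality.
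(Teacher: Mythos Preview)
Your overall architecture---an exceptional set for the precise representative of $N$, one for the gradients $|DN_k|^2$ at scales $\le \overline{\ell}_k$ assembled by Borel--Cantelli, and one to transfer smallness from $N_k$ to $N$---is exactly the paper's. The real divergence is in that last transfer step, and this is also where there is a gap.

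The paper does \emph{not} aim for pointwise convergence $N_k(x)\to N(x)$; instead it bounds the full maximal function $M(\G(N,N_k))$ in $\operatorname{Cap}_q$ with $q=2-\delta/2$, using
\[
\operatorname{Cap}_q\bigl(\{M(\G(N,N_k))>\lambda\}\bigr)\le C\lambda^{-q}\!\int_{\{\G(N,N_k)>\lambda/2\}}\!|D\G(N,N_k)|^q\le C\lambda^{-q}|\{\G(N,N_k)>\lambda/2\}|^{1-q/2}\bigl(\|DN\|_{L^2}^q+\|DN_k\|_{L^2}^q\bigr),
\]
which works for \emph{every} $q<2$ (hence every $\delta>0$) and, off the exceptional set, controls $\fint_{B_r(x)}\G(N,N_{k_j})$ uniformly at \emph{all} scales $r$. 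The final chain then closes directly as $\fint_{B_{r_n}}|N|^2\le C\fint_{B_{r_n}}\G(N,N_{k_n})^2+C\fint_{B_{r_n}}|N_{k_n}\ominus(\eta\circ N_{k_n})|^2$ without ever invoking the value $N_{k_j}(x)$ or any regularity of $N_{k_j}$ beyond $W^{1,2}$. Your route via $W^{1,q}$-convergence of $\G(N_k,N)^2$ with $q<m/(m-1)$ buys only \emph{pointwise} convergence, forces the restriction $\delta>(m-2)/(m-1)$ (fine for the application, but not for the proposition as stated), and then requires you to bridge $N_{k_j}(x)$ to $N_{k_j}$ on $B_{s\rho_j}(x)$.

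That bridge is where you assume $\limsup_k\Lip(N_k^b)<\infty$. This is not available: the renormalised maps satisfy $\Lip(N_k^b)\sim \Lip(\bar N_k)/\mathbf{h}_k$, and while $\Lip(\bar N_k)\le C\bmo_k^{\gamma}$ from the normal approximation, there is no matching lower bound on $\mathbf h_k$ that would keep the ratio bounded---the paper's proof never uses it. Your argument can be repaired: since $x\notin B_{k_j}$ gives $r^{2-\delta}\fint_{B_r(x)}|DN_{k_j}|^2\le R_{k_j}^{-\delta/2}$ at \emph{all} scales, the $\Iq$-valued Poincar\'e inequality and a telescoping sum yield $\fint_{B_{s\rho_j}(x)}\G(N_{k_j},N_{k_j}(x))^2\le C(s\rho_j)^{\delta}R_{k_j}^{-\delta/2}\le CR_{k_j}^{\delta/2}\to 0$, which replaces the Lipschitz bound. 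Two minor points: in the case $|N_{k_j}(x)|=0$ just choose $\rho_j$ small by continuity (the Hardt--Simon radius is not defined there), and the term $\epsilon_{k_j}=\|\eta\circ N_{k_j}\|_{L^\infty}^2$ is unnecessary since $\eta\circ N\equiv 0$ gives $|\eta\circ N_{k_j}|\le Q^{-1/2}\G(N_{k_j},N)$, which is already absorbed.
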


\begin{proof} Let us recall the definition of the not centered maximal function
\[ M(h)(x) = \sup_{x \in B_r} \fint |h|\,.\]
We note that by the above definition $M(h)$ is lower-semicontinuous, hence the upper-level sets are open. We will divide the proof into the following steps.

%\medskip
%\emph{Step 2: Weak Hardt-Simon estimate}
%There are universal constants $0<s<1$ and $C>0$ such that whenever $x \in \underline{D}_Q(\overline{T}_k)\setminus \{x \colon |N_k(x)|=0\} $ there is a radius $0<\rho=\rho(\overline{T}_k,x)$ s.t. 
%\begin{equation}\label{eq.weakHardtSimonNew}
 %   \fint_{B_{s\rho}(x)} |N_k \ominus \eta\circ N_k|^2 \le C \rho^2 \fint_{B_\rho(x)} |DN_k|^2 \,.
%\end{equation}
% For every $L \in \mathcal{W}^e \cup \mathcal{W}^n$ such that $T$ contains a $Q$-point, meaning there is $p$ such that $\Theta(T,p)=Q$ with $\bold{p}_{\pi_0}(\bold{p}(p))\in L$. Then there is $\bar{s}>0$ such that
% \begin{equation}\label{eq.weakHardtSimon}
%     {\ell(L)}^{-2}\fint_{\mathcal{B}_{\bar{s}\ell(L)}(\bold{p}(p))} \G(N, Q\a{\eta \circ N})^2 \le \fint_{\mathcal{B}_{\ell(L)}(\bold{p}(p))} |DN|^2\,.
% \end{equation}

% \medskip
% \emph{Step 2: ``Stopping radius comparison''}
% There is a sub-sequence such that if 
% \begin{equation}\label{eq.stoppingradius}
%     \overline{\ell_k}=\bar{r}_k^{-1}\sup\left\{ \ell(L) \colon L  \in \mathscr{W}^{(j(k))}_e \text{ and } L \cap B_{19\bar{r}_k/16}(0,\pi) \neq \emptyset \right\} \le 2^{-.....}
% \end{equation}

\medskip

\emph{Step 1: Precise representative approximation estimate.}
 There exist sequences $N_{\epsilon_k} \in \Lip(\Omega, \A_Q)$ and $F_{\epsilon_k}\subset B_{3/2}$ open and nested non increasing with the property that 
\begin{equation}\label{eq.capacity1}
    \operatorname{Cap}_2(F_{\epsilon_k})< 2^{-k} \qquad\text{and}\qquad M(\G(N, N_{\epsilon_l}))(x)< 2^{-k} \quad \forall x \notin F_{\epsilon_k}, l\ge k\,.
\end{equation}
In particular, for any sequence $(x_n) \subset \Omega \setminus F_k$ with $x_n \to x$ and $r_n \downarrow 0$, we have 
\begin{equation}\label{eq.almostuniformconvergence1}
    \lim_{n \to \infty} \fint_{B_{r_n}(x_n)} \G(N,N(x)) =0\,.
\end{equation}

\medskip

\emph{Step 2: good subsequence.}
For $q=2-\frac{\delta}{2}$ \footnote{In fact for every $q<2$ one can find such a subsequence} there exists a further subsequence $N_k$ (not relabeld) and $\hat{F}_k\subset \Omega$ open and nested non increasing such that 
\begin{equation}\label{eq.capacity2}
    \operatorname{Cap}_q(\hat{F}_{k})< 2^{-k} \qquad \text{and}\qquad M(\G(N, N_{l}))(x)< 2^{-k} \quad \forall x \notin \hat{F}_{k}, l\ge k\,.
\end{equation}

\medskip

\emph{Step 3: quantitative Hausdorff estimate.}
There is a further sub-sequence and open and nested non increasing subsets $\tilde{F}_k \subset \Omega$ s.t. 
\begin{equation}
    \mathcal{H}^{m-2+\delta}(\tilde{F}_k)< 2^{-k} \text{ and } r^{2-m-\delta/2} \int_{\mathcal{B}_r(x)} |DN_l|^2 \le 1 \quad \forall x \notin \tilde{F}_k, r\le \bar{\ell_k} \text{ and } l \ge k\,.
\end{equation}

\medskip 

\emph{Step 4: Conclusion assuming Steps 1, 2 and 3.}
Let us first show how to conclude the lemma assuming that we have found the subsequence $N_k$, not relabeled, and the sets $F_k,\hat{F}_k,\tilde{F}_k$. First we note that $\operatorname{Cap}_q(F_k \cup \hat{F}_k) \le 2^{1-k+1}$ hence we deduce that for $F_\infty^1= \bigcap_{k} (F_k\cup \hat{F}_k)$ we have $\operatorname{Cap}_q(F_\infty^1)=0$ and since $m-2+\delta > m-q$ we deduce that $\mathcal{H}^{m-2+\delta}(F_\infty^1)=0$. 
Since $\mathcal{H}^{m-2 + \delta}(\tilde{F}_k)< 2^{-k}$ we deduce that for $F_\infty^2= \bigcap_k \tilde{F}_k$ we have $\mathcal{H}^{m-2+\delta}(F_\infty^2)=0$ so we conclude 
\[ \mathcal{H}^{m-2+\delta}(F_\infty)=0 \text{ for } F_\infty = F_\infty^1\cup F_\infty^2\,.\]
Since both sequence $F_k\cup \hat{F}_k$ and $\tilde{F}_k$ are nested it is sufficient to show that for any $k_0>0$ we have the slightly weaker version of \eqref{eq.largeFinalCollapsedSet}:
\begin{equation}\label{eq.largeFinalCollapsedSet2}
    |N(x)| =0 \qquad \forall x \in \left(\limsup_{k} \underline{D}_Q(\overline{T}_k)\right) \setminus (F_{\epsilon_{k_0}}\cup \hat{F}_{k_0} \cup \tilde{F}_{k_0}). 
\end{equation}

For any $x \in \left(\limsup_{k} \underline{D}_Q(\overline{T}_k)\right) \setminus (F_{\epsilon_{k_0}}\cup \hat{F}_{k_0} \cup \tilde{F}_{k_0})$ there is a sequence $x_n \in \underline{D}_Q(\overline{T}_{k_n}) \setminus (F_{\epsilon_{k_0}}\cup \hat{F}_{k_0} \cup \tilde{F}_{k_0})$ such that $x_n \to x$. We claim that we can find radii $0<r_n<\overline{\ell}_n$ converging to $0$ such that 
\begin{equation}\label{eq.goodradii}
  \fint_{B_{r_n}(x_n)} |N_{k_n}\ominus (\eta\circ N_{k_n})|^2 \le \frac1n \,.
\end{equation}
Having found this sequence will provide the claim due to \eqref{eq.almostuniformconvergence1} and 
\begin{align*}
    \fint_{B_{r_n}(x_n)} |N|^2 &=  \fint_{B_{r_n}(x_n)} |N\ominus (\eta \circ N)|^2 \le C  \fint_{B_{r_n}(x_n)} \cG(N,N_{k_n})^2 + C \fint_{B_{r_n}(x_n)} |N_{k_n}\ominus (\eta\circ N_{k_n})|^2 \\
    &\le C 2^{-k_n} + C/n\,,
\end{align*}
where we have used in the first equality that $\eta \circ N\equiv0$ and $x_n \in \Omega \setminus \tilde{F}_{k_0}\subset \Omega \setminus \tilde{F}_{k_n}$  for $n$ sufficient large in the last line.

Let us now argue for \eqref{eq.goodradii}. Here we have to distinguish two cases. If $x_n$ in the ``collapsed set'', that is $N_{k(n)}(x_n)=0$, then %$x_n \in r_{k_n}^{-1}\textbf{e}_{k_n}^{-1}\left(\Phi_{k_n}(\Gamma_{k_n}) 
%\right)$ 
by continuity of each $N_{k_n}$ we can fix a radius $0<r_n< \overline{\ell}_{k_n}$ such that 
\[ \fint_{B_{r_n}(x_n)} |N_{k_n}\ominus (\eta\circ N_{k_n})|^2 \le \frac1n\,. \]
If $x_n$ not in the above we are precisely in the situation of step 2 and hence \eqref{eq.weakHardtSimonNew} applies. Setting $r_n= \bar{s}\rho(x_n)=\bar{s}\rho_n$ we deduce that 
\begin{align*}
   \fint_{B_{r_n}(x_n)} |N_k \ominus (\eta\circ N_k)|^2 \le C \rho_n^2 \fint_{B_{\rho_n}(x_n)} |DN_k|^2 \le C \rho_n^{\delta/2} \le \frac1n\,,
\end{align*}
where we have used in second to last inequality that $x_n\notin \tilde{F}_{k_0}$ and $\rho_n \downarrow 0$.

%{\color{red} What is this for??
%For any $s,\delta>0$ and any $w \in L^1(\Omega)$ one has for any $\rho,\lambda>0$
%\begin{equation}\label{eq.capacity3}
    %\mathcal{H}^{s+\delta}(\{ x \colon \exists \, 0<r< \rho, |y-x|\le r \text{ s.t. } r^{-s}\int_{B_r(y)} |w|> \lambda \}) \le C \rho^\delta \int_\Omega |w|
%\end{equation}}

\emph{Proof of Step 1: } It is essentially a classical argument leading to the fine properties of Sobolev functions, compare \cite[chapter 4.8]{EG}. For the sake of completeness we will present the argument. It could be stated as follows: 

\emph{Step 1 - reformulated: }
For any $g \in W^{1,2}(\Omega, \Iqn)$ there exist sequences $g_{\epsilon_k} \in \Lip(\Omega)$ and $F_{\epsilon_k}\subset \Omega$ open with the property that 
\begin{equation}\label{eq.capacity1}
    \operatorname{Cap}_2(F_{\epsilon_k})< 2^{-k} \qquad\text{and}\qquad M(\G(g, g_{\epsilon_l}))(x)< 2^{-k} \quad \forall x \notin F_{\epsilon_k}, l\ge k\,.
\end{equation}
In particular we have for any sequence $(x_n) \subset \Omega \setminus F_k$ with $x_n \to x$ and $r_n \downarrow 0$ we have 
\begin{equation}\label{eq.almostuniformconvergence}
    \lim_{n \to \infty} \fint_{B_{r_n}(x_n)} \G(g,g(x)) =0\,.
\end{equation}

We fix a sequence of Lipschtiz approximations $g_{\epsilon} \in W^{1,2}(\Omega, \Iqn)$ that converge strongly to $g$ in $W^{1,2}$ as $\eps\to0$. The classical capacity estimate, \cite[Theorem 4.18]{EG}\footnote{The theorem there is stated only on full space. In fact we want to use the following version: 
\[ \operatorname{Cap}_p(\{M(f)>\lambda\}) \le \frac{C}{\lambda^p} \int_{\{ f \ge \frac{\lambda}{2}\}\cap \Omega} |Df|^p\,\text{ for any } f \in W^{1,p}(\Omega)\,. \]
This can be seen as follows. Given any nonnegative $f \in W^{1,p}(\Omega)$ we set $f_1= (f-\frac{\lambda}{2})^+ \in W^{1,p}(\Omega), f_2 = f-f_1 \in W^{1,p}(\Omega)$. Note that both are non-negative and $f=f_1+f_2$. We may choose an extension of $f_1$ %and $f_2$
to the full space, since $\Omega$ is a Lipschitz domain. %If necessary we may take $f_2=\min\{f_2,\frac{\lambda}{2}\}$ to ensure that $f_2\le \frac{\lambda}{2}$. 
It is straightforward to see that $\{M(f)>\lambda\} \subset \{M(f_1)>\frac{\lambda}{2}\}$ hence the full space estimate \cite[Theorem 4.18]{EG} provides
\[\operatorname{Cap}_p(\{M(f)>\lambda\})\le \operatorname{Cap}_p\left(\left\{M(f_1)>\frac{\lambda}{2}\right\}\right) \le 2^p\frac{C}{\lambda^p} \int_{\R^n\cap\{f_1>\frac\lambda2\}} |Df_1|^p \le \frac{C}{\lambda^p} \int_{\{f>\frac{\lambda}{2}\}\cap \Omega} |Df|^p\,.\]}
states 
\[ \operatorname{Cap}_2(E^\epsilon_k) \le C 2^{2k} \int_{\Omega} |D\G(g,g_\epsilon)|^2\,,\qquad  \text{ for } E^\epsilon_k=\{ M(\G(g, g_{\epsilon}))>0\}\,.\]
Hence we may choose $\epsilon_k$ sufficient small such that the left hand side is less than $2^{-k-4}$ i.e. $\operatorname{Cap}_2(E^{\epsilon_k}_k) \le 2^{-k-4}$.
Hence we may take $F_k= \bigcup_{l\ge g} E^{\epsilon_k}_k$. As pointed out \cite[Theorem 4.19]{EG} this choice implies uniform strong convergence on $F_k^c$ for each $k$:
\[ \G(g_{\epsilon_i}(x), g_{\epsilon_j}(x)) \le \lim_{r\to 0} \fint_{B_r(x)} \G(g_{\epsilon_i}, g_{\epsilon_j}) \le 2 (2^{-i} + 2^{-j}) \qquad \forall x \notin F_k, i,j> k\,.  \]
In particular this implies that $g$ is continuous on $\Omega\setminus F_k$. Furthermore it implies \eqref{eq.almostuniformconvergence} since for any $l >k$ we have 
\begin{align*}
    \fint_{B_{r_n}(x_n)} \G(g,g(x)) 
        &\le \fint_{B_{r_n}(x_n)} \left(\G(g, g_{\epsilon_l}) + \G(g_{\epsilon_l}, g_{\epsilon_l}(x)) +  \G(g_{\epsilon_l}(x), g(x)) \right)\\ &\le 2^{-l} + \fint_{B_{r_n}(x_n)} \G(g_{\epsilon_l}, g_{\epsilon_l}(x)) + 2^{-l}\,.
\end{align*}
Taking the limit $n \to \infty$ gives the claim since we showed that \[\lim_{n \to \infty} \fint_{B_{r_n}(x_n)} \G(g,g(x)) \le 2^{-l+1}\,.\]

We obtain the result claimed in the original version for the choice $g=N$.

\smallskip
\emph{Proof of Step 2: }
As in Step 1 there is a abstract capacitary argument behind. This time it could be phrased as follows:

\emph{Step 2 - reformulated}
Suppose $g_k \in W^{1,2}(\Omega, \Iqn)$ conveges weakly to $g$ then for every $q<2$ there exists a subsequence $g_k$ (not relabeld) and $\hat{F}_k\subset \Omega$ open such that 
\begin{equation}\label{eq.capacity2}
    \operatorname{Cap}_q(\hat{F}_{k})< 2^{-k} \qquad \text{and}\qquad M(\G(g, g_{l}))(x)< 2^{-k} \quad \forall x \notin \hat{F}_{k}, l\ge k\,.
\end{equation}

The argument is as well classical and can be found for instance in \cite[Chapter 1, Theorem 7]{E}. Let us define $E_k^l = \{ M(\G(g, g_l))> 2^{-k}\}, \tilde{E}_k^l = \{ \G(g,g_l)> 2^{-k-1}\}$. The classical capacity estimate provides 
\[ \operatorname{Cap}_q(E^l_k) \le C 2^{qk} \int_{\tilde{E}_k^l} |D\G(g,g_l)|^{q} \le C 2^{qk} |\tilde{E}_k^l|^{1-\frac{q}{2}} \left(\norm{Dg}^q_{L^2(\Omega)}+ \norm{Dg_l}^q_{L^2(\Omega)} \right).\]
Since $|\tilde{E}_k^l|^{\frac12}\le 2^{-l} \norm{\G(g,g_l)}_{L^2(\Omega)} \to 0$ for $l \to \infty$ we may select $l_k$ sufficient large such that the right hand side becomes less than $2^{-k-4}$. We set $\hat{F}_k = \bigcup_{l\ge k} E^{l_k}_k$. Thus the new obtained sequence $g_{l_k}$ has the desired properties.

We obtain the claimed estimate by consider the sequences $N_k^b$.

\smallskip

\emph{Proof of Step 3: } First we need a quantitative version of comparison between Hausdorff measure and integrals over balls, compare \cite[theorem 2.10]{EG}: For any $s,t>0$ and any $w \in L^1(\Omega)$ one has for any $\rho,\lambda>0$
\begin{equation}\label{eq.capacity3}
    \mathcal{H}^{s+t}(A_\lambda^\rho) \le \frac{C}{\lambda} \rho^s \int_\Omega |w|\,,\quad \text{ where } A_\lambda^\rho=\left\{ x \colon \exists  0<r< \rho, |y-x|\le r \text{ s.t. } r^{-t}\int_{B_r(y)} |w|> \lambda \right\}
\end{equation}
The argument is classical. Let $\mathcal{B}=\left( B_r(y) \right)$ be the collection of all balls with $r \le \rho$ such that $r^{-t} \int_{B_r(y)} |w| > \lambda$. We clearly have $A_\lambda^\rho \subset \bigcup_{B \in \mathcal{B}} B$. Appealing to the Vitali covering theorem we can find a disjoint sub-collection $\bigl(B_{r_i}(y_i)\bigr)_i$ where their by factor $5$ increased balls still cover everything. Hence we have 
\[ \lambda \,\mathcal{H}_{\infty}^{s+t}(A_\lambda^\rho) \le C\sum_i \lambda \,r_i^{s+t} \le C \rho^{s} \int |w|\,. \]

Now we may apply the above estimate to $w=|DN_k|^2$, $t=m-2+\frac{\delta}{2}, s=\frac{\delta}{2}$ and $\rho=\overline{\ell}_k, \lambda =1$. Hence we deduce 
\[ \mathcal{H}^{m-2+\delta}(\tilde{E}_k) \le C\, \overline{\ell}_k^{\delta/2}\le 2^{-k-1}\,, \]
for $\tilde{E}_k=\left\{ x \colon \exists  0<r< \overline{\ell}_k, |y-x|\le r \text{ s.t. } r^{2-m-\delta/2}\int_{B_r(y)} |DN_k|^2> 1 \right\}$ and where we have used in the end that for any $C>0$, there is a sub-sequence such that $\overline{\ell_k}\leq \frac{1}{C} 2^{-k}$, which follows from \eqref{eq.stoppingradius} with the same justification as in \cite[formula (6.8)]{DLS_Blowup}.
To obtain a decreasing family we set $\tilde{F}_k= \bigcup_{l \ge k} \tilde{E}_l$.

% Due to step 1 we can pass to further sub-sequence of the $N_l$ such that $s_l^\delta C < 2^{-k-4}$ thus we have for them the desired estimate.  
\end{proof}

\textbf{Final contradiction: }
Combining Theorem \ref{thm:UCformeas}, Proposition \ref{prop.finalBlow-up}, Lemma \ref{lem.StationaryLimit} and Proposition \ref{lem.Capacity}, leads to a contradiction: 
Proposition \ref{lem.Capacity} implies that $\mathcal{H}_\infty^{m-1+\delta}( \{ |N_\infty^b|=0\}\cap B_{3/2})\ge \eta$, Lemma \ref{lem.StationaryLimit} implies that the limit $\mathcal{F}=\lim_{k} \mathcal{E}_{N_k^b}$ is stationary and has $N_\infty^b$ as its associated $W^{1,2}$-function. Hence Theorem \ref{thm:UCformeas} applies and we deduce that $N_\infty^b\equiv 0$. This contradicts Proposition \ref{prop.finalBlow-up}, namely $\int_{B_{3/2}} |N^b_\infty| = 1$.

\appendix
\section{On the notion of stationary graphs}\label{app:statio}

In this section for $Q=4$ we give an example of a stationary graph whose graph is not a stationary varifold. It is a small modification of the example presented in \cite{HS}.  In the second part we show that for $Q=2$ the two notions are in fact equivalent under an additional regularity assumption.

\begin{remark}[A stationary map whose graph is not stationary]
    Consider for any $m>0$ the $2$-valued map
\[g(x)=\begin{cases} \a{m\,x} + \a{-m\,x} &\text{ for } x> 0 \\ 2\a{0} & \text{ for } x< 0 \end{cases}\,.\]
One can directly check that $g$ satisfies the outer variation:% since $g$ is the composition of two geodesics in the region $x>0$ and two geodesics $x<0$. Furthermore it satisfies the outer variation at $x=0$ because 
\begin{align*}
    \int_0^\infty \sum_{l=0}^1 (-1)^l (1+m^2)^{-1/2}m \frac{d}{dx}\psi(x,m\,x) \, dx = - \sum_{l=0}^1 (-1)^l (1+m^2)^{-1/2}m \psi(0,0) =0\,.
\end{align*}
One immediately checks that $\bG_g$ is not a stationary varifold, not even for $m= \sqrt{3}$ which corresponds to an opening angle of $\frac{2\pi}{3}$, since the multiplicities do not match. In particular the inner variation of $g$ is not $0$ in any neighborhood of $0$.  

If for any value $a \in \R$, we consider the $4$-valued map
\[ f_a(x) = (g(x)\oplus a)+ (g(-x) \oplus (-a))\,,\]
then we obtain a stationary map, since $f_a$ is stationary for the outer variation as a result of both $g(x)$ and $g(-x)$ being stationary for the outer variation separately; 
whereas the inner variation is trivially satisfied since
\[\sum_{l=1}^4 \sqrt{g(f_l)}\, g(f_l)^{-1} = \sum_{l=1}^4 \frac{1}{\sqrt{|g(f_l)|}} = \frac{2}{\sqrt{1+m^2}} + 2 \quad \forall x\,.\]

However the graph of $f_a$ is stationary only for $a=0$, that is the choice that corresponds to four crossing lines which are clearly a stationary varifold.\footnote{This observation can be used to see that the inner variation is satisfied: Since the inner variation does not localise it doesn't depend on the choice of $a$, and for $a=0$ the inner variation is satisfied.}
% The inner variation does not depend on the choice of $a$ since by our assumption it is not localizing. But for the specific choice of $a=0$ we consider the situation of four crossing lines which is clearly a stationary varifold. Hence the inner variation is satisfied. 
\end{remark}

The conjecture of Lawson and Osserman states that for a Lipschitz function $f\colon \R^m \to \R^n$ if the outer variation for area is zero then so is the inner. When $n=1$, this is known to be true by De Giorgi-Nash-Moser, while for arbitrary $n$ and $m=2$ the conjecture has recently been confirmed by the first named author together with Mooney and Tione (\cite{HMT}). For general $m$ and $n$ the conjecture is still open. 

In fact we would only need a slightly weaker version of the conjecture: assume that $f_i\colon \R^m\to \R^n$, $i=1,2$ are two Lipschitz functions such that $\mathcal O^A(f_i)\equiv 0$ for $i=1,2$, and moreover, if we set $f:=\sum_{i=1}^2\a{f_i}$, then $\mathcal I^A(f)\equiv 0$, then $\mathcal I^A(f_i)\equiv 0$ for $i=1,2$. 

\begin{lemma}[Stationary map implies stationary graph] Let $f\colon\Omega \to \I{2}(\R^n)$ be a map that is stationary for the area functional, then $\bG_f$ is a stationary varifold if the weaker version of the Lawson and Osserman conjecture holds.
\end{lemma}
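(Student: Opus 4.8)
The plan is to show that the first variation $\delta\bG_f$ annihilates every compactly supported ambient field $X=(\phi,\psi)\colon \Omega\times\R^n\to\R^m\times\R^n$, which we split into its vertical part $(0,\psi)$ and horizontal part $(\phi,0)$. By the very definition of the area integrand of a multivalued graph, and since tangential contributions cancel, one has
\[
\delta\bG_f\big((0,\psi)\big)=\mathcal O^A(f,\psi),\qquad
\delta\bG_f\big((\phi,0)\big)=\mathcal I^A(f,\phi)\quad\text{whenever }\phi=\phi(x),
\]
so the hypothesis that $f$ be stationary for area already kills every vertical variation (for arbitrary $\psi=\psi(x,u)$) and every horizontal variation that does not depend on $u$. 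Thus the entire content of the statement is the vanishing of $\delta\bG_f\big((\phi,0)\big)$ for $u$-dependent $\phi$, and since stationarity is local it suffices to verify this near each point of $\spt(\bG_f)$.

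First I would dispose of the two ``open'' regimes. On the separation set $\Omega_{\rm sep}=\{x\colon \sep(f(x))>0\}$ one has locally $f=\a{f_1}+\a{f_2}$ with $f_1,f_2$ single-valued Lipschitz and $f_1\neq f_2$; testing $\mathcal O^A(f,\cdot)=0$ with fields supported in a thin tube around the graph of one sheet that misses the other gives $\mathcal O^A(f_i,\cdot)\equiv0$ for $i=1,2$, while additivity of the inner variation over the sheets, $\mathcal I^A(\a{f_1}+\a{f_2},\phi)=\mathcal I^A(f_1,\phi)+\mathcal I^A(f_2,\phi)$, together with $\mathcal I^A(f,\cdot)\equiv0$, puts the pair $(f_1,f_2)$ exactly in the hypotheses of the weak Lawson--Osserman statement and forces $\mathcal I^A(f_i,\cdot)\equiv0$. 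By the $Q=1$ equivalence between area-stationarity and varifold-stationarity of a single-valued Lipschitz graph, each $\bG_{f_i}$ is a stationary varifold, hence so is $\bG_f=\bG_{f_1}+\bG_{f_2}$ near the chosen point. The same reasoning applies verbatim on the interior $\Sigma^\circ$ of the collapsed set $\Sigma=\Omega\setminus\Omega_{\rm sep}$, where $f=2\a{\eta\circ f}$ so that $\mathcal O^A(f,\cdot)=2\,\mathcal O^A(\eta\circ f,\cdot)$ and $\mathcal I^A(f,\cdot)=2\,\mathcal I^A(\eta\circ f,\cdot)$ both vanish: $\eta\circ f$ is an area-stationary single-valued Lipschitz function and $\bG_f=2\,\bG_{\eta\circ f}$ is stationary on $\Sigma^\circ$.

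It remains to propagate stationarity across the transition set $\partial\Sigma=\Sigma\setminus\Sigma^\circ$, i.e. across the branch locus where the two sheets merge, and this is where I expect the real difficulty to lie. The route I would follow is: on a small ball $B$ about a point of $\partial\Sigma$, attempt to select two continuous, hence Lipschitz (constant $\le\sqrt2\,\Lip(f)$), single-valued functions $\bar f_1,\bar f_2$ with $f=\a{\bar f_1}+\a{\bar f_2}$ on $B$, by fixing a labelling of the sheets on each connected component of $B\setminus\Sigma$ and gluing through $\Sigma$, where every labelling is forced to take the common value $\eta\circ f$. When this succeeds, each $\bar f_i$ solves the minimal surface system on $B\setminus\partial\Sigma$ (by the localisation of $\mathcal O^A(f,\cdot)=0$ used above, applied both on $\Omega_{\rm sep}$ and on $\Sigma^\circ$) and is Lipschitz on $B$; a removability statement for the divergence-form system $\partial_i(\sqrt{|g(\bar f_i)|}\,g^{ij}(\bar f_i)\,\partial_j\bar f_i^\alpha)=0$ then promotes this to $\mathcal O^A(\bar f_i,\cdot)\equiv0$ on all of $B$, after which weak Lawson--Osserman and the $Q=1$ case conclude as before. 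Since the branch locus $\partial\Sigma$ may well be of codimension one (as the main theorem itself allows), the removability cannot be obtained from $\mathcal H^{m-1}$-smallness of $\partial\Sigma$ alone; the point is rather that the hypothesis $\mathcal I^A(f,\cdot)\equiv0$, equivalently the divergence-freeness of $\sum_l\sqrt{|g(f_l)|}\,g^{ij}(f_l)$ on all of $\Omega$, is exactly what excludes a distributional contribution of the divergence concentrated on $\partial\Sigma$. In the case where the two-sheeted cover has genuine monodromy around $\partial\Sigma$ and no such global splitting exists, one instead argues directly on $\delta\bG_f\big((\phi,0)\big)$: this functional is supported on the graph over $\partial\Sigma$, where the two sheets coincide, so a $u$-dependent horizontal variation acts identically on both sheets there, its contribution is twice that of the corresponding variation of the single-valued graph $\bG_{\eta\circ f}$, and the latter is annihilated after reducing to $x$-dependence via a cut-off near $\partial\Sigma$ and invoking $\mathcal I^A(\eta\circ f,\cdot)\equiv0$. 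Making this reduction precise — controlling the cut-off error, that is proving the first variation does not genuinely concentrate on the branch locus, which is precisely the step forced to use $\mathcal I^A(f,\cdot)\equiv0$ in an essential way — is the crux and main obstacle of the argument; under any additional regularity assumption that renders $\partial\Sigma$ negligible it becomes routine.
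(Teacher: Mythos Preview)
Your reduction is correct up to and including the treatment of the separation set: vertical variations are killed by $\mathcal O^A(f,\cdot)\equiv0$, horizontal $x$-only variations by $\mathcal I^A(f,\cdot)\equiv0$, and on $\Omega_{\rm sep}$ the weak Lawson--Osserman hypothesis promotes each sheet to a stationary graph exactly as you describe. The gap is in how you handle the collapsed set $\Sigma$. You propose either (i) selecting global single-valued sheets across $\partial\Sigma$ and invoking a removability theorem for the divergence-form system, or (ii) a direct argument that you yourself flag as incomplete (``the crux and main obstacle''). Route (i) runs into real trouble: there is no reason for a continuous selection to exist (monodromy), and even when it does, removability across a set that may have full $\mathcal H^{m-1}$ measure is not available without precisely the information you are trying to extract.

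The paper avoids all of this with a much simpler observation that you are circling but not landing on. Fix any ambient field $Y(x,y)$ and a cutoff $\eta(\dist(x,\Sigma)/\epsilon)$; split $Y=\hat Y_\epsilon+\tilde Y_\epsilon$ with $\hat Y_\epsilon$ supported in $\Omega_{\rm sep}\times\R^n$ (handled above) and $\tilde Y_\epsilon$ supported in $\{\dist(\cdot,\Sigma)<\epsilon\}\times\R^n$. Now compare $\tilde Y_\epsilon(x,y)$ with $\bar Y_\epsilon(x):=\eta(\dist(x,\Sigma)/\epsilon)\,Y(x,\eta\circ f(x))$. Since $\bar Y_\epsilon$ depends on $x$ only, the stationarity hypothesis on $f$ gives $\delta\bG_f(\bar Y_\epsilon)=0$ exactly. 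The difference $\delta\bG_f(\tilde Y_\epsilon-\bar Y_\epsilon)$ involves $Y(x,f_l(x))-Y(x,\eta\circ f(x))$, and the Lipschitz continuity of $f$ yields the pointwise bound
\[
|f_l(x)-\eta\circ f(x)|\le \Lip(f)\,\dist(x,\Sigma)\le \Lip(f)\,\epsilon
\]
on the support of the cutoff. This bound exactly cancels the $1/\epsilon$ coming from the gradient of the cutoff, and the remaining terms go to zero by dominated convergence (Rademacher for the derivative term). No splitting of sheets, no removability, no separate treatment of $\Sigma^\circ$ versus $\partial\Sigma$, and---contrary to your diagnosis---the cutoff error is controlled by the Lipschitz estimate alone; the inner variation hypothesis enters only to kill $\delta\bG_f(\bar Y_\epsilon)$.
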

\begin{proof}
    We have to show that $\delta \bG_f(Y)=0$ for all Lipschitz continous variations $(x,y) \in \Omega \times \R^n \mapsto Y(x,y) \in \R^m$ with $\spt(Y) \Subset \Omega\times \R^n$.

    Let $\Sigma = \{ x \in \Omega \colon f(x)=2 \a{\eta \circ f} \}$ the closed set where both graphs intersect. Since $f$ is assumed to be Lipschitz the complement $U = \Omega \setminus \Sigma$ is open. Due to the continuity for each $x \in U$ there are two Lipschtiz functions $f_i, i=1,2$ such that $f= \a{f_1} + \a{f_2}$ in $B_r(x)$ for some $r>0$. Furthermore $f_i$ are stationary with respect to the outer variation. Under the assumption that the conjecture holds, each $f_i$ solves the full minimal surface system separately. Hence we deduce that $\delta \bG(Y)=0$ for all $Y$ as above with $\spt(Y) \Subset U \times \R^m$. 

    Now let $Y$ be any admissible variation as above, and let $\eta$ be a smooth, non-increasing  cut-off function with $\eta(t)\equiv 1$ for $t\le 0$ and $\eta(t)=0$ for $t\ge 1$. 

    Note that for all $\epsilon >0$
    \[ Y= \left(1-\eta\left(\frac{\dist(x,\Sigma)}{\epsilon}\right)\right) Y + \eta\left(\frac{\dist(x,\Sigma)}{\epsilon}\right) Y =: \hat{Y}_\epsilon + \tilde{Y}_\epsilon\,. \]
    By the above we have $\delta \bG(\hat{Y}_\epsilon) =0$ for every $\epsilon>0$. Let us define 
    \[\bar{Y}_\epsilon := \eta\left(\frac{\dist(x,\Sigma)}{\epsilon}\right) Y(x, \eta \circ f)\,.\]
    By the assumption that $f$ is stationary for the area we have $\delta \bG(\bar{Y}_\epsilon) =0$ for every $\epsilon >0$. Thus we conclude
    \begin{align*}
        \delta\bG_f(Y)&=\delta\bG_f(\tilde{Y}_\epsilon)= \delta\bG_f(\tilde{Y}_{\epsilon}-\bar{Y}_\epsilon)\\
        &= \int \sum_{l=1}^2 \sqrt{|g(f_l)|} g^{ij}(f_l) \partial_j \left( \eta\left(\frac{\dist(x,\Sigma)}{\epsilon}\right)\left(Y(x,f_l(x)) - Y(x,\eta\circ{f})\right) \right)\\
        &= \underbrace{\int \sqrt{|g(f_l)|} g^{ij}(f_l)\, \eta\left(\frac{\dist(x,\Sigma)}{\epsilon}\right)\,\partial_j\left(Y(x,f_l(x)) - Y(x,\eta\circ{f})\right)  }_{I_\epsilon} \\
        &\quad+ \underbrace{\int \sqrt{|g(f_l)|} g^{ij}(f_l)\, \eta'\left(\frac{\dist(x,\Sigma)}{\epsilon}\right) \, \partial_j \dist(x,\Sigma) \frac{\left(Y(x,f_l(x)) - Y(x,\eta\circ{f})\right)}{\epsilon} }_{II_\epsilon}\,.
    \end{align*}
    $I_\epsilon \to 0$ as $\epsilon \downarrow 0$ since $\eta\left(\frac{\dist(x,\Sigma)}{\epsilon}\right)\sum_l |\partial_j\left(Y(x,f_l(x)) - Y(x,\eta\circ{f})\right) |\to 0$ in measure by Rademacher theorem for $Q$-valued function and the  uniform boundedness due to the Lipschitz continuity of $f$.
    
    $II_\epsilon \to 0$ since 
    \[\sum_l\left|\frac{\left(Y(x,f_l(x)) - Y(x,\eta\circ{f})\right)}{\epsilon}\right| \le \Lip(Y)\Lip(f) \frac{\dist(x,\Sigma)}{\epsilon}\lesssim 1\]    
    on the support of $\eta'\left(\frac{\dist(x,\Sigma)}{\epsilon}\right)$ and $\eta'\left(\frac{\dist(x,\Sigma)}{\epsilon}\right) \to 0$ in measure.  
\end{proof}

\section{$L^\infty$-$L^2$ estimate}

Although the following estimate is known even in the more general context of stationary varifolds (see \cite{All}), we provide here a  proof for Lipschitz multivalued functions which are stationary for area, for the reader's convenience.

\begin{lemma}\label{lem.DeGiorgiClass}
Let $f\colon\Omega\subset \R^m \to \Iq(\R^n)$ be a Lipscitz function which is stationary for area, then for any $e,t\in \R^n$ the function 
\begin{equation*}
    u(x)=\max_{l=1,\dotsc,Q} e\cdot (f_l(x)-t)
\end{equation*}
is in the De Giorgi class $DG(\Omega)$ (see \cite{GiaquintaMartinazzi} for the definition).
\end{lemma}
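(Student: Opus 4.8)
The plan is to deduce membership in $DG(\Omega)$ from a single Caccioppoli-type inequality for the truncations $(u-k)^+$, obtained by inserting a carefully chosen vector field into the outer variation identity $\mathcal O^A(f,\psi)=0$ from \eqref{eq:outer}; the inner variation will not be needed. After normalizing $|e|=1$, fix $k\in\R$ and a cut-off $\varphi\in C_c^\infty(\Omega)$ with $0\le\varphi\le1$, and set $v_l:=e\cdot(f_l-t)$, so that $u=\max_l v_l$ and $(u-k)^+=\max_l(v_l-k)^+$. I would test with
\[
\psi(x,y):=\varphi(x)^2\,\bigl(e\cdot(y-t)-k\bigr)^+\,e\in\R^n\,,
\]
which is Lipschitz, compactly supported in $x$, and admissible ($|D_y\psi|\le1$ and $|\psi|+|D_x\psi|\le C(1+|y|)$); since $f$ is Lipschitz, a routine mollification argument extends the validity of \eqref{eq:outer} to such Lipschitz $\psi$.

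Because $\psi^\alpha(x,f_l(x))=\varphi^2(v_l-k)^+e^\alpha$ and $\partial_if_l^\alpha e^\alpha=\partial_iv_l$, the chain rule (legitimate here as $t\mapsto t^+$ is piecewise linear, composed with the Lipschitz $v_l$) gives $\partial_j[\psi^\alpha(x,f_l(x))]=\bigl(2\varphi\,\partial_j\varphi\,(v_l-k)^++\varphi^2\,\mathbf{1}_{\{v_l>k\}}\partial_jv_l\bigr)e^\alpha$, so that $\mathcal O^A(f,\psi)=0$ reads
\[
\int\sum_l\varphi^2\,\mathbf{1}_{\{v_l>k\}}\sqrt{|g(f_l)|}\,g^{ij}(f_l)\,\partial_iv_l\,\partial_jv_l\,dx=-2\int\sum_l\varphi\,\partial_j\varphi\,(v_l-k)^+\sqrt{|g(f_l)|}\,g^{ij}(f_l)\,\partial_iv_l\,dx\,.
\]
I would then invoke the ellipticity bounds \eqref{eq:lip1}--\eqref{eq:lip2}: the left-hand side is $\ge c(\Lip f)\int\sum_l\varphi^2|D(v_l-k)^+|^2$, while the right-hand side is $\le C(\Lip f)\int\sum_l\varphi\,|D\varphi|\,(v_l-k)^+|D(v_l-k)^+|$ (using $\mathbf{1}_{\{v_l>k\}}|Dv_l|=|D(v_l-k)^+|$ a.e.), and Young's inequality absorbs the gradient term into the left, leaving
\[
\int\sum_l\varphi^2\,|D(v_l-k)^+|^2\,dx\le C(\Lip f)\int\sum_l|D\varphi|^2\,\bigl((v_l-k)^+\bigr)^2\,dx\,.
\]

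Finally I would pass from the individual sheets to $u$. Pointwise $(v_l-k)^+\le(u-k)^+$, so the right-hand side is $\le CQ\int|D\varphi|^2((u-k)^+)^2$. On the other hand $(u-k)^+=\max_l(v_l-k)^+$ is the maximum of finitely many functions, so at a.e.\ $x$ one has $Du(x)=Dv_{l}(x)$ for every index $l$ with $v_l(x)=u(x)$ (since $u\ge v_l$ everywhere and $u$ is differentiable a.e.), hence $|D(u-k)^+|^2\le\sum_l|D(v_l-k)^+|^2$ a.e.; thus the left-hand side above dominates $\int\varphi^2|D(u-k)^+|^2$. Choosing $\varphi$ to be the standard cut-off between concentric balls $B_\rho(x_0)\subset B_R(x_0)\Subset\Omega$, with $|D\varphi|\le 2/(R-\rho)$, yields
\[
\int_{B_\rho(x_0)}|D(u-k)^+|^2\le\frac{\gamma}{(R-\rho)^2}\int_{B_R(x_0)}\bigl((u-k)^+\bigr)^2
\]
for every $k\in\R$ and every such pair of balls, with $\gamma=\gamma(m,Q,\Lip f)$, which is exactly the defining estimate of the De Giorgi class $DG(\Omega)$.

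The delicate points are essentially bookkeeping: justifying the use of the Lipschitz test function in \eqref{eq:outer}, the chain rule through the truncation, and the observation that what one obtains is the one-sided ``subsolution'' inequality (which is precisely what the subsequent $L^\infty$--$L^2$ estimate requires). The step I expect to need the most care in writing cleanly is the gradient comparison $|D(u-k)^+|^2\le\sum_l|D(v_l-k)^+|^2$ for the maximum of the selections $v_l$: this is elementary but must be stated precisely, since the $f_l$ are only measurable selections of the $2$-valued map and one should appeal to their approximate differentiability (in the sense of \cite{DS}) together with the global Lipschitz continuity of $u$.
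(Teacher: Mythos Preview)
Your argument is correct and is essentially identical to the paper's proof: same test function $\psi(x,y)=\varphi^2(e\cdot(y-t)-k)^+e$ in the outer variation, same use of the ellipticity bounds \eqref{eq:lip1}--\eqref{eq:lip2} together with Young's inequality, and the same passage from the sheets $v_l$ to their maximum $u$. The only difference is cosmetic ordering (you apply ellipticity before the max comparison, the paper after), and you are in fact more explicit than the paper about justifying $|D(u-k)^+|^2\le\sum_l|D(v_l-k)^+|^2$.
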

\begin{proof}
We let $\varphi \in C^1_c(\Omega)$ and we test the outer variation with the vector field $\psi(x,u)= \varphi^2(x) (e\cdot(u-t) -k)^+ e$ to obtain 
\begin{align*}
    0 =& \int \varphi^2\sum_{l=1}^Q  \sqrt{|g(f_l)|}\, g^{ij}(f_l)\, e\cdot\partial_if_l \,e\cdot  \partial_jf_l\, \mathbf{1}_{\{e\cdot(f_l(x)-t)\ge k\}}\\& + 2 \int \varphi \partial_i \varphi \sum_{l=1}^Q \sqrt{|g(f_l)|}\, g^{ij}(f_l)\, e\cdot\partial_if_l \,(e\cdot(f_l-t)-k)^+\\\ge& \frac12 \int \varphi^2\sum_{l=1}^Q  \sqrt{|g(f_l)|}\, g^{ij}(f_l)\, e\cdot\partial_if_l \,e\cdot  \partial_jf_l\, \mathbf{1}_{\{e\cdot(f_l(x)-t)\ge k\}}\\& - 2 \int \partial_j\varphi \partial_i \varphi \sum_{l=1}^Q \sqrt{|g(f_l)|}\, g^{ij}(f_l) \left((e\cdot(f_l-t)-k)^+\right)^2 \,.
\end{align*}
We notice that by \eqref{eq:lip1} and \eqref{eq:lip2} we have
\begin{align*}
    \frac{1}{(1+L^2)^\frac12} |D(u-k)^+|^2 &\le \sum_{l=1}^Q  \sqrt{|g(f_l)|} g^{ij}(f_l)\, e\cdot\partial_if_l \,e\cdot  \partial_jf_l \mathbf{1}_{\{e\cdot(f_l(x)-t)\le k\}}\\
    Q (1+L^2)^{\frac{m-1}{2}} |(u-k)^+|^2&\ge\sum_{l=1}^Q \sqrt{|g(f_l)|} g^{ij}(f_l) \left((e\cdot(f_l-t)-k)^+ \right)^2 
\end{align*}
where we have used that $\Lip(f)\leq L$. 
Combining these estimates and choosing $\varphi=1$ on $B_{\rho}(x_0)$, supported in $B_R(x_0)$ with $|D\varphi|\le \frac{C}{R-\rho}$ for a given $x_0 \in \Omega$ and $0<\rho<R<\dist(x_0,\partial \Omega)$, we obtain
\[\int_{B_\rho(x_0)} |D(u-k)^+|^2 \le C\, \frac{4Q(1+L^2)^{\frac m2}}{(R-\rho)^2} \int_{\Omega} |(u-k)^+|^2 \,. \]
This estimate is precisely the definition of $u$ being in the De Giorgi class $DG(\Omega)$.
\end{proof}

\begin{corollary}\label{cor.LinftyL2}
There is a dimensional constant $C$ depending on $Q$ and $L$ such that if $f\colon \Omega\subset \R^m \to \Iq(\R^n)$ is a Lipscitz function with $\Lip(f)\leq L$ and which is stationary for area, and $B_{2R}(x_0) \subset \Omega$, then for any $t \in \R^n$ one has 
\begin{equation}\label{eq.LinftyL2}
    \sup_{B_R(x_0)} |f\ominus t|^2 \le C \fint_{B_{2R}(x_0)} |f\ominus t|^2 \,. 
\end{equation}
\end{corollary}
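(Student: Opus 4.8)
The plan is to reduce this vector-valued bound to the classical local $L^\infty$-$L^2$ estimate for scalar functions in the De Giorgi class, applied one sheet and one direction at a time. Fix $t\in\R^n$ and a ball $B_{2R}(x_0)\subset\Omega$, and for a unit vector $e\in\R^n$ set $u_e(x):=\max_{l=1,\dots,Q} e\cdot(f_l(x)-t)$. By Lemma \ref{lem.DeGiorgiClass} the function $u_e$ belongs to the De Giorgi class $DG(\Omega)$ with structural constants depending only on $m,n,Q$ and the bound $L$ on $\Lip(f)$. The classical local boundedness theorem for the De Giorgi class, as recorded in \cite{GiaquintaMartinazzi}, then furnishes a constant $C=C(m,n,Q,L)$ with
\[
\sup_{B_R(x_0)} (u_e)^+ \;\le\; C\left(\fint_{B_{2R}(x_0)} \big((u_e)^+\big)^2\right)^{1/2}\,.
\]

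Next I would make the right-hand side independent of $e$. Since $\big((u_e(x))^+\big)^2\le \max_{l}\big(e\cdot(f_l(x)-t)\big)^2\le \sum_{l=1}^Q|f_l(x)-t|^2=|f(x)\ominus t|^2$, the previous display yields, for every unit vector $e$,
\[
\sup_{B_R(x_0)} (u_e)^+ \;\le\; C\left(\fint_{B_{2R}(x_0)} |f\ominus t|^2\right)^{1/2}\,.
\]
For a fixed $x$ one has $\max_{l}|e\cdot(f_l(x)-t)|=\max\{u_e(x),u_{-e}(x)\}\le\max\{(u_e)^+(x),(u_{-e})^+(x)\}$, so applying the bound to $e$ and to $-e$ controls $\sup_{B_R(x_0)}\max_{l}|e\cdot(f_l(x)-t)|$. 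Choosing for $e$ in turn the $n$ standard coordinate directions and using $|f_l(x)-t|\le\sqrt n\,\max_{i}|e_i\cdot(f_l(x)-t)|$ gives
\[
\sup_{B_R(x_0)}\ \max_{l=1,\dots,Q} |f_l(x)-t| \;\le\; \sqrt n\,C\left(\fint_{B_{2R}(x_0)} |f\ominus t|^2\right)^{1/2}\,.
\]
Finally, $|f(x)\ominus t|^2=\sum_{l=1}^Q|f_l(x)-t|^2\le Q\max_{l}|f_l(x)-t|^2$, and taking the supremum over $x\in B_R(x_0)$ produces \eqref{eq.LinftyL2} with constant $nQC^2$.

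The only genuine input beyond bookkeeping is the classical $L^\infty$-$L^2$ estimate for the De Giorgi class cited above; I expect the sole point that needs a line of care is to observe that the one-sided Caccioppoli inequality produced by Lemma \ref{lem.DeGiorgiClass} for $u_e$ — together with its reflection for $u_{-e}$ — is exactly the hypothesis of that theorem, so that the suprema of $(u_e)^+$ and $(u_{-e})^+$, and hence of $|e\cdot(f_l-t)|$, are both controlled. No stationarity is used directly at this stage: it enters only through Lemma \ref{lem.DeGiorgiClass}, whose proof tests the outer variation formula \eqref{eq:outer}.
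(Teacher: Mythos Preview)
Your proposal is correct and is precisely the argument the paper leaves implicit: the corollary is stated without proof as an immediate consequence of Lemma~\ref{lem.DeGiorgiClass} and the classical local boundedness theorem for the De Giorgi class, and your write-up supplies exactly those details (scalarize via $u_e=\max_l e\cdot(f_l-t)$, apply the one-sided $L^\infty$--$L^2$ bound, then recombine over $\pm e$ and the coordinate directions). Nothing further is needed.
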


\section{Stratification for stationary varifolds}

We recall here the following theorem, essentially due to Allard and Almgren.

\begin{theorem}[Stratification]\label{thm:varstrat} Suppose that there exist integers $m\geq 2$, $n\geq 1$ and a $m$-dimensional integral current $T$ which is stationary in an open set $U\subset \R^{m+n}$ and such that 
\[
\Ha^{m-1+\alpha}(\Sing(T)\cap U) >0\qquad \text{for some }\alpha>0\,,
\]
then there exist $m,n,Q\geq2$, an $m$-dimensional integral current $\bar{T}$ which is stationary in $B_{8}$, and a sequence $r_k \downarrow 0$ such that $0\in D_Q(T)$ and 
\begin{gather}\label{eq.stratifications01}
\lim_{k\to \infty} \bE(T_{0,r_k}, \bB_{6\sqrt{m}})=0\,,\\\label{eq.stratifications02}
\lim_{k\to \infty}\Ha_\infty^{m-1+\alpha}(D_Q(T_{0,r_k})\cap \bB_1)>\eta>0\,,\\\label{eq.stratifications03}
\Ha^m\left(\bB_1\cap \spt (T_{0,r_k})\setminus D_Q(T_{0,r_k}) \right)>0\,.
\end{gather}
 Conclusions \eqref{eq.stratifications01}-\eqref{eq.stratifications03} hold true as well under the assumption that $T=\bG_f$ for a Lipschitz function $f\colon \Omega \to \A_2(\R^n)$ and the regular set $\reg(f)$ contains both points of density $2$ and $1$.  
    
\end{theorem}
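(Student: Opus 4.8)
The plan is to run the classical Allard--Almgren dimension reduction for stationary integral varifolds, exploiting that the density is integer valued and that Allard's $\varepsilon$-regularity theorem forces regularity in the flat multiplicity-one case. First I would recall that, by Allard's monotonicity formula \cite{All}, tangent cones exist at every point of $\spt(T)$ and are stationary integral cones of density $\Theta(T,\cdot)$, and that the stratification theorem \cite{Whstrat} yields $\cS^{0}\subset\cdots\subset\cS^{m-1}\subset\Sing(T)$, where $\cS^{j}$ collects the points no tangent cone of which splits off a factor $\R^{j+1}$, with $\dim_{\Ha}\cS^{j}\le j$. Since a point with a multiplicity-one plane as tangent cone is regular by Allard, every singular point lies in $\cS^{m-1}$ or in the branch set $\cB:=\{p\in\Sing(T):\ \text{some tangent cone at }p\ \text{is a plane of multiplicity}\ \ge 2\}$; as $\dim_{\Ha}\cS^{m-1}\le m-1$, the contradiction hypothesis $\Ha^{m-1+\alpha}(\Sing(T)\cap U)>0$ gives $\Ha^{m-1+\alpha}(\cB)>0$.

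Next I would use integrality: $\Theta(T,\cdot)$ is integer valued, locally bounded and upper semicontinuous, so some integer $Q\ge 2$ satisfies $\Ha^{m-1+\alpha}(\cB\cap D_Q(T))>0$; set $E:=\cB\cap D_Q(T)$, which we may take compact of positive finite $\Ha^{m-1+\alpha}$-measure. By the density argument for Hausdorff contents (as in \cite{DS}) there are a point of $E$ --- translated to the origin --- and radii $\sigma\downarrow 0$ with $\sigma^{-(m-1+\alpha)}\Ha^{m-1+\alpha}_\infty(E\cap\bB_\sigma(0))$ bounded below; since $0\in\cB$, the blow-ups $(\iota_{0,r})_\sharp T$ also subconverge, as varifolds and currents, to a multiplicity-$Q$ plane, and I would arrange both requirements to hold along one sequence $r_k\downarrow 0$. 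After an initial harmless rescaling (producing $\bar T$, stationary in $B_8$) and writing $T_{0,r_k}:=(\iota_{0,r_k})_\sharp T$, one has $\Theta(T_{0,r_k},0)=Q$, hence $0\in D_Q(T_{0,r_k})$; varifold convergence to the plane plus convergence of masses (monotonicity) yields $\bE(T_{0,r_k},\bB_{6\sqrt m})\to 0$, which is \eqref{eq.stratifications01}; and $D_Q(T_{0,r_k})\supset\iota_{0,r_k}(E)$ together with $\Ha^{m-1+\alpha}_\infty(\iota_{0,r_k}(E)\cap\bB_1)=r_k^{-(m-1+\alpha)}\Ha^{m-1+\alpha}_\infty(E\cap\bB_{r_k}(0))$ gives \eqref{eq.stratifications02} after passing to a subsequence along which the limit exists.

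For \eqref{eq.stratifications03} I would argue by contradiction: if it failed along a subsequence, then $\Theta(T_{0,r_k},\cdot)\equiv Q$ holds $\Ha^m$-a.e.\ on $\spt(T_{0,r_k})\cap\bB_1$, so there $T_{0,r_k}$ equals $Q$ times a multiplicity-one integral cycle $S_k$, again stationary as a varifold, with $\Theta(S_k,0)=1$; by Allard's regularity $0$ would be regular for $S_k$, hence for $T_{0,r_k}$, contradicting $0\in\cB$. In the graph case $T=\bG_f$, $Q=2$, the set $D_2(\bG_f)$ is the collapsed set and $\spt(\bG_f)\setminus D_2(\bG_f)$ the density-one set; since $\reg(f)$ contains density-one points, $f$ cannot be identically collapsed near the origin (otherwise, by connectedness and unique continuation --- harmonicity of $\eta\circ f$ and analyticity of the two single-valued sheets off the collapsed set --- it would be collapsed throughout $\Omega$, excluding density-one regular points), so density-one points accumulate at $0$ and \eqref{eq.stratifications03} holds; moreover $\Sing(f)$ is then the disjoint union of a ``crossing'' part contained in $\cS^{m-1}$ (hence of dimension $\le m-1$) and of $\cB$, so again $\Ha^{m-1+\alpha}(\cB)>0$ and the same blow-up applies, with $m,n$ and $Q=2$ inherited.

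The hard part will be the coordination in the second step --- producing a single sequence $r_k$ along which the tangent is flat \emph{and} the density-$Q$ set keeps positive $\Ha^{m-1+\alpha}_\infty$-content. I expect to handle this, in full generality, by running Federer's inductive reduction: pass to tangent cones and, whenever a tangent fails to be flat, to a point of positive $\Ha^{m-1+\alpha}_\infty$-content of the lower stratum, thereby lowering $m$ (and possibly $Q$) and stopping at the first flat stratum --- this is why the statement only claims ``there exist $m,n,Q\ge 2$'' --- using at each step upper semicontinuity of the density and of $\Ha^{s}_\infty$ under Hausdorff convergence of the (relatively closed) sets $D_Q$, exactly as for $\Dir$-minimizers in \cite{DS,DLS_Blowup}, the one new ingredient being that stationarity of $T$ (preserved under blow-up) replaces minimality.
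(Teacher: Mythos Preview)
Your treatment of the first hypothesis (a general stationary integral current with $\Ha^{m-1+\alpha}(\Sing(T))>0$) is the standard Allard--Almgren dimension reduction and matches what the paper does: it simply cites \cite[Proposition~8.7(iii)]{DHMS} for this case, and your outline is a faithful sketch of that argument.

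The gap is in the graph case. You treat it as if it were a subcase of the first hypothesis, writing ``so again $\Ha^{m-1+\alpha}(\cB)>0$ and the same blow-up applies.'' But the graph clause in the statement is a \emph{standalone} alternative hypothesis: one only assumes $T=\bG_f$ with $f$ a $2$-valued Lipschitz map whose regular set contains points of both densities~$1$ and~$2$. There is no assumption that $\Ha^{m-1+\alpha}(\Sing(f))>0$, so you cannot conclude $\Ha^{m-1+\alpha}(\cB)>0$, and in particular you have no mechanism for selecting the base point~$0$ with the $\Ha^{m-1+\alpha}_\infty$-content needed for \eqref{eq.stratifications02}. Your fallback argument for \eqref{eq.stratifications03} also rests on an unproved unique continuation: you invoke ``harmonicity of $\eta\circ f$,'' but for the area functional $\eta\circ f$ solves the minimal surface system, not the Laplace equation, and the paper's unique continuation (Theorem~\ref{thm:UCformeas}) is for the linearized Dirichlet problem, not for area. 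Propagating the collapsed condition $f=2\a{\eta\circ f}$ from an open set to all of $\Omega$ is not established anywhere in the paper for stationary graphs.

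The paper handles the graph case by a different, more direct route. Writing $E=\pi_0(D_2(T)\cap\bC_1)$, the hypothesis gives $0<|E\cap B_1|<|B_1|$. If the measure-theoretic boundary $\partial_*E$ were empty it would have finite $\Ha^{m-1}$-measure, and Federer's characterization \cite[4.5.11]{Federer} would force $\mathbf 1_E$ to be a constant BV function, a contradiction. Thus one may choose the base point in $\partial_*D_2$; by definition such a point has positive upper $\Ha^m$-density of both $D_2$ and its complement, which immediately feeds \eqref{eq.stratifications02} (positive $\Ha^m$-content in $\bB_1$ after rescaling dominates $\Ha^{m-1+\alpha}_\infty$) and \eqref{eq.stratifications03}, after which the DHMS argument runs as in the first case.
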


\begin{proof}
  In the first case the proof is the same as in \cite[Proposition 8.7 (iii)s]{DHMS}, with the obvious modification.

   The second case, though not explicitly covered in \cite[Proposition 8.7 (iii)s]{DHMS}, is essentially the same. 
Indeed, note that by our assumption the topological boundary \[\partial_*D_2:=\{ x \in \spt T \colon \limsup_{r\downarrow 0 } \frac{ \Ha^m(D_2(T)\cap \bB_r(x))}{r^m}>0,\, \limsup_{r\downarrow 0 } \frac{ \Ha^m(\bB_r(x)\setminus D_2(T))}{r^m}>0\}\]
cannot be empty. 
To see this one may argue by using sets of finite perimeter as follows: To avoid introducing them varifolds, we will argue for the graph situation. Let us denote by $E=\pi_0(D_2(T) \cap \bC_1)$ the projection of all points with density $2$. By assumption we have $\min\{|E\cap B_1|, |B_1\setminus E|\} >0$. Furthermore note that $\pi_0(\partial_*D_2(T)\cap \bC_1))=\partial_*E\cap B_1$. If $\partial_*E$ would be empty in particular it would have finite $\mathcal{H}^{n-1}$ measure, but then Federer's characterisation result, \cite[Section 4.5.11]{Federer} applies and we would deduce that $\mathbf{1}_E$ is a BV-function that is either constant $0$ or constant $1$. 

Hence we may assume that $0 \in \partial_*D_2$. From now on the argument presented in \cite[Proposition 8.7 (iii)s]{DHMS} applies. 
\end{proof}

\bibliographystyle{plain}
\bibliography{stationaryforarea.bib}

\end{document}